\tikzstyle{arrow} = [ultrathick,>=stealth]
\tikzstyle{block} = [rectangle, minimum width=3cm, minimum height=1cm, align=flush center, draw=black, thick]
\definecolor{dullmagenta}{rgb}{0.4,0,0.4}   
\definecolor{darkblue}{rgb}{0,0,0.4}
\newcommand{\assign}{:=}
\newcommand{\comma}{{,}}
\newcommand{\mathd}{\mathrm{d}}
\newcommand{\nobracket}{}
\newcommand{\tmaffiliation}[1]{\\ #1}
\newcommand{\tmcolor}[2]{{\color{#1}{#2}}}
\newcommand{\tmem}[1]{{\em #1\/}}
\newcommand{\tmemail}[1]{\\ \textit{Email:} \texttt{#1}}
\newcommand{\tmop}[1]{\ensuremath{\operatorname{#1}}}
\newcommand{\tmscript}[1]{\text{\scriptsize{$#1$}}}
\newcommand{\tmstrong}[1]{\textbf{#1}}
\newcommand{\tmtextbf}[1]{{\bfseries{#1}}}
\newcommand{\tmtextit}[1]{{\itshape{#1}}}
\newtheorem{theorem}{Theorem}
\newtheorem{conjecture}[theorem]{Conjecture}
\newtheorem{corollary}[theorem]{Corollary}
\newtheorem{definition}[theorem]{Definition}
\newtheorem{lemma}[theorem]{Lemma}
\newtheorem{proposition}[theorem]{Proposition}
\newtheorem{remark}[theorem]{Remark}
\begin{document}

\title{
  Prevalence of $\rho$-irregularity\\
  and related properties
}

\author{
  L.~Galeati \& M.~Gubinelli
  \tmaffiliation{Institute of Applied Mathematics \&\\
  Hausdorff Center for Mathematics\\
  University of Bonn\\
  Germany}
  \tmemail{\{lucio.galeati,gubinelli\}@iam.uni-bonn.de}
}

\maketitle

\begin{abstract}
  We show that generic H{\"o}lder continuous functions are $\rho$-irregular.
  The property of $\rho$-irregularity has been first introduced by Catellier
  and Gubinelli \ (Stoc.~Proc.~Appl.~126, 2016) and plays a key role in the
  study of well-posedness for some classes of perturbed ODEs and PDEs.
  Genericity here is understood in the sense of \tmtextit{prevalence}. As a
  consequence we obtain several results on regularisation by noise \ ``without
  probability'', i.e. without committing to specific assumptions on the
  statistical properties of the perturbations. We also establish useful
  criteria for stochastic processes to be $\rho$-irregular and study in detail
  the geometric and analytic properties of $\rho$-irregular functions.
\end{abstract}

{\tableofcontents}

\section{Introduction}

In recent years there has been a lot of focus in understanding the role of
noise in improving well-posedness of ordinary and partial differential
equations (ODE/PDEs), or so called \tmtextit{regularisation by noise}
phenomena, see~{\cite{flandoli}} for a review. From the modelling point of
view, the presence of external perturbations to otherwise autonomous
evolutions is a very natural assumption. For the sake of abstraction from any
specific origin of such perturbations, one is usually led to consider it
\tmtextit{random}, i.e. model the noise as a stochastic process. This common
approach introduces into the picture new considerations, some of which do to
quite fit the original deterministic formulation:
\begin{enumerate}
  \item It is not immediate to identify which types of noise are justified in
  any specific problem.
  
  \item Measurability (or adaptedness) of solutions in the sense of stochastic
  processes (i.e. seen as random variables on a filtered probability space)
  must be required; probabilistic notions of uniqueness (like path-wise
  uniqueness) are obtained, which are not easy to compare to their
  deterministic counterparts.
\end{enumerate}
Regarding~$1$., it is natural to require the noise to have statistical
properties like Gaussianity or self-similarity, as they are associated to
features of universality; as the perturbed problem has dynamical nature, it is
also reasonable to impose Markovianity. These considerations make Brownian
motion a natural candidate and a large set of theoretical tools is available
to analyse the effect of Brownian perturbations to deterministic evolutions.
For these reasons, this topic has a long and extensive history, see
e.g.~{\cite{zvonkin}}, {\cite{veretennikov}}, {\cite{krylov}}, {\cite{FGP}},
{\cite{dapratoflandoli}}, {\cite{DFRV}}, {\cite{BFGM}} and the references
therein. More recently, other classes of random perturbations partially
satisfying the above requirements have been analysed, like fractional Brownian
motion, $\alpha$-stable processes and more exotic variants, see
e.g.~{\cite{nualartouknine}}, {\cite{le}}, {\cite{priola2012}},
{\cite{priola2020}}, {\cite{banosproske}}.

In order to avoid the spurious technical problems of Point~$2$. one is led to
modify the probabilistic setting. For example, in the Brownian setting, Davie
and Flandoli introduced the stronger concept of \tmtextit{path-by-path
uniqueness}, see~{\cite{davie}},~{\cite{flandoli}},~{\cite{flandoli2011}}.
Shaposhnikov obtained further results in this direction,
see~{\cite{shaposhnikov1}}, {\cite{shaposhnikov2}} and the
recent~{\cite{shaposhnikov3}} with Wresch.

\

Here instead we follow the approach we initiated
in~{\cite{galeatigubinelli_ode}} and consider the regularisation by noise
problem from the point of view of \tmtextit{generic} perturbations, in
particular without reference to any (specific) probabilistic setting.

We will say that a property holds for a generic path, or equivalently for
\tmtextit{almost every} path, if it holds for the elements of a
\tmtextit{prevalent} set. The notion of prevalence was first introduced by
Christensen in~{\cite{christensen}} in the context of abelian Polish groups
and later rediscovered independently by Hunt, Sauer and Yorke
in~{\cite{huntsauer}} for complete metric vector spaces. It allows to define
consistently ``full Lebesgue measure sets'' in infinite dimensional spaces,
even if they don't admit any $\sigma$-additive, translation invariant measure.

Prevalence has been used in different contexts in order to study the
properties of generic functions of suitable regularity. For instance, it was
proved in~{\cite{hunt}} that almost every continuous function is nowhere
differentiable, while in~{\cite{fraysse2010}},~{\cite{fraysse2006}}, the
multi-fractal nature of generic Sobolev functions was shown. See~{\cite{ott}}
for a review. Recently, prevalence has also attracted attention in the study
of dimension of graphs and images of almost every continuous function, see
among others~{\cite{fraserhyde}},~{\cite{bayart}}.

One of the advantages of prevalence with respect to other notions of
genericity is that it allows the use of probabilistic methods in the proofs.
However the statements are fully non-probabilistic and the kind of problems
one encounters in formulating prevalence results are quite distinct from those
of a purely probabilistic setting, much more explored in the literature.

\

Our strategy and results are based on key ideas introduced by Catellier and
one of the authors in~{\cite{catelliergubinelli}}, while studying the
regularisation properties of fractional Brownian motion. In the following, we
schematically recall some of the main results of that paper and how they fit
in our framework.

\

Consider the perturbed ODE in $\mathbb{R}^d$ in integral form given by
\begin{equation}
  \label{eq:pert-ode} x_t = x_0 + \int_0^t b (s, x_s) \mathd s + w_t
\end{equation}
where $x_t = x (t)$, $w \in C^0 (\mathbb{R}_+, \mathbb{R}^d)$ is our fixed
perturbation and $b : \mathbb{R}_+ \times \mathbb{R}^d \rightarrow
\mathbb{R}^d$ is a time-dependent vector field (assume it to be continuous for
the moment).

Due to the additive nature of the perturbation, there are no particular
regularity requirements for $w$; it is natural to ask for which classes of $b$
equation~\eqref{eq:pert-ode} is well-posed and if, for suitable $w$, one can
obtain well-posedness results in classes which are ill-posed when $w = 0$.

The work~{\cite{catelliergubinelli}} singles out two analytic concepts linked
to the well-posedness of ~\eqref{eq:pert-ode}:
\begin{enumerate}
  \item The \tmtextit{averaged field} $T^w b$, given by
  \[ T^w b (t, x) = \int_0^t b (s, x + w_s) \mathd s, \qquad x \in
     \mathbb{R}^d, t \geqslant 0. \]
  \item The notion of \tmtextit{$\rho$-irregularity} of a path $w$, defined
  precisely below.
\end{enumerate}
Indeed, in order to study~\eqref{eq:pert-ode}, i.e. to give it meaning and
establish existence and uniqueness of solutions, together with a Lipschitz
flow, it is enough to have good regularity estimates for the averaged function
$T^w b$, even when the drift $b$ is merely a distribution. Schematically:

\begin{center}
\begin{tikzpicture}
\node (reg) [block] {Regularity of $T^w b$};
\node (ode) [block, right of=reg, xshift=7cm] {Well-posedness of perturbed ODE};
\draw[->,line width=0.4mm] (reg) --node[anchor=south] {Theorem \ref{sec6 thm ode}} (ode);
\end{tikzpicture}
\end{center}

In order to analyse the regularising effect of $w$ on a collection of drifts
$b$, the \tmtextit{averaging operator} $T^w : b \mapsto T^w b$ is then
introduced; it is closely tied to the \tmtextit{occupation measure} $\mu^w$,
whose regularity is in turn quantified by the concept of $\rho$-irregularity:

\

\begin{center}
\begin{tikzpicture}
\node (rho) [block] {$\rho$-irregularity of $w$};
\node (measure) [block, right of=rho, xshift=8cm] {Regularity of $\mu^w$};
\node (averaging1) [block, below of=measure, yshift=-2cm] {Regularization effect of $T^w$};
\node(averaging2) [block, left of=averaging1, xshift=-8cm] {Regularity of $T^w b$ for all $b \in E$};
\draw[->,line width=0.4mm] (rho) --node[anchor=south] {Equation \eqref{eq:link-occupatio-rho}} (measure);
\draw[->,line width=0.4mm] (measure) --node[anchor=west] {Lemma \ref{sec2.3 lemma averaging occupation}} (averaging1);
\draw[->,line width=0.4mm] (averaging1) --node[anchor=south] {Lemmas \ref{sec2.2 lemma regularity averaging 1}, \ref{sec2.2 lemma regularity averaging 2}} (averaging2);
\draw[->,line width=0.4mm] (rho) --node[anchor=east] {Overall implication } (averaging2);
\end{tikzpicture}
\end{center}

\

Here $E$ is a suitable Banach space, for instance
in~{\cite{catelliergubinelli}} Fourier--Lebesgue spaces are considered. The
precise meaning of the above diagram is explained in detail in
Sections~\ref{sec2.2} and~\ref{sec2.3}.

\

Even if $\rho$-irregularity has been first introduced in the study of
perturbed ODEs, it is a concept of independent interest and has been applied
to establish regularisation by noise for several modulated PDEs,
see~{\cite{choukgubinelli1}},~{\cite{choukgubinelli2}},~{\cite{choukgess}}.
More recently, a connection to the property of inviscid mixing for shear flows
has been found in~{\cite{galeatigubinelli_mixing}}.

\

In the companion paper~{\cite{galeatigubinelli_ode}}, we analyse in detail
the regularity of the averaged field $T^w b$ for generic $w \in C^{\delta}$
when $b$ is a \tmtextit{fixed} distribution. Here instead we focus on the
regularity of the operator $T^w$ for a generic $w \in C^{\delta}$, as
quantified by the concept of $\rho$-irregularity. This has the advantage of
showing that a generic additive perturbation $w$ regularises a huge class of
ODEs, namely the ones of the form~\eqref{eq:pert-ode} associated to $b \in E$.
On the other hand, it has the drawback that the space $E$ is strictly smaller
than the class of all possible drifts taken singularly. For instance
in~{\cite{galeatigubinelli_ode}} time~dependent distributions $b$, with only
$L^p$-regularity in time, can be considered, while it is not possible to
obtain regularity estimates for the operator $T^w$ in that class, as explained
in Section~3.1 therein.

\

In this paper, we expand the theory developed in~{\cite{catelliergubinelli}}
in several ways. For instance, we identify suitable locally-nondeterministic
conditions (LND) which ensure $\rho$-irregular trajectories for a large class
of stochastic Gaussian processes:

\

\begin{center}
\begin{tikzpicture}
\node (lnd) [block] {$w$ is $\beta$-(S)LND};
\node (rho) [block, right of=rho, xshift=7cm] {$w$ is $\rho$-irregular for any $\rho < (2 \beta)^{-1}$};
\draw[->,line width=0.4mm] (lnd) --node[anchor=south] {Theorem \ref{sec3 thm1}} (rho);
\end{tikzpicture}
\end{center}

The concept of \tmtextit{local-nondeterminism} is explained in detail in
Section~\ref{sec2.4}. The above probabilistic result then allows to prove a
much stronger (non probabilistic) prevalence statement, thanks to the fact
that the LND condition above is invariant under additive deterministic
perturbations.

\

Usually prevalence statements are established by means of \tmtextit{probes},
i.e. measures supported on finite dimensional subspaces. To the best of our
knowledge, the only other work adopting a approach similar to ours, i.e.
exploiting stochastic processes to establish prevalence results,
is~{\cite{bayart}}. See also~{\cite{peres}} and the references therein for the
study of properties of fractional Brownian motion with deterministic drift
(however without prevalence considerations).

\

We also study more in detail the geometric and analytic properties of
$\rho$-irregular functions, like the Fourier dimension of their image. We
establish that $\rho$-irregularity is indeed a concept of irregularity, as the
terminology suggests, in a precisely quantifiable way:

\

\begin{center}
\begin{tikzpicture}
\node (rho) [block] {$w$ is $\rho$-irregular};
\node (holder) [block, right of=rho, xshift=7cm] {For any $\delta > \delta^{\ast}$, $w$ is nowhere
    $\delta$-H\"{o}lder;\\
    moreover it is $\delta$-H\"{o}lder rough.};
\draw[->,line width=0.4mm] (rho) --node[anchor=south] {Theorem \ref{sec3 thm3}}(holder);
\end{tikzpicture}
\end{center}

\

Here $\delta^{\ast}$ is a critical scaling parameter which depends on $\rho$.
The precise statements of our results are given in Section~\ref{sec3}, but let
us give here informally some of the main highlights:
\begin{itemize}
  \item Almost every $w \in C^{\delta} ([0, T] ; \mathbb{R}^d)$ is
  $\rho$-irregular for any $\rho < (2 \delta)^{- 1}$. Moreover we introduce a
  stronger notion of irregularity, which allows us to prove that almost every
  $\varphi \in C^{\delta} ([0, T] ; \mathbb{R}^d)$ remains irregular under
  smooth additive perturbations. See Theorems \ref{sec3 thm2}-i) and \ref{sec3 thm5}.
  
  \item Almost every $w \in C^0 ([0, T] ; \mathbb{R}^d)$ is infinitely
  irregular, in the sense that it is $\rho$-irregular for any $\rho < \infty$.
  The associated averaging operator is infinitely regularising, in the sense
  that $T^w$ maps $\mathcal{S}' (\mathbb{R}^d)$ into $C^{\infty}
  (\mathbb{R}^d)$. See Theorems \ref{sec3 thm2}-ii).
  
  \item Almost every $w \in C^0 ([0, T] ; \mathbb{R}^d)$ has infinite
  regularisation effect on the ODE. Namely, it renders~\eqref{eq:pert-ode}
  well-posed and with a smooth flow for a large class of fields $b$, for
  instance for any $b \in H^{\alpha} (\mathbb{R}^d ; \mathbb{R}^d)$ for any
  $\alpha \in \mathbb{R}$. See Theorem \ref{sec6 thm ode}.
\end{itemize}
An overview of our results, combined with the path-wise theory developed
in~{\cite{catelliergubinelli}}~{\cite{choukgubinelli1}},
{\cite{choukgubinelli2}} discussed above, is given by the conceptual map of
Figure~\ref{fig1}.

\begin{figure}[h]\label{fig1}
\resizebox{.95\textwidth}{!}{  
      \begin{tikzpicture}
%
%
\node (lnd) [block] {LND};
%
\node (rho) [block, below of=lnd, yshift=-3cm] {$\rho$-irregularity of $w$};
\node (dummy) [left of=rho, xshift=-2cm, inner sep = 0pt] {};
\node (fourier) [block, above of=dummy, xshift=-4cm] {Fourier dimension\\ of $w([s,t])$};
\node (holder) [block, below of=dummy, xshift=-4cm] {$w$ is nowhere $\delta$-H\"{o}lder\\ for $\delta\geqslant \delta^\ast$};
\node (dummy2) [above of=fourier, xshift=-2cm,yshift=-1cm] {};
\node (dummy3) [below of=fourier, yshift=-10.3cm, xshift=-2cm] {};
\node (dummy1) [above of=dummy2, yshift=2.5cm] {};
%
%
%
\node (occupation) [block, below of=rho, yshift=-1.5cm]  {Regularity of $\mu^w$}; 
\node (averaging1) [block, below of=occupation, yshift=-1.5cm]  {Regularization effect of $T^w$};
\node (averaging2) [block, below of=averaging1, yshift=-1.5cm]  {Regularity of $T^w b$\\ for all $b\in E$};
%
\node (ode) [block, below of=averaging2, yshift=-1.5cm] {Well-posedness of\\ perturbed ODE
\\ $\dot{x}=b(x)+\dot{w}$
};
\node (transport) [block, left of=ode, xshift=-3.5cm]  {Well-posedness of\\
perturbed transport PDE
\\ $\partial_t u+ b\cdot\nabla u + \dot{w}\cdot\nabla u=0$
};
\node (modulated) [block, right of=ode, xshift=3.5cm]  {Well-posedness of\\
modulated PDE
\\ $\partial_t \varphi =A\varphi \dot{w} + \mathcal{N}(\varphi)$
};

 \draw[->,line width=0.4mm] (lnd) --node[anchor=west] {Theorem \ref{sec3 thm1}} (rho);
 \draw[->,line width=0.4mm] (rho) --node[anchor=west] {Equation \eqref{eq:link-occupatio-rho}} (occupation);
 \draw[->,line width=0.4mm] (occupation) --node[anchor=west] {Lemma \ref{sec2.3 lemma averaging occupation}} (averaging1);
 \draw[->,line width=0.4mm] (averaging1) --node[anchor=west] {Lemmas \ref{sec2.2 lemma regularity averaging 1}, \ref{sec2.2 lemma regularity averaging 2}} (averaging2);
 \draw[->,line width=0.4mm]    (averaging2) --node[anchor=west] {Theorem \ref{sec6 thm ode}} (ode); 
 \draw[->,line width=0.4mm]    (averaging2) -|node[anchor=south] {Theorem \ref{sec6 thm transport}} (transport);
 \draw[->,line width=0.4mm]    (rho) -|node[anchor=west,xshift=0.1cm, yshift=-3cm, align=center] {Theorems\\ \ref{sec6 thm nls}, \ref{sec6 thm kdv}} (modulated);
 \draw[-,line width=0.4mm]    (rho) --
  (dummy);
 \draw[->,line width=0.4mm]    (dummy) |-node[anchor=south, xshift=-0.5cm, yshift=0.1cm] {Theorem \ref{sec3 thm4}} (fourier);
 \draw[->,line width=0.4mm]    (dummy) |-node[anchor=north, xshift=-0.5cm, yshift=-0.1cm] {Theorem \ref{sec3 thm3}} (holder);
 \draw[decorate,decoration={brace,amplitude=15pt,mirror},line width=0.4mm]
(dummy2) -- (dummy3) node [black,midway,xshift=-1.5cm, align=center] 
{ {\footnotesize Analytical}\\ {\footnotesize part}};
 \draw[decorate,decoration={brace,amplitude=15pt,mirror},line width=0.4mm]
(dummy1) -- (dummy2) node [black,midway,xshift=-1.5cm,align=center] 
{ {\footnotesize Probabilistic}\\ {\footnotesize step}};
 
\end{tikzpicture}
}
\caption{Scheme of the results}
\end{figure}
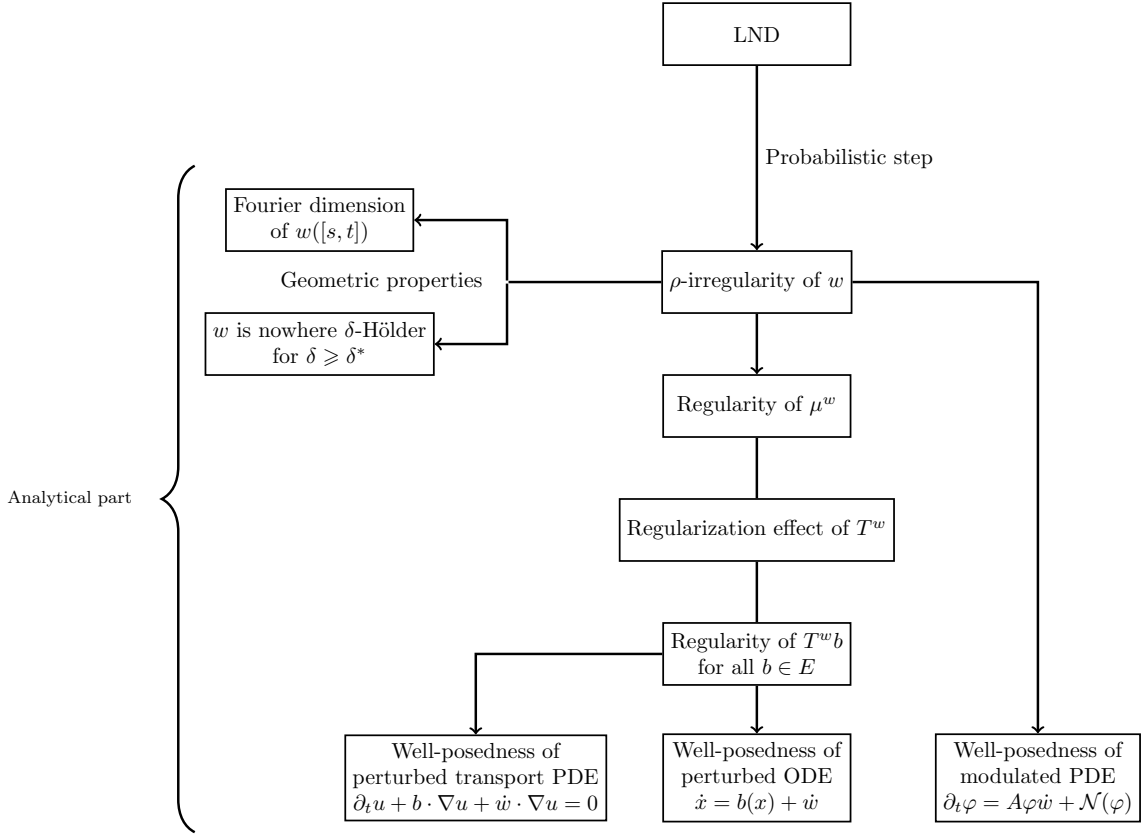

While finalising the first version of this paper, we become aware of the
preprint~{\cite{perkowski}} which indepentently develop some result similar to
ours. The authors also identify some LND conditions that ensure time-space
regularity of $\mu^w$ and apply them in combination with the theory
from~{\cite{catelliergubinelli}} to establish that suitable stochastic
processes are ``infinitely regularising'' the perturbed ODE. By introducing
genericity into the picture our work carries such probabilistic considerations
into a wider scope and quantifies the regularisation effect precisely wrt. the
H{\"o}lder regularity of the paths. We also answer some of the questions left
open in~{\cite{perkowski}}, for instance on the link between regularity of
$\mu^w$ and true roughness of $w$.

Let us finally mention the upcoming works~{\cite{tolomeo1,tolomeo2}}, further
analysing notions of irregularity of paths and giving sufficient conditions to
check them for stochastic processes; among other results, it seems the authors
are able to give a negative answer to Conjecture~\ref{conj:rho} from
Section~\ref{sec5.3}.

\

\tmtextbf{Structure of the paper.} Section~\ref{sec2} introduces all our main
actors, namely prevalence, $\rho$-irregularity, occupation measures and
averaging operators and describes the basic relations between them. Fractional
Brownian motion (fBm) enters into the picture as a transverse measure for
prevalence and local-nondeterminism (LND) as a key feature of fBm paths. The
reader acquainted with these concepts can skip this preliminary section. With
the above preparations, we can formulate in Section~\ref{sec3.1} the
statements of our main results. Section~\ref{sec3.2} shows how they can be
combined with already existing results to provide various ``noiseless
regularisation by noise'' results both for ODEs and PDEs. Sections~\ref{sec4}
and~\ref{sec5} constitute the main body of the paper and contain the proofs of
the statements from Section~\ref{sec3.1}. These sections split naturally into
a probabilistic part, in which stochastic criteria for establishing
$\rho$-irregularity are developed, and an analytic one, focused on geometric
and analytical properties of $\rho$-irregular paths. We choose to put in the
Appendix reminders of useful standard facts.

\

{\tmstrong{Notation.}} We will use the notation $a \lesssim b$ to mean that
there exists a positive constant $c$ such that $a \leqslant c \hspace{0.17em}
b$; we use the index $a \lesssim_x b$ to highlight the dependence $c = c (x)$.
$a \sim b$ if and only if $a \lesssim b$ and $b \lesssim a$, similarly for $a
\sim_x b$.

We will always work on a finite time interval $[0, T]$ unless stated
otherwise. Whenever useful we adopt the convention that $f_t$ stands for $f
(t)$ for a function $f$ indexed on $t \in [0, T]$, but depending on the
context we will use both notations; similarly for the increments of $f_{s, t}
= f_t - f_s$.

For $x \in \mathbb{R}^d$, $|x|$ denotes the Euclidean norm, $x \cdot y$ the
scalar product, $\langle x \rangle = (1 + |x|^2)^{1 / 2}$. For any $R > 0$,
$B_R$ stands for $B (0, R) = \{x \in \mathbb{R}^d : |x| \leqslant R\}$,
$\mathbb{S}^{d - 1} = \{x \in \mathbb{R}^d : |x| = 1\}$.

We denote by $\mathcal{S} (\mathbb{R}^d ; \mathbb{R}^m)$ and $\mathcal{S}'
(\mathbb{R}^d ; \mathbb{R}^m)$ respectively the spaces of vector-valued
Schwarz functions and tempered distributions on $\mathbb{R}^d$, similarly
$C^{\infty}_c (\mathbb{R}^d ; \mathbb{R}^m)$ is the set of vector-valued
smooth compactly supported functions; $\hat{f}$ stands for the Fourier
transform of $f \in \mathcal{S}' (\mathbb{R}^d)$. Standard Lebesgue spaces are
denoted by $L^p (\mathbb{R}^d ; \mathbb{R}^m)$. Whenever it doesn't create
confusion, we will just write $C^{\infty}_c$ and $L^p$ for short.

Given a Banach space $E$, $\alpha \in (0, 1)$, $C^{\alpha} ([0, T] ; E) =
C^{\alpha}_t E$ is the space of $E$-valued $\alpha$-H{\"o}lder continuous
functions, i.e. $f : [0, T] \rightarrow E$ such that
\[ \|f\|_{C^{\alpha} E} \assign \|f\|_{C^0 E} + \llbracket f
   \rrbracket_{C^{\alpha} E} = \sup_{t \in [0, T]} \|f_t \|_E +
   \sup_{\tmscript{\begin{array}{c}
     s \neq t \in [0, T]
   \end{array}}}  \frac{\|f_{s, t} \|_E}{|t - s|^{\alpha}} < \infty . \]
A similar definition holds for $\tmop{Lip} ([0, T] ; E) = \tmop{Lip}_t E$.
More generally, for a given modulus of continuity $\omega$ (possibly defined
only in a neighbourhood of $0$), we denote by $C^{\omega} ([0, T] ; E) =
C^{\omega} E$ the set of all $E$-valued continuous functions with modulus of
continuity $\omega$. $C^0_t E = C^0 ([0, T] ; E)$ is the space of $E$-valued
continuous functions, endowed with the supremum norm.

Whenever $E =\mathbb{R}^d$, we will refer to $w \in C^{\alpha}_t = C^{\alpha}
([0, T] ; \mathbb{R}^d)$ as a {\tmem{path}} and in this case we allow $\alpha
\in [0, \infty)$ with the convention that $w \in C_t^{\alpha}$ it is has
continuous derivatives up to order~$\lfloor \alpha \rfloor$ and $D^{\lfloor
\alpha \rfloor} \varphi$ is $\{\alpha\}$--H{\"o}lder continuous, where
$\lfloor \alpha \rfloor$ and $\{\alpha\}$ denote integer and fractional part.

Whenever a stochastic process $X = \{X_t \}_{t \geqslant 0}$ is considered,
even when it is not specified we imply the existence of an abstract underlying
filtered probability space $(\Omega, \mathcal{F}, \{\mathcal{F}_t \}_{t
\geqslant 0}, \mathbb{P})$ such that $\mathcal{F}$ and $\mathcal{F}_t$ satisfy
the usual assumptions and $X_t$ is adapted to $\mathcal{F}_t$. If
$\mathcal{F}_t$ is said to be the natural filtration generated by $X$, then it
is tacitly implied that it is actually its right continuous, normal
augmentation. We denote by $\mathbb{E}$ integration (equiv. expectation)
w.r.t. the probability $\mathbb{P}$ and by $\mathbb{E} \left[ \, \cdot \, |
\mathcal{G} \right]$ conditional expectation w.r.t. a $\sigma$-algebra
$\mathcal{G}$.

\

\tmtextbf{Acknowledgements.} We thank Nicolas Perkowski and Leonardo Tolomeo
for useful discussions, leading respectively to Lemma~\ref{lem:perkowski} and
Remark~\ref{rem:tolomeo}; we are also grateful to the anonymous referee for
their very carefully reading and numerous insights, which highly improved the
paper. The authors were supported by the Deutsche Forschungsgemeinschaft (DFG,
German Research Foundation) through the Hausdorff Center for Mathematics under
Germany's Excellence Strategy -- EXC-2047/1 -- 390685813 and through CRC 1060
- projekt number 211504053.

\section{Preliminaries}\label{sec2}

\subsection{Introduction to prevalence}\label{sec2.1}

We follow the exposition and the terminology given in~{\cite{huntsauer}}. The
reader might however keep in mind that in the following we will be only
interested in the case of a Banach space $E$.

\begin{definition}
  \label{definition prevalence}Let $E$ be a complete metric vector space. A
  Borel set $A \subset E$ is said to be \tmtextbf{shy} if there exists a
  measure $\mu$ such that:
  \begin{enumerate}[label=\roman*.]
    \item There exists a compact set $K \subset E$ such that $0 < \mu (K) <
    \infty$.
    \item For every $v \in E$, $\mu (v + A) = 0$.
  \end{enumerate}
  In this case, the measure $\mu$ is said to be transverse to $A$. More
  generally, a subset of $E$ is shy if it is contained in a shy Borel set. The
  complement of a shy set is called a prevalent set.
\end{definition}

Sometimes it is said more informally that the measure $\mu$ ``witnesses'' the
prevalence of $A^c$ in $E$. It follows immediately from part \tmtextit{i.} of
the definition that if needed one can assume $\mu$ to be a compactly supported
probability measure on $E$. If $E$ is separable, then any probability measure
on $E$ is tight and therefore \tmtextit{i.} is automatically satisfied.

\

The following properties hold for prevalence:
\begin{enumerate}
  \item[1.] If $E$ is finite dimensional, then a set $A$ is shy if and only if it
  has zero Lebesgue measure (cf. Fact~6 from~{\cite{huntsauer}}).
  
  \item[2.] If $A$ is shy, then so is $v + A$ for any $v \in E$ (Fact~1
  from~{\cite{huntsauer}}).
  
  \item[3.] Prevalent sets are dense (Fact~2' from~{\cite{huntsauer}}).
  
  \item[4.] If $\dim (E) = + \infty$, then compact subsets of $E$ are shy (Fact~8
  from~{\cite{huntsauer}}).
  
  \item[5.] Countable union of shy sets is shy; conversely, countable intersection
  of prevalent sets is prevalent (Fact~3'' from~{\cite{huntsauer}}).
\end{enumerate}
\begin{remark}
  \label{rem:prevalence-consistent}Although it is not entirely trivial, it
  follows from the definition that a Borel set $A$ cannot be at the same time
  both shy and prevalent. To see it, assume by contradiction this to be the
  case; then we can find two compactly supported probability measures $\mu,
  \nu$ such that $\mu (v + A) = 0$ and $\nu (v + A^c) = 0$ for all $v \in E$,
  where $A^c$ denotes the complement of $A$. It would then follow from Fact~3
  from~{\cite{huntsauer}} that $(\mu \ast \nu) (v + A) = 0 = (\nu \ast \mu) (v
  + A^c)$ for all $v \in E$, yielding to $(\mu \ast \nu) (A)$ having at the
  same time both values $0$ and $1$.
\end{remark}

From now, whenever we say that a statement holds for a.e. $v \in E$, we mean
that the set of elements of $E$ for which the statement holds is a prevalent
set; property~1. states that this is consistent with the finite dimensional
case.

In the context of a function space $E$, it is natural to consider as
probability measure the law induced by an $E$\mbox{-}valued stochastic
process. Namely, given stochastic process $W$ defined on a probability space
$(\Omega, \mathcal{F}, \mathbb{P})$ taking values in a separable Banach space
$E$, in order to show that a property $\mathcal{P}$ holds for a.e. $f \in E$,
it suffices to show that
\[ \mathbb{P} \left( \text{$f + W$ satisfies property $\mathcal{P}$} \right) =
   1 \qquad \forall \, f \in E. \]
Clearly, we are assuming that the set $A = \left\{ w \in E : \text{$w$
satisfies property $\mathcal{P}$} \right\}$ is Borel measurable; if $E$ is not
separable, then we need in addition to require that the law of $W$ is tight,
so that point~\tmtextit{i.} of Definition~\ref{definition prevalence} is
satisfied.

\

Let us point out that, as a consequence of Properties~4. and~5. above, the
set of all possible realisations of any probability measure $\mu$ on a
separable infinite dimensional Banach $E$ space (actually, any tight measure $\mu$ on an infinite dimensional Banach
space $E$) is contained in a shy set, in the following sense: we can always
find a Borel set $A \subset E$, which is a countable union of compacts, such
that $\mu (A) = 1$, yet $A$ is shy. If we are given another Borel set
$\tilde{A} \subset E$, in order to verify its prevalence we really need to
check the translation property that $\mu (\tilde{A} + f) = 1$ for any $f \in
E$; if we only know that $\mu (\tilde{A}) = 1$, then $\tilde{A}$ can be shy or
prevalent (or neither).

\ This highlights the difference between a statement of the form ``Property
$\mathcal{P}$ holds for a.e. $f$'' and for instance ``Property $\mathcal{P}$
holds for all Brownian trajectories'', where the last statement corresponds to
$\mu \left( \text{Property $\mathcal{P}$ holds} \right) = 1$, $\mu$ being the
Wiener measure on $C^0 ([0, 1])$. Indeed, the second statement doesn't provide
any information regarding whether the property might be prevalent or not;
intuitively, the elements satisfying a prevalence statement are ``many more''
than just the realisations of the Wiener measure.

\subsection{Basic properties of $\rho$\mbox{-}irregularity and related
concepts}\label{sec2.2}

The concept of $\rho$\mbox{-}irregularity was introduced
in~{\cite{catelliergubinelli}} as an analytic property of continuous functions
which allows to quantitatively measure both their irregularity and their
smoothing effect on perturbations of ODEs.

\begin{definition}\label{sec2.2 defn rho irregularity}
Let $\gamma\in [0,1]$, $\rho > 0$. A measurable
  path $w : [0, T] \rightarrow \mathbb{R}^d$ is $(\gamma,
  \rho)$\mbox{-}irregular{\tmstrong{}} if there exists a constant $C$ such
  that
  \begin{equation}
    \left| \int_s^t e^{i \xi \cdot w_r} \, \mathd r \right| \leqslant C | \xi
    |^{- \rho} | t - s |^{\gamma} \quad \forall \, \xi \in \mathbb{R}^d
    \setminus \{ 0 \}, \, s, t \in [0, T] . \label{sec2.2 defn rho-irr}
  \end{equation}
  We denote by $\| \Phi^w \|_{\mathcal{W}^{\gamma, \rho}}$ the optimal
  constant $C$; with the notation $\Phi^w_t (\xi) = \int_0^t e^{i \xi \cdot
  w_r} \mathd r$, it holds
  \[ \| \Phi^w \|_{\mathcal{W}^{\gamma, \rho}} \assign \sup_{\xi \in
     \mathbb{R}^d, s \neq t} \frac{| \Phi^w_{s, t} (\xi) | | \xi |^{\rho}}{| t
     - s |^{\gamma}} . \]
  We say that $w$ is $\rho$\mbox{-}irregular if there exists $\gamma > 1 / 2$
  such that $w$ is $(\gamma, \rho)$-irregular.
\end{definition}

We have the trivial bound $| \Phi^w_{s, t} (\xi) | \leqslant | t - s |$, so
that~\eqref{sec2.2 defn rho-irr} is always satisfies for $| \xi |$ small and
the relevant information in the above definition is given by the uniform bound
as $\xi \rightarrow \infty$. Therefore we can always without problems replace
$| \xi |$ with another function with the same asymptotic behaviour (in the
original definition from~{\cite{catelliergubinelli}}, $1 + | \xi |$ appeared,
but we prefer here the choice $| \xi |$ for its better scaling properties).

\

Let us collect now some elementary facts on $\rho$\mbox{-}irregular
functions.

\begin{lemma}
  \label{sec2.2 lemma properties rho irr}Let $w : [0, T] \rightarrow
  \mathbb{R}^d$ be a $(\gamma, \rho)$\mbox{-}irregular continuous path. Then
  the following hold:
  \begin{enumerate}[label=\roman*.]
    \item Symmetry invariance: $- w$ is $(\gamma, \rho)$\mbox{-}irregular with
    $\| \Phi^w \|_{\mathcal{W}^{\gamma, \rho}} = \| \Phi^{- w}
    \|_{\mathcal{W}^{\gamma, \rho}}$.    
    \item Translation invariance: for any $r \in [0, T]$, $w_{\cdot} - w_r$ is
    $(\gamma, \rho)$\mbox{-}irregular with $\| \Phi^w \|_{\mathcal{W}^{\gamma,
    \rho}} = \| \Phi^{w_{\cdot} - w_r} \|_{\mathcal{W}^{\gamma, \rho}}$. 
    \item Scaling invariance: for any $\lambda \in (0, 1)$, $w^{\lambda} (t)
    \assign \lambda^{- (1 - \gamma) / \rho} \, w (\lambda t)$ is $(\gamma,
    \rho)$\mbox{-}irregular.
    \item Rotation invariance: for any $O \in \tmop{SO} (d)$, $O \, w$ is
    $(\gamma, \rho)$-irregular with $\left\| \Phi^{O \, w}
    \right\|_{\mathcal{W}^{\gamma \comma \rho}} = \| \Phi^w
    \|_{\mathcal{W}^{\gamma \comma \rho}}$.
    \item More generally if $A \in \mathbb{R}^{d \times d}$ is invertible,
    then $A \, w$ is $(\gamma, \rho)$-irregular with
    \[ \left\| \Phi^{A \, w} \right\|_{\mathcal{W}^{\gamma \comma \rho}}
       \leqslant \| (A^T)^{- 1} \|^{\rho} \, \| \Phi^w \|_{\mathcal{W}^{\gamma
       \comma \rho}} . \]
  \end{enumerate}
\end{lemma}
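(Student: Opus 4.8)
The plan is to prove all five invariance properties directly from the defining inequality \eqref{sec2.2 defn rho-irr}, exploiting the structure of the oscillatory integral $\Phi^w_{s,t}(\xi)=\int_s^t e^{i\xi\cdot w_r}\,\mathrm{d}r$ under the relevant transformations. The key observation in each case is to identify how the transformation acts on the integrand, reduce everything to a change of the frequency variable $\xi$ and/or the integration interval, and then re-apply the hypothesis for $w$ at the transformed frequency. Items (i), (ii), (iv) are essentially formal identities; (v) is the only one requiring a genuine (if short) estimate, and (iii) requires tracking the scaling exponent carefully. I would present (v) first since it immediately yields (i) and (iv) as special cases, then handle (ii) and (iii).

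For item (v), the main computation is the substitution: for invertible $A\in\mathbb{R}^{d\times d}$ we have $e^{i\xi\cdot (Aw_r)} = e^{i(A^T\xi)\cdot w_r}$, so $\Phi^{Aw}_{s,t}(\xi)=\Phi^w_{s,t}(A^T\xi)$. Applying the $(\gamma,\rho)$-irregularity of $w$ at frequency $A^T\xi$ gives
\[
|\Phi^{Aw}_{s,t}(\xi)| \leqslant \|\Phi^w\|_{\mathcal{W}^{\gamma,\rho}}\,|A^T\xi|^{-\rho}\,|t-s|^\gamma.
\]
It then remains to bound $|A^T\xi|^{-\rho}$ by $\|(A^T)^{-1}\|^{\rho}\,|\xi|^{-\rho}$, which follows from $|\xi| = |(A^T)^{-1}A^T\xi|\leqslant \|(A^T)^{-1}\|\,|A^T\xi|$. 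Taking the supremum over $\xi\neq 0$ and $s\neq t$ gives the stated bound on $\|\Phi^{Aw}\|_{\mathcal{W}^{\gamma,\rho}}$. For (iv), when $A=O\in\mathrm{SO}(d)$ one has $O^T=O^{-1}$, so $|O^T\xi|=|\xi|$ and the constant is preserved exactly; for (i), $A=-\mathrm{Id}$ gives $\Phi^{-w}_{s,t}(\xi)=\Phi^w_{s,t}(-\xi)$ and $|-\xi|=|\xi|$, so again equality of norms.

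For item (ii), since $e^{i\xi\cdot(w_r-w_\rho)} = e^{-i\xi\cdot w_\rho}e^{i\xi\cdot w_r}$ and $e^{-i\xi\cdot w_\rho}$ is a unit-modulus constant pulled out of the integral, we get $|\Phi^{w_\cdot - w_\rho}_{s,t}(\xi)| = |\Phi^w_{s,t}(\xi)|$ pointwise, whence equality of the norms (I would also note the typo that the stated norm should read $\|\Phi^{w_\cdot-w_r}\|$ rather than $\|\Phi^{-w}\|$). For item (iii), with $w^\lambda(t)=\lambda^{-(1-\gamma)/\rho}w(\lambda t)$ and $\lambda\in(0,1)$, the substitution $r=\lambda r'$ gives $\Phi^{w^\lambda}_{s,t}(\xi)=\lambda\int_{s/\lambda}^{t/\lambda} e^{i(\lambda^{-(1-\gamma)/\rho}\xi)\cdot w_{r'}}\,\mathrm{d}r' = \lambda\,\Phi^w_{s/\lambda,t/\lambda}(\lambda^{-(1-\gamma)/\rho}\xi)$; applying the hypothesis for $w$ and collecting the powers of $\lambda$ (from the prefactor $\lambda$, from $|t/\lambda - s/\lambda|^\gamma = \lambda^{-\gamma}|t-s|^\gamma$, and from $|\lambda^{-(1-\gamma)/\rho}\xi|^{-\rho}=\lambda^{1-\gamma}|\xi|^{-\rho}$) yields a total factor $\lambda\cdot\lambda^{-\gamma}\cdot\lambda^{1-\gamma} = \lambda^{2(1-\gamma)}$, so $w^\lambda$ is $(\gamma,\rho)$-irregular with $\|\Phi^{w^\lambda}\|_{\mathcal{W}^{\gamma,\rho}}\leqslant \lambda^{2(1-\gamma)}\|\Phi^w\|_{\mathcal{W}^{\gamma,\rho}}$ (one should be slightly careful that $[0,T]$ is mapped appropriately; since $\lambda<1$ the rescaled path is defined on a larger interval and the estimate restricts harmlessly). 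The only mild obstacle is purely bookkeeping — keeping the exponent arithmetic in (iii) consistent with the chosen normalisation $\lambda^{-(1-\gamma)/\rho}$ — and there is no real analytic difficulty anywhere in the lemma.
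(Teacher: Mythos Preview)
Your treatment of (i), (ii), (iv) and (v) is correct and matches the paper's approach exactly: reduce to a change of frequency via $\Phi^{Aw}_{s,t}(\xi)=\Phi^{w}_{s,t}(A^T\xi)$ and then use $|\xi|\leqslant \|(A^T)^{-1}\|\,|A^T\xi|$. The paper only writes out (iii) and (v) explicitly and leaves the rest as ``elementary calculations'', so on those items you are fine.

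There is a genuine error in your computation for (iii). You write the substitution as $r=\lambda r'$, obtaining prefactor $\lambda$ and limits $s/\lambda,\,t/\lambda$; but with that substitution $w(\lambda r)=w(\lambda^2 r')$, not $w(r')$, so your claimed identity $\Phi^{w^\lambda}_{s,t}(\xi)=\lambda\,\Phi^{w}_{s/\lambda,\,t/\lambda}(\lambda^{-(1-\gamma)/\rho}\xi)$ is false. The correct change of variables is $u=\lambda r$, giving
\[
\Phi^{w^\lambda}_{s,t}(\xi)=\lambda^{-1}\,\Phi^{w}_{\lambda s,\,\lambda t}\bigl(\lambda^{-(1-\gamma)/\rho}\xi\bigr),
\]
exactly as the paper computes. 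Two consequences: first, the powers of $\lambda$ cancel \emph{exactly} (prefactor $\lambda^{-1}$, time factor $\lambda^{\gamma}$ from $|\lambda t-\lambda s|^{\gamma}$, frequency factor $\lambda^{1-\gamma}$ from $|\lambda^{-(1-\gamma)/\rho}\xi|^{-\rho}$), so $\|\Phi^{w^\lambda}\|_{\mathcal{W}^{\gamma,\rho}}\leqslant \|\Phi^{w}\|_{\mathcal{W}^{\gamma,\rho}}$ with constant $1$, not $\lambda^{2(1-\gamma)}$. Second, your (incorrect) limits $[s/\lambda,\,t/\lambda]$ can fall outside $[0,T]$ when $\lambda<1$, so you could not legitimately invoke the $(\gamma,\rho)$-irregularity of $w$ there; the correct limits $[\lambda s,\,\lambda t]\subset[0,T]$ avoid this issue entirely, and your parenthetical remark about the ``rescaled path on a larger interval'' is addressing the wrong problem. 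Fixing the substitution direction repairs the argument completely.
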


\begin{proof}
  All the statements follow from elementary calculations; let us prove only
  {\tmem{iii.}} and {\tmem{v.}} . Fix $\lambda \in (0, 1)$, then
  \[ \Phi_{s, t}^{w^{\lambda}} (\xi) = \int_s^t e^{i \xi \cdot \lambda^{- (1 -
     \gamma) / \rho} w_{\lambda r}} \mathd r = \lambda^{- 1} \int_{\lambda
     s}^{\lambda t} e^{i \lambda^{- (1 - \gamma) / \rho} \xi \cdot w_r} \mathd
     r = \lambda^{- 1} \Phi^w_{\lambda s, \lambda t} (\lambda^{- (1 - \gamma)
     / \rho} \xi), \]
  so that
  \[ \frac{| \Phi_{s, t}^{w^{\lambda}} (\xi) | | \xi |^{\rho}}{| t - s
     |^{\gamma}} = \lambda^{- 1} \frac{| \Phi^w_{\lambda s, \lambda t}
     (\lambda^{- (1 - \gamma) / \rho} \xi) | | \xi |^{\rho}}{| t - s
     |^{\gamma}} = \frac{| \Phi^w_{\lambda s, \lambda t} (\lambda^{- (1 -
     \gamma) / \rho} \xi) | | \lambda^{- (1 - \gamma) / \rho} \xi |^{\rho}}{|
     \lambda t - \lambda s |^{\gamma}} \leqslant \| \Phi^w
     \|_{\mathcal{W}^{\gamma, \rho}} . \]
  Regarding {\tmem{v.}}, similarly we have
  \[ \frac{| \Phi^{A w}_{s, t} (\xi) | | \xi |^{\rho}}{| t - s |^{\gamma}} =
     \left| \int_s^t e^{i (A^T \xi) \cdot w_r} \mathd r \right| \frac{| \xi
     |^{\rho}}{| t - s |^{\gamma}} \leqslant \| \Phi^w
     \|_{\mathcal{W}^{\gamma, \rho}} \left( \frac{| A^T \xi |}{| \xi |}
     \right)^{- \rho} \leqslant \| (A^T)^{- 1} \|^{\rho} \, \| \Phi^w
     \|_{\mathcal{W}^{\gamma \comma \rho}} . \]
\end{proof}

\begin{remark}
  There is a striking similarity between \tmtextit{i.}--\tmtextit{iii.} and
  properties of stochastic processes like self\mbox{-}similarity and
  stationarity; however the latter properties are of statistical nature,
  namely they preserve the \tmtextit{law} of the process, but not fixed
  trajectories, while $\rho$\mbox{-}irregularity is an analytical property
  which holds for deterministic trajectories. From property \tmtextit{iii.} we
  also deduce the existence of a critical scaling parameter associated to the
  pair $(\gamma, \rho)$ given by
  \begin{equation}
    \delta^{\ast}_{\gamma, \rho} = \frac{1 - \gamma}{\rho} ; \label{sec2.2
    critical parameter}
  \end{equation}
  we will see in Theorem~\ref{sec3 thm3} how $\delta^{\ast}_{\gamma, \rho}$
  relates to regularity of $(\gamma, \rho)$\mbox{-}irregular paths.
\end{remark}

\begin{remark}
  It's easy to see from the definition that $(\gamma,
  \rho)$\mbox{-}irregularity for $w$ is equivalent to the following: there
  exists a constant $C$ such that, for any $v \in \mathbb{S}^{d - 1}$, $v
  \cdot w$ is $(\gamma, \rho)$\mbox{-}irregular with $\| \Phi^{v \cdot w}
  \|_{\mathcal{W}^{\gamma, \rho}} \leqslant C.$ The latter condition is not
  equivalent to checking $(\gamma, \rho)$\mbox{-}irregularity of the
  coordinates $w^{(i)}$ (i.e. to $v = e_i$, $i = 1, \ldots, d$): for instance
  if $w$ is a $1$\mbox{-}dimensional $(\gamma, \rho)$-irregular function and
  we define $\tilde{w}_t \assign (w_t, - w_t)$, then the single coordinates of
  $\tilde{w}$ are $(\gamma, \rho)$-irregular but $\tilde{w}$ is not, since
  $(1, 1) \cdot \tilde{w} \equiv 0$.
\end{remark}

\begin{lemma}
  \label{sec2.2 lemma interpolation rho irr}Let $w$ be $(\gamma,
  \rho)$\mbox{-}irregular, then for any $\theta \in [0, 1]$ it is also
  $(\gamma^{\theta}, \rho^{\theta})$\mbox{-}irregular for the choice
  $\gamma^{\theta} = 1 - \theta + \theta \gamma$, $\rho^{\theta} = \theta
  \rho$ and it holds $\| \Phi^w \|_{\mathcal{W}^{\gamma^{\theta},
  \rho^{\theta}}} \leqslant \| \Phi^w \|^{\theta}_{\mathcal{W}^{\gamma,
  \rho}}$.
\end{lemma}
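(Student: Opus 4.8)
The plan is to interpolate the trivial bound $|\Phi^w_{s,t}(\xi)| \leqslant |t-s|$ against the $(\gamma,\rho)$-irregularity bound $|\Phi^w_{s,t}(\xi)| \leqslant \|\Phi^w\|_{\mathcal{W}^{\gamma,\rho}} |\xi|^{-\rho} |t-s|^{\gamma}$, multiplicatively. First I would fix $\xi \in \mathbb{R}^d \setminus \{0\}$, $s \neq t \in [0,T]$ and $\theta \in [0,1]$, and simply write the product
\[
  |\Phi^w_{s,t}(\xi)| = |\Phi^w_{s,t}(\xi)|^{1-\theta} \, |\Phi^w_{s,t}(\xi)|^{\theta}.
\]
Bounding the first factor by $|t-s|^{1-\theta}$ and the second by $\big(\|\Phi^w\|_{\mathcal{W}^{\gamma,\rho}}\big)^{\theta} |\xi|^{-\theta\rho} |t-s|^{\theta\gamma}$ gives
\[
  |\Phi^w_{s,t}(\xi)| \leqslant \|\Phi^w\|^{\theta}_{\mathcal{W}^{\gamma,\rho}} \, |\xi|^{-\theta\rho} \, |t-s|^{(1-\theta) + \theta\gamma}.
\]
Reading off the exponents, this is exactly the $(\gamma^{\theta}, \rho^{\theta})$-irregularity estimate with $\gamma^{\theta} = 1 - \theta + \theta\gamma$ and $\rho^{\theta} = \theta\rho$, and taking the supremum over $\xi$ and $s\neq t$ yields the claimed bound $\|\Phi^w\|_{\mathcal{W}^{\gamma^{\theta}, \rho^{\theta}}} \leqslant \|\Phi^w\|^{\theta}_{\mathcal{W}^{\gamma,\rho}}$.

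There is essentially no obstacle here; the only minor points to check are bookkeeping ones. One should note that at the endpoints $\theta = 0$ and $\theta = 1$ the statement degenerates gracefully: $\theta = 1$ recovers the original pair, while $\theta = 0$ gives $\gamma^0 = 1$, $\rho^0 = 0$, corresponding to the trivial bound $|\Phi^w_{s,t}(\xi)| \leqslant |t-s|$, which holds with constant $1$ (consistent with $\|\Phi^w\|^0_{\mathcal{W}^{\gamma,\rho}} = 1$). One should also observe that the interpolation keeps us in the admissible regime $\gamma^{\theta}, \rho^{\theta} > 0$ only for $\theta \in (0,1]$, but since the $\theta = 0$ case is the harmless trivial bound this causes no issue with the statement as phrased. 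Finally, it is worth remarking that this is a genuine trade-off: one loses decay in $\xi$ (smaller $\rho^{\theta}$) in exchange for better time regularity (larger $\gamma^{\theta}$, since $\gamma^{\theta} = 1 - \theta(1-\gamma) \geqslant \gamma$); in particular if $\gamma \leqslant 1/2$ one can sometimes push $\gamma^{\theta}$ above $1/2$ at the cost of shrinking $\rho$, which is presumably the point of recording this lemma.

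The measurability and finiteness requirements implicit in the definition are automatically inherited: if $\|\Phi^w\|_{\mathcal{W}^{\gamma,\rho}} < \infty$ then so is every power of it, and $w$ remains the same measurable path, so no additional hypotheses are needed. Thus the whole proof is a one-line Hölder-type interpolation between the defining estimate and the trivial $L^\infty$-in-$\xi$ bound, and the ``hard part'' is nothing more than writing the exponents correctly.
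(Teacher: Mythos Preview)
Your proof is correct and is essentially identical to the paper's: both interpolate the trivial bound $|\Phi^w_{s,t}(\xi)| \leqslant |t-s|$ against the defining $(\gamma,\rho)$-irregularity bound via the product decomposition $|\Phi^w_{s,t}(\xi)| = |\Phi^w_{s,t}(\xi)|^{1-\theta}|\Phi^w_{s,t}(\xi)|^{\theta}$. The paper's proof is just the one-line version of what you wrote.
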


\begin{proof}
The conclusion follows immediately by interpolating the two inequalities
\begin{equation*}
| \Phi^w_{s, t} (\xi) | \leqslant | t - s |, \quad | \Phi^w_{s, t} (\xi)
     | \leqslant \| \Phi^w \|_{\mathcal{W}^{\gamma, \rho}} | t - s |^{\gamma}
     | \xi |^{- \rho} .
\end{equation*}
\end{proof}

The above lemma shows that we can always trade space regularity for time
regularity, i.e. we can decrease the parameter $\rho$ in order to increase
$\gamma$. Let us also point out that
\begin{equation}
  \delta^{\ast}_{\gamma, \rho} = \frac{1 - \gamma}{\rho} = \frac{1 -
  \gamma^{\theta}}{\rho^{\theta}} = \delta^{\ast}_{\gamma^{\theta},
  \rho^{\theta}}, \label{eq:critical-parameter}
\end{equation}
i.e. the critical scaling parameter is left unchanged by this procedure.

\

In general dimension $d \geqslant 2$, it is in general difficult to establish
if a given function is $(\gamma, \rho)$\mbox{-}irregular. This fact is one of
the main motivations of our interest in establishing the prevalence of this
property. The situation is different in the case $d = 1$, in which there are
simple conditions to establish $\rho$\mbox{-}irregularity (at least for some
values of $\rho$).

\begin{proposition}[Proposition 1.4 from {\cite{choukgubinelli1}}]
  \label{sec2.2 prop d=1}Let $w \in C^1 ([0, T] ; \mathbb{R})$ be such that
  $\inf_t | w'_t | \geqslant \delta > 0$ and $w'' \in L^1$, then $w$ is
  $(\gamma, 1 - \gamma)$\mbox{-}irregular for any $\gamma \in (0, 1)$.
\end{proposition}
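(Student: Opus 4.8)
The plan is to estimate the oscillatory integral $\Phi^w_{s,t}(\xi) = \int_s^t e^{i\xi w_r}\,\mathrm{d}r$ directly by changing variables along the curve $r\mapsto w_r$, which is legitimate because $w$ is $C^1$ and strictly monotone (as $\inf_t|w'_t|\geqslant\delta>0$, $w'$ never vanishes and keeps a constant sign). On the interval $[s,t]$, $w$ is a $C^1$-diffeomorphism onto its image $[a,b]$ (say $a=w_s$, $b=w_t$ up to sign), with inverse $u\mapsto r(u)$ satisfying $r'(u) = 1/w'(r(u))$. Substituting gives
\[
  \Phi^w_{s,t}(\xi) = \int_a^b e^{i\xi u}\,\frac{\mathrm{d}u}{w'(r(u))} = \int_a^b e^{i\xi u}\,g(u)\,\mathrm{d}u,
  \qquad g(u) := \frac{1}{w'(r(u))}.
\]
So the problem reduces to bounding the Fourier transform (on an interval) of the function $g$, for which I would use the standard ``integration by parts'' estimate for Fourier integrals of functions of bounded variation.

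The key steps, in order: (1) record that $|w'|\geqslant\delta$ gives $\|g\|_{L^\infty}\leqslant\delta^{-1}$, hence the trivial bound $|\Phi^w_{s,t}(\xi)|\leqslant \delta^{-1}|b-a|$; (2) show $g$ has bounded variation on $[a,b]$ with $\mathrm{Var}(g;[a,b]) = \int_a^b |g'(u)|\,\mathrm{d}u = \int_s^t \bigl|\tfrac{d}{dr}\tfrac{1}{w'_r}\bigr|\,\mathrm{d}r = \int_s^t \tfrac{|w''_r|}{|w'_r|^2}\,\mathrm{d}r \leqslant \delta^{-2}\|w''\|_{L^1}$, using the hypothesis $w''\in L^1$; (3) integrate by parts once in the oscillatory integral: $\int_a^b e^{i\xi u} g(u)\,\mathrm{d}u = \frac{1}{i\xi}\bigl[e^{i\xi u}g(u)\bigr]_a^b - \frac{1}{i\xi}\int_a^b e^{i\xi u}g'(u)\,\mathrm{d}u$, giving
\[
  |\Phi^w_{s,t}(\xi)| \leqslant \frac{1}{|\xi|}\Bigl( |g(b)| + |g(a)| + \mathrm{Var}(g;[a,b]) \Bigr) \leqslant \frac{C_w}{|\xi|},
\]
with $C_w = 2\delta^{-1} + \delta^{-2}\|w''\|_{L^1}$ independent of $s,t,\xi$; (4) interpolate the two bounds $|\Phi^w_{s,t}(\xi)|\leqslant \delta^{-1}|t-s|^{0}|b-a|$ — actually more usefully $|\Phi^w_{s,t}(\xi)|\leqslant\delta^{-1}|t-s|$ via $|b-a|\leqslant\|w'\|_\infty|t-s|$, but to keep constants clean I would rather just use $|\Phi^w_{s,t}(\xi)|\le|t-s|$ — against $|\Phi^w_{s,t}(\xi)|\leqslant C_w|\xi|^{-1}$, raising the first to the power $\gamma$ and the second to $1-\gamma$, to obtain $|\Phi^w_{s,t}(\xi)|\leqslant C_w^{1-\gamma}|t-s|^{\gamma}|\xi|^{-(1-\gamma)}$, which is exactly $(\gamma,1-\gamma)$-irregularity. (Alternatively step (4) is precisely Lemma~\ref{sec2.2 lemma interpolation rho irr} applied with the endpoint pair $(1,1)$.)

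The main obstacle is purely bookkeeping rather than conceptual: one must be careful that $w$ restricted to $[s,t]$ is monotone with a well-defined $C^1$ inverse (immediate from $|w'|\geqslant\delta$ and the intermediate value theorem, but worth stating), and that $g'$ exists in the weak/$L^1$ sense with $g'(u) = -w''(r(u))/w'(r(u))^3$ — this requires $w''\in L^1$ to make sense of $r\mapsto 1/w'_r$ as an absolutely continuous function so that the integration by parts is justified; if one worries about $w''$ being only $L^1$ (not continuous), the cleanest route is to approximate $w$ by smooth functions $w^\varepsilon$ with $w^{\varepsilon\prime}$ bounded below by $\delta/2$ and $\|w^{\varepsilon\prime\prime}\|_{L^1}\leqslant\|w''\|_{L^1}+\varepsilon$, prove the bound for each $w^\varepsilon$ with a uniform constant, and pass to the limit using dominated convergence in $\Phi^{w^\varepsilon}_{s,t}(\xi)\to\Phi^w_{s,t}(\xi)$. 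Everything else is elementary.
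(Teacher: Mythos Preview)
The paper does not actually supply a proof of this proposition: it is quoted verbatim from~{\cite{choukgubinelli1}} and used only as a source of examples. So there is no ``paper's own proof'' to compare against. Your argument is correct and is the standard van der Corput-type estimate; the change of variables, the BV bound on $g$, the integration by parts yielding the uniform $|\xi|^{-1}$ decay, and the interpolation with the trivial bound $|\Phi^w_{s,t}(\xi)|\leqslant |t-s|$ are all sound. One minor simplification: you do not need the change of variables at all --- writing $e^{i\xi w_r}=(i\xi w'_r)^{-1}\frac{\mathrm{d}}{\mathrm{d}r}e^{i\xi w_r}$ and integrating by parts directly in $r$ gives the same $C_w|\xi|^{-1}$ bound with the same constant, and avoids the (correct but slightly delicate) discussion of whether $g=H\circ r$ is absolutely continuous. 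Your parenthetical reference to Lemma~\ref{sec2.2 lemma interpolation rho irr} with ``endpoint pair $(1,1)$'' is garbled --- what you mean is that the uniform bound $|\Phi^w_{s,t}(\xi)|\leqslant C_w|\xi|^{-1}$ is formally $(0,1)$-irregularity, and interpolating against the trivial $(1,0)$ bound gives $(\gamma,1-\gamma)$ --- but the direct interpolation you wrote out is clear and correct.
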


Let us also introduce the following concept of \tmtextit{exponential
irregularity}, which is a novel contribution of the present work.

\begin{definition}
  \label{sec2.2 defn exp irregularity}A measurable path $w : [0, T]
  \rightarrow \mathbb{R}^d$ is exponentially irregular if there exist positive
  constants $c_1$, $c_2$ and $\gamma \in (0, 1)$ such that
  \begin{equation}
    | \Phi^w_{s, t} (\xi) | \leqslant c_1 e^{- c_2 | \xi |} | t - s |^{\gamma}
    \quad \forall \, \xi \in \mathbb{R}^d \setminus \{ 0 \}, \, s, t \in [0,
    T] . \label{sec2.2 defn exp irr}
  \end{equation}
\end{definition}

Contrary to Definition~\ref{sec2.2 defn rho irregularity}, in
Definition~\ref{sec2.2 defn exp irregularity} we do not impose $\gamma > 1 /
2$; this is because, arguing by interpolation as in Lemma~\ref{sec2.2 lemma
interpolation rho irr}, it is easy to check that if~\eqref{sec2.2 defn exp
irr} holds \tmtextit{some} $\gamma \in (0, 1)$, then the path $w$ actually
satisfies~\eqref{sec2.2 defn exp irr} for \tmtextit{all} $\gamma \in (0, 1)$,
up to modifying the constant $c_i = c_i (\gamma)$, $i = 1, 2$. It is then also
clear that exponential irregularity implies $\rho$-irregularity, for any value
$\rho \in (0, + \infty)$.

\

We now prove that the $\rho$\mbox{-}irregularity and exponential irregularity
properties define Borel sets, which is the first step in order to establish
prevalence.

\begin{lemma}
  \label{sec2.2 lemma borel rho irr}For any $\rho > 0$, the set
  \begin{equation*}
    A = \left\{ w : [0, T] \rightarrow \mathbb{R}^d \, | \text{$w$ is
    $\rho$-irregular} \right\}
  \end{equation*}
  is Borel measurable w.r.t to the topology induced by any of the following
  norms: $\| \cdot \|_{L^p}$, $p \in [1, \infty]$, $\| \cdot \|_{C^0}$, $\|
  \cdot \|_{C^{\alpha}}$, $\alpha \in (0, 1)$.
\end{lemma}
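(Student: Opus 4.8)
The plan is to exhibit $A$ as a countable union of \emph{closed} subsets of each of the spaces appearing in the statement, so that $A$ is $F_\sigma$ and a fortiori Borel.

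\textbf{Step 1: turn the two hidden existential quantifiers into countable unions.} On the bounded interval $[0,T]$ the $(\gamma,\rho)$-irregularity property is monotone in $\gamma$: if $w$ is $(\gamma,\rho)$-irregular and $0<\gamma'<\gamma$, then from $|t-s|^{\gamma}\le \max(1,T)^{\gamma-\gamma'}|t-s|^{\gamma'}$ for all $s,t\in[0,T]$ one sees that $w$ is also $(\gamma',\rho)$-irregular. Hence $w$ is $\rho$-irregular if and only if it is $(\tfrac12+\tfrac1n,\rho)$-irregular for some $n\in\mathbb N$; and since the constant in the defining inequality may always be replaced by its ceiling, this happens if and only if $w$ lies in $B_{n,C}$ for some $n,C\in\mathbb N$, where
\[
B_{n,C}:=\Big\{w:\ |\Phi^{w}_{s,t}(\xi)|\le C\,|\xi|^{-\rho}\,|t-s|^{1/2+1/n}\ \ \forall\,\xi\in\mathbb R^{d}\setminus\{0\},\ \forall\, s,t\in[0,T]\Big\}.
\]
Thus $A=\bigcup_{n,C}B_{n,C}$ and it suffices to show that each $B_{n,C}$ is closed.

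\textbf{Step 2: replace the family of constraints defining $B_{n,C}$ by a countable one.} For any fixed measurable $w$ the map $(\xi,s,t)\mapsto\Phi^{w}_{s,t}(\xi)$ is continuous on $(\mathbb R^d\setminus\{0\})\times[0,T]^2$: continuity in $(s,t)$ follows from $|\Phi^{w}_{s,t}(\xi)-\Phi^{w}_{s',t'}(\xi)|\le|s-s'|+|t-t'|$ (the integrand has modulus $1$), uniformly in $\xi$ and $w$, while continuity in $\xi$ is dominated convergence. Since $|\xi|^{-\rho}|t-s|^{\gamma}$ is continuous there too, the inequality defining $B_{n,C}$ holds on all of $(\mathbb R^d\setminus\{0\})\times[0,T]^2$ as soon as it holds on a fixed countable dense subset $\mathcal D$ of it (e.g.\ $\xi\in\mathbb Q^d\setminus\{0\}$, $s,t$ rational), by passing to the limit; the converse is trivial. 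Hence $B_{n,C}=\bigcap_{(\xi,s,t)\in\mathcal D}\{w:|\Phi^{w}_{s,t}(\xi)|\le C|\xi|^{-\rho}|t-s|^{1/2+1/n}\}$, a countable intersection.

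\textbf{Step 3: each single constraint is closed.} For fixed $(\xi,s,t)$ the functional $w\mapsto\Phi^{w}_{s,t}(\xi)$ is continuous with respect to every norm in the statement: from $|e^{ia}-e^{ib}|\le|a-b|$ one gets $|\Phi^{w}_{s,t}(\xi)-\Phi^{v}_{s,t}(\xi)|\le|\xi|\,\|w-v\|_{L^1([0,T])}$, which gives continuity for $\|\cdot\|_{L^1}$, hence for every $\|\cdot\|_{L^p}$, $p\in[1,\infty]$, since $\|f\|_{L^1}\le T^{1-1/p}\|f\|_{L^p}$ on $[0,T]$, and a fortiori for $\|\cdot\|_{C^0}$ and $\|\cdot\|_{C^\alpha}$, $\alpha\in(0,1)$, which both dominate $\|\cdot\|_{L^1}$. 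Consequently every set in the intersection of Step 2 is closed, so $B_{n,C}$ is closed, and $A=\bigcup_{n,C}B_{n,C}$ is $F_\sigma$, hence Borel, for each of the topologies considered. The same scheme also yields the corresponding statement for exponential irregularity, since its defining bound $c_1e^{-c_2|\xi|}|t-s|^{\gamma}$ is again continuous in $(\xi,s,t)$.

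I do not expect a serious obstacle here. The step most likely to require care is Step 1, namely the monotonicity in $\gamma$ that makes the union over $\gamma>1/2$ countable, together with checking in Step 3 that $w\mapsto\Phi^{w}_{s,t}(\xi)$ is continuous uniformly across \emph{all} the listed norms; both are elementary once one exploits that $[0,T]$ is bounded and that $|e^{i\xi\cdot w_r}|\equiv 1$, so that all the required estimates are controlled by the $L^1$-norm of $w$.
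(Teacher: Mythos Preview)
Your proof is correct and follows essentially the same strategy as the paper's: both write $A=\bigcup_{n,C}B_{n,C}$ and show each $B_{n,C}$ is closed by using that $\Phi^{w_k}_{s,t}(\xi)\to\Phi^{w}_{s,t}(\xi)$ whenever $w_k\to w$. Two minor remarks: your Step~2 is unnecessary, since an \emph{arbitrary} intersection of closed sets is already closed; and the paper establishes closedness under convergence in measure (via dominated convergence), which is slightly more general than your $L^1$-Lipschitz bound, though either suffices here.
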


\begin{proof}
  We can write the set $A$ as the following countable union:
  \begin{equation*}
    A = \bigcup_{n, m \in \mathbb{N}} A_{n, m} = \bigcup_{n, m \in
    \mathbb{N}} \left\{ w : [0, T] \rightarrow \mathbb{R}^d \hspace{1em} |
    \hspace{1em} \sup_{\xi \in \mathbb{R}^d, s \neq t} \frac{| \Phi^w_{s, t}
    (\xi) | \, | \xi |^{\rho}}{| t - s |^{1 / 2 + 1 / m}} \leqslant n \right\}
    .
  \end{equation*}
  It will be then sufficient to show that for every $m, n$ the set $A_{m, n}$
  is closed in the above topologies. We will actually show that it is closed
  under convergence in measure, which is weaker than any of the norms
  considered and therefore yields the conclusion.
  
  Let $w_k$ be a sequence of elements of $A_{n, m}$ such that $w_k \rightarrow
  w$ in measure, then by dominated convergence it's easy to see that for any
  fixed $s < t$, $\xi \in \mathbb{R}^d$ it holds $\Phi^{w_k}_{s, t} (\xi)
  \rightarrow \Phi^w_{s, t} (\xi)$. But then
  \begin{equation*}
    \frac{| \Phi^w_{s, t} (\xi) | \, | \xi |^{\rho}}{| t - s |^{1 / 2 + 1/m}}
    = \lim_{k \rightarrow \infty} \frac{| \Phi^{w_k}_{s, t} (\xi) | \, |
    \xi |^{\rho}}{| t - s |^{1 / 2 + 1 / m}} \leqslant n \nonumber
  \end{equation*}
  and since the reasoning holds for any fixed $s<t$ and $\xi$ we can
  conclude that $f \in A_{n, m}$ as well.
\end{proof}

\begin{remark}
  \label{sec2.2 lemma borel general}More generally, given a modulus of
  continuity $\varphi$ and a function $F : \mathbb{R}^d \rightarrow
  \mathbb{R}^+$, the same proof shows that the set
  
  \begin{align}
    B = & \left\{ w : [0, T] \rightarrow \mathbb{R}^d \hspace{1em} |
    \hspace{1em} \sup_{\xi \in \mathbb{R}^d, s \neq t} \frac{| \Phi^w_{s, t}
    (\xi) | F (\xi)}{\varphi (| t - s |)} < \infty \right\} \nonumber
  \end{align}
  
  is Borel measurable in any of the above topologies. The fact that
  exponential irregularity defines Borel sets is established similarly.
\end{remark}

We conclude this section with a brief detour on Carath{\'e}odory functions and
their connection with the exponential irregularity property. Here $\lambda$
denotes the Lebesgue measure on $\mathbb{R}^d$, while $\mathcal{L}$ denotes
the Lebesgue measure on $[0, T]$.

\begin{definition}
  \label{sec2.2 defn charatheodory}A measurable function $f : [0, T]
  \rightarrow \mathbb{R}^d$ is a Carath{\'e}odory function if for any set $D
  \subset \mathbb{R}^d$ such that $\lambda (D) > 0$ it holds $\mathcal{L}
  (f^{- 1} (D) \cap B) > 0$ for every (non empty) interval $B$.
\end{definition}

Observe in particular that if $f$ is Carath{\'e}odory, then it is unbounded on
every interval and thus discontinuous.

\begin{lemma}
  \label{sec2.2 exp implies charatheodory}Let $w$ be an exponentially
  irregular measurable path, then $w$ is Carath{\'e}odory.
\end{lemma}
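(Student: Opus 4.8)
The plan is to realise $\Phi^w$ as the Fourier transform of the occupation measure of $w$ and to turn the exponential decay in \eqref{sec2.2 defn exp irr} into the statement that, over every non-degenerate interval, this measure has a nonnegative \emph{real-analytic} density, which is then automatically strictly positive Lebesgue-almost everywhere; the Carathéodory property is an immediate consequence.

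First I would fix a non-degenerate interval $B = [a,b] \subseteq [0,T]$ and introduce the occupation measure $\mu^w_B$ on $\mathbb{R}^d$, i.e. the pushforward of the restriction of $\mathcal{L}$ to $B$ under $w$, characterised by $\int_{\mathbb{R}^d} f \, \mathd \mu^w_B = \int_a^b f(w_r) \, \mathd r$ for bounded Borel $f$; equivalently $\mu^w_B(A) = \mathcal{L}(w^{-1}(A) \cap B)$. It is a finite positive Borel measure with total mass $\mu^w_B(\mathbb{R}^d) = b - a > 0$, and the choice $f(x) = e^{i \xi \cdot x}$ gives $\widehat{\mu^w_B}(\xi) = \Phi^w_{a,b}(\xi)$. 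Exponential irregularity then yields $|\widehat{\mu^w_B}(\xi)| \leqslant c_1 (b-a)^\gamma e^{-c_2 |\xi|}$ for all $\xi \in \mathbb{R}^d$, so in particular $\widehat{\mu^w_B} \in L^1(\mathbb{R}^d)$ and, by Fourier inversion, $\mu^w_B$ is absolutely continuous with a bounded continuous density
\[ \ell_B(x) = (2\pi)^{-d} \int_{\mathbb{R}^d} e^{-i \xi \cdot x} \, \widehat{\mu^w_B}(\xi) \, \mathd \xi. \]

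The key step is to upgrade continuity to real-analyticity. Since $|e^{-i z \cdot \xi}| = e^{\mathrm{Im}(z) \cdot \xi} \leqslant e^{|\mathrm{Im}\, z|\,|\xi|}$, the integral defining $\ell_B$ converges absolutely and locally uniformly for $z \in \mathbb{C}^d$ with $|\mathrm{Im}\, z| < c_2$ and there defines a holomorphic function (by Morera's theorem and dominated convergence), so $\ell_B$ extends holomorphically to a complex tube around $\mathbb{R}^d$; in particular $\ell_B$ is real-analytic on $\mathbb{R}^d$. Being a continuous density of a positive measure, $\ell_B \geqslant 0$ everywhere, and $\int_{\mathbb{R}^d} \ell_B \, \mathd x = b - a > 0$, so $\ell_B \not\equiv 0$. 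Since the zero set of a non-trivial real-analytic function on $\mathbb{R}^d$ is Lebesgue-null, we conclude $\ell_B > 0$ for $\lambda$-a.e. $x$; equivalently $\mu^w_B$ is equivalent to $\lambda$. Finally, given any $D \subseteq \mathbb{R}^d$ with $\lambda(D) > 0$ (reducing to the case of Borel $D$ by inner regularity if necessary) and any non-degenerate interval $B$,
\[ \mathcal{L}(w^{-1}(D) \cap B) = \mu^w_B(D) = \int_D \ell_B(x) \, \mathd x > 0, \]
because $\ell_B$ is strictly positive a.e. on the positive-measure set $D$; this is exactly the Carathéodory property. I expect the only genuinely non-routine point to be the justification that $\ell_B$ admits a holomorphic extension to a tube (hence is real-analytic), combined with the use of the fact that zero sets of non-trivial real-analytic functions are Lebesgue-null; the identification of $\Phi^w$ with $\widehat{\mu^w_B}$ and the Fourier inversion are routine.
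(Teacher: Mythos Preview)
Your proposal is correct and follows essentially the same route as the paper: identify $\Phi^w_{s,t}$ with (the conjugate of) the Fourier transform of the occupation measure $\mu^w_{s,t}$, use the exponential decay to deduce that the density is real-analytic, and conclude via the fact that a nonzero real-analytic function cannot vanish on a set of positive Lebesgue measure. The only cosmetic difference is that the paper packages the analyticity step as an appeal to the Paley--Wiener theorem (working with the $L^2$-weighted bound $\int e^{c|\xi|}|\hat\mu^w_{s,t}(\xi)|^2\,\mathd\xi<\infty$ and citing~{\cite{berman69}},~{\cite{geman}}), whereas you carry out the holomorphic extension to a tube by hand via dominated convergence and Morera; this is exactly the content of the relevant Paley--Wiener statement.
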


\begin{proof}
  The statement follows immediately from the considerations given at the
  beginning of Section~6 from~{\cite{berman69}}, see also Sections~11 and~28
  from~{\cite{geman}}. Let us briefly sketch the proof.
  
  Denoting by $\mu_{s, t}^w$ the occupation measure associated to $w$, which
  is defined in Definition~\ref{sec2.3 defn occupation measure} below, it
  follows from the exponentially irregularity property that we can find $c >
  0$ such that for any $s < t$ it holds
  \[ \int_{\mathbb{R}^d} e^{c | \xi |} | \hat{\mu}_{s, t}^w (\xi) |^2 \mathd
     \xi = \int_{\mathbb{R}^d} e^{c | \xi |} | \Phi^w_{s, t} (\xi) |^2 \mathd
     \xi < \infty . \]
  It then follows from the Paley--Wiener Theorem that $\mu^w_{s, t}$ is
  analytic and therefore it cannot vanish on any set $D \subset \mathbb{R}^d$
  such that $\lambda (D) > 0$, namely for $B = [s, t]$ it holds
  \[ \mathcal{L} (w^{- 1} (D) \cap [s, t]) = \int_D \mu_{s, t}^w (y) \mathd y
     > 0. \]
\end{proof}

\subsection{Occupation measures and averaging operators}\label{sec2.3}

So far we have discussed several properties of $\rho$\mbox{-}irregularity, but
we haven't motivated the importance of such notion and its relation with
regularisation by noise phenomena. It turns out that
$\rho$\mbox{-}irregularity is closely tied to the \tmtextit{occupation
measure} of the path $w$.

\

In the following $\mathcal{M} (\mathbb{R}^d)$ denotes the set of all finite
Radon measures on $\mathbb{R}^d$, endowed with the total variation norm $\|
\cdot \|_{\tmop{TV}}$; $\mathcal{M}_+ (\mathbb{R}^d)$ is the closed subset of
non\mbox{-}negative measures.

\begin{definition}
  \label{sec2.3 defn occupation measure}Given a measurable path $w : [0, T]
  \rightarrow \mathbb{R}^d$, we define its occupation measure as the family
  $(\mu_{s, t}^w)_{0 \leqslant s \leqslant t \leqslant T} \subset
  \mathcal{M}_+ (\mathbb{R}^d)$ given by $\mu^w_{s, t} = w_{\ast}
  (\mathcal{L}_{[s, t)})$, namely
  \[ \int_{\mathbb{R}^d} f (y) \, \mu_{s, t}^w (\mathd y) = \int_{[s, t)} f
     (w_r) \, \mathd r \quad \forall \, f \in C^0_b (\mathbb{R}^d) . \]
  Observe that by definition $\mu^w_{s, t} = \mu^w_{0, t} - \mu^w_{0, s}$; for
  this reason we will identity the family $(\mu_{s, t}^w)_{s \leqslant t}$
  with the map $\mu^w \in C^0 ([0, T] ; \mathcal{M}_+ (\mathbb{R}^d))$ given
  by $t \mapsto \mu^w_t = \mu^w_{0, t}$, so that $\mu^w_{s, t}$ represents an
  increment of $\mu^w_{\cdot}$.
\end{definition}

Note that $\mu^w \in \tmop{Lip} ([0, T] ; \mathcal{M}_+ (\mathbb{R}^d))$ with
$\| \mu^w_{s, t} \|_{\tmop{TV}} = | t - s |$ and Gateaux derivative
$\dot{\mu}^w_t = \delta_{w_t}$.

\

The Fourier transform of $\mu^w_{s, t}$ is given by
\[ \widehat{\mu_{s, t}^w} (\xi) = \int_{\mathbb{R}^d} e^{- i \xi \cdot y} \,
   \mu_{s, t}^w (\mathd y) = \int_s^t e^{- i \xi \cdot w_r} \mathd r =
   \overline{\Phi^w_{s, t} (\xi)} \]
which shows that $w$ is $(\gamma, \rho)$\mbox{-}irregular if and only if the
map $t \mapsto \mu^w_t$ belongs to $C^{\gamma} ([0, T] ; \mathcal{F} L^{\rho,
\infty} (\mathbb{R}^d))$, where the Fourier--Lebesgue spaces $\mathcal{F}
L^{\rho, p}$ with $p \in [1, \infty]$ are defined by
\[ \mathcal{F} L^{\rho, p} = \left\{ f \in \mathcal{S}' (\mathbb{R}^d) \, :
   \langle \xi \rangle^{\rho} | \hat{f} (\xi) | \in L^p \right\}, \]
see Appendix~\ref{appendixA2} for their main properties. In particular we have
\begin{equation}
  \| \Phi^w \|_{\mathcal{W}^{\gamma, \rho}} \sim \| \mu^w \|_{C^{\gamma}
  \mathcal{F} L^{\rho, \infty}} . \label{eq:link-occupatio-rho}
\end{equation}
\begin{remark}
  \label{sec2.3 remark equivalence class}We will mostly work with given
  measurable paths $w$, but both definitions of $\Phi^w$ and $\mu^w$ are not
  affected by changing $w$ on an $\mathcal{L}_{[0, T]}$\mbox{-}negligible
  subset of $[0, T]$; therefore they also makes sense when dealing with
  equivalence classes like $w \in L^p (0, T ; \mathbb{R}^d)$ for $p \in [1,
  \infty]$. Similarly, it makes sense for $w$ in an equivalence class to say
  that it is $(\gamma, \rho)$\mbox{-}irregular (resp. exponentially
  irregular).
\end{remark}

Occupation measures are also closely related to averaging operators.

\begin{definition}
  \label{sec2.3 defn averaging operator}Let $w : [0, T] \rightarrow
  \mathbb{R}^d$ be a measurable bounded function, then we define the averaging
  operator associated to $w$ as the family of operators $\{ T^w_{s, t}, 0
  \leqslant s \leqslant t \leqslant T \} \subset \mathcal{L} (\mathcal{S}'
  (\mathbb{R}^d) ; \mathcal{S}' (\mathbb{R}^d))$ given by
  \[ T^w_{s, t} b = \int_s^t b \left( \cdot \, + w_r \right) \mathd r \]
  or equivalently by duality as follows: for any $\varphi \in \mathcal{S}
  (\mathbb{R}^d)$ and any $b \in \mathcal{S}' (\mathbb{R}^d)$ it holds
  \begin{equation}
    \langle T^w_{s, t} b, \varphi \rangle = \langle b, \int_s^t \varphi \left(
    \cdot \, - w_r \right) \mathd r \rangle . \label{sec2.3 defn averaging
    duality}
  \end{equation}
  As before, $T^w_{s, t} = T^w_{0, t} - T^w_{0, s}$ and therefore we identify
  $(T^w_{s, t})_{s \leqslant t}$ with the map $t \mapsto T^w_t = T^w_{0, t}$.
\end{definition}

\begin{remark}
  Differently from Definitions~\ref{sec2.2 defn rho irregularity}
  and~\ref{sec2.3 defn occupation measure}, in Definition~\ref{sec2.3 defn
  averaging operator} we have required that $w$ is bounded; the reason for
  this is that otherwise it is unclear a priori if, for a given $\varphi \in
  \mathcal{S} (\mathbb{R}^d)$, $\int_s^t \varphi \left( \cdot \, - w_r \right)
  \mathd r \in \mathcal{S} (\mathbb{R}^d)$ uniformly in $s < t$ and so if the
  above is a good definition. However by looking at the Fourier transform
  $\widehat{\varphi \left( \cdot \, - w_t \right)} = e^{- i \xi \cdot w_t} 
  \hat{\varphi}$ one can check that $w \in L^{\infty}$ can be relaxed to
  requiring
  \[ \int_0^T | w_t |^N \mathd t < \infty \quad \text{for all } N \in
     \mathbb{N}, \]
  namely $w \in L^p_t$ for all $p < \infty$. It is still true, analogously to
  Remark~\ref{sec2.3 remark equivalence class}, that $T^w = T^{\tilde{w}}$ if
  $w = \tilde{w}$ up to $\mathcal{L}_{[0, T]}$\mbox{-}negligible sets; for
  this reason from now on when dealing with $T^w$ we will always implicitly
  assume $w \in L^{\infty}_t$.
\end{remark}

Averaging operators can also be defined for time\mbox{-}dependent
distributions, as done in~{\cite{galeatigubinelli_ode}}. In the time dependent
case however we lose the following fundamental property which relates the
averaging operator to the occupation measure.

\begin{lemma}
  \label{sec2.3 lemma averaging occupation}Let $w \in L^{\infty}$, $\mu^w$ and
  $T^w$ as above. Then for any $b \in \mathcal{S}' (\mathbb{R}^d)$, $T^w_{s,
  t} b = \tilde{\mu}^w_{s, t} \ast b$, where $\tilde{\mu}$ denotes the
  reflection of $\mu$, i.e. $\tilde{\mu}^w_{s, t} (A) = \mu^w_{s, t} (- A)$.
\end{lemma}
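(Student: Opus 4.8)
The plan is to compute both sides of the claimed identity by testing against a Schwartz function $\varphi \in \mathcal{S}(\mathbb{R}^d)$ and exploiting the duality characterisation~\eqref{sec2.3 defn averaging duality} of $T^w_{s,t}$. First I would recall that since $w \in L^\infty$, the occupation measure $\mu^w_{s,t}$ is a finite measure supported in the ball $B_R$ with $R = \|w\|_{C^0}$, so its reflection $\tilde\mu^w_{s,t}$ is again a compactly supported finite measure, and hence the convolution $\tilde\mu^w_{s,t} \ast b$ is a well-defined tempered distribution for any $b \in \mathcal{S}'(\mathbb{R}^d)$; moreover the map $\varphi \mapsto \int_s^t \varphi(\cdot - w_r)\,\mathrm{d}r$ does land in $\mathcal{S}(\mathbb{R}^d)$ uniformly because $w$ is bounded, so the right-hand side of~\eqref{sec2.3 defn averaging duality} makes sense.

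The key computation is the following. For $\varphi \in \mathcal{S}(\mathbb{R}^d)$, by the definition of convolution of a distribution with a compactly supported measure,
\[
\langle \tilde\mu^w_{s,t} \ast b, \varphi \rangle = \langle b, \tilde{\tilde\mu}^w_{s,t} \ast \varphi \rangle = \langle b, \mu^w_{s,t} \ast \varphi \rangle,
\]
using $\tilde{\tilde\mu} = \mu$ and the fact that the adjoint of convolution by a measure $\nu$ is convolution by its reflection $\tilde\nu$. Then one expands the convolution against the occupation measure using its defining property (Definition~\ref{sec2.3 defn occupation measure}):
\[
(\mu^w_{s,t} \ast \varphi)(x) = \int_{\mathbb{R}^d} \varphi(x - y)\, \mu^w_{s,t}(\mathrm{d}y) = \int_s^t \varphi(x - w_r)\, \mathrm{d}r,
\]
so that $\mu^w_{s,t} \ast \varphi = \int_s^t \varphi(\cdot - w_r)\,\mathrm{d}r$. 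Substituting back gives
\[
\langle \tilde\mu^w_{s,t} \ast b, \varphi \rangle = \Big\langle b, \int_s^t \varphi(\cdot - w_r)\,\mathrm{d}r \Big\rangle = \langle T^w_{s,t} b, \varphi \rangle,
\]
where the last equality is precisely~\eqref{sec2.3 defn averaging duality}. Since this holds for all $\varphi \in \mathcal{S}(\mathbb{R}^d)$, we conclude $T^w_{s,t} b = \tilde\mu^w_{s,t} \ast b$ as elements of $\mathcal{S}'(\mathbb{R}^d)$.

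The only genuinely delicate point — really a matter of bookkeeping rather than a real obstacle — is justifying that $\int_s^t \varphi(\cdot - w_r)\,\mathrm{d}r$ coincides with $\mu^w_{s,t} \ast \varphi$ as a Schwartz function (not merely pointwise): one checks that the integral converges in the topology of $\mathcal{S}(\mathbb{R}^d)$, which follows from dominated convergence for each seminorm using the boundedness of $w$ on $[0,T]$ and the rapid decay of $\varphi$ and its derivatives, shifted by the uniformly bounded $w_r$. With that in hand, the interchange of $\langle b, \cdot\rangle$ with the $r$-integral, and the identification of the adjoint of measure-convolution, are standard. I would also remark that the identity extends to the full family indexed by $(s,t)$ and hence, by the increment convention, to $T^w_t = \tilde\mu^w_t \ast (\cdot)$.
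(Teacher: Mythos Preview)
Your proof is correct and follows essentially the same approach as the paper: both note that $\mu^w_{s,t}$ has compact support (so the convolution with any $b\in\mathcal{S}'$ is well defined), compute $(\mu^w_{s,t}\ast\varphi)(x)=\int_s^t\varphi(x-w_r)\,\mathrm{d}r$ from the definition of the occupation measure, and then conclude via the duality formula~\eqref{sec2.3 defn averaging duality} together with $\langle b,\varphi\ast\mu^w_{s,t}\rangle=\langle\tilde\mu^w_{s,t}\ast b,\varphi\rangle$. Your additional remark on convergence of $\int_s^t\varphi(\cdot-w_r)\,\mathrm{d}r$ in the Schwartz topology is a nice bit of extra care but not needed beyond what the paper uses.
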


\begin{proof}
  Observe that by definition of occupation measure, for any $s \leqslant t$
  and any $R \geqslant \| w \|_{L^{\infty}}$, it holds $\tmop{supp} \mu^w_{s,
  t} \subset B_R$. In particular since $\mu^w_{s, t}$ is a measure with
  compact support, for any $b \in \mathcal{S}' (\mathbb{R}^d)$ the convolution
  $b \ast \mu^w_{s, t}$ is well defined. The same goes for $\tilde{\mu}^w_{s,
  t}$. Let $\varphi \in \mathcal{S} (\mathbb{R}^d)$, then for any $x \in
  \mathbb{R}^d$ it holds
  \[ \int_s^t \varphi (x - w_r) \mathd r = \int \varphi (x - y) \mu^w_{s, t}
     (\mathd y) = (\varphi \ast \mu^w_{s, t}) (x) . \]
  The conclusion for $b$ follows by the duality formula~\eqref{sec2.3 defn
  averaging duality} and $\langle b, \varphi \ast \mu^w_{s, t} \rangle =
  \langle \tilde{\mu}_{s, t}^w \ast b, \varphi \rangle$.
\end{proof}

As a consequence, estimating the regularisation properties of $T^w$ is
equivalent to estimating the regularity of $\mu^w$ in suitable function
spaces. This is exactly where the notion of $\rho$\mbox{-}irregularity comes
into play.

\begin{lemma}
  \label{sec2.2 lemma regularity averaging 1}Let $w \in L^{\infty} (0, T ;
  \mathbb{R}^d)$ be $(\gamma, \rho)$\mbox{-}irregular. Then for any $\alpha
  \in \mathbb{R}$ and $p \in [1, \infty]$, the averaging operator $T^w$
  belongs to $C^{\gamma}_t \mathcal{L} (\mathcal{F} L^{\alpha, p} ;
  \mathcal{F} L^{\alpha + \rho, p})$ and for any $b \in \mathcal{F} L^{\alpha,
  p}$ it holds
  \begin{equation}
    \| T^w_{s, t} b \|_{\mathcal{F} L^{\alpha + \rho, p}} \lesssim | t - s
    |^{\gamma} \| b \|_{\mathcal{F} L^{\alpha, p}} \| \Phi^w
    \|_{\mathcal{W}^{\gamma, \rho}} .
  \end{equation}
\end{lemma}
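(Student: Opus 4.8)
The claim follows by combining Lemma~\ref{sec2.3 lemma averaging occupation}, which identifies $T^w_{s,t}b$ with the convolution $\tilde\mu^w_{s,t}\ast b$, with the Fourier-side characterisation of $(\gamma,\rho)$-irregularity recorded just before Remark~\ref{sec2.3 remark equivalence class}. The plan is to pass everything to the Fourier transform, where convolution becomes multiplication and the $\mathcal F L^{\alpha,p}$ norms become weighted $L^p$ norms of $\hat b$.

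\emph{Step 1: Fourier representation.} By Lemma~\ref{sec2.3 lemma averaging occupation}, $T^w_{s,t}b=\tilde\mu^w_{s,t}\ast b$, hence $\widehat{T^w_{s,t}b}(\xi)=\widehat{\tilde\mu^w_{s,t}}(\xi)\,\hat b(\xi)$. Since $\tilde\mu^w_{s,t}$ is the reflection of $\mu^w_{s,t}$, one has $\widehat{\tilde\mu^w_{s,t}}(\xi)=\overline{\widehat{\mu^w_{s,t}}(\xi)}=\Phi^w_{s,t}(\xi)$, using the identity $\widehat{\mu^w_{s,t}}(\xi)=\overline{\Phi^w_{s,t}(\xi)}$ from the text. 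Therefore $\widehat{T^w_{s,t}b}(\xi)=\Phi^w_{s,t}(\xi)\,\hat b(\xi)$ pointwise in $\xi$.

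\emph{Step 2: Bounding the $\mathcal F L^{\alpha+\rho,p}$ norm.} By definition of the Fourier--Lebesgue norm, $\|T^w_{s,t}b\|_{\mathcal F L^{\alpha+\rho,p}}=\big\|\langle\xi\rangle^{\alpha+\rho}\,\Phi^w_{s,t}(\xi)\,\hat b(\xi)\big\|_{L^p_\xi}$. The $(\gamma,\rho)$-irregularity bound~\eqref{sec2.2 defn rho-irr} gives $|\Phi^w_{s,t}(\xi)|\leqslant \|\Phi^w\|_{\mathcal W^{\gamma,\rho}}\,|t-s|^\gamma\,|\xi|^{-\rho}$ for $\xi\neq 0$, while the trivial bound $|\Phi^w_{s,t}(\xi)|\leqslant|t-s|$ handles small $\xi$; together these yield $|\Phi^w_{s,t}(\xi)|\lesssim \|\Phi^w\|_{\mathcal W^{\gamma,\rho}}\,|t-s|^\gamma\,\langle\xi\rangle^{-\rho}$ uniformly in $\xi$ (with an implicit constant depending only on $T$, since $\|\Phi^w\|_{\mathcal W^{\gamma,\rho}}\gtrsim T^{\gamma-1}$ so the two regimes match up to constants). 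Inserting this,
\[
\|T^w_{s,t}b\|_{\mathcal F L^{\alpha+\rho,p}}\lesssim |t-s|^\gamma\,\|\Phi^w\|_{\mathcal W^{\gamma,\rho}}\,\big\|\langle\xi\rangle^{\alpha+\rho}\langle\xi\rangle^{-\rho}\,\hat b(\xi)\big\|_{L^p_\xi}=|t-s|^\gamma\,\|\Phi^w\|_{\mathcal W^{\gamma,\rho}}\,\|b\|_{\mathcal F L^{\alpha,p}}.
\]
This is precisely the asserted increment estimate. The statement that $T^w\in C^\gamma_t\mathcal L(\mathcal F L^{\alpha,p};\mathcal F L^{\alpha+\rho,p})$ then follows: the above shows $T^w_{s,t}\in\mathcal L(\mathcal F L^{\alpha,p};\mathcal F L^{\alpha+\rho,p})$ with operator norm $\lesssim|t-s|^\gamma\|\Phi^w\|_{\mathcal W^{\gamma,\rho}}$, and taking $s=0$ shows $\sup_t\|T^w_t\|<\infty$ as well, so $t\mapsto T^w_t$ is $\gamma$-Hölder into the operator space.

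\emph{Main obstacle.} The only genuinely delicate point is the passage to the Fourier side in Step~1 when $b$ is merely a tempered distribution: one must check that $\Phi^w_{s,t}(\xi)\hat b(\xi)$ is a well-defined element of $\mathcal S'(\mathbb R^d)$ whose inverse transform is $\tilde\mu^w_{s,t}\ast b$. This is legitimate because $\mu^w_{s,t}$ (hence $\tilde\mu^w_{s,t}$) is a compactly supported measure — as noted in the proof of Lemma~\ref{sec2.3 lemma averaging occupation}, $\mathrm{supp}\,\mu^w_{s,t}\subset B_R$ with $R=\|w\|_{L^\infty}$ — so $\Phi^w_{s,t}=\widehat{\tilde\mu^w_{s,t}}$ is a smooth function of at most polynomial growth (indeed bounded by $|t-s|$), and multiplication by such a symbol maps $\mathcal S'$ to $\mathcal S'$ continuously and intertwines with convolution by $\tilde\mu^w_{s,t}$ in the standard way. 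Everything else is the routine weighted-$L^p$ estimate of Step~2.
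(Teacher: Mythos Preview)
Your proof is correct and follows essentially the same route as the paper: both write $T^w_{s,t}b=\tilde\mu^w_{s,t}\ast b$ via Lemma~\ref{sec2.3 lemma averaging occupation}, pass to the Fourier side, and use the pointwise bound $|\Phi^w_{s,t}(\xi)|\lesssim\|\Phi^w\|_{\mathcal W^{\gamma,\rho}}|t-s|^\gamma\langle\xi\rangle^{-\rho}$. The only difference is packaging: the paper invokes the general convolution inequality $\|f\ast g\|_{\mathcal F L^{\alpha+\rho,p}}\leqslant\|f\|_{\mathcal F L^{\rho,\infty}}\|g\|_{\mathcal F L^{\alpha,p}}$ (Lemma~\ref{appendixA2 fourier lebesgue convolution}) as a black box, whereas you carry out that H\"older-on-the-Fourier-side step explicitly in your Step~2.
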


\begin{proof}
  The statement follows from the considerations given in the introduction
  of~{\cite{catelliergubinelli}}. Alternatively, using Lemma~\ref{appendixA2
  fourier lebesgue convolution} from Appendix~\ref{appendixA2}, it holds
  \[ \| T^w_{s, t} b \|_{\mathcal{F} L^{\alpha + \rho, p}} = \|
     \tilde{\mu}^w_{s, t} \ast b \|_{\mathcal{F} L^{\alpha + \rho, p}}
     \leqslant \| \tilde{\mu}^w_{s, t} \|_{\mathcal{F} L^{\rho, \infty}}  \| b
     \|_{\mathcal{F} L^{\alpha, p}} \lesssim | t - s |^{\gamma} \| b
     \|_{\mathcal{F} L^{\alpha, p}} \| \Phi^w \|_{\mathcal{W}^{\gamma, \rho}}
     . \]
\end{proof}

Unfortunately, Fourier--Lebesgue spaces are not always very useful, with the
exclusion of the scale $p = 2$, in which case $\mathcal{F} L^{\alpha, 2} =
H^{\alpha}$. We can however use Fourier--Lebesgue embeddings to deduce
regularity for $\mu^w_{s, t}$ in other scales of spaces, which in turn imply
different estimates for $T^w_{s, t}$. To this end, following~{\cite{geman}},
we introduce the concept of \tmtextit{occupation density}. In the
probabilistic literature it is usually referred to as \tmtextit{local time}
and we will indifferently use both terminologies.

\begin{definition}
  \label{sec2.2 defn occupation density}We say that a measurable $w : [0, T]
  \rightarrow \mathbb{R}^d$ admits an occupation density if for any $s < t$,
  $\mu^w_{s, t}$ is absolutely continuous w.r.t. $\mathcal{L}_{[s, t)}$, in
  which case we denote by $\ell^w_{s, t}$ its density, so that $\mu^w_{s, t}
  (\mathd x) = \ell^w_{s, t} (x) \mathd x$. As usual it holds $\ell^w_{s, t} =
  \ell^w_{0, t} - \ell^w_{0, s}$ and we set $\ell^w_{0, t} = \ell^w_t$.
  Sometimes we will also use the notation $\ell^w_t (x) = \ell^w (t, x)$.
\end{definition}

The spaces $B^s_{p, q} = B^s_{p, q} (\mathbb{R}^d)$ appearing in the next
lemma are inhomogeneous Besov spaces, see~{\cite{bahouri}}. Similar statements
can be given for more classical Sobolev spaces $W^{k, p}$ or Bessel potential
spaces $L^{s, p} = (1 - \Delta)^{- s / 2} L^p$.

\begin{lemma}
  \label{sec2.2 lemma regularity averaging 2}Let $w : [0, T] \rightarrow
  \mathbb{R}^d$ be a $(\gamma, \rho)$\mbox{-}irregular measurable path. Then:
  \begin{enumerate}[label=\roman*.]
    \item If $\rho > d / 2$, then $w$ admits an occupation density $\ell^w \in
    C^{\gamma}_t L^2_x \cap \tmop{Lip}_t L^1_x$.
    
    \item If $\rho > d$, then $\ell^w$ is jointly continuous in $(t, x)$ and
    $\ell^w \in C^{\gamma}_t C^0_x$.
    
    \item If $\rho > d / 2 + k$ for some $k \in \mathbb{N}$, then $\ell^w \in
    C^{\gamma}_t W^{k, 2}_x$; in particular, if $w \in L^{\infty}$, then
    $\ell$ is compactly supported on $[0, T] \times \mathbb{R}^d$ and
    therefore $\ell^w \in C^{\gamma}_t W^{k, 1}_x$.
    
    \item As a consequence, if $\rho > d / 2 + k$ for some $k \in \mathbb{N}$
    and $w \in L^{\infty}$, then for any $\alpha \in \mathbb{R}$, $p, q \in
    [1, \infty]$ it holds
    \[ T^w \in C^{\gamma} ([0, T] ; \mathcal{L} (B^{\alpha}_{p, q} ; B^{\alpha
       + k}_{p, q})) \]
    where $B^{\alpha}_{p, q}$ denote Besov spaces. In particular, for any $b
    \in B^{\alpha}_{p, q}$ it holds
    \[ \| T^w_{s, t} b \|_{B^{\alpha + k}_{p, q}} \lesssim | t - s |^{\gamma}
       \| b \|_{B^{\alpha}_{p, q}} \| w \|_{L^{\infty}}^{d / 2} \| \Phi^w
       \|_{\mathcal{W}^{\gamma, \rho}} \quad \text{uniformly in } s < t. \]
  \end{enumerate}
\end{lemma}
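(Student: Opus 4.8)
The plan is to deduce everything from the basic identity $\widehat{\mu^w_{s,t}}(\xi) = \overline{\Phi^w_{s,t}(\xi)}$ together with the $(\gamma,\rho)$-irregularity bound $|\Phi^w_{s,t}(\xi)| \leqslant \|\Phi^w\|_{\mathcal{W}^{\gamma,\rho}}\,|t-s|^\gamma\langle\xi\rangle^{-\rho}$ (using $\langle\xi\rangle$ in place of $|\xi|$ is harmless by the remark after Definition \ref{sec2.2 defn rho irregularity}, since the small-$\xi$ region is controlled by the trivial bound $|\Phi^w_{s,t}(\xi)|\leqslant|t-s|$). For part \emph{i.}, when $\rho>d/2$ the function $\xi\mapsto\langle\xi\rangle^{-\rho}$ lies in $L^2(\mathbb{R}^d)$, so $\widehat{\mu^w_{s,t}}\in L^2$ and by Plancherel $\mu^w_{s,t}$ is represented by an $L^2$ density $\ell^w_{s,t}$ with $\|\ell^w_{s,t}\|_{L^2}\lesssim|t-s|^\gamma\|\Phi^w\|_{\mathcal{W}^{\gamma,\rho}}$; absolute continuity w.r.t. Lebesgue measure gives the occupation-density property, and since the map $t\mapsto\mu^w_t$ is already in $\mathrm{Lip}_t\mathcal{M}_+$ with total variation increments $|t-s|$, the same holds for $\ell^w$ in $L^1_x$ (the total mass of $\mu^w_{s,t}$ is $|t-s|$). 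The $C^\gamma_t L^2_x$ bound follows because the estimate on $\|\ell^w_{s,t}\|_{L^2}$ is exactly an increment bound for $t\mapsto\ell^w_t$, and boundedness at a single time comes from $\ell^w_{0,t}=\sum$ of increments or directly from $\|\ell^w_t\|_{L^2}\lesssim t^\gamma\|\Phi^w\|$.

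For part \emph{ii.}, when $\rho>d$ we have $\langle\xi\rangle^{-\rho}\in L^1(\mathbb{R}^d)$, so $\widehat{\mu^w_{s,t}}\in L^1$ and Fourier inversion gives a bounded continuous density with $\|\ell^w_{s,t}\|_{C^0}\lesssim\|\widehat{\mu^w_{s,t}}\|_{L^1}\lesssim|t-s|^\gamma\|\Phi^w\|_{\mathcal{W}^{\gamma,\rho}}$; joint continuity in $(t,x)$ follows since $\ell^w(t,x)=\frac{1}{(2\pi)^d}\int e^{i\xi\cdot x}\overline{\Phi^w_t(\xi)}\,\mathd\xi$ with an $L^1$-in-$\xi$ integrand depending continuously on $t$ (dominated convergence), and again the $C^0_x$ increment bound is precisely the $C^\gamma_t$ statement. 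Part \emph{iii.} is the same argument applied after multiplying by $\langle\xi\rangle^k$: if $\rho>d/2+k$ then $\langle\xi\rangle^k\langle\xi\rangle^{-\rho}\in L^2$, hence $\langle\xi\rangle^k\widehat{\mu^w_{s,t}}\in L^2$, i.e. $\ell^w_{s,t}\in W^{k,2}_x$ with the corresponding increment bound. If moreover $w\in L^\infty$ then, exactly as in the proof of Lemma \ref{sec2.3 lemma averaging occupation}, $\mathrm{supp}\,\mu^w_{s,t}\subset B_R$ with $R=\|w\|_{L^\infty}$, so $\ell^w$ is compactly supported in $[0,T]\times\mathbb{R}^d$ and $W^{k,2}_x\hookrightarrow W^{k,1}_x$ on a fixed bounded set (with constant $\sim R^{d/2}\sim\|w\|_{L^\infty}^{d/2}$), giving $\ell^w\in C^\gamma_t W^{k,1}_x$.

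For part \emph{iv.}, I combine \emph{iii.} with Lemma \ref{sec2.3 lemma averaging occupation}: $T^w_{s,t}b=\tilde\mu^w_{s,t}\ast b$, and convolution with a function in $W^{k,1}_x$ maps $B^s_{p,q}$ into $B^{s+k}_{p,q}$ with operator norm controlled by the $W^{k,1}$ norm of the kernel (Young-type inequality for Besov spaces, see \cite{bahouri}); since $\tilde\mu^w_{s,t}$ has the same $W^{k,1}_x$ norm as $\mu^w_{s,t}$, the bound $\|T^w_{s,t}b\|_{B^{s+k}_{p,q}}\lesssim\|\ell^w_{s,t}\|_{W^{k,1}}\|b\|_{B^s_{p,q}}\lesssim|t-s|^\gamma\|w\|_{L^\infty}^{d/2}\|\Phi^w\|_{\mathcal{W}^{\gamma,\rho}}\|b\|_{B^s_{p,q}}$ follows, which is exactly the claimed $C^\gamma_t\mathcal{L}(B^s_{p,q};B^{s+k}_{p,q})$ membership. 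The main point to be careful about — the only step that is more than bookkeeping — is justifying that the pointwise Fourier-side bounds upgrade to the claimed \emph{time-regularity} (i.e. that $\|\ell^w_{s,t}\|_{X_x}\lesssim|t-s|^\gamma$ really is an increment of a $C^\gamma_t X_x$ path, together with boundedness at one time point); this is immediate here because $\ell^w_{s,t}=\ell^w_t-\ell^w_s$ by construction and the estimates are uniform in $s<t$, but it is worth stating explicitly. Everything else is a direct application of Plancherel, Fourier inversion, the support argument from Lemma \ref{sec2.3 lemma averaging occupation}, and standard convolution estimates on Besov spaces.
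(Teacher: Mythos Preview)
Your proof is correct and follows essentially the same route as the paper's. The only cosmetic difference is that the paper packages the Fourier-side estimates as the Fourier--Lebesgue embeddings $\mathcal{F}L^{\rho,\infty}\hookrightarrow L^2$ (for $\rho>d/2$), $\mathcal{F}L^{\rho,\infty}\hookrightarrow \mathcal{F}L^{0,1}\hookrightarrow C^0_b$ (for $\rho>d$), and $\mathcal{F}L^{\rho,\infty}\hookrightarrow W^{k,2}$ (for $\rho>d/2+k$), whereas you write out the underlying integrability of $\langle\xi\rangle^{-\rho}$ directly; the compact-support argument, the $W^{k,2}\hookrightarrow W^{k,1}$ step with constant $\|w\|_{L^\infty}^{d/2}$, and the Young-type convolution inequality on Besov spaces are identical in both.
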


\begin{proof}
  By equation~\eqref{eq:link-occupatio-rho} and the Fourier--Lebesgue
  embedding $\mathcal{F} L^{\rho, \infty} \hookrightarrow \mathcal{F} L^{0, 2}
  = L^2$ which holds for $\rho > d / 2$, we deduce that $\mu^w \in
  C^{\gamma}_t L^2_x$, i.e. for any $s < t$ the measure $\mu_{s, t}^w$ can be
  identified with a function in $L^2$, which is exactly $\ell^w_{s, t}$.
  Moreover $\mu^w_{s, t}$ is a positive measure with total variation $\|
  \mu^w_{s, t} \|_{\mathcal{M}} = t - s$, which implies that $\ell^w_{s, t}
  \in L^1_x$ with $\| \ell^w_{s, t} \|_{L^1} = t - s$. From this
  follows~\tmtextit{i.}; \tmtextit{ii.} and the first part of \tmtextit{iii.}
  follow similarly by using the embeddings $\mathcal{F} L^{\rho, \infty}
  \hookrightarrow \mathcal{F} L^{0, 1} \hookrightarrow C^0_b$, valid for $\rho
  > d$, and $\mathcal{F} L^{\rho, \infty} \hookrightarrow \mathcal{F} L^{\rho
  - d / 2, 2} \hookrightarrow W^{k, 2}_x$, valid for $\rho - d / 2 > k$. The
  second half of \tmtextit{iii.} follows from the fact that if $w \in
  L^{\infty}$, then $\mu^w_{s, t}$ is supported on $B_{\| w \|_{L^{\infty}}}$
  and so we have the estimate $\| \ell^w_{s, t} \|_{W^{k, 1}_x} \lesssim \| w
  \|_{L^{\infty}}^{d / 2} \| \ell^w_{s, t} \|_{W^{k, 2}}$. Finally statement
  \tmtextit{iv.} can be deduced from a combination of the previous estimate,
  the general Young\mbox{-}type inequality
  \[ \| f \ast g \|_{B^{\alpha + k}_{p, q}} \lesssim \| f \|_{B^{\alpha}_{p,
     q}}  \| g \|_{W^{k, 1}}, \]
  the identity $T^w_{s, t} b = \tilde{\mu}^w_{s, t} \ast b$ and the estimate
  $\| \mu^w_{s, t} \|_{W^{k, 1}} \lesssim \| w\|_{L^\infty}^{d/2} \| \mu_{s, t}^w \|_{\mathcal{F} L^{\rho,
  \infty}} \lesssim | t - s |^{\gamma}  \| \mu^w \|_{C^{\gamma} \mathcal{F}
  L^{\rho, \infty}}$.
\end{proof}

\begin{remark}
  We stated for simplicity Points~\tmtextit{iii.} and~\tmtextit{iv.} of
  Lemma~\ref{sec2.2 lemma regularity averaging 2} for $k \in \mathbb{N}$, so
  that the definition of $W^{k, 1}$ is classical and unambiguous. However, up
  to coming up with a suitable concept for $W^{k, 1}$ for $k \in (0, \infty)
  \setminus \mathbb{N}$ (e.g. replacing it with ${B^k_{1, 1}} $), one can
  immediately extend the statements to the case of general $k \in
  \mathbb{R}_{\geqslant 0}$, since Fourier--Lebesgue embedding $\mathcal{F}
  L^{\rho} \hookrightarrow W^{k, 1}$ and Young\mbox{-}type inequality for
  convolutions still hold.
\end{remark}

The reader might wonder if the restriction $\rho > d / 2$ appearing in
Lemma~\ref{sec2.2 lemma regularity averaging 2} can be weakened, given that it
does not appear when dealing with the scales of spaces $\mathcal{F} L^{\alpha,
p}$ as in Lemma~\ref{sec2.2 lemma regularity averaging 1}, nor in more
probabilistic $\mathbb{P}$-a.s. statements where $b$ is fixed and $w$ is
sampled as a stochastic process (cf. the results
from~{\cite{catelliergubinelli,galeatigubinelli_ode}}). The next statement,
which can be regarded as a useful rewriting of Remark~3.5
from~{\cite{perkowski}}, strongly suggests this not to be possible in general;
indeed if the averaging operator $T^w$ acts in a regularising way over the
Besov--H\"{o}lder scales $B^{\alpha}_{\infty, \infty}$, then $w$ must be
necessarily enjoy limited regularity.

\begin{lemma}
  \label{lem:perkowski}Let $w \in C^{\delta} ([0, T] ; \mathbb{R}^d)$ for some $\delta\in (0,1]$; suppose
  that there exist $\alpha \in \mathbb{R}$, $\varepsilon > 0$ s.t. $T^w_{0, T}
  \in \mathcal{L} (B^{\alpha}_{\infty, \infty}, B^{\alpha +
  \varepsilon}_{\infty, \infty}) .$ Then it must hold $\delta \leqslant 1 /
  d$.
\end{lemma}

\begin{proof}
  Without loss of generality we can assume $\varepsilon \leqslant 1$. Observe that, since averaging
  and convolutions commute, i.e. $T^w (K \ast b) = K \ast (T^w b)$ (cf.
  Lemma~3.3 from~{\cite{galeatigubinelli_ode}}), if the hypothesis holds for
  \tmtextit{some} $\alpha \in \mathbb{R}$, then it holds for \tmtextit{all}
  $\alpha \in \mathbb{R}$ (one can take for instance $K = \Delta_j$ to be
  Littlewood--Paley blocks).
  
  Next, choose $\alpha = \varepsilon / 2$ and $p = 2 d / \varepsilon$, so that
  we have the embedding $L^p \hookrightarrow B^{- \varepsilon / 2}_{\infty,
  \infty}$, and observe that for any $f \in \mathcal{S}$ it holds
  \[ | \langle f, \tilde{\mu}_{0, T}^w \rangle | = | (f \ast \tilde{\mu}^w_{0,T}) (0)
     | \lesssim \| f \ast \tilde{\mu}_{0, T}^{w} \|_{B^{\varepsilon
     / 2}_{\infty, \infty}} = \| T^w_{0, T} f \|_{B^{\varepsilon / 2}_{\infty,
     \infty}} \lesssim \| f \|_{B^{- \varepsilon / 2}_{\infty, \infty}}
     \lesssim \| f \|_{L^p} ; \]
  by density the inequality extends to all $f \in L^p$ and by duality we can
  conclude that there exists $g \in L^{p'}$, $p'$ being the conjugate exponent
  to $p$, such that $\langle f, \tilde{\mu}^w_{0, T} \rangle = \langle f,
  \tilde{g} \rangle$ for all $f \in L^p$. This necessarily implies that
  $\mu^w_{0, T} (\mathd x) = g (x) \mathd x$; since $w$ is continuous,
  $\mu^w_{0, T}$ is compactly supported, therefore it holds $g = \ell^w_{0, T}
  \in L^1 \cap L^{p'}$. Overall we have concluded that $\mu^w_{0, T}$ admits
  an occupation density $\ell^w_{0, T} \in L^1$.
  
  Since $\mu^w_{0, T} (w ([0, T])) = T$ and $\mu^w_{0, T} \ll \lambda$,
  where $\lambda$ is the Lebesgue measure on $\mathbb{R}^d$, the set $w ([0, T])$ must have
  positive Lebesgue measure; by standard facts from geometric measure theory,
  $w ([0, T])$ then has Hausdorff dimension $d$. The conclusion then follows
  from the relation $d=\text{dim}_H (w ([0, T])) \leqslant \delta^{- 1}$ (cf.
  Proposition~3.3~(a) from~{\cite{falconer}}).
\end{proof}

Slightly anticipating upcoming results, it is interesting to observe that the
restriction $\delta \leqslant 1 / d$ matches almost exactly the sufficient
condition $\rho > d / 2$ given in Lemma~\ref{sec2.2 lemma regularity averaging
2}\mbox{-}\tmtextit{i.} for the local time to exist, once we combine it with
the fact (from Theorem~\ref{sec3 thm2} below) that a.e. $w \in C^{\delta}_t$
is $\rho$-irregular for any $\rho < 1 / (2 \delta)$.

\begin{remark}
  \label{rem:tolomeo}It follows from Lemma~\ref{lem:perkowski} and the
  upcoming Theorem~\ref{sec3 thm2} that one can find many examples of paths $w
  : [0, T] \rightarrow \mathbb{R}^d$ whose averaging $T^w b$ has a
  regularizing effect when acting on the scales $\mathcal{F} L^{\alpha, p}$,
  but none on the scales $B^{\alpha}_{\infty, \infty}$. This issue boils down
  to the fact that in general convolutional operators with polynomially
  decaying Fourier transform are not necessarily nice Fourier multiplies, with
  strikingly similar examples coming from dispersive PDEs. Indeed, if one
  considers the linear wave PDE on $\mathbb{R}^d$
  \[ \partial_t^2 u = \Delta u, \quad u|_{t = 0} = 0, \quad \partial_t u|_{t =
     0} = f, \]
  then the solution at time $t > 0$ is given by the linear operator
  \[ T_t f = \frac{\sin (t | \nabla |)}{| \nabla |} f, \]
  in the sense that $T_t$ acts in Fourier space by multiplying $\hat{f}$ by
  $\sin (t | \xi |) / | \xi |$. It is then clear that $T_t$ maps $H^s$ into
  $H^{s + 1}$ for any $s \in \mathbb{R}$; but if one tries to have a similar
  result in $L^p$-based results, this is not true in general, with explicit
  counterexamples given in~{\cite{peral1970}}. What is more, when $d = 3$,
  $T_t$ has the alternative harmonic mean representation $T_t f = \mu_t \ast
  f$, where $\mu_t$ is the unit measure on the sphere $S_t = \{ x \in
  \mathbb{R}^d : | x | = t \}$, which is singular w.r.t. the Lebesgue measure;
  in particular, a similar argument to the one from Lemma~\ref{lem:perkowski}
  readily implies that $T_t$ cannot map $B^{\alpha}_{\infty, \infty}$ into
  $B^{\alpha + \varepsilon}_{\infty, \infty}$ for any $\alpha \in \mathbb{R}$
  and $\varepsilon > 0$. Roughly speaking $T_t$ here behaves exactly like
  $T^w_t$ would, in the case where the path $w$ is $1$-irregular but also
  $\delta$-H\"{o}lder continuous with $\delta > 1 / d$ (e.g. for $d = 3$,
  typical realization of fBm with $H \in (1 / 3, 1 / 2)$ would work by virtue
  of the upcoming Theorem~\ref{sec2.4 thm catelliergubinelli}).
\end{remark}

In general, as pointed out in Remark~7 of~{\cite{galeatigubinelli_ode}}, the
operator $T^w$ cannot regularise time\mbox{-}dependent fields $b = b (t, x)$,
at least not uniformly in all $b \in C^0_t E$ for suitable Banach spaces $E$.
Intuitively, the reason is that the oscillations in time of $b$ could
compensate the oscillations of $w$ and limit the regularisation effect.
However if $b$ is required to be sufficiently regular in $t$, it is still
possible to obtain a regularisation effect, as we are going to show now.

\begin{lemma}
  \label{sec2.2 lemma averaging time}Let $w \in L^{\infty}$ be $(\gamma,
  \rho)$\mbox{-}irregular, $b \in C^{\beta}_t \mathcal{F} L^{\alpha, p}$ with
  $\beta > 1 - \gamma$. Then $T^w b \in C^{\gamma}_t \mathcal{F} L^{\alpha +
  \rho, p}$ and there exists a constant $C = C (\gamma + \beta, T) > 0$ such
  that
  \begin{equation}
    \| T^w_{s, t} b \|_{\mathcal{F} L^{\alpha + \rho, p}} \leqslant C \| b
    \|_{C^{\beta} \mathcal{F} L^{\alpha, p}} \| \Phi^w
    \|_{\mathcal{W}^{\gamma, \rho}} | t - s |^{\gamma} \quad \tmop{for}
    \tmop{all} \quad 0 \leqslant s \leqslant t \leqslant T. \label{sec2.2
    lemma time eq1}
  \end{equation}
  Namely, the linear map $T^w : C^{\beta}_t \mathcal{F} L^{\alpha, p}
  \rightarrow C^{\gamma}_t \mathcal{F} L^{\alpha, p + \rho}$ is bounded with
  constant $C \| \Phi^w \|_{\mathcal{W}^{\gamma, \rho}}$.
\end{lemma}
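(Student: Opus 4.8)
The plan is to reduce the estimate for $T^w_{s,t}b$ to the already-established estimate for $\rho$-irregular paths against a \emph{fixed} distribution (Lemma~\ref{sec2.2 lemma regularity averaging 1}), handling the time-dependence of $b$ by a sewing/Young-integral argument. First I would note that, because of the scaling invariance of the Fourier--Lebesgue norm under the spatial Fourier multiplier $\langle\xi\rangle^{\alpha}$, it suffices to treat the case $\alpha = 0$; so we only need to bound $\|T^w_{s,t}b\|_{\mathcal{F}L^{\rho,p}}$ in terms of $\|b\|_{C^\beta\mathcal{F}L^{0,p}}$. On the Fourier side, $\widehat{T^w_{s,t}b}(\xi) = \int_s^t e^{-i\xi\cdot w_r}\,\hat b_r(\xi)\,\mathrm{d}r$, so the object to estimate is, for each fixed $\xi$, a scalar integral $\int_s^t g_r\,\mathrm{d}\Phi^w_r(\xi)$ with $g_r = \hat b_r(\xi)$ of regularity $\beta$ in $r$ and $\Phi^w_{\cdot}(\xi)$ of regularity $\gamma$ in $r$; since $\beta + \gamma > 1$, this is a classical Young integral.

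The key steps, in order: (i) fix $\xi$ and write $I_{s,t}(\xi) := \int_s^t \hat b_r(\xi)\,e^{-i\xi\cdot w_r}\,\mathrm{d}r$; (ii) apply the sewing lemma / Young integration bound to the germ $\hat b_s(\xi)\,\Phi^w_{s,t}(\xi)$, whose increment defect is $(\hat b_s(\xi) - \hat b_u(\xi))(\Phi^w_{s,t}(\xi) - \Phi^w_{s,u}(\xi))$, controlled by $\llbracket \hat b_\cdot(\xi)\rrbracket_{C^\beta}\,\|\Phi^w\|_{\mathcal{W}^{\gamma,\rho}}|\xi|^{-\rho}|t-s|^{\beta+\gamma}$; this yields, for a constant $C = C(\beta+\gamma,T)$,
\[
|I_{s,t}(\xi)| \leqslant C\Big(\sup_r|\hat b_r(\xi)| + \llbracket\hat b_\cdot(\xi)\rrbracket_{C^\beta}\Big)\,\|\Phi^w\|_{\mathcal{W}^{\gamma,\rho}}\,|\xi|^{-\rho}\,|t-s|^{\gamma};
\]
(iii) multiply by $\langle\xi\rangle^{\rho}$ and take the $L^p_\xi$ (or $L^\infty_\xi$ if $p=\infty$) norm: since $\langle\xi\rangle^{\rho}|\xi|^{-\rho}$ is bounded for $|\xi|\gtrsim 1$ and the trivial bound $|I_{s,t}(\xi)|\leqslant \int_s^t|\hat b_r(\xi)|\,\mathrm{d}r \leqslant |t-s|\sup_r|\hat b_r(\xi)|$ handles $|\xi|$ small, Minkowski's integral inequality turns $\sup_r\|\,\cdot\,\|$ and $\llbracket\,\cdot\,\rrbracket_{C^\beta}$ pointwise in $\xi$ into $\|b\|_{C^0_t\mathcal{F}L^{0,p}}$ and $\llbracket b\rrbracket_{C^\beta_t\mathcal{F}L^{0,p}}$, giving~\eqref{sec2.2 lemma time eq1}; (iv) finally, control the $C^0_t$-in-$t$ part of the target norm $\|T^w b\|_{C^\gamma_t\mathcal{F}L^{\alpha+\rho,p}}$ by taking $s=0$ (or using $T^w_0 = 0$), which is immediate from the increment bound, and re-insert the multiplier $\langle\xi\rangle^\alpha$ to recover general $\alpha$.

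The main obstacle I expect is the bookkeeping in step (iii): one must justify interchanging the $L^p_\xi$ norm with the supremum/Hölder seminorm in $r$ uniformly, and verify that the sewing constant genuinely depends only on $\beta+\gamma$ and $T$ and not on $\xi$ — this is where the uniformity of the $\rho$-irregularity bound over all $\xi$ is essential. The Young/sewing step itself is standard (it is the one-dimensional Young integral inequality, or a direct Riemann-sum telescoping argument using $\beta+\gamma>1$), and the reduction to $\alpha=0$ is purely cosmetic, so no serious difficulty is anticipated there.
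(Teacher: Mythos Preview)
Your overall strategy --- recognise $T^w_{s,t}b$ as a Young integral of $b$ against $\mu^w$ and invoke the sewing bound --- is exactly the paper's approach. However, step~(iii) as you describe it does not go through for $p<\infty$. The pointwise Young estimate produces a factor $\llbracket \hat b_\cdot(\xi)\rrbracket_{C^\beta_t}$, which is a \emph{supremum} over time pairs, and you then need $\big\|\llbracket \hat b_\cdot(\xi)\rrbracket_{C^\beta_t}\big\|_{L^p_\xi}\lesssim \llbracket b\rrbracket_{C^\beta_t \mathcal F L^{0,p}}$. This inequality is false in general: Minkowski handles sums and integrals, not suprema, and in fact the inequality goes the wrong way (one always has $\sup_{s,t}\|f_{s,t}\|_{L^p}\le \|\sup_{s,t}|f_{s,t}|\|_{L^p}$, not the reverse). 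A concrete counterexample: take $\hat b_t(\xi)=\sum_{n\ge 1}\chi_{[n,n+1]}(\xi)\,\phi_n(t)$ with the $\phi_n$ supported on disjoint subintervals of $[0,T]$ and each having $C^\beta$-seminorm~$1$; then $b\in C^\beta_t L^2_\xi$ (for any fixed $s,t$ at most two $\phi_n$ contribute to the increment) but $\llbracket\hat b_\cdot(\xi)\rrbracket_{C^\beta}\equiv 1$ on $[1,\infty)$, so its $L^2_\xi$-norm is infinite. Your approach does work for $p=\infty$, where suprema commute.

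The fix, which is precisely what the paper does, is to run the sewing lemma directly in the Banach space $\mathcal F L^{\alpha+\rho,p}$ rather than pointwise in $\xi$: take the germ $\Gamma_{s,t}=b_s\ast\tilde\mu^w_{s,t}$ and bound its defect via the convolution inequality $\|f\ast g\|_{\mathcal F L^{\alpha+\rho,p}}\le \|f\|_{\mathcal F L^{\alpha,p}}\|g\|_{\mathcal F L^{\rho,\infty}}$, giving $\|\delta\Gamma_{s,u,t}\|_{\mathcal F L^{\alpha+\rho,p}}\le \llbracket b\rrbracket_{C^\beta\mathcal F L^{\alpha,p}}\|\Phi^w\|_{\mathcal W^{\gamma,\rho}}|t-s|^{\beta+\gamma}$. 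Equivalently, you can keep your Fourier-side picture but take the $L^p_\xi$-norm \emph{inside} the telescoping sum of the sewing construction (Minkowski then applies to the countable sum over dyadic levels), which amounts to the same thing.
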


\begin{proof}
  Let us first assume that $b$ is a smooth function. In this case for any $[s,
  t] \subset [0, T]$ and any sequence $\Pi$ of partitions of $[s, t]$ with
  infinitesimal mesh, it holds
  \begin{align*}
    T^w_{s, t} b (x) & = \int_s^t b (r, x + w_r) \mathd r = \lim_{| \Pi |
    \rightarrow 0} \sum_i \int_{t_i}^{t_{i + 1}} b (t_i, x + w_r) \mathd r\\
    & = \lim_{| \Pi | \rightarrow 0} \sum_i T^w_{t_i, t_{i + 1}} [b (t_i,
    \cdot)] (x) = \lim_{| \Pi | \rightarrow 0} \sum_i (b_{t_i} \ast
    \tilde{\mu}^w_{t_i, t_{i + 1}}) (x)
  \end{align*}
  which implies the functional equality
  \[ T^w_{s, t} b = \lim_{| \Pi | \rightarrow 0} \sum_i b_{t_i} \ast
     \tilde{\mu}^w_{t_i, t_{i + 1}} . \]
  Since $b \in C^{\beta}_t \mathcal{F} L^{\alpha, p}$, $\mu^w \in C^{\gamma}_t
  \mathcal{F} L^{\rho, \infty}$, $\gamma + \beta > 1$ and the map $\ast : (f,
  g) \mapsto f \ast g$ is bilinear and bounded from $\mathcal{F} L^{\alpha, p}
  \times \mathcal{F} L^{\rho, \infty}$ into $\mathcal{F} L^{\alpha + \rho,
  p}$, it follows from Young integration in Banach spaces (see
  Appendix~\ref{appendixA1}) that
  \[ T^w_{s, t} b = \int_s^t b_r \ast \tilde{\mu}^w_{\mathd r}, \]
  as well as estimate~\eqref{sec2.2 lemma time eq1}, since $\| \tilde{\mu}^w
  \|_{C^{\gamma}_t \mathcal{F} L^{\rho, \infty}} = \| \mu^w \|_{C^{\gamma}_t
  \mathcal{F} L^{\rho, \infty}} \sim \| \Phi^w \|_{\mathcal{W}^{\gamma,
  \rho}}$. The case of general $b$ follows from approximation procedures:
  given $b \in C^{\beta}_t \mathcal{F} L^{\alpha, p}$, we can find a sequence
  $b^n$ of smooth functions such that $\| b^n \|_{C^{\beta} \mathcal{F}
  L^{\alpha, p}} \leqslant \| b \|_{C^{\beta} \mathcal{F} L^{\alpha, p}}$ and
  that for any $\varepsilon > 0$ $b^n \rightarrow b$ locally in $C^{\beta -
  \varepsilon}_t \mathcal{F} L^{\alpha - \varepsilon, p}$; by properties of
  averaging, for any $s < t$ $T^w_{s, t} b^n$ converges to $T^w_{s, t} b$
  weakly\mbox{-}$\ast$ in $\mathcal{F} L^{\alpha, p}$. The conclusion then
  follows from taking the liminf as $n \rightarrow \infty$ on both sides
  of~\eqref{sec2.2 lemma time eq1} applied to $b^n$ and using the Fatou
  property of weak\mbox{-}$\ast$ convergence.
\end{proof}

\begin{remark}
  \label{sec2.2 remark averaging time}It is clear that the proof can be
  readapted in a more general setting: given $E, F, G$ function spaces such
  that $\ast : (f, g) \mapsto f \ast g$ is a bilinear bounded map from $E
  \times F$ into $G$, if $\mu^w \in C^{\gamma}_t F$ and $\beta > 1 - \gamma$,
  then $T^w : C^{\beta}_t E \rightarrow C^{\gamma}_t G$ \ is a linear bounded
  map. This can be applied in combination with Lemma~\ref{sec2.2 lemma
  regularity averaging 2}, obtaining regularising effects of $T^w$ when $E$
  and $G$ are taken in suitable Besov scales.
\end{remark}

\subsection{Fractional Brownian motion and
local\mbox{-}nondeterminism}\label{sec2.4}

The main tool in order to establish our prevalence results will be a family of
measures associated to locally nondeterministic Gaussian processes. The main
examples from this family are the laws $\mu^H$ of fractional Brownian motions
(fBm) of Hurst parameter $H \in (0, 1)$. General references on fBm
are~{\cite{nualart2006}} and~{\cite{picard}}.

\

A one dimensional fBm $(W^H_t)_{t \geqslant 0}$ of Hurst parameter $H \in (0,
1)$ is a centred Gaussian process with covariance
\[ \mathbb{E} [W^H_t W^H_s] = \frac{1}{2} (| t |^{2 H} + | s |^{2 H} - | t - s
   |^{2 H}) . \]
Up to a multiplicative constant, it is the unique centred Gaussian process
with stationary increments and $H$\mbox{-}self-similarity, i.e. with law
invariant under the scaling $\tilde{W}^H_{\cdot} \assign \lambda^{- H}
W^H_{\lambda \cdot}$ for any $\lambda > 0$.

When $H = 1 / 2$, it coincides with standard Brownian motion, while for $H
\neq 1 / 2$ it is not a semi-martingale nor a Markov process. However it
shares many properties of Brownian motion, for instance fBm trajectories are
$\mathbb{P}$-a.s. $\alpha$-H{\"o}lder continuous for any $\alpha < H$ and
nowhere $\alpha$-H{\"o}lder continuous for any $\alpha \geqslant H$. A
$d$-dimensional fBm $W^H$ of Hurst parameter $H \in (0, 1)$ is an
$\mathbb{R}^d$\mbox{-}valued Gaussian process with components given by
independent one dimensional fBms.

\

Given a two-sided Brownian motion $(B_t)_{t \in \mathbb{R}}$, a fBm of
parameter $H \neq 1 / 2$ can be constructed by the formula
\begin{equation}
  W^H_t = c_H \int_{- \infty}^t [(t - r)^{H - 1 / 2}_+ - (- r)_+^{H - 1 / 2}]
  \, \mathd B_r \label{sec2.4 non canonical representation}
\end{equation}
where $c_H = \Gamma (H + 1 / 2)^{- 1}$ is a suitable renormalising constant
and the integral is in the It\^{o} sense. As a consequence, for any $0
\leqslant s < t$, the variable $W^H_t$ decomposes into the sum $W^H_t = W^{1,
H}_{s, t} + W^{2, H}_{s, t}$, where
\[ W^{1, H}_{s, t} : = c_H \int_s^t (t - r)^{H - 1 / 2} \, \mathd B_r, \quad
   W^{2, H}_{s, t} : =\mathbb{E} [W^H_t | \mathcal{F}_s] = c_H
   \int_{- \infty}^s [(t - r)^{H - 1 / 2}_+ - (- r)_+^{H - 1 / 2}] \, \mathd
   B_r ; \]
specifically, $W^{2, H}_{s, t}$ is $\mathcal{F}_s$-measurable, while $W^{1,
H}_{s, t}$ is Gaussian, independent of $\mathcal{F}_s$ and with variance
\[ \tmop{Var} (W^{1, H}_{s, t}) = \tilde{c}_H | t - s |^{2 H}, \]
where $\tilde{c}_H = c_H^2 / (2 H)$. In particular this implies that
\begin{equation}
  \tmop{Var} (W^H_t | \sigma (W^H_r, r \leqslant s \nobracket)) \geqslant
  \tmop{Var} (W^H_t | \mathcal{F}_s \nobracket) = \tmop{Var} (W^{1, H}_{s, t})
  = \tilde{c}_H | t - s |^{2 H} . \label{sec2.4 lnd fbm}
\end{equation}
Equation~\eqref{sec2.4 lnd fbm} is known in the literature as a
local\mbox{-}nondeterminism (LND) property. LND was first introduced by Berman
in~{\cite{berman73}} in order to analyse the properties of the occupation
measure (more precisely the local time) of Gaussian processes. Loosely
speaking, it means that for any $s < t$, the increment $W_t^H - W^H_s$
contains a part which is independent of the the history of the path
$W^H_{\cdot}$ up to time $s$ and therefore makes the path $W^H_{\cdot}$
``intrinsically chaotic''.

\

There is now a huge literature on local\mbox{-}nondeterminism and several
alternative definitions, which are not in general equivalent,
see~{\cite{xiao}} for a survey; here we identify three types of LND which are
closely tied with $\rho$\mbox{-}irregularity and exponential irregularity of
sample paths of Gaussian processes. They will play a major role in the proofs
respectively of Sections~\ref{sec4.2} and~\ref{sec4.4}.

\begin{definition}
  \label{sec2.4 defn strong lnd}Let $(X_t)_{t \in [0, T]}$ be an
  $\mathbb{R}^d$-valued separable Gaussian process adapted to a given
  filtration $\mathcal{F}_t$. We say that $X$ is strongly locally
  nondeterministic with parameter $\beta > 0$, $X$ is $\beta$\mbox{-}SLND for
  short, if there exists $\delta > 0$ s.t.
  \begin{equation}
    \tmop{Var} (X_t | \mathcal{F}_s \nobracket) \gtrsim | t - s |^{2 \beta}
    I_d \quad \text{uniformly in } s, t \text{ such that } 0 < t - s < \delta
    . \label{sec2.4 strong lnd eq}
  \end{equation}
\end{definition}

\begin{definition}
  \label{sec2.4 defn lnd}Let $(X_t)_{t \in [0, T]}$ be an
  $\mathbb{R}^d$-valued separable Gaussian process adapted to a given
  filtration $\mathcal{F}_t$. We say that $X$ is locally nondeterministic with
  parameter $\beta > 0$, $X$ is $\beta$\mbox{-}LND for short, if for every
  integer $n \geqslant 2$ there exists positive constants $c_n$ and $\delta_n$
  such that
  \begin{equation}
    \tmop{Var} \left( \sum_{k = 1}^n v_k \cdot (X_{t_{k + 1}} - X_{t_k})
    \right) \geqslant c_n \sum_{k = 1}^n \tmop{Var} (v_k \cdot (X_{t_{k + 1}}
    - X_{t_k})) \geqslant c_n \sum_{k = 1}^n | v_k |^2  | t_{k + 1} - t_k |^{2 \beta}
    \label{sec2.4 defn lnd eq}
  \end{equation}
  for all ordered points $t_1 < t_2 < \ldots < t_n$ with $t_n - t_1 <
  \delta_n$ and $v_k \in \mathbb{R}^m$.
\end{definition}

Properly speaking, in the terminology of~{\cite{xiao}}, Definition~\ref{sec2.4
defn strong lnd} is that of a one\mbox{-}sided strong local~nondeterminism,
but we have preferred to adopt the terminology $\beta$\mbox{-}SLND for
simplicity. It follows from Remark~2.3 from~{\cite{xiao}} that
Definition~\ref{sec2.4 defn strong lnd} is strictly stronger than
Definition~\ref{sec2.4 defn lnd}, namely any $\beta$\mbox{-}SLND Gaussian
process is also $\beta$\mbox{-}LND, while the converse is not true.
Equation~\eqref{sec2.4 lnd fbm} implies that $W^H$ is $H$\mbox{-}SLND.

\begin{definition}
  \label{sec2.4 defn exp lnd}Let $\{ X_t \}_{t \in [0, T]}$ be an
  $\mathbb{R}^d$-valued separable Gaussian process adapted to a given
  filtration $\mathcal{F}_t$. We say that $X$ is exponentially locally
  nondeterministic with parameter $\beta > 0$, $X$ is $\beta$\mbox{-}eSLND for
  short, if there exists $\delta > 0$ s.t.
  \begin{equation}
    \tmop{Var} (X_t | \mathcal{F}_s \nobracket) \gtrsim | \log (t - s) |^{-
    \beta} I_d \quad \text{uniformly in } s, t \text{ such that } 0 < t - s <
    \delta . \label{sec2.4 exp lnd eq}
  \end{equation}
\end{definition}

\begin{remark}
  \label{sec2.4 remark invariance lnd}Definitions~\ref{sec2.4 defn strong
  lnd},~\ref{sec2.4 defn lnd} and~\ref{sec2.4 defn exp lnd} only involve the
  variance (resp. conditional variance) of the process $X$ and are independent
  of its mean. This implies that they are all properties invariant under
  deterministic perturbations, namely if $X$ is a $\beta$\mbox{-}(e)(S)LND
  process and $f$ is a given measurable function, then $X + f$ is still
  $\beta$\mbox{-}(e)(S)LND. This can be interpreted as the chaoticity
  represented by local nondeterminism being too strong to be disrupted by a
  deterministic additive perturbations; this fundamental feature will allow us
  to prove prevalence of $\rho$\mbox{-}irregularity and exponential
  irregularity.
\end{remark}

We conclude this section by recalling the following result.

\begin{theorem}[Theorem 1.4 from {\cite{catelliergubinelli}}]
  \label{sec2.4 thm catelliergubinelli}Let $H \in (0, 1)$ and denote by
  $\mu^H$ the law of a $d$\mbox{-}dimensional fBm of Hurst parameter $H$. Then
  for any $\rho < (2 H)^{- 1}$ there exists $\gamma > 1 / 2$ such that
  \[ \mu^H \left( w \in C^0 ([0, T] ; \mathbb{R}^d)  \left| \, w \text{ is
     $(\gamma, \rho)$-irregular} \right. \right) = 1. \]
\end{theorem}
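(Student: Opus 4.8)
The plan is to exploit the Gaussian structure of $W^H$, and in particular its strong local nondeterminism --- recall that \eqref{sec2.4 lnd fbm} says exactly that $W^H$ is $H$-SLND --- to obtain sharp moment estimates for the increments $\Phi^{W^H}_{s,t}(\xi)$, and then a quantitative Kolmogorov--Garsia--Rodemich--Rumsey (GRR) argument to turn them into the almost sure uniform bound \eqref{sec2.2 defn rho-irr}. Throughout, the given exponent $\rho<(2H)^{-1}$ is used together with an auxiliary $\rho'\in\bigl(\rho,(2H)^{-1}\bigr)$ and a Hölder exponent $\gamma\in\bigl(1/2,\,1-H\rho'\bigr)$; both choices are available precisely because $\rho<(2H)^{-1}$, and the small gap $\rho'-\rho>0$ is what makes the final frequency sum converge.

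\emph{Step 1 (moment bound).} For every integer $m\geqslant 1$ one expands
\[ \mathbb{E}\bigl[|\Phi^{W^H}_{s,t}(\xi)|^{2m}\bigr]=\int_{[s,t]^{2m}}\mathbb{E}\exp\!\left(i\,\xi\cdot\sum_{j=1}^m\bigl(W^H_{u_j}-W^H_{v_j}\bigr)\right)\mathd u\,\mathd v, \]
reduces to ordered configurations $s<r_1<\dots<r_{2m}<t$ by summing over the $\binom{2m}{m}$ placements of the $m$ ``$+$'' and $m$ ``$-$'' signs, and uses $\mathbb{E}[e^{iN}]=e^{-\frac12\tmop{Var}(N)}$ for a centred Gaussian $N$, Abel summation, and the $H$-SLND bound applied componentwise (the $d$ coordinates being independent) to obtain
\begin{align*}
\tmop{Var}\Bigl(\xi\cdot\sum_{k=1}^{2m}\eta_k W^H_{r_k}\Bigr)
&=\tmop{Var}\Bigl(\xi\cdot\sum_{k=1}^{2m-1}A_k\,\bigl(W^H_{r_{k+1}}-W^H_{r_k}\bigr)\Bigr)\\
&\gtrsim |\xi|^2\sum_{k=1}^{2m-1}A_k^2\,|r_{k+1}-r_k|^{2H},
\end{align*}
where $A_k=\sum_{j\leqslant k}\eta_j$ is the $k$-th partial sum of the sign pattern $\eta$. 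An elementary excursion count (group $1,\dots,2m-1$ into the stretches between consecutive zeros of $k\mapsto A_k$) shows that at least $m$ of the $A_1,\dots,A_{2m-1}$ are nonzero; retaining only those factors and bounding the remaining $r_{k+1}-r_k$ and $r_1$ trivially by $|t-s|$, an integration gives
\[ \mathbb{E}\bigl[|\Phi^{W^H}_{s,t}(\xi)|^{2m}\bigr]\lesssim_m |t-s|^{m}\,|\xi|^{-m/H}. \]

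\emph{Step 2 (interpolation and GRR).} Interpolating the previous bound with the trivial one $|\Phi^{W^H}_{s,t}(\xi)|\leqslant|t-s|$ yields $\mathbb{E}[|\Phi^{W^H}_{s,t}(\xi)|^{2m}]\lesssim_m|t-s|^{m(1+\theta)}|\xi|^{-m(1-\theta)/H}$ for every $\theta\in[0,1]$; the choice $\theta=1-2H\rho'\in(0,1)$ gives $\mathbb{E}[|\Phi^{W^H}_{s,t}(\xi)|^{2m}]\lesssim_m|\xi|^{-2m\rho'}|t-s|^{2m(1-H\rho')}$. Since $\gamma<1-H\rho'$, a quantitative GRR estimate in the time variable (the diagonal integral converges) upgrades this, for $m$ large, to
\[ \mathbb{E}\bigl[\|\Phi^{W^H}_{\cdot}(\xi)\|_{C^{\gamma}([0,T])}^{2m}\bigr]\lesssim_{m,\gamma,\rho',T}|\xi|^{-2m\rho'}\qquad\text{for every }\xi\neq 0. \]

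\emph{Step 3 (uniformity in the frequency).} It remains to pass from this per-frequency control to the uniform bound $\|\Phi^{W^H}\|_{\mathcal{W}^{\gamma,\rho}}<\infty$ of Definition~\ref{sec2.2 defn rho irregularity}. Here one uses that $\xi\mapsto\Phi^{W^H}_{s,t}(\xi)$ is smooth with $\|\Phi^{W^H}_{\cdot}(\xi)-\Phi^{W^H}_{\cdot}(\xi')\|_{C^{\gamma}}\lesssim_T|\xi-\xi'|\,\|W^H\|_{C^0([0,T])}$, and $\|W^H\|_{C^0}<\infty$ almost surely. In each dyadic shell $\{2^n\leqslant|\xi|<2^{n+1}\}$ place a lattice of mesh $2^{-\kappa n}$ with $\kappa\geqslant\rho$ fixed; by Markov's inequality, the moment bound of Step~2 and a union bound, the probability that $\|\Phi^{W^H}_{\cdot}(\xi)\|_{C^{\gamma}}$ exceeds $|\xi|^{-\rho}$ at some lattice point of the $n$-th shell is $\lesssim 2^{(1+\kappa)dn}\,2^{-2m(\rho'-\rho)n}$, which is summable in $n$ once $m>(1+\kappa)d/\bigl(2(\rho'-\rho)\bigr)$. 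Borel--Cantelli gives the bound at all lattice points with $|\xi|$ large, the Lipschitz-in-$\xi$ estimate interpolates it to all such $\xi$ (the error $2^{-\kappa n}\|W^H\|_{C^0}$ being $\lesssim|\xi|^{-\rho}$ since $\kappa\geqslant\rho$), and the finitely many small frequencies are absorbed via $|\Phi^{W^H}_{s,t}(\xi)|\leqslant|t-s|$. This gives $\mu^H$-a.s. $(\gamma,\rho)$-irregularity of $w$ with $\gamma>1/2$, hence $\rho$-irregularity. The delicate point is precisely this last step: the frequency space $\mathbb{R}^d$ is non-compact, so one has to weigh the $\sim 2^{(1+\kappa)dn}$ lattice points of the shell $\{|\xi|\sim 2^n\}$ against the polynomial tail $|\xi|^{-2m\rho'}$ of the moments, which forces a quantitatively large choice of $m$ depending on $d$ and on the distance of $\rho$ from $(2H)^{-1}$; the combinatorial fact used in Step~1 is the other spot needing a short argument, but it is elementary.
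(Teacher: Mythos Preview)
Your argument is correct. The paper does not prove this statement itself (it is quoted from \cite{catelliergubinelli}); the natural comparison is to the paper's proofs of the generalizations Theorem~\ref{sec4.2 corollary rho gaussian} and Theorem~\ref{sec4.4 main thm}, both of which specialize to fBm.

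Your Step~1 is essentially Proposition~\ref{sec4.4 main proposition}: the same Abel summation, the same LND lower bound on the variance (strictly speaking what you invoke is $H$-LND in the sense of Definition~\ref{sec2.4 defn lnd}, which the paper records as a consequence of $H$-SLND), and the same combinatorial fact that at least $m$ of $A_1,\dots,A_{2m-1}$ are nonzero --- the paper derives this via the parity observation $A_k\equiv k\pmod 2$ (Lemma~\ref{sec4.4 combinatorial lemma}), slightly quicker than your excursion count. Your Steps~2--3 differ from both of the paper's routes for the uniformity in $\xi$. In Section~\ref{sec4.4} the paper exploits that $\mu^{W^H}_{s,t}$ has compact support, and Lemma~\ref{sec4.4 analytic lemma} converts $\mathcal{F}L^{\rho,2n}$ bounds into $\mathcal{F}L^{\rho,\infty}$ bounds, so that only an \emph{integral} in $\xi$ needs controlling and Kolmogorov's criterion suffices; your lattice plus Borel--Cantelli argument is a valid and more hands-on alternative, at the cost of the large choice of $m$. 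The paper's second route (Sections~\ref{sec4.1}--\ref{sec4.2}), specific to $\beta$-SLND processes, is different in spirit: an Azuma--Hoeffding martingale bound (Lemma~\ref{sec4.1 lemma condition rho irr}) gives \emph{exponential} rather than polynomial moments on $\Phi^{W^H}_{s,t}(\xi)$, and the chaining of Theorem~\ref{sec4.1 thm general criterion} then yields $\mathbb{E}[\exp(\lambda\|\Phi^{W^H}\|_{\mathcal{W}^{\gamma,\rho}}^2)]<\infty$ for all $\lambda$, which is strictly stronger than what either your approach or Section~\ref{sec4.4} deliver.
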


Let us point out that, combined with Lemma~\ref{sec2.2 lemma interpolation rho
irr}, this last theorem implies the existence of continuous $(\gamma,
\rho)$\mbox{-}irregular functions for any choice of $\gamma \in (0, 1)$ and
$\rho < \infty$.

\section{Main results}\label{sec3}

\subsection{Statements}\label{sec3.1}

We are now ready to present the main result of this paper. In the next
statement, the space $C^{\infty}_{\tmop{slow}}$ denotes the collection of all
infinitely differentiable functions $\psi : \mathbb{R}^d \rightarrow
\mathbb{R}$ such that $\psi$ and all its derivatives grow at most polynomially
at infinity; instead we set $C^{\infty}_b = \cap_n C^n_b$ as the set of
bounded smooth functions with bounded derivatives.

\begin{theorem}
  \label{sec3 thm2}It holds that:
  \begin{enumerate}[label=\roman*.]
    \item For any $\delta \in (0, \infty)$, almost every $\varphi \in
    C^{\delta}_t$ is $\rho$\mbox{-}irregular for any $\rho < (2 \delta)^{-
    1}$. If $\delta \geqslant 1$, then in addition for any $k < \delta$,
    $D^{(k)} \varphi$ is $\rho$\mbox{-}irregular for any $\rho < (2 (\delta -
    k))^{- 1}$.
    
    \item Almost every $\varphi \in C^0_t$ is $\rho$\mbox{-}irregular for any
    $\rho < \infty$; in particular, its occupation measures
    $(\mu^{\varphi}_{s, t}) \subset C^{\infty}_c$ and its averaging operator
    $T^w$ maps $\mathcal{S}'$ into $C^{\infty}_{\tmop{slow}}$, as well as
    $B^s_{p, q}$ into $C^{\infty}_b$ for any $s \in \mathbb{R}$, $p, q \in [1,
    \infty]$.
    
    \item For any $p \in [1, \infty)$, almost every $\varphi \in L^p_t$ is
    exponentially irregular; in particular it is Carath{\'e}odory and its
    occupation measures $(\mu^{\varphi}_{s, t})$ are analytic.
  \end{enumerate}
\end{theorem}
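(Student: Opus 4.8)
The plan is to prove all three items by a single prevalence scheme. In each case the set $A\subset E$ under consideration is Borel in the relevant topology by Lemma~\ref{sec2.2 lemma borel rho irr} and Remark~\ref{sec2.2 lemma borel general}, and a transverse measure is provided by the law of a suitable $\mathbb R^d$-valued Gaussian process $X$ on the ambient space $E$ (one of $C^\delta_t$, $C^0_t$, $L^p_t$). Two facts drive the argument. First, the irregularity in question is a $\mathbb P$-a.s.\ consequence of a local-nondeterminism condition on $X$: $\beta$-SLND (Definition~\ref{sec2.4 defn strong lnd}) forces $(\gamma,\rho)$-irregularity for some $\gamma>1/2$ whenever $\rho<(2\beta)^{-1}$, and $\beta$-eSLND (Definition~\ref{sec2.4 defn exp lnd}) forces exponential irregularity; this is the substantive ``probabilistic step'' sketched in the introduction, to be established below. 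Second, by Remark~\ref{sec2.4 remark invariance lnd} these (e)(S)LND properties survive an arbitrary deterministic additive perturbation. Combining the two, for every $\varphi\in E$ the process $\varphi+X$ is (e)(S)LND with the same parameter, hence lies in $A$ a.s., so $\mathbb P(\varphi+X\in A)=1$ for all $\varphi$. Since tightness of the law of $X$ on $E$ holds --- automatic when $E$ is separable, i.e.\ for $C^0_t$ and $L^p_t$, and for the non-separable spaces $C^\delta_t$ ($\delta\notin\mathbb N$) because $X$ a.s.\ lies in the separable closed subspace of functions whose top-order H\"older difference quotient vanishes as the increment does --- Definition~\ref{definition prevalence} says precisely that $A$ is prevalent. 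Finally, to pass from ``$\rho$-irregular for each $\rho<R$'' to ``for all $\rho<R$'' one intersects the prevalent sets $\{w:w\text{ is }\rho_n\text{-irregular}\}$ along $\rho_n\uparrow R$ (countable intersections of prevalent sets are prevalent) and invokes Lemma~\ref{sec2.2 lemma interpolation rho irr}, by which $\rho$-irregularity entails $\rho'$-irregularity for all $0<\rho'\le\rho$.

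It remains to choose $X$. For (i) with $\delta\in(0,1)$ take $X=W^H$, a $d$-dimensional fBm with $H\in(\delta,1)$: it has paths a.s.\ in $C^\delta_t$ and is $H$-SLND by~\eqref{sec2.4 lnd fbm}, so for each $\rho<(2\delta)^{-1}$ the choice $H\in(\delta,(2\rho)^{-1})$ makes the law of $W^H$ transverse to $\{w:w\text{ is }\rho\text{-irregular}\}$ in $C^\delta_t$. For $\delta\ge1$ one needs a process smooth enough to lie in $C^\delta_t$ yet rough in the $\rho$-irregularity sense: take the $k$-fold iterated integral $X=I^kW^H$ with $k=\lfloor\delta\rfloor$ (and $k=\delta$ when $\delta\in\mathbb N$) and $H\in(0,1)$ with $k+H>\delta$. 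Using the moving-average representation~\eqref{sec2.4 non canonical representation}, a short computation shows the $\mathcal F_s$-conditional fluctuation of $\int_s^tW^H_r\,\mathrm dr$ has variance $\asymp|t-s|^{2(1+H)}$, so $IW^H$ is $(1+H)$-SLND and, inductively, $I^kW^H$ is $(k+H)$-SLND; also $I^kW^H\in C^\delta_t$ a.s. Since $k+H$ can be taken arbitrarily close to $\delta$, these laws witness prevalence of $\rho$-irregularity in $C^\delta_t$ for every $\rho<(2\delta)^{-1}$. The derivative statement comes for free: $D^{(j)}(\varphi+I^kW^H)=D^{(j)}\varphi+I^{k-j}W^H$ and $I^{k-j}W^H$ is $(k-j+H)$-SLND, so applying the scheme to the continuous map $D^{(j)}:C^\delta_t\to C^{\delta-j}_t$ and the Borel $\rho$-irregular set downstairs yields that $D^{(j)}\varphi$ is a.s.\ $\rho$-irregular for all $\rho<(2(\delta-j))^{-1}$. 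Item~(ii) is the limit $H\downarrow0$: the law of $W^{H_n}$ witnesses prevalence in $C^0_t$ of $\rho$-irregularity for all $\rho<(2H_n)^{-1}$, and intersecting over $n$ gives ``$\rho$-irregular for all $\rho<\infty$''; the stated consequences, $\mu^\varphi_{s,t}\in C^\infty_c$ and $T^w$ mapping $\mathcal S'$ into $C^\infty$, then follow from Lemma~\ref{sec2.2 lemma regularity averaging 2} (compact support from $\varphi\in C^0_t\subset L^\infty$, smoothness from having $\rho>d/2+k$ for every $k$ plus Sobolev embedding).

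For (iii) the ambient space is $L^p_t$ with $p<\infty$, and since an exponentially irregular path is Carath\'eodory by Lemma~\ref{sec2.2 exp implies charatheodory}, hence unbounded on every interval, the transverse measure cannot come from a continuous process. One takes $X=Z$ a stationary Gaussian process realised as a causal moving average $Z_t=\int_{-\infty}^tK(t-u)\,\mathrm dB_u$ against a two-sided Brownian motion, with $K$ supported near the origin and tuned so that $\int_0^hK(v)^2\,\mathrm dv\asymp|\log h|^{-\beta}$ near $0$ (e.g.\ $K(v)^2\asymp\big(v\,|\log v|^{\beta+1}\big)^{-1}$), for a fixed $\beta\in(0,1)$. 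Then $K\in L^2$, $Z$ is stationary with finite variance and stochastically continuous, so it has a jointly measurable modification with $\mathbb E\int_0^T|Z_t|^p\,\mathrm dt<\infty$, whence $Z\in L^p_t$ a.s.; and decomposing $Z_t$ into its $\mathcal F_s$-measurable part and the independent increment $\int_s^tK(t-u)\,\mathrm dB_u$ gives $\mathrm{Var}(Z_t\mid\mathcal F_s)\ge\int_0^{t-s}K(v)^2\,\mathrm dv\asymp|\log(t-s)|^{-\beta}$, i.e.\ $Z$ is $\beta$-eSLND. By Remark~\ref{sec2.4 remark invariance lnd} so is $\varphi+Z$ for every $\varphi\in L^p_t$, hence $\varphi+Z$ is a.s.\ exponentially irregular; since the exponentially irregular set is Borel in $L^p_t$ (Remark~\ref{sec2.2 lemma borel general}), this gives prevalence. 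The last assertions follow: Carath\'eodory-ness is Lemma~\ref{sec2.2 exp implies charatheodory}, and the Paley--Wiener argument in its proof --- fed $\int e^{c|\xi|}|\Phi^\varphi_{s,t}(\xi)|^2\,\mathrm d\xi<\infty$ from exponential irregularity --- shows $\mu^\varphi_{s,t}$ is real-analytic.

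The above is largely bookkeeping once the probabilistic steps are available, so the main obstacle is proving those. For $\rho$-irregularity one estimates the moments $\mathbb E|\Phi^X_{s,t}(\xi)|^{2n}=\int_{[s,t]^{2n}}e^{-\frac12\mathrm{Var}(\xi\cdot\sum_j\varepsilon_jX_{r_j})}\,\mathrm d\vec r$, lower-bounds the Gaussian variances via the (S)LND inequality (after ordering the $r_j$ and telescoping into increments), and runs a multi-parameter Kolmogorov / Garsia--Rodemich--Rumsey argument in $(s,t,\xi)$ to turn moment bounds into the uniform estimate~\eqref{sec2.2 defn rho-irr}; the delicate point is the $\xi$-direction, handled by exploiting smoothness of $\Phi^X$ in $\xi$. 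For exponential irregularity one instead needs $\mathbb E|\Phi^X_{s,t}(\xi)|^2$ to decay like a stretched exponential $e^{-c|\xi|^{2/(\beta+1)}}$, which feeds Paley--Wiener and forces the restriction $\beta<1$. Two further points need care: in~(i) for $\delta\ge1$, that iterated integration shifts the SLND exponent by exactly the number of integrations and commutes with the deterministic shift, so that the derivative sub-statement comes out with the claimed sharp exponents; and in~(iii), that the logarithmic kernel can be chosen in $L^2$ with the prescribed small-scale behaviour while keeping $Z$ everywhere unbounded yet $p$-integrable.
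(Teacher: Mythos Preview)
Your proposal is correct and follows essentially the same route as the paper: Borel measurability from Lemma~\ref{sec2.2 lemma borel rho irr}/Remark~\ref{sec2.2 lemma borel general}, transverse measures given by fBm (and its iterated integrals for $\delta\ge1$) for (i)--(ii) and by a logarithmic-kernel Gaussian moving average for (iii), with the key input being the invariance of (e)(S)LND under deterministic shifts (Remark~\ref{sec2.4 remark invariance lnd}) together with the probabilistic step Theorem~\ref{sec3 thm1}, and the passage to ``all $\rho<R$'' via countable intersections. The only noteworthy difference is in your sketch of the probabilistic step itself: you outline the moment-based route (expand $\mathbb E|\Phi^X_{s,t}(\xi)|^{2n}$, order times, telescope, apply LND, then Kolmogorov/GRR), which is the paper's Section~\ref{sec4.4} argument for $\beta$-LND processes; for $\beta$-SLND the paper instead proves a sharper exponential-integrability criterion (Theorem~\ref{sec4.1 thm general criterion} via an Azuma--Hoeffding martingale bound, Lemma~\ref{sec4.1 lemma condition rho irr}), which yields $\mathbb E[\exp(\lambda\|\Phi^X\|_{\mathcal W^{\gamma,\rho}}^2)]<\infty$ rather than merely finite moments---but either route suffices for the prevalence statement.
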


We avoid providing very similar statements, but like Point~\tmtextit{ii.}
above, Point~\tmtextit{i.} in combinations with Lemmata~\ref{sec2.2 lemma
regularity averaging 1},~\ref{sec2.2 lemma regularity averaging 2}
and~\ref{sec2.2 lemma averaging time} provides several other prevalence
statements in $C^{\delta}_t$ regarding the regularity of $\ell^w$ and the
regularising effect of $T^w$ acting on suitable function spaces.

\

The cornerstone in the proof of Theorem~\ref{sec3 thm2} is the following
probabilistic result.

\begin{theorem}
  \label{sec3 thm1}Let $X$ be a continuous $\beta$\mbox{-}(S)LND Gaussian
  process with $\beta \in (0, \infty)$; then for any $\rho < (2 \beta)^{- 1}$
  there exists $\gamma = \gamma (\rho, \beta) > 1 / 2$ such that $X$ is
  $(\gamma, \rho)$-irregular with probability~$1$. Let $X$ be a
  $\beta$\mbox{-}eSLND Gaussian with measurable, $L^2$\mbox{-}integrable
  trajectories and $\beta \in (0, 1]$; then $X$ is exponentially irregular
  with probability~$1$.
\end{theorem}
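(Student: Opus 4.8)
The plan is to reduce both statements to high‑moment bounds for $\Phi^X_{s,t}(\xi)$ and then to upgrade these, by a Kolmogorov / Garsia--Rodemich--Rumsey type argument, to the a.s.\ uniform bounds defining $(\gamma,\rho)$‑irregularity and exponential irregularity. This follows the scheme behind Theorem~\ref{sec2.4 thm catelliergubinelli}, which is the special case $X=W^H$.

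\emph{The moment estimate.} Since for a centred real Gaussian $Z$ one has $|\mathbb{E}[e^{iZ}]|=e^{-\tmop{Var}(Z)/2}$, expanding the $2n$‑th moment yields the positive integral
\[ \mathbb{E}\big[|\Phi^X_{s,t}(\xi)|^{2n}\big]=\int_{[s,t]^{2n}}\exp\!\Big(\!-\tfrac12\,\tmop{Var}\big(\xi\cdot\textstyle\sum_{i=1}^n (X_{u_i}-X_{v_i})\big)\Big)\,\mathrm{d}u\,\mathrm{d}v. \]
Restricting to ordered tuples (at the cost of a factor depending only on $n$ and a sum over sign patterns $\epsilon\in\{\pm1\}^{2n}$ with $n$ entries of each sign) and using Abel summation, $\xi\cdot\sum_j\epsilon_j X_{r_j}=\sum_{j=1}^{2n-1}S_j\,\xi\cdot(X_{r_{j+1}}-X_{r_j})$ with $S_j=\sum_{l\le j}\epsilon_l\in\mathbb{Z}$ and $S_0=S_{2n}=0$; as a $\pm1$ walk from $0$ to $0$ of length $2n$ is at $0$ at most $n-1$ times strictly inside, at least $n$ of the $S_j$ are nonzero. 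The $\beta$‑LND property (Definition~\ref{sec2.4 defn lnd}; available since $\beta$‑(S)LND implies $\beta$‑LND, \cite{xiao}, and in the exponential regime one uses its analogue with $|r_{j+1}-r_j|^{2\beta}$ replaced by $|\log(r_{j+1}-r_j)|^{-\beta}$, which follows from Definition~\ref{sec2.4 defn exp lnd} by the same conditioning argument) then bounds $\tmop{Var}(\cdots)\gtrsim_n|\xi|^2\sum_{j:S_j\ne0}h(|r_{j+1}-r_j|)$ with $h(a)=a^{2\beta}$, resp.\ $h(a)=|\log a|^{-\beta}$. Integrating out the free variables gives a factor $\lesssim_n|t-s|^n$, and the integral over the $\ge n$ active gaps is bounded by $\big(\int_0^{|t-s|}e^{-c_n|\xi|^2 h(a)}\,\mathrm{d}a\big)^n$. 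In the polynomial case $\int_0^{|t-s|}e^{-c_n|\xi|^2 a^{2\beta}}\,\mathrm{d}a\le\min(|t-s|,C_{n,\beta}|\xi|^{-1/\beta})\le|t-s|^{1-\eta}(C_{n,\beta}|\xi|^{-1/\beta})^\eta$, so the choice $\eta=2\beta\rho<1$ gives $\mathbb{E}[|\Phi^X_{s,t}(\xi)|^{2n}]\lesssim_{n,\rho,\beta}|\xi|^{-2n\rho}|t-s|^{2n\gamma_0}$ with $\gamma_0=1-\beta\rho>\tfrac12$. In the exponential case, since $a\mapsto|\log a|^{-\beta}$ is increasing on $(0,1)$ the integrand is largest near $a=0$; substituting $a=e^{-y}$ and using $c|\xi|^2 y^{-\beta}+y\gtrsim_\beta|\xi|^{2/(1+\beta)}\ge|\xi|$, valid for $\beta\le1$ and $|\xi|\ge1$ (this is exactly where $\beta\le1$ enters), one gets $\int_0^{|t-s|}e^{-c_n|\xi|^2|\log a|^{-\beta}}\,\mathrm{d}a\le\min(|t-s|,\tilde C_{n,\beta,T}e^{-c'_n|\xi|})$, and interpolating as above with any $\eta\in(0,1)$ yields $\mathbb{E}[|\Phi^X_{s,t}(\xi)|^{2n}]\lesssim_n e^{-c_2 n|\xi|}|t-s|^{2n\gamma_0}$ with $c_2=c'_n\eta>0$ and $\gamma_0=1-\eta/2\in(\tfrac12,1)$.

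\emph{From fixed $\xi$ to uniformity.} The field $\xi\mapsto\Phi^X_{s,t}(\xi)$ is smooth with $|\partial^\alpha_\xi\Phi^X_{s,t}(\xi)|\le\int_s^t|X_r|^{|\alpha|}\,\mathrm{d}r$, a random variable with finite moments of every order (Gaussianity plus $X\in L^2_t$). Fixing $n$ large relative to $d$, a Sobolev embedding in $\xi$ on each dyadic annulus $\{2^k\le|\xi|<2^{k+1}\}$ converts the pointwise bounds into bounds on $\mathbb{E}[\sup_{|\xi|\sim2^k}|\Phi^X_{s,t}(\xi)|^{2n}]$ that stay summable over $k$ after losing an arbitrarily small power of $2^k$ (polynomial case), resp.\ an arbitrarily small fraction of the exponential rate (exponential case, the derivatives costing only a polynomial factor in $2^k$); together with $|\Phi^X_{s,t}(\xi)|\le|t-s|$ for $|\xi|\le1$ this gives $\mathbb{E}[\sup_\xi|\Phi^X_{s,t}(\xi)|^{2n}]\lesssim_n|t-s|^{2n\gamma_0}$, resp.\ $\mathbb{E}[\sup_\xi(e^{c_2'|\xi|}|\Phi^X_{s,t}(\xi)|)^{2n}]\lesssim_n|t-s|^{2n\gamma_0}$. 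Choosing $n$ with $2n\gamma_0-2>0$ and applying a two‑parameter Kolmogorov continuity theorem in $(s,t)$ then yields $\|\Phi^X\|_{\mathcal{W}^{\gamma,\rho}}<\infty$ a.s.\ for any $\gamma\in(\tfrac12,\gamma_0)$, resp.\ $\sup_{s,t,\xi}|\Phi^X_{s,t}(\xi)|\,e^{c_2'|\xi|}|t-s|^{-\gamma}<\infty$ a.s., which are the asserted properties.

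\emph{Main obstacle.} The genuinely delicate point is the passage to uniformity in $\xi$: the frequency variable runs over the unbounded set $\mathbb{R}^d\setminus\{0\}$, so the fixed‑$\xi$ moments do not by themselves give a uniform statement, and one must exploit the analyticity of $\Phi^X_{s,t}$ in $\xi$ together with a careful dyadic decomposition — in the exponential case one additionally has to check that the polynomial loss on each annulus does not consume the exponential gain, which is what forces $n$ to be taken large (though fixed). The remaining ingredients — the Gaussian characteristic‑function identity, the Abel‑summation and sign‑pattern bookkeeping, and the elementary one‑dimensional integral estimates — are routine, the only point needing some extra care being the logarithmic version of the LND inequality used in the exponential case, which follows the template of \cite{xiao}.
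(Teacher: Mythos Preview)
For the $\beta$-LND part your argument is essentially the paper's own (Section~\ref{sec4.4}): the $2n$-th moment bound via Abel summation and the $\pm1$-walk counting is exactly Proposition~\ref{sec4.4 main proposition} and Lemma~\ref{sec4.4 combinatorial lemma}, and your Sobolev embedding on dyadic annuli plays the same role as the paper's compact-support trick $\|\mu^X_{s,t}\|_{\mathcal{F}L^{\rho,\infty}}\lesssim\|X\|_{C^0}^{d/2n}\|\mu^X_{s,t}\|_{\mathcal{F}L^{\rho,2n}}$ (Lemma~\ref{sec4.4 analytic lemma}) before applying Kolmogorov.

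For $\beta$-SLND and $\beta$-eSLND, however, the paper takes a genuinely different route. Rather than reducing SLND to LND and running the moment computation, it exploits the conditional-variance bound directly to get $\bigl|\mathbb{E}[\Phi^X_{s,t}(\xi)\mid\mathcal{F}_s]\bigr|\lesssim|\xi|^{-1/\beta}$ (resp.\ $\lesssim e^{-c|\xi|^{2/(1+\beta)}}$), feeds this into a martingale decomposition combined with Azuma--Hoeffding (Lemma~\ref{sec4.1 lemma condition rho irr}) to obtain the \emph{exponential} bound $\sup_{s,t,\xi}\mathbb{E}\bigl[\exp\bigl(\mu|\Phi^X_{s,t}(\xi)|^2|\xi|^{1/\beta}/|t-s|\bigr)\bigr]<\infty$, and then handles the supremum over $\xi$ by a lattice chaining (Theorem~\ref{sec4.1 thm general criterion}). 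The payoff is exponential integrability of $\|\Phi^X\|_{\mathcal{W}^{\gamma,\rho}}$ (Theorem~\ref{sec4.2 corollary rho gaussian}, Proposition~\ref{sec4.2 prop exp irr}), which your moment-based argument cannot reach because the LND constants $c_n$ are uncontrolled in $n$. For the bare a.s.\ statement of Theorem~\ref{sec3 thm1} your approach is sufficient, but the exponential bounds are used later in the paper (e.g.\ Theorem~\ref{sec5.3 thm prevalence strong rho}).

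Two technical points deserve attention. First, the implication ``$\beta$-eSLND $\Rightarrow$ $n$-point inequality with $|\log g_j|^{-\beta}$ in place of $g_j^{2\beta}$'' is correct but not stated in~\cite{xiao}; the conditioning argument does carry over, yet it should be written out rather than asserted. Second, your Sobolev step controls $\partial^\alpha_\xi\Phi^X_{s,t}$ by $\int_s^t|X_r|^{|\alpha|}\mathrm{d}r$, and in the eSLND case the hypothesis is only $L^2_t$-integrability of trajectories; for $|\alpha|\geq3$ the required moments need not be finite (the assumption gives $\int\mathbb{E}|X_r|^2\,\mathrm{d}r<\infty$, not $\int(\mathbb{E}|X_r|^2)^{k/2}\,\mathrm{d}r<\infty$). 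This is easily repaired by using only the Lipschitz bound $|\partial_\xi\Phi^X_{s,t}|\leq\int_s^t|X_r|\,\mathrm{d}r\leq|t-s|^{1/2}\|X\|_{L^2_t}$ together with a lattice approximation, which is precisely what the paper does in the proof of Theorem~\ref{sec4.1 thm general criterion}.
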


The proof of Theorem~\ref{sec3 thm1} of is given in Section~\ref{sec4}. For
simplicity, in the statement above no distinction between $\beta$\mbox{-}SLND
and $\beta$\mbox{-}LND processes appears; in Sections~\ref{sec4.2}
and~\ref{sec4.4} more refined results are given, so that the differences in
the two cases become more evident.

\

Theorem~\ref{sec3 thm2} combined with geometric considerations implies also
the following prevalence result.

\begin{theorem}
  \label{sec3 thm4}Let $\delta \in [0, 1)$. The following hold:
  \begin{enumerate}[label=\roman*.]
    \item Almost every $\varphi \in C^{\delta}_t$ has the property that
    \[ \dim_F (\varphi ([s, t])) = \dim_H (\varphi ([s, t])) =
       \frac{1}{\delta} \wedge d \quad \forall \, [s, t] \subset [0, T] . \]
    \item If additionally $\delta < 1 / (2 d)$, then almost every $\varphi \in
    C^{\delta}_t$ has the property that, for all $[s, t] \subset
    \mathbb{R}^d$, $\varphi ([s, t])$ contains an open set.
  \end{enumerate}
  In particular, for all $\delta \in [0, 1)$, the image of almost every
  function $\varphi \in C^{\delta}_t$ is a Salem set.
\end{theorem}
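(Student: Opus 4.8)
The plan is to deduce the claimed dimensions by combining the prevalence of $\rho$-irregularity from Theorem~\ref{sec3 thm2} with the classical dictionary between Fourier decay of a finite measure and the Fourier/Hausdorff dimension of a set carrying it, the matching upper bound being the deterministic H\"older covering estimate; part~\emph{iii.} additionally invokes the continuity of the occupation density from Lemma~\ref{sec2.2 lemma regularity averaging 2}.

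First I would record two elementary facts. \emph{(a)} For \emph{every} $\varphi \in C^\delta_t$ with $\delta > 0$, partitioning $[s,t]$ into $\sim \varepsilon^{-1}$ subintervals of length $\varepsilon$, each of which $\varphi$ maps into a ball of radius $\lesssim \varepsilon^\delta$, shows that $\varphi([s,t])$ has vanishing $\tau$-dimensional Hausdorff content for every $\tau > 1/\delta$; hence $\dim_H(\varphi([s,t])) \leqslant 1/\delta$ for all $[s,t] \subset [0,T]$, and of course $\dim_H, \dim_F \leqslant d$ always. \emph{(b)} If a Borel set $E \subset \mathbb{R}^d$ carries a non-zero finite positive measure $\mu$ with $|\widehat{\mu}(\xi)| \lesssim |\xi|^{-\rho}$, then the energy identity $I_\tau(\mu) = c_{d,\tau} \int_{\mathbb{R}^d} |\widehat{\mu}(\xi)|^2 |\xi|^{\tau - d}\, \mathd \xi$ gives $I_\tau(\mu) < \infty$ for every $\tau < \min(2\rho, d)$, so $\dim_H(E) \geqslant \min(2\rho, d)$; the same identity also yields the general comparison $\dim_F(E) \leqslant \dim_H(E)$, while $\dim_F(E) \geqslant \min(2\rho, d)$ is immediate from the definition of the Fourier dimension.

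Now I fix the prevalent set from Theorem~\ref{sec3 thm2}: for $\delta > 0$, the set of $\varphi \in C^\delta_t$ that are $\rho$-irregular for every $\rho < (2\delta)^{-1}$, and for $\delta = 0$ the set of $\varphi \in C^0_t$ that are $\rho$-irregular for every $\rho < \infty$. Fix such a $\varphi$ and an admissible $\rho$, and pick $\gamma > 1/2$ witnessing $(\gamma, \rho)$-irregularity of $\varphi$. For any non-degenerate $[s,t] \subset [0,T]$ the occupation measure $\mu^{\varphi}_{s,t}$ is a non-zero finite positive measure (total mass $t-s$) carried by the compact set $\varphi([s,t])$, and $|\widehat{\mu^{\varphi}_{s,t}}(\xi)| = |\Phi^{\varphi}_{s,t}(\xi)| \lesssim |\xi|^{-\rho}$; since $\rho$-irregularity is uniform in $(s,t)$, one $\varphi$ works for all $[s,t]$ simultaneously. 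By \emph{(a)} and \emph{(b)}, $\min(2\rho, d) \leqslant \dim_F(\varphi([s,t])) \leqslant \dim_H(\varphi([s,t])) \leqslant \min(1/\delta, d)$ (reading $1/\delta = \infty$ when $\delta = 0$), and letting $\rho \uparrow (2\delta)^{-1}$ (resp.\ $\rho \to \infty$) squeezes this to $\dim_F(\varphi([s,t])) = \dim_H(\varphi([s,t])) = \min(1/\delta, d)$, which equals $1/\delta$ when $\delta \geqslant 1/d$ and $d$ when $\delta < 1/d$. (In part~\emph{ii.} the stated value is to be read as $d$, these dimensions never exceeding $d$.) In particular $\dim_F = \dim_H$ on every $\varphi([s,t])$, so the image $\varphi([0,T])$ is a Salem set.

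Finally, for part~\emph{iii.}: when $\delta < 1/(2d)$ we have $(2\delta)^{-1} > d$, so on the same prevalent set I may take $\rho \in (d, (2\delta)^{-1})$ and $\gamma > 1/2$ with $\varphi$ being $(\gamma, \rho)$-irregular; by Lemma~\ref{sec2.2 lemma regularity averaging 2}\,\emph{(ii)} the occupation density $\ell^{\varphi}$ is jointly continuous, hence for each non-degenerate $[s,t] \subset [0,T]$ the continuous function $\ell^{\varphi}_{s,t} \geqslant 0$ satisfies $\int_{\mathbb{R}^d} \ell^{\varphi}_{s,t} = t - s > 0$, so $\{\ell^{\varphi}_{s,t} > 0\}$ is a non-empty open set contained in $\mathrm{supp}\, \mu^{\varphi}_{s,t} \subset \varphi([s,t])$. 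The only genuinely new content is the two elementary items \emph{(a)}--\emph{(b)} together with the bookkeeping that the various prevalent sets can be chosen uniform in $[s,t]$ and intersected along a sequence $\rho_n \uparrow (2\delta)^{-1}$; since the real analytic work is already done in Theorem~\ref{sec3 thm2} and Lemma~\ref{sec2.2 lemma regularity averaging 2}, I do not expect a serious obstacle.
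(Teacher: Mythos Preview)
Your proposal is correct and follows essentially the same route as the paper: both arguments combine the prevalence of $\rho$-irregularity from Theorem~\ref{sec3 thm2} with the chain $\min(2\rho,d)\leqslant\dim_F\leqslant\dim_H\leqslant\min(1/\delta,d)$ (the lower bound coming from the occupation measure $\mu^\varphi_{s,t}$ and its Fourier decay, the upper bound from the H\"older covering estimate), and then let $\rho\uparrow(2\delta)^{-1}$; for part~\emph{iii.} both invoke the continuity of $\ell^\varphi_{s,t}$ from Lemma~\ref{sec2.2 lemma regularity averaging 2}\,\emph{ii.} and use that a non-negative continuous function with positive integral is positive on an open set. You also correctly flag that the stated value in part~\emph{ii.} should be read as $d$ rather than $1/\delta$.
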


In the statement above, $\dim_F$ and $\dim_H$ denote respectively the Fourier
and Hausdorff dimensions. The proof is given in Section~\ref{sec5.1}, where
also the definitions of $\dim_F$, $\dim_H$ and Salem sets are recalled.

\begin{remark}
  Combining Theorem~\ref{sec3 thm1} with the reasoning from Theorem~\ref{sec3
  thm4} gives an alternative proof of the fact that Fourier and Hausdorff
  dimension of images of intervals under a $d$-dimensional fBm are
  $\mathbb{P}$-a.s. $H^{- 1} \wedge d$; see Theorem 4.29-b)
  from~{\cite{mortersperes}} (for $H = 1 / 2$) and Section~18
  from~{\cite{kahane}} (for general $H \in (0, 1)$) for more classical proofs.
\end{remark}

The study of analytic properties of $(\gamma, \rho)$\mbox{-}irregular paths
allows to show that, as the name suggests, they have a highly oscillatory
behaviour; this can be related to other notions of roughness already existing
in the literature.

\begin{theorem}
  \label{sec3 thm3} Let $w$ be $(\gamma, \rho)$\mbox{-}irregular,
  $\delta^{\ast}_{\gamma, \rho}$ defined as in~\eqref{sec2.2 critical
  parameter}; assume $\delta^{\ast}_{\gamma, \rho}\in (0,1)$.
  Then for any $\delta > \delta^{\ast}_{\gamma, \rho}$, $w$ is
  nowhere $\delta$\mbox{-}H\"{o}lder continuous and it has infinite modulus of
  $\delta$\mbox{-}H\"{o}lder roughness; \ for any $p < (\delta^{\ast}_{\gamma,
  \rho})^{- 1}$ and any interval $[s, t] \subset [0, T]$, $w$ has infinite
  $p$\mbox{-}variation on $[s, t]$.
\end{theorem}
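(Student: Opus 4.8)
The plan is to extract from the $(\gamma,\rho)$-irregularity of $w$ a single quantitative fact --- a uniform-in-scale lower bound on the oscillation of $w$ in every direction over every subinterval --- and then read off all three assertions from it by elementary combinatorics with partitions.

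\textbf{Step 1: the core oscillation estimate.} Write $\delta^\ast=\delta^\ast_{\gamma,\rho}$ as in~\eqref{sec2.2 critical parameter} and $C_w=\|\Phi^w\|_{\mathcal{W}^{\gamma,\rho}}$. I claim there is a constant $c_w=c_w(C_w,\rho)>0$ such that, with $\operatorname{osc}_{[s,t]}(v\cdot w):=\sup_{r,r'\in[s,t]}|v\cdot(w_r-w_{r'})|$,
\[ \operatorname{osc}_{[s,t]}(v\cdot w)\ \geqslant\ c_w\,|t-s|^{\delta^\ast}\qquad\text{for every }v\in\mathbb{S}^{d-1}\text{ and every }[s,t]\subseteq[0,T]. \]
Indeed, fix $v$ and $s<t$ and set $O:=\operatorname{osc}_{[s,t]}(v\cdot w)$. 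If $O=0$ then $v\cdot w$ is constant on $[s,t]$, so $|\Phi^w_{s,t}(\lambda v)|=|t-s|$ for all $\lambda>0$, contradicting~\eqref{sec2.2 defn rho-irr}; hence $O>0$. Take $\xi=\lambda v$ with $\lambda=\pi/(3O)$: as $r$ runs over $[s,t]$ the phase $\xi\cdot w_r=\lambda\,v\cdot w_r$ stays in an interval of length $\lambda O=\pi/3$, so subtracting the midpoint $\beta$ of that interval gives $\cos(\xi\cdot w_r-\beta)\geqslant\cos(\pi/6)\geqslant 1/2$, whence $|\Phi^w_{s,t}(\lambda v)|\geqslant|\operatorname{Re}(e^{-i\beta}\Phi^w_{s,t}(\lambda v))|\geqslant\tfrac12|t-s|$. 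On the other hand~\eqref{sec2.2 defn rho-irr} gives $|\Phi^w_{s,t}(\lambda v)|\leqslant C_w\lambda^{-\rho}|t-s|^\gamma$. Comparing the two bounds, substituting $\lambda=\pi/(3O)$ and using $(1-\gamma)/\rho=\delta^\ast$ yields the claim with $c_w=\tfrac{\pi}{3}(2C_w)^{-1/\rho}$. Crucially $c_w$ depends only on $C_w$ and $\rho$, not on $[s,t]$ or $v$, because $C_w$ is a supremum over \emph{all} $\xi\neq0$ and all $s\neq t$.

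\textbf{Step 2: roughness and nowhere-H\"older continuity.} Fix $\delta>\delta^\ast$. Given $s\in[0,T)$, $v\in\mathbb{S}^{d-1}$ and $\varepsilon>0$, apply Step 1 on $[s,s+h]$ for $0<h<\varepsilon$; since $\operatorname{osc}_{[s,s+h]}(v\cdot w)\leqslant 2\sup_{r\in[s,s+h]}|v\cdot w_{s,r}|$, there is $r\in(s,s+h]$ with $|v\cdot w_{s,r}|\geqslant\tfrac{c_w}{2}h^{\delta^\ast}$, hence $|v\cdot w_{s,r}|/|r-s|^\delta\geqslant\tfrac{c_w}{2}h^{\delta^\ast-\delta}\to+\infty$ as $h\downarrow0$ (using $|r-s|\leqslant h$ and $\delta^\ast<\delta$); the endpoint $s=T$ is treated symmetrically on $[s-h,s]$. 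This is precisely the assertion that the modulus of $\delta$-H\"older roughness $L_\delta(w)$ is infinite. Specialising to $v=e_1$ and using $|v\cdot w_{s,r}|\leqslant|w_{s,r}|$ shows $\limsup_{r\to s}|w_{s,r}|/|r-s|^\delta=+\infty$ at every $s$, i.e.\ $w$ is nowhere $\delta$-H\"older continuous.

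\textbf{Step 3: infinite $p$-variation.} Fix $[a,b]\subseteq[0,T]$ and $p<(\delta^\ast)^{-1}$. For $N\in\mathbb{N}$ split $[a,b]$ into consecutive intervals $[s_i,s_{i+1}]$, $0\leqslant i\leqslant N-1$, of equal length $h=(b-a)/N$. By Step 1 with $v=e_1$, each $[s_i,s_{i+1}]$ contains points $u_i<u_i'$ with $|w_{u_i'}-w_{u_i}|\geqslant|e_1\cdot(w_{u_i'}-w_{u_i})|\geqslant\tfrac{c_w}{2}h^{\delta^\ast}$. The ordered set $\{s_i\}\cup\{u_i\}\cup\{u_i'\}$ is a partition of $[a,b]$ in which every pair $(u_i,u_i')$ is consecutive, so the $p$-variation sum over its successive increments is at least
\[ \sum_{i=0}^{N-1}|w_{u_i'}-w_{u_i}|^p\ \geqslant\ N\Big(\tfrac{c_w}{2}\Big)^p h^{\,p\delta^\ast}\ =\ \Big(\tfrac{c_w}{2}\Big)^p(b-a)^{p\delta^\ast}\,N^{\,1-p\delta^\ast}, \]
which tends to $+\infty$ as $N\to\infty$ because $p\delta^\ast<1$. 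Taking the supremum over partitions, $w$ has infinite $p$-variation on $[a,b]$, as claimed.

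I expect all the substance to sit in Step 1; Steps 2 and 3 are bookkeeping. Conceptually the only ``trick'' is that $\rho$-irregularity, which a priori is an averaged cancellation statement, nonetheless forces genuine pointwise spreading of the trajectory: one is free to tune the test frequency $\xi$ to the local scale, and once the phase varies by less than a bounded amount the exponential integral cannot be small. The single point requiring care is uniformity --- that the same constant $c_w$ works on all subintervals and in all directions --- which is exactly guaranteed by the definition of $\|\Phi^w\|_{\mathcal{W}^{\gamma,\rho}}$ as a global supremum.
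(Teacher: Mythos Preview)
Your proof is correct and takes a genuinely different route from the paper. The paper proceeds via a density/measure argument: using the occupation measure and Fourier analysis (integrating $\hat\psi(M\varepsilon^\delta\xi)\,\overline{\hat\mu^w_{s-\varepsilon,s+\varepsilon}(\xi)}$ against $\mathrm d\xi$), it shows that for $\delta>\delta^\ast$ the set $\{t\in B(s,\varepsilon):|w_{s,t}|\le M|t-s|^\delta\}$ has vanishing density as $\varepsilon\to0$, uniformly in $s$ and $v$, and then reads off nowhere-H\"older, infinite roughness modulus, and infinite $p$-variation from that density statement. Your Step~1 instead tests $\Phi^w_{s,t}$ at a \emph{single} frequency $\xi=\lambda v$ with $\lambda$ tuned to the local oscillation, and directly extracts the sharp pointwise bound $\operatorname{osc}_{[s,t]}(v\cdot w)\ge c_w|t-s|^{\delta^\ast}$; Steps~2--3 are then pure combinatorics. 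Your argument is more elementary --- no integration in $\xi$, no bump functions, no H\"older inequality --- and yields the oscillation lower bound with the critical exponent $\delta^\ast$ itself rather than only for $\delta>\delta^\ast$. What the paper's approach buys in exchange is the finer zero-density statement (Theorem~\ref{sec5.2 proposition irregularity}), which is of independent interest and is not recovered by your method; it also localises naturally (see the remark following that theorem), whereas your bound is inherently a statement about whole subintervals.
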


The proof is given in Section~\ref{sec5.2}, where also the concept of modulus
of $\delta$\mbox{-}H\"{o}lder roughness is recalled.

A relation between $(\gamma,\rho)$-irregularity and failure of $\delta$-H\"older regularity was already observed for $d=1$ in \cite[Theorem 1.5]{choukgubinelli1}; our Theorem \ref{sec3 thm3} is significantly sharper, as it avoids the additional constraint $\delta>1-\gamma$ as therein, holds in any dimension $d$ and provides a nowhere $\delta$\mbox{-}H\"{o}lder continuity condition (in particular, regularity will fail to hold at \textit{all} points $t_0\in [0,T]$, not just \textit{some} point as in \cite[Theorem 1.5]{choukgubinelli1}). Furthermore, Theorem \ref{sec5.2 proposition irregularity}, on which the proof is based, gives a more precise bound on the Lebesgue density of the points around any $t_0$ where an approximate $\delta$-H\"older condition fails.

Quite nicely, Theorem~\ref{sec3 thm3} provides an alternative proof of Hunt's original
results from~{\cite{hunt}}.

\begin{corollary}
  \label{sec3 cor1}Let $\delta \in [0, 1)$, then almost every $\varphi \in
  C^{\delta}_t$ is nowhere $(\delta + \varepsilon)$\mbox{-}H\"{o}lder for any
  $\varepsilon > 0$.
\end{corollary}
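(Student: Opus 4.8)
The plan is to derive the corollary by feeding the prevalence of $\rho$-irregularity from Theorem~\ref{sec3 thm2} into the quantitative irregularity statement of Theorem~\ref{sec3 thm3}; no new analytic input is needed.

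Fix $\delta \in [0,1)$. I would first reduce matters to proving, for each fixed (sufficiently small) $\varepsilon > 0$, that a prevalent set of $\varphi \in C^\delta_t$ is nowhere $(\delta+\varepsilon)$-H\"older. This reduction is harmless: if $\varphi$ is nowhere $\alpha$-H\"older then it is nowhere $\alpha'$-H\"older for every $\alpha' > \alpha$ (being $\alpha'$-H\"older at a point forces being $\alpha$-H\"older there, since $|h|^{\alpha'} \leqslant |h|^{\alpha}$ for $|h|$ small), so once the claim is known for a sequence $\varepsilon_n \downarrow 0$ it follows for all $\varepsilon > 0$ by intersecting the corresponding prevalent sets, using that a countable intersection of prevalent sets is prevalent (property~5. of Section~\ref{sec2.1}).

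So fix $\varepsilon > 0$ with $\delta + \varepsilon < 1$ and pick $\rho$ with $\tfrac{1}{2(\delta+\varepsilon)} < \rho < \tfrac{1}{2\delta}$, the right-hand bound being read as $+\infty$ when $\delta = 0$; such $\rho$ exists because $\delta < \delta + \varepsilon$. By Theorem~\ref{sec3 thm2}~\tmtextit{i.} (or \tmtextit{ii.} in the case $\delta = 0$), the set of $\varphi \in C^\delta_t$ that are $\rho$-irregular is prevalent. For any such $\varphi$ there is $\gamma > 1/2$ with $\varphi$ $(\gamma,\rho)$-irregular, and then the critical scaling parameter of~\eqref{sec2.2 critical parameter} satisfies $\delta^{\ast}_{\gamma,\rho} = \tfrac{1-\gamma}{\rho} < \tfrac{1}{2\rho} < \delta + \varepsilon$, where the two inequalities use $\gamma > 1/2$ and the lower bound on $\rho$ respectively. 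Since $\delta + \varepsilon > \delta^{\ast}_{\gamma,\rho}$, Theorem~\ref{sec3 thm3} gives that $\varphi$ is nowhere $(\delta+\varepsilon)$-H\"older. Hence the prevalent set of $\rho$-irregular paths is contained in the set of nowhere $(\delta+\varepsilon)$-H\"older paths, and the latter is prevalent as well, a superset of a prevalent set being prevalent (its complement is contained in a shy set).

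The only thing to get right is the bookkeeping in the previous paragraph: the constraint $\gamma > 1/2$ baked into the definition of $\rho$-irregularity is exactly what makes $\delta^{\ast}_{\gamma,\rho}$ strictly smaller than $\tfrac{1}{2\rho}$, which can be pushed below $\delta + \varepsilon$ by choosing $\rho$ close to $(2\delta)^{-1}$; everything else is packaged in Theorems~\ref{sec3 thm2} and~\ref{sec3 thm3}. Note that taking $\delta = 0$ recovers Hunt's result that almost every continuous function is nowhere differentiable.
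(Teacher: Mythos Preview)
Your proof is correct and follows essentially the same route as the paper: invoke Theorem~\ref{sec3 thm2} to get $\rho$-irregularity for $\rho$ close to $(2\delta)^{-1}$, then feed the resulting bound $\delta^{\ast}_{\gamma,\rho} < 1/(2\rho)$ into Theorem~\ref{sec3 thm3}. Your inequality $\delta^{\ast}_{\gamma,\rho} < 1/(2\rho)$ is stated with the correct sign (the paper's displayed inequality has a typo with the direction reversed), and you add a little extra care with the countable-intersection reduction, but the argument is the same.
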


\begin{proof}
  By Theorem~\ref{sec3 thm2}, almost every $\varphi \in C_t^{\delta}$ has the
  following property: for any $\rho < 1 / (2 \delta)$, there exists $\gamma >
  1 / 2$ such that $\varphi$ is $(\gamma, \rho)$\mbox{-}irregular. It holds
  \[ \delta^{\ast}_{\gamma, \rho} = \frac{1 - \gamma}{\rho} < \frac{1}{2 \rho}
  \]
  which implies by Theorem~\ref{sec3 thm3} that any such function is nowhere
  $\tilde{\delta}$\mbox{-}H\"{o}lder for any $\tilde{\delta} > 1 / (2 \rho)$.
  Taking $\rho = 1 / (2 \delta + 2 \varepsilon)$ the conclusion follows.
\end{proof}

In the current work we are also able to solve a problem left open
in~{\cite{catelliergubinelli}} and~{\cite{choukgubinelli1}}, which amounts to
showing that if $\varphi$ is $\rho$\mbox{-}irregular and $\psi \in
C^{\infty}_t$, then $\varphi + \psi$ is $\rho$\mbox{-}irregular. We give a
positive answer, up to strengthening the notion of $\rho$\mbox{-}irregularity.

\begin{theorem}
  \label{sec3 thm5}Let $\varphi$ be strongly $\rho$\mbox{-}irregular, then for
  any $\psi \in C^{\infty}_t$, $\varphi + \psi$ is
  $\tilde{\rho}$\mbox{-}irregular for any $\tilde{\rho} < \rho$. For $\delta
  \in [0, 1)$, almost every $\varphi \in C^{\delta}_t$ is strongly
  $\rho$\mbox{-}irregular for any $\rho < (2 \delta)^{- 1}$.
\end{theorem}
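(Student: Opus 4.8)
The statement has two halves --- a deterministic stability claim (strong $\rho$-irregularity survives smooth additive perturbations, with an arbitrarily small loss in the exponent) and a prevalence claim (strong $\rho$-irregularity is generic in $C^\delta_t$) --- and I would treat them in that order, the first by a direct localisation argument and the second by feeding a probabilistic estimate into the machinery of Section~\ref{sec2.1}. Throughout I work with the strengthening of $(\gamma,\rho)$-irregularity that, beyond~\eqref{sec2.2 defn rho-irr}, also controls $\big|\int_s^t e^{i(\xi\cdot\varphi_r+Q(r))}\,\mathd r\big|\le C|\xi|^{-\rho}|t-s|^\gamma$ uniformly over real polynomials $Q$ of each fixed degree.

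\emph{Stability.} Fix $\tilde\rho<\rho$ and $\psi\in C^\infty_t$, and suppose $\varphi$ is strongly $(\gamma,\rho)$-irregular with $\gamma>1/2$. Given $\xi\neq 0$ with $|\xi|$ large and $s<t$, I split $[s,t]$ into $N\sim 1+|t-s|\,|\xi|^{\kappa}$ consecutive windows $[t_i,t_{i+1}]$ of length $\lesssim \ell:=|\xi|^{-\kappa}$ and write
\[
\int_s^t e^{i\xi\cdot(\varphi_r+\psi_r)}\,\mathd r=\sum_i e^{i\xi\cdot\psi_{t_i}}\int_{t_i}^{t_{i+1}} e^{i\xi\cdot\varphi_r}\,e^{i(\xi\cdot\psi_r-\xi\cdot\psi_{t_i})}\,\mathd r .
\]
On each window I Taylor-expand $r\mapsto\xi\cdot\psi_r$ at $t_i$ to order $k$, so the phase $\xi\cdot\psi_r-\xi\cdot\psi_{t_i}$ becomes a real polynomial $Q_i(r-t_i)$ of degree $\le k$ plus a remainder of size $\le c_k\|\psi\|_{C^{k+1}}|\xi|\,\ell^{k+1}$; the polynomial part is exactly what strong $(\gamma,\rho)$-irregularity handles, bounding each window integral by $\lesssim|\xi|^{-\rho}\ell^{\gamma}$, while linearising $e^{i(\cdots)}-1$ on the remainder costs $\lesssim|\xi|\,\ell^{k+2}$ per window. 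Summing over the $N$ windows and using $|t-s|\le T$ produces a bound of the shape $|\xi|^{-\rho}|t-s|\,\ell^{\gamma-1}+|\xi|\,|t-s|\,\ell^{k+1}$, up to constants; choosing $\kappa$ to balance the two terms and then taking $k$ large pushes the effective $|\xi|$-exponent below $-\tilde\rho$ for any prescribed $\tilde\rho<\rho$, and a short check of the short-interval regime (handled by the same Taylor step applied to the single window $[s,t]$) shows the $|t-s|$-power can be taken to be any $\tilde\gamma\in(1/2,\gamma]$. Hence $\varphi+\psi$ is $(\tilde\gamma,\tilde\rho)$-irregular.

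\emph{Prevalence.} Fix $\delta\in[0,1)$ and $\rho<(2\delta)^{-1}$ and pick a Hurst index $H$ with $\delta<H<(2\rho)^{-1}$ (possible precisely because $\rho<(2\delta)^{-1}$). The law $\mu^H$ of a $d$-dimensional fBm is supported on, and tight in, $C^\delta_t$, so it is an admissible transverse measure; and, as in Lemma~\ref{sec2.2 lemma borel rho irr} and the Remark following it, the set of strongly $\rho$-irregular paths is Borel, being a countable intersection of sets of the type considered there. By the prevalence criterion of Section~\ref{sec2.1} it then suffices to show $\mathbb{P}\big(\varphi_0+W^H\text{ is strongly }\rho\text{-irregular}\big)=1$ for every $\varphi_0\in C^\delta_t$. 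But $W^H$ is $H$-SLND by~\eqref{sec2.4 lnd fbm}, hence so is $\varphi_0+W^H$ by the deterministic-perturbation invariance of Remark~\ref{sec2.4 remark invariance lnd}, and the claim follows from the strong-irregularity analogue of Theorem~\ref{sec3 thm1} --- that a continuous $\beta$-SLND Gaussian process is almost surely strongly $(\gamma,\rho)$-irregular for every $\rho<(2\beta)^{-1}$ and a suitable $\gamma>1/2$ --- applied with $\beta=H$. Intersecting over a sequence $\rho_n\uparrow(2\delta)^{-1}$ and using monotonicity of strong $\rho$-irregularity in $\rho$ upgrades this to ``for any $\rho<(2\delta)^{-1}$''.

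The main obstacle is this last probabilistic input, the strong version of Theorem~\ref{sec3 thm1}. Its heart is the observation that the Gaussian $2n$-th moment estimates used to prove $\rho$-irregularity for SLND processes are insensitive to an extra deterministic phase: $\mathbb{E}\big[\prod_j e^{i\varepsilon_j(\xi\cdot X_{r_j}+Q(r_j))}\big]=\big(\prod_j e^{i\varepsilon_j Q(r_j)}\big)\,\mathbb{E}\big[\prod_j e^{i\varepsilon_j\xi\cdot X_{r_j}}\big]$, and the prefactor has modulus one, so the moment bounds are literally unchanged, uniformly in $Q$. The real work is to turn these uniform-in-$Q$ moment bounds into an almost sure bound that is still uniform in $Q$; I would do this by running the Garsia--Rodemich--Rumsey/Kolmogorov chaining of Section~\ref{sec4} with the phase frequency adjoined to $(\xi,s,t)$ as an additional parameter (at the cost of a harmless polynomial weight in that parameter, absorbed into the ``strong'' norm), or more cheaply over a countable dense set of phases together with a continuity-in-$Q$ argument. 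On the analytic side the only delicate point is the $k\to\infty$ limit in the stability argument, but it is harmless: for a fixed $\psi$ and a fixed target $\tilde\rho$ only one value of $k$ is ever needed, so the Taylor constants $c_k\|\psi\|_{C^{k+1}}$ never compete with the $|\xi|$-asymptotics.
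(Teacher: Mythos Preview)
Your overall strategy matches the paper's: Taylor-expand the smooth perturbation to reduce to polynomial phases handled by strong irregularity, estimate the Taylor remainder, subdivide in time and balance; for prevalence use fBm as transverse measure together with the observation that a deterministic phase factor has modulus one and therefore leaves the Gaussian moment bounds unchanged. On the stability side the paper (Theorem~\ref{sec5.3 thm perturbation strong}) controls the remainder via a Young-integral estimate where you use the cruder $|e^{iR}-1|\le|R|$; both work, and yours is slightly more elementary.

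There is, however, a genuine gap in how you set up the notion of strong $\rho$-irregularity. You ask for a bound that is \emph{uniform} over all real polynomials $Q$ of a given degree, but this is strictly stronger than the paper's Definition~\ref{sec5.3 defn strong rho}, and it is stronger than what your proposed chaining can produce. Chaining over the unbounded coefficient vector $\eta\in\mathbb{R}^n$ necessarily costs a factor growing in $|\eta|$; this is precisely the ``harmless polynomial weight'' you allude to, but it cannot be ``absorbed into the strong norm'' if that norm is, by your definition, uniform in $\eta$. The paper resolves this by building a $\sqrt{\log(1+|\eta|)}$ factor (together with $\sqrt{\log|\xi|}$ and the time modulus $\sqrt{|t-s|\,|\log|t-s||}$) directly into the strong-irregularity norm. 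That weaker notion \emph{is} what the chaining of Section~\ref{sec4} (run with $\eta$ adjoined to $\xi$, exactly as you propose) delivers, and it still suffices for the stability step: when you Taylor-expand $\xi\cdot\psi$ on each window the polynomial coefficients satisfy $|\eta|\lesssim|\xi|\,\|\psi\|_{C^{k+1}}$, so $\sqrt{\log(1+|\eta|)}\lesssim\sqrt{\log|\xi|}$, and all logarithmic corrections are absorbed by the passage from $\rho$ to $\tilde\rho<\rho$. Your ``countable dense set plus continuity'' alternative does not avoid the issue either, since continuity in $\eta$ only propagates a bound that already holds with a uniform constant, which is exactly what is in question. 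Once you replace your uniform-in-$Q$ definition by the paper's logarithmic one, both halves of your argument go through essentially as written.
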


The proof is given in Section~\ref{sec5.3.1}, where the concept of strong
$\rho$\mbox{-}irregularity is introduced. It is a very technical concept,
which is the reason why we have preferred not to introduce it in
Section~\ref{sec2.2}. We hope in the future simpler criteria will be developed
for establishing the invariance of $\rho$\mbox{-}irregularity under additive
perturbations.

\begin{remark}
  \label{sec5.2 discussion optimality}Let us discuss already here the
  optimality of Theorem~\ref{sec3 thm2}, in light of Theorems~\ref{sec3
  thm1}\mbox{-}\ref{sec3 thm5} and the results presented in the rest of the
  paper.
  \begin{enumerate}[label=\arabic*.]
    \item For $\delta \in (0, 1)$, optimality of the condition $\rho < (2
    \delta)^{- 1}$ follows from Theorem~\ref{sec3 thm3}: it must hold $\delta
    \leqslant \delta^{\ast}_{\gamma, \rho} < (2 \rho)^{- 1}$ in light of the
    condition $\gamma > 1 / 2$.
    
    \item By applying Lemma~\ref{sec2.2 lemma interpolation rho irr}, in the
    case $\delta = 0$ the result can be strengthened to the fact that almost
    every $\varphi \in C^0_t$ is $(\gamma, \rho)$\mbox{-}irregular for any
    $\gamma < 1$ and any $\rho < \infty$. Time regularity cannot be improved
    to $\ell^{\varphi}$ being differentiable in time, since we know that (in
    the weak sense)
    \[ \frac{\mathd}{\mathd s} \ell (s, \cdot) |_{s = t} \nobracket =
       \delta_{\varphi_t} \quad \forall \, t \in [0, T] . \]
    Moreover $\{ \ell_{s, t}^{\varphi} \} \subset C^{\infty}_c$ cannot be
    improved to $\ell^{\varphi}_{s, t}$ being analytic, since this would imply
    that $\varphi ([s, t])$ is an unbounded set, thus violate continuity of
    $\varphi$.
    
    \item One might wonder if, since by Lemma~\tmcolor{blue}{\ref{sec2.2 lemma
    interpolation rho irr}} we can always raise the value of $\gamma$ by
    lowering the one of $\rho$, we can also do the opposite; in particular if,
    without imposing the restriction $\gamma > 1 / 2$, we can find functions
    $\varphi \in C^{\delta}$ which are $(\gamma, \rho)$-irregular for a pair
    $(\gamma, \rho)$ satisfying $\delta \leqslant \delta^{\ast}_{\gamma,
    \rho}$ but also $\rho > (2 \delta)^{- 1}$. In the case $\delta> 1/d$, $\delta\in (0,1]$ (thus implying $d\geqslant 2$)
    this possibility is ruled out by reasoning with Fourier dimensions (see
    Section~\ref{sec5.1} for more details), since it must hold
    \[ 2 \rho = \min (d, 2 \rho) \leqslant \dim_F (\varphi ([s, t])) \leqslant
       \dim_H (\varphi ([s, t])) \leqslant \delta^{- 1} \]
    independently of the value of $\gamma$.
    
    \item If $\delta \leqslant 1 / d$, the problem posed above is currently
    open. The only information we are able to provide in this case is that for
    $d = 1$, by Proposition~\ref{sec2.2 prop d=1} there exist indeed $C^1$
    functions (for instance $\varphi (t) = t$) which are $(\gamma, 1 -
    \gamma)$\mbox{-}irregular for any $\gamma \in (0, 1)$.
    
    \item Although Point~$4.$ is open in terms of generic $\varphi \in
    C^{\delta}$, we are at least able to establish that fBm paths do not have
    this property, see Lemma~\ref{lem:negative-answer-fbm} from
    Section~\ref{sec5.2}.
  \end{enumerate}
\end{remark}

We conclude this section with a proof of Theorem~\ref{sec3 thm2}, based on the
other results contained in the paper.

\begin{proof}[Proof of Theorem~\ref{sec3 thm2}]
  We have already seen in Lemma~\ref{sec2.2 lemma borel rho irr} that both
  $\rho$\mbox{-}irregularity and exponential irregularity properties define
  Borel measurable sets in all of the above function spaces, so we only need
  to provide suitable measures ``witnessing'' their prevalence.
  
  Let us start from the case $\delta \in (0, 1)$. Let $\mu^H$ to be the law on
  $C^{\delta}_t$ of a fBm $W^H$ of parameter $H = \delta + \varepsilon$ for
  some $\varepsilon > 0$, which is tight on $C^{\delta}_t$; let $\varphi \in
  C^{\delta}_t$ be fixed. The process $W^H$ is $H$\mbox{-}SLND and by
  Remark~\ref{sec2.4 remark invariance lnd} so is $\varphi + W^H$; then by
  Theorem~\ref{sec3 thm1} it holds
  \[ \mu^H \left( \varphi + w \text{ is $\rho$\mbox{-}irregular for any } \rho
     < 1 / (2 H) \right) =\mathbb{P} \left( \varphi + W^H \text{ is
     $\rho$\mbox{-}irregular for any } \rho < 1 / (2 H) \right) = 1. \]
  This implies that almost every $\varphi \in C^{\delta}_t$ is
  $\rho$\mbox{-}irregular for any $\rho < 1 / (2 \delta + 2 \varepsilon)$;
  taking a sequence $\varepsilon_n \downarrow 0$ and using the fact that
  countable intersection of prevalent sets is still prevalent, we obtain the
  conclusion in this case.
  
  Consider now the case $\delta \in [n, n + 1)$, $n \geqslant 1$; set $\delta
  = n + \theta$, $\theta \in [0, 1)$. Denote by $\mu^{n + H}$ the law of the
  process $Y$ obtained by integrating $n$ times an fBm, which is discussed at
  Point~\tmtextit{iv.} of the list of examples from Section~\ref{sec4.2}.
  Choose $H > \theta$, so that $\mu^{n + H}$ is tight in $C^{\delta}_t$. Now
  fix $\varphi \in C^{\delta}_t$; by the discussion at Point~\tmtextit{iv.},
  the process $Y$ is $(n + H)$\mbox{-}SLND with $D^{(k)} Y$ being $(n + H -
  k)$\mbox{-}SLND for any $k \in \{ 1, \ldots, n \}$ therefore $Y + \varphi$
  and $D^{(k)} (Y + \varphi) = D^{(k)} Y + D^{(k)} \varphi$ have the same
  properties by Remark~\ref{sec2.4 remark invariance lnd}. By applying
  Theorem~\ref{sec3 thm1} and arguing as in the previous point, taking a
  sequence $H_n \downarrow \delta$, the proof of claim~\tmtextit{i.} is
  complete.
  
  The first part of claim~\tmtextit{ii.} is identical, relying this time on
  the fact that we can take any $H > 0$ and we obtain
  $\rho$\mbox{-}irregularity for any $\rho < 1 / (2 H)$, together with the
  property that countable intersection of prevalent sets is prevalent; the
  fact that $(\mu^{\varphi}_{s, t}) \subset C^{\infty}_c$ readily follows from
  $\text{supp} \mu^{\varphi}_{s, t} \subset B_{\| \varphi \|_{C^0}}$ and the
  Fourier--Lebesgue embddings $\mathcal{F} L^{\rho, \infty} \hookrightarrow
  C^{\rho - d / 2 - \varepsilon} $. Concerning the second part of
  claim~\tmtextit{ii.}, observing that $\tilde{\mu}^{\varphi}_{s, t} \in
  C^{\infty}_c \hookrightarrow \mathcal{S}$, we can combine Lemma~\ref{sec2.3
  lemma averaging occupation} with Proposition~9.10~a) from~{\cite{folland}}
  to deduce that $T^{\varphi}$ maps $\mathcal{S}'$ into
  $C^{\infty}_{\tmop{slow}}$; the case of $B^s_{p, q}$ can be treated
  similarly, this time first embedding it into $B^{s - d / p}_{\infty, \infty}
  = C^{s - d / p}$ and then use standard Young-type estimates for
  convolutions.
  
  The second part of claim~\tmtextit{iii.} follows from Lemma~\ref{sec2.2 exp
  implies charatheodory}, once we have shown the first part. For any $\beta
  \in (0, 1)$, denote by $\mu^{\beta}$ the law of (a measurable version of)
  the process $X$ constructed in Proposition~\ref{sec4.2 prop wierd process},
  which is $\beta$\mbox{-}eSLND. Let $\varphi \in L^p_t$; we can require
  $\varphi$ to be an actual measurable function in its equivalence class,
  since the property of exponential irregularity does not depend on the chosen
  representative. By Remark~\ref{sec2.4 remark invariance lnd}, the process
  $\varphi + X$ is also $\beta$\mbox{-}eSLND and so by Theorem~\ref{sec3 thm1}
  \[ \mu^{\beta} \left( \varphi + w \text{ is exponentially irregular} \right)
     =\mathbb{P} \left( \varphi + X \text{ is exponentially irregular} \right)
     = 1 \]
  which implies the conclusion.
\end{proof}

\subsection{Applications to regularisation by noise for ODEs and
PDEs}\label{sec3.2}

In this section we show how our main result, Theorem~\ref{sec3 thm2}, can be
combined with already existing results in the literature, obtaining results on
the regularising effect of almost every $w \in C^{\delta}_t$ on ODEs and PDEs.

\

As mentioned before, $(\gamma, \rho)$\mbox{-}irregularity is closely related
to the regularising properties of the averaging operator $T^w$. Moreover, in
order to develop a good solution theory for the perturbed ODE
\begin{equation}
  \frac{\mathd}{\mathd t} x_t = b (x_t) + \frac{\mathd}{\mathd t} w_t
  \label{sec6 ode}
\end{equation}
it is enough to have good regularity properties for $T^w b$. Results in this
direction have been obtained in~{\cite{catelliergubinelli}}
and~{\cite{galeatigubinelli_ode}}. They can be combined with Lemma~\ref{sec2.2
lemma regularity averaging 1} and Theorem~\ref{sec3 thm2} as follows.

\begin{theorem}
  \label{sec6 thm ode}Suppose that either $T^w b \in C^{\gamma}_t C^2_x$ or $b
  \in L^{\infty}_{t, x}$ and $T^w b \in C^{\gamma}_t C^{3 / 2}_x$, for some
  $\gamma > 1 / 2$. Then for any $x_0 \in \mathbb{R}^d$ there exists a unique
  global solution to~\eqref{sec6 ode}; moreover the ODE admits a locally
  Lipschitz flow. If, for some $n\geqslant 1$, $T^w b \in C^{\gamma}_t C^{n + 1}_x$, or $b \in
  L^{\infty}_{t, x}$ and $T^w b \in C^{\gamma}_t C^{n + 1 / 2}_x$, then the
  flow is locally $C^n$.
  
  Let $\delta \in [0, 1)$, then almost every $w \in C^{\delta}_t$ is such that
  for any $\alpha \in \mathbb{R}$, the following hold:
  \begin{enumerate}[label=\roman*.]
    \item if $\alpha > \max \{ 3 / 2 - (2 \delta)^{- 1}, 0 \}$ or $\alpha > 2
    - (2 \delta)^{- 1}$, then for any $b \in \mathcal{F} L^{\alpha, 1}$ the
    perturbed ODE~\eqref{sec6 ode} is well posed and admits a locally
    Lipschitz flow;
    
    \item if $\alpha > \max \{ n + 1 / 2 - (2 \delta)^{- 1}, 0 \}$ or $\alpha
    > n + 1 - (2 \delta)^{- 1}$, for some $n\geqslant 1$, then for any $b \in \mathcal{F} L^{\alpha,
    1}$ the flow is locally $C^n$.
  \end{enumerate}
  In particular for almost every $\varphi \in C^0_t$, for any $\alpha \in
  \mathbb{R}$ and any $b \in \mathcal{F} L^{\alpha, 1}$, the ODE~\eqref{sec6
  ode} is well\mbox{-}posed and admits a $C^{\infty}$ flow.
\end{theorem}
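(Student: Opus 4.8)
The plan is to split the proof into a deterministic half and a prevalence half. For the deterministic half --- the assertions of the first paragraph --- I would reprove nothing: that sufficient regularity of the averaged field forces well-posedness and a smooth flow for \eqref{sec6 ode} is precisely the path-wise solution theory built in \cite{catelliergubinelli} and \cite{galeatigubinelli_ode}. Concretely, writing $\theta_t = x_t - w_t$, equation \eqref{sec6 ode} becomes $\theta_t = x_0 + \int_0^t b(\theta_s + w_s)\,\mathd s$, and the integral is controlled by a nonlinear Young / sewing estimate in terms of the increments of $s \mapsto T^w_s b$; the hypotheses $T^w b \in C^\gamma_t C^2_x$ with $\gamma > 1/2$ (or $b \in L^\infty_{t,x}$ and $T^w b \in C^\gamma_t C^{3/2}_x$) are exactly what is needed to close a contraction locally in time and to differentiate the resulting fixed point in $x_0$, the $C^n$ statement coming from $n$ extra spatial derivatives on $T^w b$. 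I would present this as a citation with a one-line reminder of the mechanism.

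For the prevalence half, fix $\delta \in [0,1)$ and recall from Theorem~\ref{sec3 thm2}, parts \emph{i.} and \emph{ii.}, that the set of $w \in C^\delta_t$ which are $\rho$-irregular for every $\rho < (2\delta)^{-1}$ --- with the convention $(2 \cdot 0)^{-1} = +\infty$ --- is prevalent, being a countable intersection (over $\rho = (2\delta)^{-1} - k^{-1}$) of prevalent sets. I would then fix such a $w$: it is continuous on $[0,T]$, hence bounded, so $T^w$ is well defined, and the time-independent $b$ lies in $L^\infty_{t,x}$ as soon as it is bounded in space. Given $\alpha$ as in \emph{i.}, one picks $\rho < (2\delta)^{-1}$ with $\alpha + \rho \geqslant 2$ (possible precisely when $\alpha > 2 - (2\delta)^{-1}$), or with $\alpha + \rho \geqslant 3/2$ (possible when $\alpha > 3/2 - (2\delta)^{-1}$), together with the accompanying $\gamma > 1/2$ for which $w$ is $(\gamma,\rho)$-irregular. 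Lemma~\ref{sec2.2 lemma regularity averaging 1} then yields $T^w b \in C^\gamma_t \mathcal{F} L^{\alpha+\rho,1}$ for every $b \in \mathcal{F} L^{\alpha,1}$, and the elementary Fourier--Lebesgue embeddings $\mathcal{F} L^{s,1} \hookrightarrow C^s_x$ (H\"older--Zygmund; in particular $C^2_x$ when $s = 2$) and $\mathcal{F} L^{\alpha,1} \hookrightarrow L^\infty$ for $\alpha \geqslant 0$ place us inside the hypotheses of the deterministic half, whence \emph{i.} Part \emph{ii.} is identical with $\alpha + \rho \geqslant n+1$ (resp.\ $\geqslant n + 1/2$), landing $T^w b$ in $C^\gamma_t C^{n+1}_x$ (resp.\ $C^\gamma_t C^{n+1/2}_x$). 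Finally, when $\delta = 0$ the prevalent $w$ is $\rho$-irregular for every $\rho < \infty$, so for any $\alpha \in \mathbb{R}$, any $b \in \mathcal{F} L^{\alpha,1}$ and any $n \in \mathbb{N}$ one chooses $\rho > n + 1 - \alpha$ and obtains a locally $C^n$ flow; as $n$ is arbitrary, the flow is $C^\infty$.

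I do not expect a genuine obstacle --- the argument orchestrates results already in hand --- and the only point worth watching is uniformity: a single prevalent $w$ must serve every admissible pair $(\alpha, b)$, and when $\delta = 0$ every $n$, simultaneously. This is secured by the stability of prevalence under countable intersections together with the observation that $(\gamma,\rho)$-irregularity of $w$ for a single $\rho$ just below the critical threshold already covers an entire range of $\alpha$. The explicit thresholds $3/2 - (2\delta)^{-1}$, $2 - (2\delta)^{-1}$ and their $C^n$ analogues are then simply read off from the two embeddings above and from whether the $b \in L^\infty$ branch of the deterministic criterion is invoked --- which is the source of the $\max\{\,\cdot\,,0\}$.
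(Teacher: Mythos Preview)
Your proposal is correct and follows exactly the route the paper intends: the first paragraph is imported from \cite{catelliergubinelli} and \cite{galeatigubinelli_ode}, and the prevalence half is obtained by combining Theorem~\ref{sec3 thm2} with Lemma~\ref{sec2.2 lemma regularity averaging 1} and the embedding $\mathcal{F}L^{s,1}\hookrightarrow C^s_x$, reading off the thresholds from the requirement $\alpha+\rho>2$ (resp.\ $3/2$, $n+1$, $n+1/2$) together with $\alpha>0$ for the $L^\infty$ branch. The paper in fact gives no proof beyond pointing at these ingredients, so you have simply spelled out what was left implicit.
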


\begin{proof}
  The first part of the statement comes from Remark~4.7 and Theorem~4.33
  from~{\cite{galeatigubinelli_ode}}.
  
  Point~\tmtextit{i.} then follows from a combination of Theorem~\ref{sec3
  thm2}\mbox{-}\tmtextit{i.}, Lemma~\ref{sec2.2 lemma regularity averaging 1}
  and the Fourier--Lebesgue embeddings $\mathcal{F} L^{\beta, 1}
  \hookrightarrow C^{\beta}_b$ for $\beta \geqslant 0$. For instance in the
  case $\alpha > 2 - (2 \delta)^{- 1}$, a.e. $w \in C^{\delta}_t$ is $(\gamma,
  \rho)$-irregular for $\rho = 2 - \alpha < (2 \delta)^{- 1}$ and some $\gamma
  > 1 / 2$, implying that for any $b \in \mathcal{F} L^{\alpha, 1}$ it holds
  $T^w b \in C^{\gamma}_t \mathcal{F} L^{\alpha + \rho,1} \hookrightarrow
  C^{\gamma}_t C^2_x$, so that we can apply the first part of the theorem. The
  case $\alpha > \max \{ 3 / 2 - (2 \delta)^{- 1}, 0 \}$ is similar, only in
  this case by embeddings we have $b \in C^0_b$ to begin with, and our
  assumptions allow to verify that $T^w b \in C^{\gamma}_t C^{3 / 2}_x$.
  Point~\tmtextit{ii.} can be established in a similar manner and the final
  claim comes from Theorem~\ref{sec3 thm2}\mbox{-}\tmtextit{ii.}
\end{proof}

The last part of the statement is an instance of the
$\infty$\mbox{-}regularising effect of generic continuous functions
on~\eqref{sec6 ode}. Applying Lemma~\ref{sec2.2 lemma regularity averaging 2},
analogue statements can be given replacing Fourier--Lebesgue $\mathcal{F}
L^{\alpha, 1}$ with other scales like $B^s_{p, q}$ spaces. In addition, we can
also consider time\mbox{-}dependent fields $b$, for instance such that $b \in
C^{1 / 2}_t \mathcal{F} L^{\alpha, 1}$, thanks to Lemma~\ref{sec2.2 lemma
averaging time}.

\

The theory developed for solving~\eqref{sec6 ode} can be also successfully
applied to the study first order linear PDEs of the form
\begin{equation}
  \partial_t u + b \cdot \nabla u + c u + \frac{\mathd w}{\mathd t} \cdot
  \nabla u = 0. \label{sec6 transport1}
\end{equation}
In general, if $w$ is $(\gamma, \rho)$\mbox{-}irregular then it is nowhere
differentiable, but assume for the moment $w \in C^1$. Setting $v (t, x) = u
(t, x + w_t)$, $\tilde{b} (t, x) = b (t, x + w_t)$, $\tilde{c} (t, x) = c (t,
x + w_t)$, $u$ would be a classical solution of~\eqref{sec6 transport1} if and
only if $v$ solved
\begin{equation}
  \partial_t v + \tilde{b} \cdot \nabla v + \tilde{c} v = 0, \label{sec6
  transport2}
\end{equation}
which is meaningful (under suitable assumptions on $\tilde{b}$, $\tilde{c}$)
even when $w \in C^0$. For this reason we will say that $u$ is a solution
to~\eqref{sec6 transport1} if and only if $v$ solves~\eqref{sec6 transport2},
without properly defining the meaning of equation~\eqref{sec6 transport1}. Let
us mention that in the context of rough path theory, if $w$ admits a rough
lift, then it is possible to give meaning to~\eqref{sec6 transport1} and show
the equivalence with~\eqref{sec6 transport2}, see~{\cite{catellier}}. If $b$
and $c$ are distributional but $T^w b$ and $T^w c$ have good regularity in
$C^{\gamma}_t C^{\beta}_x$\mbox{-}spaces, then it is possible to give meaning
to~\eqref{sec6 transport2} with Young integrals; in this case
equation~\eqref{sec6 transport2} (and consequently~\eqref{sec6 transport1})
admit a natural interpretation in the sense of distributions, thus we will
talk of weak solutions, cf. Definitions~5.5 and~5.13
from~{\cite{galeatigubinelli_ode}}; we refrain for giving here more details,
for which we refer the interested reader to Section~5
from~{\cite{galeatigubinelli_ode}}. In the next statement, $\mathcal{M}_x =
\mathcal{M} (\mathbb{R}^d)$ denotes the set of all finite Radon measures.

\begin{theorem}
  \label{sec6 thm transport}Let $\delta \in [0, 1)$ then almost every $w \in
  C^{\delta}_t$ is such that for any $\alpha \in \mathbb{R}$, the following
  hold:
  \begin{enumerate}[label=\roman*.]
    \item if $\alpha > \max \{ 3 / 2 - (2 \delta)^{- 1}, 0 \}$ or $\alpha > 2
    - (2 \delta)^{- 1}$, then for any $b \in \mathcal{F} L^{\alpha, 1}$ the
    transport PDE
    \[ \partial_t u + b \cdot \nabla u + \frac{\mathd w}{\mathd t} \cdot
       \nabla u = 0 \]
    has a unique solution $u \in C^0_t C^1_{\tmop{loc}}$ for any $u_0 \in
    C^2_x$;
    
    \item if for some $n\geqslant 1$, $\alpha > \max \{ n + 1 / 2 - (2 \delta)^{- 1}, 0 \}$ or $\alpha
    > n + 1 - (2 \delta)^{- 1}$, then for any $b \in \mathcal{F} L^{\alpha,
    1}$ and any $u_0 \in C^{n + 1}_x$, the solution $u \in C^0_t
    C^n_{\tmop{loc}}$;
    
    \item if $\alpha > \max \{ 3 / 2 - (2 \delta)^{- 1}, 0 \}$ or $\alpha > 2
    - (2 \delta)^{- 1}$, then for any $b \in \mathcal{F} L^{\alpha, 1}$ the
    continuity equation
    \[ \partial_t u + \nabla \cdot (b u) + \frac{\mathd w}{\mathd t} \cdot
       \nabla u = 0 \]
    has a unique weak solution $u \in C^{\gamma}_t (C^1_x)^{\ast} \cap
    L^{\infty}_t \mathcal{M}_x$ for any $u_0 \in \mathcal{M}_x$.
  \end{enumerate}
\end{theorem}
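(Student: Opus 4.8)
The plan is to treat this result exactly as the PDE companion of Theorem~\ref{sec6 thm ode}: reduce everything to the regularity of the averaged fields $T^w b$ (and, for the continuity equation, $T^w(\nabla\cdot b)$), and then combine the deterministic well-posedness theory of \cite{galeatigubinelli_ode} with the prevalence statement of Theorem~\ref{sec3 thm2}. First I would fix, once and for all, the prevalent set $\mathcal{P}\subset C^\delta_t$ produced by Theorem~\ref{sec3 thm2}(i): each $w\in\mathcal{P}$ is, for every $\rho<(2\delta)^{-1}$, $(\gamma_\rho,\rho)$-irregular for some $\gamma_\rho>1/2$. Everything after this point is deterministic, and a single such $w$ will serve all $\alpha\in\mathbb{R}$, all $b\in\mathcal{F} L^{\alpha,1}$ and all $n$ at once, so no further countable-intersection step is needed beyond the one already built into Theorem~\ref{sec3 thm2}.

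The core step is the passage from $\rho$-irregularity of $w$ to smoothness of $T^w b$. Given an admissible $\alpha$ and $b\in\mathcal{F} L^{\alpha,1}$, I would choose $\rho<(2\delta)^{-1}$ so close to $(2\delta)^{-1}$ that $\alpha+\rho$ exceeds the threshold required by the deterministic theory: the value $2$ (resp.\ $n+1$) for the branch $\alpha>2-(2\delta)^{-1}$ (resp.\ $\alpha>n+1-(2\delta)^{-1}$), or the lower value $3/2$ (resp.\ $n+1/2$) for the branch using $b\in L^\infty_{t,x}$, which is legitimate once $\alpha\ge 0$ since $\mathcal{F} L^{\alpha,1}\hookrightarrow C^0_b$. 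Lemma~\ref{sec2.2 lemma regularity averaging 1} then gives $T^w b\in C^{\gamma_\rho}_t\mathcal{F} L^{\alpha+\rho,1}$, and the Fourier--Lebesgue embedding $\mathcal{F} L^{s,1}\hookrightarrow B^s_{\infty,1}\hookrightarrow C^k_x$ for $s\ge k$ (Appendix~\ref{appendixA2}) places $T^w b$ in the Hölder scale on which \cite{galeatigubinelli_ode} operates. Feeding this into their well-posedness results for the transformed equation $\partial_t v+\tilde b\cdot\nabla v=0$, $\tilde b(t,x)=b(t,x+w_t)$, and undoing the substitution $v(t,x)=u(t,x+w_t)$, yields existence, uniqueness and the stated regularity in (i) and (ii).

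For (iii) I would rewrite the continuity equation in transport form, $\partial_t u+b\cdot\nabla u+c\,u+\tfrac{\mathd w}{\mathd t}\cdot\nabla u=0$ with $c=\nabla\cdot b\in\mathcal{F} L^{\alpha-1,1}$, so that the extra datum to control is $T^w c\in C^{\gamma_\rho}_t\mathcal{F} L^{\alpha-1+\rho,1}$, one derivative worse than $T^w b$. The measure-valued solution is then built by duality: $\langle u_t,\varphi\rangle$ is tested against solutions of the backward transport equation with $C^1_x$ terminal data, for which the regularity of $T^w b$ from (i) already suffices; this is why the threshold for (iii) coincides with that of (i). Existence, uniqueness and the class $C^{\gamma}_t(C^1_x)^\ast\cap L^\infty_t\mathcal{M}_x$ then follow from the corresponding statement in \cite{galeatigubinelli_ode}.

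The genuinely delicate work is not in this paper: giving meaning to the transformed PDEs via Young/sewing arguments when $b,c$ are only distributions, and proving their well-posedness and the sharp regularity of solutions, is carried out in \cite{galeatigubinelli_ode} and is only cited here. The main obstacle in the present proof is therefore the bookkeeping — checking, branch by branch, that the Fourier--Lebesgue regularity of $T^w b$ (and the one-derivative-worse regularity of $T^w c$) delivered by Lemma~\ref{sec2.2 lemma regularity averaging 1} matches the precise hypotheses of that deterministic theory, and that the constraint $\alpha\ge 0$ in the ``$b\in L^\infty$'' branches is exactly what produces the $\max\{\cdot,0\}$ appearing in the stated thresholds.
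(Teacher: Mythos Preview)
Your proposal is correct and follows exactly the approach of the paper: the theorem is stated there without a dedicated proof, as a direct combination of the prevalence result Theorem~\ref{sec3 thm2} with the deterministic well-posedness theory of~\cite{galeatigubinelli_ode}, using the substitution $v(t,x)=u(t,x+w_t)$ and Lemma~\ref{sec2.2 lemma regularity averaging 1} to transfer $\rho$-irregularity of $w$ into $C^\gamma_t \mathcal{F} L^{\alpha+\rho,1}$ regularity of $T^w b$. Your write-up is in fact more detailed than what the paper supplies.
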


\begin{proof}
  It follows from the results of~{\cite{galeatigubinelli_ode}} that, if $b \in
  C^0_x$ and $T^w b \in C^{\gamma}_t C^{3 / 2}_x$ (cf. Proposition~5.1
  therein) or $T^w b \in C^{\gamma}_t C^2_x$ (cf. Lemma~5.7 and Theorem~5.12)
  for some $\gamma > 1 / 2$, then existence and uniqueness of solutions to the
  transport PDE holds for any $u_0 \in C^2_x$ and we have the representation
  formula $u_t (x) = u_0 (\phi^{- 1}_t (x))$, where $\phi$ is the flow
  associated to the ODE~\eqref{sec6 ode}. Arguing as in the proof of
  Theorem~\ref{sec6 thm ode} it is easy to check that, under our assumptions
  on $\delta$ and $\alpha$, a.e. $w \in C^{\delta}_t$ is
  $\rho$\mbox{-}irregular for the right range of parameters $\rho$ needed to
  establish the required regularity of $T^w b$, proving Point~\tmtextit{i.}
  
  Point~\tmtextit{ii.} follows from the representation formula for $u$ and the
  higher regularity of the flow $\phi$ associated to the ODE, which comes
  from~Theorem~\ref{sec6 thm ode}\mbox{-}\tmtextit{ii.}
  
  Point~\tmtextit{iii.} comes from similar reasonings, only this time applying
  the results from~{\cite{galeatigubinelli_ode}} for the continuity equation,
  which reduce again the problem to verify the regularity of $T^w b$; in
  particular one can invoke Proposition~5.3 therein in the case $b \in C^0_x$
  and Lemma~5.15, Theorem~5.16 in the case of general $b$.
\end{proof}

In the above examples, $w$ enters the equation as a perturbation that can be
reabsorbed by shifting the phase space, which is why the operator $T^w$
appears. In the next examples instead $w$ has the role of
\tmtextit{modulating} a given group of transformations.

\

In the papers~{\cite{choukgubinelli1}} and~{\cite{choukgubinelli2}}, the
authors study the regularising properties of $(\gamma, \rho)$\mbox{-}irregular
paths on non\mbox{-}linear dispersive PDEs of the general form
\begin{equation}
  \frac{\mathd}{\mathd t} \varphi_t = A \varphi_t  \frac{\mathd w_t}{\mathd t}
  + \mathcal{N} (\varphi_t), \label{sec6 choukgubinelli eq1}
\end{equation}
where $w \in C^0_t$, $\varphi : D \rightarrow \mathbb{R}$ (or $\mathbb{C}$),
$A$ is a linear unbounded operator and $\mathcal{N}$ is a nonlinearity
(typically of polynomial type). Their cases of interest are:
\begin{enumerate}[label=\arabic*.]
  \item (cubic NLS) Non\mbox{-}linear cubic Schr\"{o}dinger, $D =\mathbb{T}^d$
  or $\mathbb{R}^d$, $d = 1, 2$, $A = i \Delta$, $\mathcal{N} (\varphi) = i |
  \varphi |^2 \varphi$;
  
  \item General NLS on $D =\mathbb{R}$ with $A = i \partial_x^2$, $\mathcal{}
  \mathcal{N} (\varphi) = i | \varphi |^{\mu} \varphi$, $\mu \in (1, 4]$;
  
  \item (dNLS) Non\mbox{-}linear derivative cubic Schr\"{o}dinger on
  $\mathbb{T}$, $A = i \partial_x^2$, $\mathcal{N} (\varphi) = i \partial (|
  \varphi |^2 - \| \varphi \|_{L^2}^2) \varphi$;
  
  \item (KdV) Korteweg--de~Vries, $D =\mathbb{T}$ or $\mathbb{R}$, $A =
  \partial_x^3$, $\mathcal{N} (\varphi) = \partial_x (\varphi^2)$;
  
  \item (mKdV) Modified Korteweg--de~Vries, $D =\mathbb{T}$, $A =
  \partial_x^3$, $\mathcal{N} (\varphi) = \partial_x (\varphi^3 - 3 \varphi \|
  \varphi \|_{L^2}^2)$.
\end{enumerate}
From now on we will refer to these PDEs as {\tmem{$w$-modulated equations}},
e.g. $w$-modulated cubic NLS, $w$-modulated KdV, etc. In all cases
1.\mbox{-}5. above, although the original system would be of integrable
nature, the presence of $w$ doesn't allow to exploit any integrability
features; moreover the group $\{ e^{t A} \}_{t \in \mathbb{R}}$ associated to
$A$ acts isometrically on all $H^{\alpha}$\mbox{-}spaces, thus doesn't provide
a priori any regularisation.

In order to give meaning to~\eqref{sec6 choukgubinelli eq1}, the authors adopt
the mild formulation (which would be justified for $w \in C^1$ by the chain
rule, but afterwards makes sense for any $w \in C^0$)
\[ \varphi_t = U^w_t \varphi_0 + U^w_t \int_0^t (U^w_s)^{- 1} \mathcal{N}
   (\varphi_s) \mathd s \]
where $U^w_t = e^{w_t A}$; applying the change of variables $\psi_t =
(U^w_t)^{- 1} \varphi_t$, the equation becomes
\begin{equation}
  \psi_t = \varphi_0 + \int_0^t (U^w_s)^{- 1} \mathcal{N} (U^w_s \psi_s)
  \mathd s \label{sec6 choukgubinelli eq2} .
\end{equation}
It is finally possible to give meaning to~\eqref{sec6 choukgubinelli eq2},
even in spaces $H^{\alpha}$ for which the original nonlinearity $\mathcal{N}$
is not well defined, using the $\rho$\mbox{-}irregularity property to show
that the operators $\{ X_{s, t} \}_{s < t}$
\[ X_{s, t} (\phi) = \int_s^t (U^w_r)^{- 1} \mathcal{N} (U^w_r \phi) \mathd s,
   \quad s < t \]
are continuous from $H^{\alpha}$ to itself (actually $C^{\infty}$ since they
are the monoid associated to an $n$\mbox{-}linear bounded operator). It is
then imposed that $\varphi$ is a solution to~\eqref{sec6 choukgubinelli eq1}
if and only if the associated $\psi$ solves~\eqref{sec6 choukgubinelli eq2}.
We refrain from giving further details on the topic and only point out that
our Theorem~\ref{sec3 thm2} combined with their results give the following
statements.

\begin{theorem}
  \label{sec6 thm nls}Let $\delta \in [0, 1)$. Then for almost every $w \in
  C^{\delta} ([0, T])$, the $w$\mbox{-}modulated cubic NLS on $\mathbb{T}$ and
  $\mathbb{R}$ has a global solution in $H^{\alpha}$ for any $\alpha \geqslant
  0$ and any $\varphi_0 \in H^{\alpha}$, which is unique in a suitable class;
  moreover the equation admits a locally Lipschitz continuous flow.
\end{theorem}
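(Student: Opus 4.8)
The approach is the one foreshadowed just before the statement: combine the prevalence of $\rho$-irregularity (Theorem~\ref{sec3 thm2}) with the pathwise well-posedness theory for $w$-modulated dispersive PDEs of~\cite{choukgubinelli1}. Recall that Theorem~1.8 of~\cite{choukgubinelli1} is a purely deterministic implication: there is a threshold $\rho^\ast(\alpha)>0$ such that, whenever $w\in C^0([0,T])$ is $(\gamma,\rho)$-irregular for some $\gamma>1/2$ and some $\rho>\rho^\ast(\alpha)$, the $w$-modulated cubic NLS on $\mathbb{T}$ (resp.\ on $\mathbb{R}$) is globally well-posed in $H^\alpha$ and admits a locally Lipschitz flow; the passage from local to global well-posedness is carried out in~\cite{choukgubinelli1} using the conservation of the $L^2$ mass, which the unitary modulation $U^w_t=e^{iw_t\Delta}$ leaves unaffected. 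Since the cubic nonlinearity only becomes easier to control as $\alpha$ grows, the binding case is $\alpha=0$ and one may take $\rho^\ast(\alpha)\le\rho^\ast(0)\le 1/2$ for every $\alpha\ge0$, so that a single order of irregularity serves the whole scale $\alpha\ge0$ at once.

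Concretely I would fix $\delta\in[0,1)$ and pick $\rho$ with $\rho^\ast(0)<\rho<(2\delta)^{-1}$ when $\delta\in(0,1)$ --- this interval is nonempty because $(2\delta)^{-1}>1/2\ge\rho^\ast(0)$ --- and any $\rho>\rho^\ast(0)$ when $\delta=0$. By part i.\ of Theorem~\ref{sec3 thm2} (part ii.\ when $\delta=0$) the set
\[ A=\bigl\{\, w\in C^\delta([0,T]) \ :\ w\text{ is }(\gamma,\rho)\text{-irregular for some }\gamma>1/2 \,\bigr\} \]
is prevalent in $C^\delta([0,T])$. For every $w\in A$ the deterministic Theorem~1.8 of~\cite{choukgubinelli1} applies and produces, simultaneously for all $\alpha\ge0$, the unique global solution in $H^\alpha$ together with the locally Lipschitz flow. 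Hence the set of paths satisfying the conclusion of the theorem contains the prevalent set $A$, and is therefore itself prevalent: a superset of a prevalent set is prevalent, its complement being contained in the shy Borel set witnessing the shyness of $A^c$. In particular no independent measurability verification is required.

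The only genuine point of care is the matching of the two quantitative constraints, i.e.\ checking that the threshold $\rho^\ast(\alpha)$ coming out of the contraction estimates for the cubic nonlinearity in~\cite{choukgubinelli1} really lies strictly below the admissible window $\rho<(2\delta)^{-1}$ provided by prevalence, uniformly in $\alpha\ge0$. If one prefers not to rely on monotonicity in $\alpha$, the same conclusion is reached by intersecting the countably many prevalent sets attached to $\alpha=n$, $n\in\mathbb{N}$ (legitimate since countable intersections of prevalent sets are prevalent), and recovering the intermediate $\alpha\in(n,n+1)$ by persistence of Sobolev regularity for the modulated flow. Either way the proof is nothing but the probabilistic/analytic split already used for Theorem~\ref{sec3 thm2}, with the analytic input outsourced to~\cite{choukgubinelli1}.
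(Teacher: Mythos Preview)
Your proposal is correct and follows exactly the route the paper indicates: the paper does not give a separate proof of this theorem but simply states (in the paragraph preceding it) that it follows by combining Theorem~\ref{sec3 thm2} with Theorem~1.8 of~\cite{choukgubinelli1}, which is precisely what you do, with the additional care of checking that the irregularity threshold from~\cite{choukgubinelli1} fits inside the window $\rho<(2\delta)^{-1}$ supplied by prevalence. Your remark that measurability of the conclusion set is not needed (since supersets of prevalent sets are prevalent) is a nice point the paper leaves implicit.
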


\begin{theorem}
  \label{sec6 thm kdv}Let $\delta \in [0, 2 / 3)$. Then:
  \begin{enumerate}[label=\roman*.]
    \item For almost every $w \in C^{\delta} ([0, T])$, the
    $w$\mbox{-}modulated KdV on $\mathbb{T}$ has a unique local solution in
    $H^{\alpha}$ for any $\varphi_0 \in H^{\alpha}$ with $\alpha > - (2
    \delta)^{- 1}$, which is global if $\alpha > - \min \{ 3 / 2, (4
    \delta)^{- 1} \}$.
    
    \item For almost every $w \in C^{\delta} ([0, T])$, the
    $w$\mbox{-}modulated KdV on $\mathbb{R}$ has a unique local solution in
    $H^{\alpha}$ for any $\varphi_0 \in H^{\alpha}$ with $\alpha > - \min \{ 3
    / 4, (2 \delta)^{- 1} \}$, which is global if $\alpha > - \min \{ 3 / 4, (4
    \delta)^{- 1} \}$.
  \end{enumerate}
  Moreover for any $\delta \in [0, 1)$, for almost every $w \in C^{\delta}_t$,
  the $w$\mbox{-}modulated mKdV on $\mathbb{T}$ has a unique local solution in
  $H^{\alpha}$ for any $\varphi_0 \in H^{\alpha}$ with $\alpha \geqslant 1 /
  2$.
\end{theorem}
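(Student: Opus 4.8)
The plan is to combine the path-wise well-posedness theory for $w$-modulated dispersive equations developed in~\cite{choukgubinelli2} with the prevalence statement of Theorem~\ref{sec3 thm2}. Recall that in~\cite{choukgubinelli2} one shows, for a \emph{fixed} path $w$, that after the change of variables $\psi_t = (U^w_t)^{-1}\varphi_t$ the modulated KdV (resp.\ mKdV) equation reduces to the fixed point~\eqref{sec6 choukgubinelli eq2}, and that $(\gamma,\rho)$-irregularity of $w$ with $\gamma>1/2$ is precisely what makes the quadratic (resp.\ cubic) operators $X_{s,t}$ bounded on $H^\alpha$ with the Young-type time regularity needed to close the contraction. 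Local well-posedness in $H^\alpha$ is then obtained provided $\rho$ exceeds an explicit threshold $\rho_0(\alpha)$ dictated by the order of the derivative in the nonlinearity, by the domain ($\mathbb{T}$ or $\mathbb{R}$) and by $\alpha$, while global well-posedness follows in a slightly more restrictive range using a priori bounds coming from conserved quantities of the flow, which survive the $H^\alpha$-isometric modulation $U^w_t = e^{w_t\partial_x^3}$. The only content beyond~\cite{choukgubinelli2} is the passage from ``for a fixed $(\gamma,\rho)$-irregular $w$'' to ``for a prevalent set of $w\in C^\delta_t$'', which is supplied by Theorem~\ref{sec3 thm2}\,\emph{(i)}.

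First I would fix $\delta\in[0,2/3)$ for KdV (resp.\ $\delta\in[0,1)$ for mKdV) and, for a target exponent $\alpha$ satisfying the relevant strict inequality, choose $\rho$ with $\rho_0(\alpha)<\rho<(2\delta)^{-1}$; this is possible exactly because the hypothesis on $\alpha$ is strict. By Theorem~\ref{sec3 thm2}\,\emph{(i)} the set of $w\in C^\delta_t$ that are $\rho$-irregular, i.e.\ $(\gamma,\rho)$-irregular for some $\gamma>1/2$, is prevalent; if the cited path-wise statements require $\gamma$ in a particular sub-range of $(1/2,1)$, one adjusts it for free via the interpolation Lemma~\ref{sec2.2 lemma interpolation rho irr}, at the cost of lowering $\rho$ while still keeping $\rho>\rho_0(\alpha)$. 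For each such $w$, Theorems~1.6 and~1.7 from~\cite{choukgubinelli2} give the stated local (and, in the better range, global) well-posedness of the $w$-modulated equation in $H^\alpha$.

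To produce a single prevalent set valid for \emph{all} admissible $\alpha$ simultaneously, I would take a sequence $\alpha_n$ decreasing to the critical value (for KdV on $\mathbb{T}$, $\alpha_n\downarrow -(2\delta)^{-1}$ for the local statement and $\alpha_n\downarrow -\min\{3/2,(4\delta)^{-1}\}$ for the global one, and analogously on $\mathbb{R}$; for mKdV the fixed endpoint $\alpha=1/2$), observe that $H^\alpha\hookrightarrow H^{\alpha_n}$ whenever $\alpha\geqslant\alpha_n$ so that well-posedness at level $\alpha_n$ already yields it at every $\alpha\geqslant\alpha_n$, intersect the corresponding prevalent sets over $n$, and invoke property~5. of prevalence (countable intersections of prevalent sets are prevalent). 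The mKdV case on $\mathbb{T}$ is the simplest instance: for any $\delta\in[0,1)$ one has $\rho$-irregularity for every $\rho<(2\delta)^{-1}$, and for $\alpha\geqslant 1/2$ this available range already contains the threshold demanded by the (derivative) cubic nonlinearity, so local well-posedness in $H^\alpha$, $\alpha\geqslant 1/2$, holds on the corresponding prevalent set.

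The main obstacle is bookkeeping rather than conceptual: one must extract from~\cite{choukgubinelli2} the precise dependence $\rho_0=\rho_0(\alpha)$, and the analogous threshold for the global a priori estimate, in each of the three settings (KdV on $\mathbb{T}$, KdV on $\mathbb{R}$, mKdV on $\mathbb{T}$), and verify that $\rho_0(\alpha)<(2\delta)^{-1}$ is equivalent to the stated constraint on $\alpha$ --- in particular that the absolute ceilings $3/2$, $3/4$, $1/2$ appearing inside the minima, which originate in the path-wise analysis and not in the irregularity of $w$, are inherited correctly. No fresh analysis of the PDEs themselves is required beyond what is already in~\cite{choukgubinelli2}.
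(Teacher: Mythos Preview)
Your proposal is correct and follows exactly the approach the paper itself takes: the paper does not give a standalone proof of this theorem but simply states that it follows by combining Theorem~\ref{sec3 thm2} with Theorems~1.6 and~1.7 of~\cite{choukgubinelli2}, which is precisely your strategy. Your write-up is in fact more explicit than the paper's, spelling out the choice of $\rho\in(\rho_0(\alpha),(2\delta)^{-1})$, the use of Lemma~\ref{sec2.2 lemma interpolation rho irr} to adjust $\gamma$, and the countable-intersection argument to obtain a single prevalent set.
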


\begin{proof}[Proof of Theorems~\ref{sec6 thm nls}\mbox{-}\ref{sec6 thm kdv}]
  Theorem~\ref{sec6 thm nls} follows from a combination of Theorem~\ref{sec3
  thm2}-i) and Theorem~1.8 from~{\cite{choukgubinelli1}}; indeed the condition
  $\delta < 1$ always ensures that a.e. $w \in C^{\delta}$ is $\rho$-irregular
  for some $\rho > 1 / 2$. Theorem~\ref{sec6 thm kdv} is instead implied by
  Theorem~\ref{sec3 thm2}-i) and Theorems~1.6 from~{\cite{choukgubinelli2}}.
  Indeed, Theorem ~\ref{sec3 thm2}-i) guarantees that a.e. $w\in C^\delta$ is $(\gamma,\rho)$-irregular, for $\gamma$ arbitrarily close to $1/2$ and $\rho$ arbitrarily close to $(2\delta)^{-1}$; the conclusion then follows by plugging the above consideration in the conditions for local and global wellposedness of solutions in $H^\alpha$ provided by Theorems~1.6 from~{\cite{choukgubinelli2}}, which are expressed in terms of $(\gamma,\rho)$.
  For instance, concerning Point \textit{i}., local wellposedness holds for $\rho\sim (2\delta)^{-1}>3/4$ (thus $\delta<2/3$) and $\alpha>-\rho\sim -(2\delta)^{-1}$, while global existence holds for $\alpha> -\min\{3/2,\rho/(3-2\gamma)\}\sim -\min\{3/2,(4\delta)^{-1}\}$. Point \textit{ii}. is similar.
\end{proof}

Analogue statements can be obtained combining Theorem~\ref{sec3 thm2} with
other results from the aforementioned papers, for instance Theorems~1.9 and
1.10 from~{\cite{choukgubinelli1}} or Theorem~1.7
from~{\cite{choukgubinelli2}}.

\

In the setting of standard dispersive equations, a key role in establishing
uniqueness of solutions is played by Strichartz estimates. In the
paper~{\cite{duboscq}}, for a given path $w \in C^0 ([0, T])$, the authors
study under which conditions the operator $A$ given by
\[ f \mapsto (A f)_t \assign \int_0^t | w_t - w_s |^{- \alpha} f_s \mathd s \]
is bounded from $L^p (0, T)$ to $L^q (0, T)$ for suitable values of $(p, q)$;
the idea is to apply this kind of modulated Hardy--Littlewood--Sobolev
inequality to obtain Strichartz estimates for the modulated semigroup
\[ P_{s, t} \psi (x) = e^{i \Delta (w_t - w_s)} \psi (x) = \frac{1}{(4 \pi
   (w_t - w_s))^{d / 2}} \int_{\mathbb{R}^d} \exp \left( i \frac{| x - y
   |^2}{4 (w_t - w_s)} \right) \psi (y) \mathd y. \]
They only consider $w$ sampled as a stochastic process, specifically a fBm of
parameter $H \in (0, 1)$, but a closer inspection of the proof shows that
their result (Theorem~1.1 from~{\cite{duboscq}}) can be restated in an
analytic fashion as follows.

\begin{theorem}
  \label{sec6 thm hardy}Suppose that $w \in C^0 ([0, T])$ admits an occupation
  density $\ell^w \in C^{\beta}_t C^0_x$ for some $\beta \in (0, 1)$. Then for
  any $p, q \in (1, \infty)$ and $\alpha \in (0, 1)$ satisfying
  \[ 2 - \alpha = \frac{1}{p} + \frac{1}{q} \]
  there exists a constant $C > 0$ such that for all $f \in L^p (0, T)$ and $g
  \in L^q (0, T)$ it holds
  \[ \left| \int_0^T \int_0^T f_t  | w_t - w_s |^{- \alpha} g_s \mathd s
     \mathd t \right| \leqslant C T^{\alpha \beta}  \| f \|_{L^p} \| g
     \|_{L^q} . \]
  For any $\delta \in [0, 1 / 2)$, almost every $\varphi \in C^{\delta} ([0,
  T])$ satisfies the above assumption for any $\beta < 1 - \delta$.
\end{theorem}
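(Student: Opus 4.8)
The plan is to reduce the analytic estimate to the one-dimensional Hardy--Littlewood--Sobolev inequality by passing through the occupation density of $w$, and then to derive the prevalence statement from Theorem~\ref{sec3 thm2} together with Lemma~\ref{sec2.2 lemma regularity averaging 2}. First one writes $I=\int_0^T\int_0^T f_t\,|w_t-w_s|^{-\alpha}\,g_s\,\mathd s\,\mathd t$. For each fixed $x$ the function $t\mapsto\ell^w_{0,t}(x)$ is continuous and non-decreasing; let $\ell^w(\mathd t,x)$ denote its (positive) Lebesgue--Stieltjes measure and set $F(x)=\int_0^T f_t\,\ell^w(\mathd t,x)$ and $G(y)=\int_0^T g_s\,\ell^w(\mathd s,y)$, the $f$- and $g$-weighted occupation densities of $w$. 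The occupation formula $\int_0^T h_t\,\phi(w_t)\,\mathd t=\int_{\mathbb{R}}\phi(x)\big(\int_0^T h_t\,\ell^w(\mathd t,x)\big)\mathd x$, valid for $\phi\in C^0_b(\mathbb{R})$ (check it first for $h$ the indicator of an interval, then extend by linearity and approximation), applied twice --- once in the $s$-integral with $\phi=|w_t-\cdot|^{-\alpha}$ and once in the $t$-integral --- gives
\[ I=\int_{\mathbb{R}}\int_{\mathbb{R}}|x-y|^{-\alpha}\,F(x)\,G(y)\,\mathd x\,\mathd y. \]
Since $w\in C^0([0,T])$, the density $\ell^w_{0,T}$ lies in $L^1_x\cap L^\infty_x$ and has compact support, and $|z|^{-\alpha}\in L^1_{\tmop{loc}}(\mathbb{R})$ for $\alpha<1$, so every interchange above is legitimate (one argues first for bounded $f,g$ and then passes to the limit).

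Next I would bound the linear operator $f\mapsto F$ at two endpoints: $\|F\|_{L^1_x}\leqslant\|f\|_{L^1_t}$, because $|F(x)|\leqslant\int_0^T|f_t|\,\ell^w(\mathd t,x)$ and $\int_{\mathbb{R}}\ell^w(\mathd t,x)\,\mathd x=\mathd t$ (indeed $\int_{\mathbb{R}}\ell^w_{0,t}(x)\,\mathd x=\|\mu^w_{0,t}\|_{\tmop{TV}}=t$); and $\|F\|_{L^\infty_x}\leqslant\|\ell^w_{0,T}\|_{C^0_x}\,\|f\|_{L^\infty_t}\leqslant\llbracket\ell^w\rrbracket_{C^\beta_t C^0_x}\,T^\beta\,\|f\|_{L^\infty_t}$. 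By Riesz--Thorin interpolation of this operator one gets $\|F\|_{L^p_x}\leqslant\big(\llbracket\ell^w\rrbracket_{C^\beta_t C^0_x}T^\beta\big)^{1/p'}\|f\|_{L^p_t}$, and symmetrically $\|G\|_{L^q_y}\leqslant\big(\llbracket\ell^w\rrbracket_{C^\beta_t C^0_x}T^\beta\big)^{1/q'}\|g\|_{L^q_t}$. The hypothesis $2-\alpha=\tfrac1p+\tfrac1q$ is exactly the homogeneity relation $\tfrac1p+\tfrac1q+\alpha=2$ under which the one-dimensional Hardy--Littlewood--Sobolev inequality gives $|I|\leqslant C_{p,q,\alpha}\,\|F\|_{L^p}\|G\|_{L^q}$ (for $1<p,q<\infty$, $0<\alpha<1$); combining the three bounds and using $\tfrac1{p'}+\tfrac1{q'}=2-\tfrac1p-\tfrac1q=\alpha$ yields $|I|\leqslant C_{p,q,\alpha}\,\big(\llbracket\ell^w\rrbracket_{C^\beta_t C^0_x}\big)^{\alpha}\,T^{\alpha\beta}\,\|f\|_{L^p}\|g\|_{L^q}$, which is the asserted estimate.

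For the prevalence part, fix $\delta\in[0,1/2)$ and $\beta<1-\delta$; by what precedes it suffices to show $\ell^\varphi\in C^\beta_t C^0_x$ for a.e.\ $\varphi\in C^\delta_t$. By Theorem~\ref{sec3 thm2}(i) (or (ii) if $\delta=0$), for every admissible $\rho$ --- i.e.\ $\rho<(2\delta)^{-1}$, or any finite $\rho$ when $\delta=0$ --- a.e.\ $\varphi\in C^\delta_t$ is $(\gamma,\rho)$-irregular for some $\gamma>1/2$. Because $1-\gamma<1/2$ and $1-\beta>\delta$ one has $\max\{1,\tfrac{1-\gamma}{1-\beta}\}<(2\delta)^{-1}$, so $\rho$ can be chosen above this maximum; then Lemma~\ref{sec2.2 lemma interpolation rho irr}, applied pointwise to each such $\varphi$ with $\theta=\min\{1,\tfrac{1-\beta}{1-\gamma}\}$, shows $\varphi$ is also $(\gamma_\ast,\rho_\ast)$-irregular with $\gamma_\ast\geqslant\beta$ and $\rho_\ast>1$. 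Lemma~\ref{sec2.2 lemma regularity averaging 2}(ii) (with $d=1$) then forces $\ell^\varphi\in C^{\gamma_\ast}_t C^0_x\hookrightarrow C^\beta_t C^0_x$. Intersecting over a sequence $\beta_n\uparrow 1-\delta$ --- a countable intersection of prevalent sets being prevalent --- gives the claim.

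The one genuinely delicate point will be the second paragraph: to obtain $\|f\|_{L^p}$ on the right, rather than a geometric mean of $\|f\|_{L^1}$ and $\|f\|_{L^\infty}$, one must interpolate the \emph{operator} $f\mapsto F$, and it is precisely the identity $\tfrac1{p'}+\tfrac1{q'}=\alpha$ forced by the Hardy--Littlewood--Sobolev exponent relation that produces the sharp power $T^{\alpha\beta}$. Everything else --- the repeated occupation-formula and Fubini exchanges, justified by the absolute-convergence observations above, and the parameter bookkeeping in the prevalence argument --- is routine.
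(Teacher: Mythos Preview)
Your proof is correct, and the analytic half takes a genuinely different route from the paper. The paper does not prove the inequality directly: it observes that the argument in~{\cite{duboscq}} follows the Lieb--Loss layer-cake proof of HLS and hinges on the sublevel-set bound
\[
  W(r,T)\assign\sup_{t\in[0,T]}\int_0^T\mathbbm{1}_{|w_t-w_s|<r}\,\mathd s\leqslant 2rc\,T^{\beta},
\]
and then shows this bound is equivalent to $\ell^w_T(x)\leqslant cT^{\beta}$, hence to $\ell^w\in C^{\beta}_t C^0_x$. Your approach instead transforms the time integral into a spatial one via the weighted occupation formula, interpolates the linear map $f\mapsto F$ between $L^1\to L^1$ and $L^\infty\to L^\infty$, and applies the \emph{standard} one-dimensional HLS inequality. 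This is more self-contained (no external reference needed) and explains transparently why the exponent relation $\tfrac1{p'}+\tfrac1{q'}=\alpha$ produces exactly $T^{\alpha\beta}$. The paper's route, on the other hand, isolates the sublevel-set condition $W(r,T)\lesssim rT^\beta$, which is of independent interest for the Strichartz estimates that motivate the theorem.

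One small wrinkle worth tightening in your prevalence paragraph: as written, you choose $\rho$ above $\max\{1,(1-\gamma)/(1-\beta)\}$ where $\gamma$ is the H\"older exponent coming from $\rho$-irregularity of $\varphi$---a circular dependence. The fix is implicit in your own inequalities: since $\gamma>1/2$ regardless, one has $(1-\gamma)/(1-\beta)<\tfrac12/(1-\beta)<(2\delta)^{-1}$, so simply pick $\rho$ in the nonempty interval $\big(\max\{1,\tfrac12/(1-\beta)\},(2\delta)^{-1}\big)$ \emph{first}; then whatever $\gamma>1/2$ arises automatically satisfies $\rho>(1-\gamma)/(1-\beta)$, and your interpolation with $\theta=\min\{1,(1-\beta)/(1-\gamma)\}$ goes through. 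With this clarified your prevalence argument coincides with the paper's.
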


\begin{proof}
  The proof of Theorem~1.1 in~{\cite{duboscq}} is entirely analytical, as it
  follows closely the proof of the standard Hardy--Littlewood--Sobolev
  inequality from Lieb--Loss~{\cite{lieb}}, but it requires a key property
  satisfied by fBm paths, given in Lemma~2.1 therein: if we define
  \[ W (r, T) \assign \sup_{t \in [0, T]} \int_0^T \mathbbm{1}_{| w_t - w_s |
     < r} \mathd s, \]
  then there must exist a constant $c$ such that
  \begin{equation}
    W (r, T) \leqslant 2 r c T^{\beta} \quad \text{for all } r > 0.
    \label{sec6 hardy eq1}
  \end{equation}
  It is not difficult to see that requirement~\eqref{sec6 hardy eq1} is
  equivalent to the request that $\ell^w_T (x) \leqslant c T^{\beta}$ for all
  $x \in \mathbb{R}$; indeed, assume first that $\ell^w_T (x) \leqslant c
  T^{\beta}$ holds, then
  \[ W (r, T) = \sup_{t \in [0, T]} \int_0^T \mathbbm{1}_{w_s \in B (w_t, r)}
     \mathd s = \sup_{t \in [0, T]} \int_{B (w_t, r)} \ell^w_T (x) \mathd x
     \lesssim 2 r c T^{\beta} . \]
  On the other side, if $w$ admits a continuous density $\ell^w$
  and~\eqref{sec6 hardy eq1} holds, then
  \[ \ell^w_T (w_t) = \lim_{r \rightarrow 0} \frac{1}{2 r} \int_{B (w_t, r)}
     \ell^w_T (x) \mathd x \leqslant \lim_{r \rightarrow 0} \frac{1}{2 r} W
     (r, T) \leqslant c T^{\beta}, \]
  and since we know that $\ell^w_T$ is supported on $w ([0, T])$, the above
  estimate extends to all $x \in \mathbb{R}^d$.
  
  It is now clear that requirement~\eqref{sec6 hardy eq1} can be expressed in
  entirely analytical terms and so does the proof of Theorem~1.1
  from~{\cite{duboscq}}, and the authors are only using the additional fact
  that almost every fBm trajectory satisfies~\eqref{sec6 hardy eq1}. Another
  analytical condition ensuring that $\ell^w \in C^{\beta}_t C^0_x$ is the
  $(\gamma, \rho)$\mbox{-}irregularity property, since by Lemma~\ref{sec2.2
  lemma regularity averaging 2}, if $\rho > 1$, then $\ell^w \in C^{\gamma}_t
  C^0_x$. Moreover we can apply Lemma~\ref{sec2.2 lemma interpolation rho irr}
  to deduce that for any $\theta \in (0, 1)$ such that $\theta \rho > 1$,
  $\ell^w \in C^{1 - \theta + \theta \gamma}_t C^0_x$, which implies that if
  $w$ is $\rho$\mbox{-}irregular for any $\rho < \bar{\rho}$, $\bar{\rho} >
  1$, then $\ell^w \in C^{\beta}_t C^0_x$ for any $\beta < 1 - (2
  \bar{\rho})^{- 1}$. The conclusion follows applying the fact that for
  $\delta < 1 / 2$, almost every $\varphi \in C^{\delta}_t$ is
  $\rho$\mbox{-}irregular for any $\rho < (2 \delta)^{- 1}$.
\end{proof}

Similarly, the proofs of Strichartz estimates and well-posedness for
$w$\mbox{-}modulated NLS (Proposition~1.1 and Theorem~1.2 respectively)
from~{\cite{duboscq}} are entirely deterministic and only rely on the validity
of the above modulated Hardy--Littlewood--Sobolev inequality; they can
therefore be fully generalised to prevalence results, similarly to
Theorems~\ref{sec6 thm nls} and~\ref{sec6 thm kdv} above.

In~{\cite{choukgess}}, the authors provide regularity estimates for solutions
to scalar conservation laws modulated by a path $w$ of the form
\begin{equation}
  \partial_t u + \sum_{i = 1}^d \partial_{x_i} A^i (u) \circ \frac{\mathd
  w^i_t}{\mathd t} = 0 \quad \text{ on } \mathbb{T}^d, \quad u (0) = u_0 \in
  L^{\infty} (\mathbb{T}^d) . \label{sec6 gess eq1}
\end{equation}

They use the concept of $(\gamma, \rho)$\mbox{-}irregularity to show
regularisation by noise phenomena whenever $w$ is sampled as an fBm, but their
results are of analytic (or path\mbox{-}by\mbox{-}path) type; before stating
their result, let us point out a simplification: given a $(\gamma,
\rho)$\mbox{-}irregular $w \in C^{\delta}_t$, the authors impose on a suitable
parameter $\lambda$, depending on another parameter $\nu \geqslant 1$, the
condition
\[ \lambda < \frac{\rho (\delta + 1) - (1 - \gamma)}{(\nu \rho \vee 1) (\delta
   + 1) + (1 - \gamma)} \wedge \frac{\rho + 2 (\nu \rho \vee 1)}{(\nu \rho
   \vee 1) (2 \delta + 1) + (1 - \gamma)} =: c_1 \wedge c_2 . \]
Thanks to Theorem~\ref{sec3 thm3}, we can actually simplify the above
expression; we claim that $c_1 \leqslant c_2$. Since it must always hold
$\delta \leqslant \delta^{\ast}_{\gamma, \rho}$, we have
\[ \rho (\delta + 1) - (1 - \gamma) \leqslant \rho + \rho
   \delta^{\ast}_{\gamma, \rho} - (1 - \gamma) \leqslant \rho ; \]
to check that $c_1 \leqslant c_2$, it then suffices to verify that
\[ \frac{\rho}{(\nu \rho \vee 1) (\delta + 1) + 1 - \gamma} \leqslant
   \frac{\rho + 2 (\nu \rho \vee 1)}{(\nu \rho \vee 1) (2 \delta + 1) + (1 -
   \gamma)} \]
and after a few algebraic manipulations we see that this is equivalent to
\[ \rho \delta \leqslant 2 [(\nu \rho \vee 1) (\delta + 1) + 1 - \gamma] ; \]
this is now clearly always true since $\nu \geqslant 1$, so that $\rho \delta
\leqslant (\nu \rho \vee 1) \delta \leqslant 2 (\nu \rho \vee 1) (\delta +
1)$.

The main result of~{\cite{choukgess}} can then be restated as follows:

\begin{theorem}[Theorem~2.4 from~{\cite{choukgess}}]
  \label{sec6 thm chouk gess} Let $w \in C^{\delta}_t$ be $(\gamma,
  \rho)$\mbox{-}irregular and let $u$ be a quasi\mbox{-}solution
  to~\eqref{sec6 gess eq1}. Assume $A = (A^1, \ldots, A^d) \in C^2
  (\mathbb{R}; \mathbb{R}^d)$ satisfies the following non\mbox{-}degeneracy
  condition: there exist $\nu \geqslant 1$ and $c > 0$ such that, for $A' = a
  = (a^1, \ldots, a^d) \in C^1 (\mathbb{R}; \mathbb{R}^d)$, it holds
  \[ \inf_{v \in \mathbb{R}^d} \max_{i = 1, \ldots, d}  | v_i (a^i (x) - a^i
     (y)) | \geqslant c | x - y |^{\nu} \quad \text{for all } x, y \in
     \mathbb{R}. \]
  Then there exists a constant $C = C (\| \Phi^w \|_{\mathcal{W}^{\gamma,
  \rho}})$ such that for all $T > 0$ and all
  \[ \lambda < \frac{\rho (\delta + 1) - (1 - \gamma)}{(\nu \rho \vee 1)
     (\delta + 1) + (1 - \gamma)} \]
  it holds
  \begin{equation}
    \int_0^T \| u_t \|_{W^{\lambda, 1}} \mathd t \leqslant C (\| u_0
    \|_{L^1_x} + \| u \|_{L^1_{t, x}} + \| w \|_{C^{\delta}_t}  \| a' (v) m
    \|_{\tmop{TV}}) . \label{sec6 gess eq2}
  \end{equation}
  If $u$ is an entropy solution then in addition
  \[ \| u_t \|_{W^{\lambda, 1}} < \infty \quad \text{for all } t > 0. \]
\end{theorem}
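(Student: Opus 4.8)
The statement is, up to the simplification of the admissible range of $\lambda$, a restatement of Theorem~2.3 of~{\cite{choukgess}}; accordingly the plan is not to redo the analysis of~{\eqref{sec6 gess eq1}} from scratch but to recall its structure, indicate where $(\gamma,\rho)$\mbox{-}irregularity of $w$ enters, and then carry out the reduction of the bound on $\lambda$ by means of Theorem~\ref{sec3 thm3}.

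First I would recall that~{\cite{choukgess}} passes to the kinetic formulation of~{\eqref{sec6 gess eq1}}: writing $a=A'$, the kinetic function $\chi_u(t,x,\xi)=\mathbbm{1}_{0<\xi<u(t,x)}-\mathbbm{1}_{u(t,x)<\xi<0}$ solves a linear first\mbox{-}order equation with characteristic speed $a(\xi)\cdot\dot{w}_t$ and the kinetic (quasi\mbox{-})measure $m$ as a source term. As with the modulated transport equations discussed earlier in this section, one absorbs the modulation by a change of variables in phase space, and the resulting smoothing in the velocity variable $\xi$ is quantified by the occupation measure $\mu^w$, i.e.\ by the $\rho$\mbox{-}irregularity of $w$, through estimates of the type of Lemma~\ref{sec2.2 lemma regularity averaging 1}. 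The non\mbox{-}degeneracy assumption on $a$ with exponent $\nu$ is exactly what is needed to convert this modulated smoothing of $\chi_u$ into the regularity gain $u\in L^1_t W^{\lambda,1}_x$ via a modulated averaging lemma. The hard part is precisely this averaging\mbox{-}lemma computation, in which one tracks how the H\"{o}lder exponent $\delta$ of $w$, the irregularity exponents $(\gamma,\rho)$ and the non\mbox{-}degeneracy exponent $\nu$ combine into the constraint on $\lambda$; I would quote it directly from~{\cite{choukgess}}, the point for us being merely that every hypothesis there is phrased in terms of $(\gamma,\rho)$\mbox{-}irregularity of $w$, a property we have shown to be prevalent.

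It then remains to rewrite the range of $\lambda$. As in the computation preceding the statement, Theorem~\ref{sec3 thm3} forces $\delta\leqslant\delta^{\ast}_{\gamma,\rho}=(1-\gamma)/\rho$: indeed, since $w\in C^{\delta}_t$ is $\delta$\mbox{-}H\"{o}lder everywhere, Theorem~\ref{sec3 thm3} rules out $\delta>\delta^{\ast}_{\gamma,\rho}$. Hence $\rho(\delta+1)-(1-\gamma)\leqslant\rho\leqslant\rho+2(\rho\vee 1)$, so that the second term in the minimum appearing in~{\cite{choukgess}} is always the larger one and may be discarded, leaving exactly
\[ \lambda<\frac{\rho(\delta+1)-(1-\gamma)}{(\nu\rho\vee 1)(\delta+1)+(1-\gamma)}. \]
Finally, the entropy\mbox{-}solution assertion follows from~{\eqref{sec6 gess eq2}}: for an entropy solution the kinetic measure $m$ is finite and non\mbox{-}negative, so that $\|a'(v)\,m\|_{\tmop{TV}}<\infty$ and the right\mbox{-}hand side of~{\eqref{sec6 gess eq2}} is finite for each fixed time; hence $\|u_t\|_{W^{\lambda,1}}<\infty$ for every $t>0$.
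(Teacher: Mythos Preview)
Your proposal is correct and matches the paper's treatment: the theorem is quoted from~{\cite{choukgess}} rather than reproved, and the only contribution of the present paper is the simplification of the admissible range of $\lambda$ via Theorem~\ref{sec3 thm3}, which you carry out exactly as in the text preceding the statement. One small inaccuracy: the pointwise-in-time assertion $\| u_t \|_{W^{\lambda,1}} < \infty$ for \emph{all} $t>0$ does not follow from the time-integrated estimate~{\eqref{sec6 gess eq2}} alone (that would only give finiteness for almost every $t$); this refinement is part of what~{\cite{choukgess}} proves for entropy solutions and should simply be cited together with the rest, as the paper does.
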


We avoid here providing all the details on the result above for which we refer
the reader to~{\cite{choukgess}}; let us only mention that the definition of
quasi\mbox{-}solution to~\eqref{sec6 gess eq1} requires the existence of a
finite Radon measure $m$ associated to $u$, which is the one appearing in
estimate~\eqref{sec6 gess eq2}; $u$ is an entropy solution if $m$ is
non\mbox{-}negative.

\

A few algebraic manipulations together with Theorem~\ref{sec3 thm2} imply the
following result.

\begin{corollary}
  Let $\delta \in (0, 1)$. For almost every $\varphi \in C^{\delta}_t$, the
  statement of Theorem~\ref{sec6 thm chouk gess} holds for any
  \[ \lambda < \frac{1}{(\nu \vee 2 \delta) (\delta + 1) + \delta} . \]
\end{corollary}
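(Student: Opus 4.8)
The plan is to substitute the $\rho$-irregularity available for generic $\varphi\in C^\delta_t$ into the admissibility constraint on $\lambda$ in Theorem~\ref{sec6 thm chouk gess}, and then optimise over the admissible pairs $(\gamma,\rho)$. First I would fix once and for all a prevalent subset of $C^\delta_t$, obtained as a countable intersection (legitimate by property~5.\ of prevalence) of the prevalent sets produced by part \emph{i.}\ of Theorem~\ref{sec3 thm2}, on which the following holds: for every pair $(\varepsilon,\rho)$ in a fixed countable set with $\varepsilon>0$ and $\rho<(2(\delta+\varepsilon))^{-1}$, the path $\varphi$ is $(\gamma,\rho)$-irregular with $\gamma>1/2$ and with critical parameter $\delta^\ast_{\gamma,\rho}=(1-\gamma)/\rho$ as close to $\delta$ as we wish. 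This uses the construction behind Theorem~\ref{sec3 thm2} — fractional Brownian motion of Hurst parameter $H=\delta+\varepsilon$ together with Theorem~\ref{sec2.4 thm catelliergubinelli}, which produces $\gamma$ arbitrarily close to $1-H\rho$ — and the interpolation Lemma~\ref{sec2.2 lemma interpolation rho irr}, which moves $\varphi$ along the line of constant $\delta^\ast_{\gamma,\rho}$ while trading $\rho$ for $\gamma$; note that by Theorem~\ref{sec3 thm3} one always has $\delta^\ast_{\gamma,\rho}\geqslant\delta$, so $\delta$ is exactly the infimum attainable.

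Fix such a $\varphi$ and such a pair $(\gamma,\rho)$; the hypotheses of Theorem~\ref{sec6 thm chouk gess} are satisfied (in particular $w\in C^\delta_t$ since $\delta\in(0,1)$), so the estimate holds for every
\[ \lambda<\frac{\rho(\delta+1)-(1-\gamma)}{(\nu\rho\vee1)(\delta+1)+(1-\gamma)}. \]
Writing $1-\gamma=\delta^\ast_{\gamma,\rho}\,\rho$, the numerator becomes $\rho(\delta+1-\delta^\ast_{\gamma,\rho})$, which tends to $\rho$ as $\delta^\ast_{\gamma,\rho}\downarrow\delta$; the threshold then tends to $\rho\big/\big[(\nu\rho\vee1)(\delta+1)+\delta\rho\big]$, which I maximise over $\rho\in(0,(2\delta)^{-1})$. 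The maximisation is elementary: if $\nu>2\delta$ the threshold is maximised as $\rho\to1/\nu$ (the regimes $\nu\rho<1$ and $\nu\rho\geqslant1$ agree at $\rho=1/\nu$), with limiting value $[\nu(\delta+1)+\delta]^{-1}$; if $\nu\leqslant2\delta$ one has $\nu\rho\leqslant1$ throughout, the threshold is increasing in $\rho$, and letting $\rho\to(2\delta)^{-1}$ gives $[2\delta(\delta+1)+\delta]^{-1}$. The two cases combine into $[(\nu\vee2\delta)(\delta+1)+\delta]^{-1}$, exactly the asserted bound: given any $\lambda$ strictly below it, one picks a pair $(\gamma,\rho)$ from the fixed prevalent family whose Chouk--Gess threshold exceeds $\lambda$, and Theorem~\ref{sec6 thm chouk gess} applied to that pair yields the conclusion.

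The one genuinely delicate step is the first one, i.e.\ certifying for \emph{generic} $\varphi$ the sharp asymptotics $\delta^\ast_{\gamma,\rho}\to\delta$, and not merely the crude $\delta^\ast_{\gamma,\rho}<1/(2\rho)$ that follows from $\gamma>1/2$ alone: the optimisation above is sensitive to this, and using only the bare statement that $\varphi$ is ``$\rho$-irregular'', without tracking $\gamma$, would yield a strictly weaker exponent. Everything after that is the bookkeeping of a maximum of an explicit one-variable function, split according to the sign of $\nu-2\delta$, together with the stability of prevalence under countable intersections.
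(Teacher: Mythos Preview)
Your argument is correct and reaches the stated bound, but it takes a longer route than necessary, and your closing paragraph misidentifies the delicate point. The paper's proof is the one-line ``algebraic manipulations together with Theorem~\ref{sec3 thm2}'', and this already suffices without any appeal to the fBm construction or to interpolation. Writing the Chouk--Gess threshold as
\[ f(\gamma,\rho)=\frac{\rho(\delta+1)-(1-\gamma)}{(\nu\rho\vee1)(\delta+1)+(1-\gamma)}, \]
one observes that $f$ is increasing in $\gamma$, so $f(\gamma,\rho)>f(1/2,\rho)$ whenever $\gamma>1/2$, and a direct computation gives
\[ \lim_{\rho\uparrow(2\delta)^{-1}}f(1/2,\rho)=\frac{1}{(\nu\vee2\delta)(\delta+1)+\delta}. \]
Given $\lambda$ strictly below this, pick $\rho$ close enough to $(2\delta)^{-1}$ that $f(1/2,\rho)>\lambda$, and apply Theorem~\ref{sec6 thm chouk gess} with the pair $(\gamma,\rho)$ handed to you by Theorem~\ref{sec3 thm2}.

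The reason this works without your ``genuinely delicate step'' is that the crude bound $\delta^\ast_{\gamma,\rho}=(1-\gamma)/\rho<1/(2\rho)$, coming from $\gamma>1/2$ alone, already tends to $\delta$ as $\rho\to(2\delta)^{-1}$. So the bare $\rho$-irregularity statement, varied over $\rho$, delivers exactly the sharp exponent --- your claim that it ``would yield a strictly weaker exponent'' is incorrect. The interpolation is likewise redundant: along the line $\theta\mapsto(\gamma^\theta,\rho^\theta)$ the function $f$ is constant when $\nu\rho^\theta\geqslant1$ and increasing in $\theta$ otherwise, so the endpoint $\theta=1$ already realises the maximum. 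Finally, the assertion that Theorem~\ref{sec2.4 thm catelliergubinelli} ``produces $\gamma$ arbitrarily close to $1-H\rho$'' is not what that theorem states in this paper; what you actually use (and what suffices) is just $\gamma>1/2$ at $\rho$ near $(2H)^{-1}$.
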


\section{Criteria for $\rho$-irregularity of stochastic processes}\label{sec4}

This section is devoted to the study of probabilistic properties sufficient to
ensure that a stochastic process has $(\gamma, \rho)$\mbox{-}irregular sample
paths. It includes the proof of Theorem~\ref{sec3 thm1}, which is the
cornerstone for our main result prevalence Theorem~\ref{sec3 thm2}, but
develops several criteria which are of independent interest. In particular we
establish $\rho$\mbox{-}irregularity for processes like fBm,
$\alpha$\mbox{-}stable process, Ornstein--Uhlenbeck as well as $X_t = \int_0^t
B_s \mathd s$; many of these process have appeared in regularisation by noise
phenomena, see for
instance~{\cite{le}},~{\cite{athreya}},~{\cite{priola2020}},~{\cite{chaudru}}
among others.

\subsection{General criteria}\label{sec4.1}

We provide here useful general criteria to establish
$\rho$\mbox{-}irregularity for a given stochastic process, which will then be
applied to several examples in the next section.

\

We adopt the following convention: although we always write statements to
hold for any $\xi \in \mathbb{R}^d$, they must be interpreted as ``for all
$\xi$ big enough'', i.e. $| \xi | \geqslant C$ for some universal
deterministic constant $C > 0$, so that for instance expressions like $\log |
\xi |$ are meaningful. We have seen that in the case of
$\rho$\mbox{-}irregularity this is not an issue, since the only relevant
information given by $\| \Phi^w \|_{\mathcal{W}^{\gamma, \rho}}$ is for big
values of $| \xi |$. Similarly, for a modulus of continuity $\varphi$ defined
only on a neighbourhood of $0$, $t$ and $s$ are tacitly assumed to be
sufficiently close whenever $\varphi (| t - s |)$ appears.

\

The next statement given in a general form, but keep in mind that our primary
focus is the case $F (\xi) = | \xi |^{\alpha}$ for suitable values of
$\alpha$.

\begin{theorem}
  \label{sec4.1 thm general criterion}Let $(X_t)_{t \in [0, T]}$ be an
  $\mathbb{R}^d$\mbox{-}valued stochastic process with $\mathbb{P}$-a.s.
  measurable trajectories, $T < \infty$. Let $F : \mathbb{R}^d \rightarrow
  \mathbb{R}_{\geqslant 0}$ be a continuous function with the following
  properties:
  \begin{enumerate}[label=\roman*.]
    \item there exist constants $c_1, c_2 > 0$ such that $F (x) \leqslant c_1
    F (y)$ whenever $| x - y | \leqslant c_2$;
    
    \item $F$ has at most polynomial growth, i.e. $F (\xi) \leqslant c_3 | \xi
    |^{\alpha}$ as $\xi \rightarrow \infty$ for some $\alpha < \infty$, $c_3 >
    0$.
  \end{enumerate}
  Also assume that there exist positive constants $\mu, \theta, \delta$ such
  that the following hold:
  \begin{enumerate}
    \item (Integrability condition)
    \begin{equation}
      \mathbb{E} \left[ \exp \left( \mu \int_0^T | X_t |^{\theta} \mathd t
      \right) \right] < \infty \label{sec4.1 general criterion integrability}
      ;
    \end{equation}
    \item (Continuity condition)
    \begin{equation}
      \sup_{\substack{        0 < t - s < \delta\\
        \xi \in \mathbb{R}^d }}
        \mathbb{E} \left[ \exp \left( \mu \frac{| \Phi^X_{s, t}
      (\xi) |^2  | F (\xi) |^2}{| t - s |} \right) \right] < \infty
      \label{sec4.1 general criterion continuity} .
    \end{equation}
  \end{enumerate}
  Then, for the choice $\varphi (x) = \sqrt{x | \log x |}$, defining the
  random variable
  \[ Y \assign \sup_{s \neq t, \xi \in \mathbb{R}^d} \frac{| \Phi^X_{s, t}
     (\xi) | F (\xi)}{\varphi (| t - s |) \sqrt{\log | \xi |}}, \]
  there exists $\lambda > 0$ such that $\mathbb{E} [\exp (\lambda Y^2)] <
  \infty$.
\end{theorem}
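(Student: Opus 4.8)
The plan is to decouple the two suprema defining $Y$: first treat the supremum over $s \ne t$ at a frozen frequency $\xi$ by a Garsia--Rodemich--Rumsey (GRR) argument in time, obtaining a random modulus of continuity for $t \mapsto \Phi^X_t(\xi)$ with sub-Gaussian tails that are \emph{uniform in $\xi$}; then pass to the supremum over $\xi$ by discretising $\mathbb{R}^d$ into a net whose resolution grows with $|\xi|$ and running a union bound, the $\sqrt{\log|\xi|}$ normalisation being exactly what makes that union bound summable. It is convenient to assume $T \le e^{-1}$ throughout — splitting $[0,T]$ into finitely many pieces if needed — so that $\sqrt{h} \le \varphi(h)$ and $|\log h|$ stays bounded below for $0 < h \le T$.

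For the per-frequency step I would apply GRR to $t \mapsto \sqrt{\mu'}\,F(\xi)\,\Phi^X_t(\xi)$ with Young function $\Psi(x) = e^{x^2}-1$ and gauge $p(u) = \sqrt u$, for a small constant $\mu' \le \mu$. The continuity condition \eqref{sec4.1 general criterion continuity} bounds the expectation of the GRR double integral $B_\xi$ uniformly in $\xi$ on the range $|t-s| < \delta$; the complementary range is absorbed by writing $\Phi^X_{s,t}(\xi)$ as a sum of at most $\lceil T/\delta\rceil$ increments over subintervals of length $<\delta$ and using Cauchy--Schwarz together with Hölder's inequality in $\Omega$ (shrinking $\mu'$ if necessary). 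Since $\Psi^{-1}(y) = \sqrt{\log(1+y)}$ and, for $h \le T$,
\[ \int_0^h \Psi^{-1}\!\big(4 B_\xi u^{-2}\big)\, \mathrm{d}p(u) \lesssim \big(\sqrt{\log(1 + B_\xi)} + 1\big)\,\varphi(h), \]
GRR then yields $Y_\xi := \sup_{s\ne t} F(\xi)|\Phi^X_{s,t}(\xi)|/\varphi(|t-s|) \lesssim \sqrt{\log(1+B_\xi)}+1$, hence $\mathbb{E}[\exp(\kappa Y_\xi^2)] \lesssim 1 + \mathbb{E}[B_\xi] \le C$ for some $\kappa>0$ and $C<\infty$ not depending on $\xi$; equivalently $\mathbb{P}(Y_\xi > u) \le C e^{-\kappa u^2}$ uniformly in $\xi$.

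For the supremum over $\xi$ I would use the crude bounds $|\partial_\xi \Phi^X_{s,t}(\xi)| \le \int_s^t|X_r|\,\mathrm{d}r$ and $|\Phi^X_{s,t}(\xi)| \le |t-s|$ to control the error in replacing $\xi$ by its nearest point $\bar\xi$ in a net $N_j$ of mesh $\sim 2^{-j\alpha'}$ inside the dyadic shell $\{2^{j-1}\le|\xi|<2^j\}$, with $\alpha' = \alpha\max(1,2/\theta)$ and $\alpha$ the polynomial growth exponent of $F$. Optimising between the two bounds according to the size of $|t-s|$, and using the doubling and growth properties \emph{i.}--\emph{ii.} of $F$, the remainder is $\lesssim W^{\max(1/\theta,\,1/2)}/\sqrt{\log|\xi|}$ with $W = \int_0^T|X_r|^\theta\,\mathrm{d}r$; by the integrability condition \eqref{sec4.1 general criterion integrability} its supremum over $(s,t,\xi)$ has a Gaussian moment. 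On the net itself, $|N_j| \lesssim 2^{jd(1+\alpha')}$ while $\log|\xi| \ge (j-1)\log 2$, so a union bound gives $\mathbb{P}(\sup_{\xi\in N,\,|\xi|\ge 2^{j_0}} Y_\xi/\sqrt{\log|\xi|} > \lambda) \le \sum_{j\ge j_0}|N_j|\,C\,e^{-\kappa\lambda^2(j-1)\log 2} \lesssim e^{-c\lambda^2}$ once $\kappa\lambda^2 > d(1+\alpha')\log 2$; the finitely-many-in-scale frequencies $|\xi| < 2^{j_0}$ are handled by the same GRR-plus-net device on a compact set. Putting the pieces together and recalling that only large $|\xi|$ are relevant then gives $\mathbb{E}[\exp(\lambda Y^2)] < \infty$ for $\lambda$ small.

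The routine part is the per-frequency GRR estimate; the real obstacle is the frequency discretisation. The net has to be fine enough that the remainder — which is controlled \emph{only} through \eqref{sec4.1 general criterion integrability} and the crude $\xi$-Lipschitz bound — keeps a Gaussian moment uniformly in $\xi$, and at the same time coarse enough that $|N_j|$ grows like a fixed exponential in $j$, so that the union bound converges for some range of $\lambda$. The gauge $\sqrt{\log|\xi|}$ in the definition of $Y$ is precisely calibrated so that both requirements hold at once, and so that one gets a genuine Gaussian tail for $Y$ rather than merely finitely many moments.
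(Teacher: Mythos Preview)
Your proposal is correct and uses the same ingredients as the paper --- a GRR/chaining estimate in time, a lattice discretisation in frequency, and the integrability condition to bridge net points to arbitrary~$\xi$ --- but you run them in the opposite order. The paper first freezes $(s,t)$, packages the entire frequency lattice (over all scales $n$ and all $\tilde\xi\in 2^{-n}\mathbb{Z}^d$) into a single weighted functional
\[
  J_{s,t}(\lambda)=\sum_{n}2^{-n}\sum_{\tilde\xi\in 2^{-n}\mathbb{Z}^d}2^{-n(d+1)}(1+|\tilde\xi|)^{-(d+1)}\exp\!\Big(\lambda\,\tfrac{|\Phi^X_{s,t}(\tilde\xi)|^2F(\tilde\xi)^2}{|t-s|}\Big),
\]
whose expectation is uniformly bounded by~\eqref{sec4.1 general criterion continuity}; then for arbitrary $\xi$ it picks the scale $n\sim\log|\xi|$ and the nearest lattice point, controls the remainder exactly as you do (interpolating $|e^{i\xi\cdot x}-e^{i\tilde\xi\cdot x}|\le 2\wedge|\xi-\tilde\xi||x|$), and arrives at a pointwise bound $\exp(\lambda Y_{s,t}^2)\lesssim \exp(\lambda C\|X\|_{L^\theta}^{\theta})+J_{s,t}(2\lambda C)$. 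Only at the very end does it chain in time via Lemma~\ref{appendixA2 chaining lemma}, which is the same GRR estimate you invoke first. What the paper's ordering buys is compactness: the union bound over the net and the choice of resolution are absorbed into the single object $J_{s,t}$, and one never has to track the cardinalities $|N_j|$ or the threshold on $\lambda$ explicitly. What your ordering buys is transparency about \emph{why} the $\sqrt{\log|\xi|}$ normalisation is the right one --- it is visibly the price of the union bound over a net of polynomial-in-$|\xi|$ cardinality against Gaussian per-point tails. Both routes give the same conclusion with the same loss.
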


\begin{proof}
  First of all let us show that, starting from~\eqref{sec4.1 general criterion
  continuity}, we can find another constant $\tilde{\mu}$ such that the same
  bound holds taking the supremum over all $s < t$, without the restriction $|
  t - s | < \delta$. Let $[s, t]$ be such that $| t - s | > \delta$; we can
  split the interval $[s, t]$ in at most $n = \lfloor T / \delta \rfloor + 1$
  intervals of the form $[t_i, t_{i + 1}]$, of size at most $\delta$; we have
  the estimate
  \begin{align*}
    \mathbb{E} \left[ \exp \left( \tilde{\mu}  \frac{| \Phi^X_{s, t} (\xi) |^2
    | F (\xi) |^2}{| t - s |} \right) \right] & \leqslant \mathbb{E} \left[
    \exp \left( \frac{n \tilde{\mu}}{\delta}  \sum_i | \Phi^X_{t_i, t_{i + 1}}
    (\xi) |^2 | F (\xi) |^2 \right) \right]\\
    & =\mathbb{E} \left[ \prod_i \exp \left( \frac{n \tilde{\mu}}{\delta} |
    \Phi^X_{t_i, t_{i + 1}} (\xi) |^2 | F (\xi) |^2 \right) \right]\\
    & \leqslant \prod_i \mathbb{E} \left[ \exp \left( n^2 \tilde{\mu} 
    \frac{| \Phi^X_{t_i, t_{i + 1}} (\xi) |^2 | F (\xi) |^2}{| t_{i + 1} - t_i
    |} \right) \right]^{1 / n}
  \end{align*}
  and choosing $\tilde{\mu}$ such that $ (\lfloor T / \delta \rfloor + 1)^2
  \tilde{\mu} \leqslant \mu$ we obtain
  \[ \sup_{\tmscript{\begin{array}{c}
       | t - s | > \delta\\
       \xi \in \mathbb{R}^d
     \end{array}}} \mathbb{E} \left[ \exp \left( \tilde{\mu}  \frac{|
     \Phi^X_{s, t} (\xi) |^2  | F (\xi) |^2}{| t - s |} \right) \right] <
     \infty . \]
  From now on with a slight abuse we will denote by $\mu$ the new constant
  under which we have a bound of the form~\eqref{sec4.1 general criterion
  continuity} over all possible $t \neq s$.
  
  Let us define, for any $s \neq t$ and suitable $\lambda > 0$, the following
  quantity:
  \begin{equation*}
    J_{s, t} (\lambda) \assign \sum_{n \in \mathbb{N}} 2^{- n} \sum_{\xi \in
    2^{- n} \mathbb{Z}^d} 2^{- n (d + 1)} (1 + | \xi |)^{- (d + 1)} \exp
    \left( \lambda \frac{| \Phi^X_{s, t} (\xi) |^2 | F (\xi) |^2}{| t - s |}
    \right) .
  \end{equation*}
  It follows from~\eqref{sec4.1 general criterion continuity} that, for all
  $\lambda \leqslant \mu$, $\mathbb{E} [J_{s, t} (\lambda)] \leqslant K$
  uniformly in $s, t$; moreover by Jensen inequality it is clear that for any
  $\beta > 1$ it holds $J_{s, t} (\lambda)^{\beta} \lesssim J_{s, t} (\beta
  \lambda)$. Let us also define
  \[ Y_{s, t} \assign \frac{1}{| t - s |^{1 / 2}} \sup_{\xi \in \mathbb{R}^d}
     \frac{| \Phi^X_{s, t} (\xi) | F (\xi)}{\sqrt{\log | \xi |}} . \]
  In order to conclude, it suffices to show that there exists $\lambda \in (0,
  \mu)$ such that $\mathbb{E} [\exp (\lambda Y^2_{s, t})] \leqslant K$
  uniformly in $s < t$; indeed, we can then apply Lemma~\ref{appendixA2
  chaining lemma} from Appendix~\ref{appendixA2} for the choice $\alpha = 1 /
  2$, $E = C^0 (\mathbb{R}^d)$ and $Z : [0, T] \rightarrow E$ given by $Z_t
  (\xi) \assign \Phi^X_t (\xi) F (\xi) / \sqrt{\log | \xi |}$, to get the
  associated bound for $Y$.
  
  Fix $\xi \in \mathbb{R}^d$. For any $n \in \mathbb{N}$, we can find
  $\tilde{\xi} \in 2^{- n} \mathbb{Z}^d$ such that $| \xi - \tilde{\xi} |
  \lesssim 2^{- n}$; for such $\tilde{\xi}$ it holds
  \[ \frac{| \Phi_{s, t}^X (\tilde{\xi}) | F (\tilde{\xi})}{| t - s |^{1 / 2}}
     \lesssim \lambda^{- 1 / 2} \sqrt{\log \, J_{s, t} (\lambda) + n + \log |
     \tilde{\xi} |} . \]

  We can assume $\theta\in (0,1]$; indeed, if \eqref{sec4.1 general criterion integrability} is satisfied for some $\theta>1$, then by H\"older's inequality it also holds for $\theta=1$.
  Combining the trivial estimates $| e^{i \xi \cdot x} -
  e^{i \tilde{\xi} \cdot x} | \leqslant 2$, $| e^{i \xi \cdot x} - e^{i
  \tilde{\xi} \cdot x} | \leqslant | \xi - \tilde{\xi} | | x |$, we then find
  \begin{align}
    | \Phi_{s, t}^X (\tilde{\xi}) - \Phi_{s, t}^X (\xi) | & \leqslant \int_s^t
    | e^{i \xi \cdot X_r} - e^{i \tilde{\xi} \cdot X_r} | \, \mathd r \lesssim
    | \xi - \tilde{\xi} |^{\theta} \int_s^t | X_r |^{\theta} \, \mathd r
    \leqslant | \xi - \tilde{\xi} |^{\theta} \| X \|_{L^{\theta}}, \nonumber
  \end{align}
  which interpolated together with $| \Phi^X_{s, t} (\xi) | \leqslant | t - s
  |$ gives
  \[ | \Phi_{s, t}^X (\tilde{\xi}) - \Phi_{s, t}^X (\xi) | \lesssim | t - s
     |^{1 / 2} | \xi - \tilde{\xi} |^{\theta / 2} \| X \|^{\theta /
     2}_{L^{\theta}} . \]
  Gathering everything together and using the fact that for $n$ big enough it
  holds $F (\xi) \lesssim F (\tilde{\xi})$, we obtain
  \begin{eqnarray*}
    | \Phi^X_{s, t} (\xi) | F (\xi) & \lesssim & | \Phi^X_{s, t} (\xi) -
    \Phi_{s, t}^X (\tilde{\xi}) | F (\xi) + | \Phi_{s, t}^X (\tilde{\xi}) | F
    (\tilde{\xi})\\
    & \lesssim & | t - s |^{1 / 2}  | \xi - \tilde{\xi} |^{\theta / 2}  \| X
    \|^{\theta / 2}_{L^{\theta}} F (\xi)\\
    &  & + | t - s |^{1 / 2} \lambda^{- 1 / 2}  \sqrt{\log \, J_{s, t}
    (\lambda) + \, n + \log | \tilde{\xi} |}\\
    & \lesssim & | t - s |^{1 / 2}  \, \| X \|^{\theta / 2}_{L^{\theta}} 2^{-
    n \theta / 2} F (\xi)\\
    &  & + | t - s |^{1 / 2} \lambda^{- 1 / 2}  \sqrt{\log \, J_{s, t}
    (\lambda) + n + \log | \xi | + c}
  \end{eqnarray*}
  Therefore, choosing $n \sim \log | \xi |$, which is by assumption enough for
  $F (\xi)  \, 2^{- n \theta / 2} \lesssim 1$, we obtain
  \[ | \Phi^X_{s, t} (\xi) | F (\xi) \lesssim | t - s |^{1 / 2} \left[ \| X
     \|^{\theta / 2}_{L^{\theta}} + \lambda^{- 1 / 2} \sqrt{\log \, J_{s, t}
     (\lambda) + \log | \xi |} \right] . \]
  Dividing by $\sqrt{\log | \xi |}  | t - s |^{1 / 2}$ and taking the supremum
  we get
  \[ Y_{s, t} \lesssim \| X \|^{\theta / 2}_{L^{\theta}} + \lambda^{- 1 / 2} +
     \lambda^{- 1 / 2} \sqrt{\log \, J_{s, t} (\lambda)}, \]
  and so there exists a constant $C$ such that
  \begin{eqnarray*}
    \exp (\lambda Y_{s, t}^2) & \lesssim & \exp (\lambda C \| X
    \|_{L^{\theta}}^{\theta})   \, J_{s, t} (\lambda)^C\\
    & \lesssim & \exp (2 \lambda C \| X \|_{L^{\theta}}^{\theta}) + J_{s, t}
    (\lambda)^{2 C}\\
    & \lesssim & \exp (2 \lambda C \| X \|_{L^{\theta}}^{\theta}) + J_{s, t}
    (2 \lambda C) .
  \end{eqnarray*}
  Choosing $\lambda$ such that $2 \lambda C \leqslant \mu$ we therefore obtain
  a uniform bound for $\mathbb{E} [\exp (\lambda Y_{s, t}^2)]$ which by the
  above reasoning implies the conclusion.
\end{proof}

\begin{remark}
  \label{sec4.1 remark general criterion}Going through the same proof, one can
  obtain a similar statement for $F$ such that:
  \begin{enumerate}[label=\roman*.]
    \item there exist constants $c_1, c_2, c_3 > 0$ such that $F (x) \leqslant
    c_1 F (c_2 y)$ whenever $| x - y | \leqslant c_3$;
    
    \item $F$ has exponential\mbox{-}type growth, i.e. $\log F (\xi) \leqslant
    c_4 | \xi |^{\alpha}$ as $\xi \rightarrow \infty$ for some $\alpha <
    \infty$, $c_4 > 0$.
  \end{enumerate}
  Then under conditions~\eqref{sec4.1 general criterion integrability}
  and~\eqref{sec4.1 general criterion continuity} it is possible to find $c >
  0$ such that defining
  \begin{equation}
    Y \assign \sup_{s \neq t, \xi \in \mathbb{R}^d} \frac{| \Phi^X_{s, t}
    (\xi) | F (c \xi)}{\varphi (| t - s |)  | \xi |^{\alpha}} \label{sec4.1 eq
    remark}
  \end{equation}
  the same conclusion as in Theorem~\ref{sec4.1 thm general criterion} holds.
  The choice $F (\xi) = \exp (\lambda | \xi |^{\alpha})$ satisfies the above
  requirements and in this case we can get rid of $| \xi |^{\alpha}$ in the
  denominator of~\eqref{sec4.1 eq remark} by changing $c$.
\end{remark}

We immediately deduce the following result.

\begin{corollary}
  \label{sec4.1 corollary exp integr rho}Let $X$ be a process satisfying the
  assumptions of Theorem~\ref{sec4.1 thm general criterion} for $F (\xi) = |
  \xi |^{\alpha}$. Then for any $\rho < \alpha$, there exists $\gamma = \gamma
  (\rho) > 1 / 2$ such that $X$ is $(\gamma, \rho)$-irregular and moreover
  \begin{equation}
    \mathbb{E} [\exp (\lambda \| \Phi^X \|_{\mathcal{W}^{\gamma, \rho}}^2) ] <
    \infty \quad \forall \, \lambda \in \mathbb{R}. \label{sec4.1 exp integr
    rho irr}
  \end{equation}
\end{corollary}

\begin{proof}
  Let $Y$ be defined as in Theorem~\ref{sec4.1 thm general criterion};
  combining the trivial estimate $| \Phi^X_{s, t} (\xi) | \leqslant | t - s |$
  with
  \[ | \Phi^X_{s, t} (\xi) | \leqslant Y | \xi |^{- \alpha} \log^{1 / 2} |
     \xi |  | t - s |^{1 / 2}  | \log | t - s | |^{1 / 2} \]
  by interpolation we obtain, for any fixed $\varepsilon > 0$,
  \begin{align}
    | \Phi^X_{s, t} (\xi) | & \leqslant \, Y^{1 - 2 \varepsilon}  | \xi |^{-
    \alpha (1 - 2 \varepsilon)} \log^{1 / 2 - \varepsilon} | \xi |  | t - s
    |^{1 / 2 + \varepsilon}  | \log | t - s | |^{1 / 2} \nonumber\\
    & \lesssim_{\varepsilon} \, Y^{1 - 2 \varepsilon} | \xi |^{- \alpha (1 -
    3 \varepsilon)}  | t - s |^{1 / 2 + \varepsilon / 2} \nonumber
  \end{align}
  so that setting $\rho = \alpha (1 - 3 \varepsilon) < \alpha$, $\gamma = 1 /
  2 + \varepsilon / 2 > 1 / 2$, we obtain
  \[ \| \Phi^X \|_{\mathcal{W}^{\gamma, \rho}_T} \lesssim Y^{1 - 2
     \varepsilon} \]
  which also implies that, for a suitable $C = C (\varepsilon)$, taking $\mu >
  0$ small enough, it holds
  \begin{equation}
    \mathbb{E} [\exp (\mu \| \Phi^X \|_{\mathcal{W}^{\gamma, \rho}}^{2 / (1 -
    2 \varepsilon)})] \leqslant \mathbb{E} [\exp (\mu C Y^2)] < \infty .
    \label{sec4.1 estimate norm rho irr}
  \end{equation}
  Since $\varepsilon > 0$, $2 / (1 - 2 \varepsilon) > 2$ and therefore
  from~\eqref{sec4.1 estimate norm rho irr} we immediately
  deduce~\eqref{sec4.1 exp integr rho irr}. The reasoning holds for any
  $\varepsilon > 0$, so we can invert the relations between $\rho$,
  $\varepsilon$ and $\gamma$ to deduce that for any given $\rho < \alpha$ we
  can take $\gamma (\rho) = 1 / 2 + (1 - \rho / \alpha) / 6$.
\end{proof}

Theorem~\ref{sec4.1 thm general criterion} and Corollary~\ref{sec4.1 corollary
exp integr rho} are well suited for establishing $\rho$\mbox{-}irregularity in
several situations, as we will show in the next section. However,
conditions~\eqref{sec4.1 general criterion integrability} and~\eqref{sec4.1
general criterion continuity} are in general difficult to check, due to their
exponential nature. We present here a weaker version of Theorem~\ref{sec4.1
thm general criterion} which relaxes condition~\eqref{sec4.1 general criterion
integrability}; in Section~\ref{sec4.3} we will show how it is possible to
relax condition~\eqref{sec4.1 general criterion continuity} instead.

\begin{corollary}
  \label{sec4.1 cor weaker rho irr}Let $(X_t)_{t \in [0, T]}$ be an
  $\mathbb{R}^d$-valued stochastic process with $\mathbb{P}$-a.s. measurable
  trajectories, $T < \infty$; assume that it satisfies the continuity
  condition~\eqref{sec4.1 general criterion continuity} from
  Theorem~\ref{sec4.1 thm general criterion} for $F (\xi) = | \xi |^{\alpha}$
  and that there exists $\theta > 0$ such that
  \[ \mathbb{P} \left( \int_0^T | X_t |^{\theta} \mathd t < \infty \right) =
     1. \]
  Then for any $\rho < \alpha$ there exists $\gamma = \gamma (\rho) > 1 / 2$
  such that $\mathbb{P}$-a.s. $X_{\cdot}$ is $(\gamma,
  \rho)$\mbox{-}irregular.
\end{corollary}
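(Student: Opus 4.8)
The plan is to deduce the statement from Theorem~\ref{sec4.1 thm general criterion} by passing to a conditional probability measure on which the exponential integrability condition~\eqref{sec4.1 general criterion integrability} holds for free.

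First I would record that, under the standing convention that $|\xi|$ is bounded below, $F(\xi)=|\xi|^\alpha$ satisfies the two hypotheses on $F$ in Theorem~\ref{sec4.1 thm general criterion}. Next, exactly as in the proof of Corollary~\ref{sec4.1 corollary exp integr rho}, I would observe that it suffices to prove that $Y<\infty$ $\mathbb{P}$-almost surely, where
\[ Y\assign\sup_{s\neq t,\,\xi\in\mathbb{R}^d}\frac{|\Phi^X_{s,t}(\xi)|\,|\xi|^\alpha}{\varphi(|t-s|)\sqrt{\log|\xi|}},\qquad \varphi(x)=\sqrt{x\,|\log x|}. \]
Indeed, interpolating the deterministic bounds $|\Phi^X_{s,t}(\xi)|\le|t-s|$ and $|\Phi^X_{s,t}(\xi)|\le Y\,|\xi|^{-\alpha}\log^{1/2}|\xi|\,|t-s|^{1/2}|\log|t-s||^{1/2}$ gives, pathwise, $\|\Phi^X\|_{\mathcal{W}^{\gamma,\rho}}\lesssim Y^{1-2\varepsilon}$ for any $\rho<\alpha$, with $\varepsilon=(1-\rho/\alpha)/3$ and $\gamma=\gamma(\rho)=1/2+(1-\rho/\alpha)/6>1/2$, so $Y<\infty$ a.s.\ forces $(\gamma(\rho),\rho)$-irregularity a.s.

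For the a.s.\ finiteness of $Y$ I would localise by conditioning. Set $Z\assign\int_0^T|X_t|^\theta\,\mathd t$, which is finite $\mathbb{P}$-a.s.\ by hypothesis, and $\Omega_N\assign\{Z\le N\}$, so that $\mathbb{P}(\Omega_N)\uparrow1$. For $N$ large enough that $\mathbb{P}(\Omega_N)>0$, consider the conditional probability $\mathbb{P}_N(\cdot)\assign\mathbb{P}(\cdot\mid\Omega_N)$ with expectation $\mathbb{E}_N$. On $(\Omega,\mathcal{F},\mathbb{P}_N)$ the process $X$ still has a.s.\ measurable trajectories; the continuity condition~\eqref{sec4.1 general criterion continuity} still holds, because $\mathbb{E}_N[\,\cdot\,]\le\mathbb{P}(\Omega_N)^{-1}\,\mathbb{E}[\,\cdot\,]$; and the integrability condition~\eqref{sec4.1 general criterion integrability} now holds for every $\mu>0$, since $Z\le N$ $\mathbb{P}_N$-a.s.\ forces $\mathbb{E}_N[\exp(\mu Z)]\le e^{\mu N}<\infty$. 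Hence Theorem~\ref{sec4.1 thm general criterion} applies with $\mathbb{P}$ replaced by $\mathbb{P}_N$ and produces $\lambda_N>0$ with $\mathbb{E}_N[\exp(\lambda_N Y^2)]<\infty$; in particular $Y<\infty$ $\mathbb{P}_N$-a.s., i.e.\ $\mathbb{P}(\{Y=\infty\}\cap\Omega_N)=0$. Letting $N\to\infty$ and using $\mathbb{P}(\Omega_N)\to1$ gives $\mathbb{P}(Y=\infty)=0$, which together with the previous step shows that for every $\rho<\alpha$, $\mathbb{P}$-a.s.\ $X$ is $(\gamma(\rho),\rho)$-irregular.

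The only point that needs care is that~\eqref{sec4.1 general criterion continuity} is a statement about the global law of $X$ and so cannot be localised by truncating the process itself; conditioning the measure on $\{Z\le N\}$ is exactly the right device, since it leaves the continuity estimate intact (up to the harmless constant $\mathbb{P}(\Omega_N)^{-1}$) while trivialising the exponential integrability of $Z$. I do not expect any genuine difficulty beyond this observation: everything else is a black-box application of Theorem~\ref{sec4.1 thm general criterion} and of the interpolation already carried out in Corollary~\ref{sec4.1 corollary exp integr rho}.
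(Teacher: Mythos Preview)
Your proof is correct and takes a genuinely different route from the paper. The paper modifies the \emph{process}: it fixes a deterministic continuous path $f$ that is $\tilde\rho$-irregular for some $\tilde\rho\ge\alpha$ (whose existence is supplied by Theorem~\ref{sec2.4 thm catelliergubinelli}), sets $A_N=\{\|X\|_{L^\theta}\le N\}$ and $X^N\assign\mathbbm{1}_{A_N}X+\mathbbm{1}_{A_N^c}f$, checks that $X^N$ satisfies both~\eqref{sec4.1 general criterion integrability} and~\eqref{sec4.1 general criterion continuity} under the original $\mathbb{P}$, applies Theorem~\ref{sec4.1 thm general criterion}, and then lets $N\to\infty$. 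You instead modify the \emph{measure}, conditioning on $\Omega_N=\{\int_0^T|X_t|^\theta\,\mathd t\le N\}$ so that~\eqref{sec4.1 general criterion integrability} becomes trivial while~\eqref{sec4.1 general criterion continuity} survives up to the harmless factor $\mathbb{P}(\Omega_N)^{-1}$. Your argument is slightly more self-contained, since it does not need the auxiliary $\rho$-irregular path $f$ as an external input; the paper's version, on the other hand, stays on a single probability space and yields the same moment bounds for $X^N$ as in Corollary~\ref{sec4.1 corollary exp integr rho}, not just a.s.\ finiteness. Both approaches exploit the same structural point you identified: the continuity condition~\eqref{sec4.1 general criterion continuity} is a global law statement that cannot be localised by truncating the trajectory, so one must either swap the process on the bad event for something harmless (the paper) or simply discard that event from the measure (your proof).
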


\begin{proof}
  Let $f \in C^0_t$ be a deterministic continuous function which is
  $\alpha$-irregular, whose existence is granted by Theorem~\ref{sec2.4 thm
  catelliergubinelli}.
  
  For any $N \in \mathbb{N}$, set $A = \left\{ \omega \in \Omega : \, \|
  X_{\cdot} (\omega) \|_{L^{\theta} ([0, T])} \leqslant N \right\}$ and define
  $X^N_{\cdot} \assign \mathbbm{1}_A X_{\cdot} + \mathbbm{1}_{A^c} f_{\cdot}$.
  Then it is easy to check that by construction
  \[ \mathbb{E} [\exp (\lambda \| X^N_{\cdot} \|_{L^{\theta}})] < \infty \quad
     \forall \, \lambda \in \mathbb{R}. \]
  Moreover, letting $\mu, \delta$ denote the constant under which $X$
  satisfies condition~\eqref{sec4.1 general criterion continuity}, for any $|
  s - t | < \delta$ we have
  \begin{align*}
    \mathbb{E} \left[ \exp \left( \mu \frac{| \Phi^{X^N}_{s, t} (\xi) |^2 |
    \xi |^{2 \alpha}}{| t - s |} \right) \right] & \leqslant \, \mathbb{E}
    \left[ \exp \left( \mu \frac{| \Phi^X_{s, t} (\xi) |^2 | \xi |^{2
    \alpha}}{| t - s |} \right) \right] + \exp \left( \mu \frac{| \Phi^f_{s,
    t} (\xi) |^2 | \xi |^{2 \alpha}}{| t - s |} \right)\\
    & \leqslant \, {\sup_{\substack{ 0 < s - t < \delta\\
      \xi \in \mathbb{R}^d}}}  \, \mathbb{E} \left[ \exp \left( \mu \frac{| \Phi^X_{s, t}
    (\xi) |^2 | \xi |^{2 \alpha}}{| t - s |} \right) \right] + \exp (\mu \|
    \Phi^f \|_{\mathcal{W}^{1 / 2, \alpha}}^2) < \infty
  \end{align*}
  which implies that $X^N$ also satisfies condition~\eqref{sec4.1 general
  criterion continuity}. Therefore, by Corollary~\ref{sec4.1 corollary exp
  integr rho}, for fixed $N$ and $\rho < \alpha$, $X^N$ is
  $\mathbb{P}$\mbox{-}a.s. $\rho$\mbox{-}irregular; but then
  \[ \mathbb{P} \left( X \text{ is } \rho \text{-irregular} \right) \geqslant
     \mathbb{P} \left( X = X^N, \, X^N \text{ is } \rho \text{-irregular}
     \right) = 1 -\mathbb{P} (\| X \|_{L^{\theta}} > N) \rightarrow 1 \quad
     \text{as } N \rightarrow \infty . \]
\end{proof}

The following lemma provides easy\mbox{-}to\mbox{-}check sufficient conditions
for~\eqref{sec4.1 general criterion continuity} to hold. It is a more general
version of the technique developed in~{\cite{catelliergubinelli}},
Theorem~4.3.

\begin{lemma}
  \label{sec4.1 lemma condition rho irr}Let $(X_t)_{t \in [0, T]}$ be an
  $\mathbb{R}^d$-valued stochastic process with measurable trajectories and $f
  \in C^0 ([0, T] \times \mathbb{R}^d ; \mathbb{C})$ such that $\| f \|_{C^0}
  \leqslant 1$. Assume that there exists a deterministic constant $C_f$ such
  that for any $s < t$ it holds
  \begin{equation}
    \mathbb{P} \left( \left| \mathbb{E} \left[ \int_s^t f (u, X_u) \mathd u |
    \mathcal{F}_s \right] \right| \leqslant C_f \right) = 1.
  \end{equation}
  Then there exists a constant $\mu > 0$ independent of $f, s, t$ such that
  \begin{equation}
    \sup_{s \neq t} \mathbb{E} \left[ \exp \left( \mu \frac{\left| \int_s^t f
    (u, X_u) \mathd u \right|^2}{C_f  | t - s |} \right) \right] < \infty
    \label{sec4.1 exp estimate lemma} .
  \end{equation}
\end{lemma}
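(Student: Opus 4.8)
The plan is to bound the even moments of $A_{s,t} := \int_s^t f(u,X_u)\,\mathd u$ and then sum the exponential series. First I would reduce to real-valued $f$: writing $f = f_1 + i f_2$ with $f_j$ real, each $f_j$ is continuous with $\|f_j\|_\infty \le 1$ and obeys the same conditional bound, since $|\mathbb{E}[\int_s^t f_j(u,X_u)\,\mathd u \mid \mathcal{F}_s]| \le |\mathbb{E}[\int_s^t f(u,X_u)\,\mathd u \mid \mathcal{F}_s]| \le C_f$ a.s.; moreover $|A_{s,t}|^2 = (A^{(1)}_{s,t})^2 + (A^{(2)}_{s,t})^2$ with $A^{(j)}_{s,t} = \int_s^t f_j(u,X_u)\,\mathd u$, so by Cauchy--Schwarz it suffices to bound $\mathbb{E}[\exp(2\mu\,(A^{(j)}_{s,t})^2/(C_f|t-s|))]$ uniformly in $j,s,t$. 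Hence I assume $f$ real and write $A_{a,b} = \int_a^b f(u,X_u)\,\mathd u$, a real, bounded, $1$-Lipschitz (in each endpoint) adapted process.

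The core step is the a.s. estimate, for all $0 \le a \le b \le T$ and all $n\in\mathbb{N}$,
\[ \mathbb{E}\big[A_{a,b}^{2n} \mid \mathcal{F}_a\big] \;\le\; \frac{(2n)!}{n!}\,\big(C_f\,(b-a)\big)^n , \]
proved by induction on $n$ (the cases $n=0,1$ are immediate, the latter from the identity $A_{a,b}^2 = 2\int_a^b A_{r,b}\,f(r,X_r)\,\mathd r$ together with $|\mathbb{E}[A_{r,b}\mid\mathcal{F}_r]|\le C_f$). The mechanism producing one factor of $C_f$ per two powers is an integration by parts in the time variable: for a.e.\ $\omega$ the maps $r\mapsto A_{a,r}$ and $r\mapsto A_{r,b}$ are absolutely continuous with a.e.\ derivatives $f(r,X_r)$ and $-f(r,X_r)$, so starting from the chain-rule identity $A_{a,b}^{2n} = 2n\int_a^b A_{a,r}^{2n-1}f(r,X_r)\,\mathd r$ and integrating by parts, the boundary terms vanish because $A_{a,a}=A_{b,b}=0$ and one gets
\[ A_{a,b}^{2n} \;=\; 2n(2n-1)\int_a^b A_{a,r}^{2n-2}\,A_{r,b}\,f(r,X_r)\,\mathd r . \]
Now $A_{a,r}^{2n-2}$ and $f(r,X_r)$ are $\mathcal{F}_r$-measurable while $A_{r,b}$ is ``in the future''; conditioning on $\mathcal{F}_r$ inside the $r$-integral, pulling out $A_{a,r}^{2n-2}f(r,X_r)$, and using $|\mathbb{E}[A_{r,b}\mid\mathcal{F}_r]|\le C_f$ a.s.\ gives
\[ \mathbb{E}\big[A_{a,b}^{2n}\mid\mathcal{F}_a\big] \;\le\; 2n(2n-1)\,C_f \int_a^b \mathbb{E}\big[A_{a,r}^{2n-2}\mid\mathcal{F}_a\big]\,\mathd r . \]
The inductive hypothesis for $A_{a,r}$ together with $\int_a^b (r-a)^{n-1}\,\mathd r = (b-a)^n/n$ closes the induction, the recursion $c_n = 2(2n-1)c_{n-1}$, $c_1 = 2$, yielding exactly $c_n = (2n)!/n!$.

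Finally, since $A_{s,t}^{2n}\ge 0$, Tonelli gives $\mathbb{E}[\exp(\lambda A_{s,t}^2/(C_f|t-s|))] = \sum_{n\ge 0}\frac{\lambda^n}{n!}\,\mathbb{E}[A_{s,t}^{2n}]/(C_f|t-s|)^n \le \sum_{n\ge 0}\binom{2n}{n}\lambda^n$, which is finite whenever $\lambda < 1/4$, with a bound independent of $f,s,t$; taking $2\mu < 1/4$ (so $\mu < 1/8$) and recombining the two coordinates via Cauchy--Schwarz gives the claim. I expect the genuinely load-bearing point to be this time-integration-by-parts trick, which converts $A_{a,r}^{2n-1}f(r,X_r)$ into $A_{a,r}^{2n-2}A_{r,b}f(r,X_r)$ and thereby exposes a ``future'' factor $A_{r,b}$ on which the hypothesis can be used after conditioning on $\mathcal{F}_r$; the remaining points are routine bookkeeping (pathwise validity of the integration by parts, which is clear since $|f|\le 1$ makes everything Lipschitz in $r$; progressive measurability of $u\mapsto f(u,X_u)$ so that $A_{a,r}$ is $\mathcal{F}_r$-measurable; and the interchange of $\mathbb{E}[\,\cdot\mid\mathcal{F}_a]$ with $\int_a^b \cdot\,\mathd r$, immediate from boundedness).
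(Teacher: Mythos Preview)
Your proof is correct and takes a genuinely different route from the paper's. The paper first disposes of the regime $|t-s|\le C_f$ trivially, and in the regime $|t-s|>C_f$ writes $\int_s^t f(u,X_u)\,\mathd u$ as a martingale $S_n=\sum_k Z_k$ along a partition $\{t_k\}$ of $[s,t]$ plus the $\mathcal F_s$-conditional mean; the hypothesis gives the a.s.\ bound $|Z_k|\le 2C_f+|t-s|/n$, and Azuma--Hoeffding with the choice $n\sim |t-s|/C_f$ yields the exponential estimate. Your argument instead establishes the sharp moment bound $\mathbb E[A_{a,b}^{2n}\mid\mathcal F_a]\le \tfrac{(2n)!}{n!}(C_f(b-a))^n$ directly, via the time integration-by-parts identity $A_{a,b}^{2n}=2n(2n-1)\int_a^b A_{a,r}^{2n-2}A_{r,b}f(r,X_r)\,\mathd r$ followed by conditioning on $\mathcal F_r$; this is essentially a Khas'minskii-type iterated-conditioning argument. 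Your approach is more elementary (no concentration inequality, no case split) and produces an explicit admissible constant $\mu<1/8$; the paper's martingale viewpoint, on the other hand, makes the mechanism transparent as a fluctuation bound and generalises readily to discrete-time settings where the pathwise integration by parts is unavailable.
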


\begin{proof}
  We can assume without loss of generality $f$ to be real valued, since otherwise we can just apply
  separately estimates for $\Re f$ and $\Im f$, which still satisfy the above
  assumptions. Suppose first that $| t - s | \leqslant C_f$. Then we have the
  trivial estimate
  \[ \left| \int_s^t f (u, X_u) \mathd u \right|^2 \leqslant | t - s |^2
     \leqslant | t - s | C_f, \]
  which immediately implies the estimate~\eqref{sec4.1 exp estimate lemma} in
  this case.
  
  Suppose now that $| t - s | / C_f  \geqslant 1$. Let $n \in \mathbb{N}$ to
  be fixed later; define $t_k = s + k (t - s) / n$ for $k \in \{ 0, \ldots, n
  \}$ and
  \[ Z_k =\mathbb{E} \left[ \int_s^t f (u, X_u) \mathd u | \mathcal{F}_{t_{k
     + 1}} \right] -\mathbb{E} \left[ \int_s^t f (u, X_u) \mathd u | 
     \mathcal{F}_{t_k} \right] . \]
  Setting $S_n = \sum_{k = 0}^{n - 1} Z_k$, $S_n$ is a martingale and it holds
  \[ \int_s^t f (u, X_u) \mathd u = S_n +\mathbb{E} \left[ \int_s^t f (u, X_u)
     \mathd u | \mathcal{F}_s \right] . \]
  In order to get a bound for such quantity, we want to apply
  Azuma\mbox{-}Hoeffding inequality to the martingale increments $Z_k$. We
  start by estimating the second term:
  \[ \left| \mathbb{E} \left[ \int_s^t f (u, X_u) \mathd u | \mathcal{F}_s
     \right] \right| \leqslant C_f \leqslant C_f^{1 / 2} | t - s |^{1 / 2}, \]
  so that uniformly in $s < t$ we have
  \begin{equation}
    \exp \left( \mu \left| \mathbb{E} \left[ \int_s^t f (u, X_u) \mathd u |
    \mathcal{F}_s \right]^2 \right| / (C_f | t - s |) \right) < \infty .
    \label{sec4.1 estimate inside lemma}
  \end{equation}
  For the martingale increments instead we have
  \[ Z_k =\mathbb{E} \left[ \int_{t_{k + 1}}^t f (u, X_u) \mathd u |
     \mathcal{F}_{t_{k + 1}} \right] -\mathbb{E} \left[ \int_{t_k}^t f (u,
     X_u) \mathd u | \mathcal{F}_{t_k} \right] + \int_{t_k}^{t_{k + 1}} f (u,
     X_u) \mathd u \]
  and so by the hypothesis and $\| f \|_{\infty} \leqslant 1$ we have the
  $\mathbb{P}$-a.s. estimate
  \[ | Z_k | \leqslant 2 C_f + \int_{t_k}^{t_{k + 1}} | f (u, X_u) | \mathd u
     \leqslant 2 C_f + | t - s | / n. \]
  By Azuma\mbox{-}Hoeffding inequality, there exist universal constants $\mu,
  K > 0$ such that
  \[ \mathbb{E} [\exp (\mu | S_n |^2 / C_n)] < K \]
  where the constant $C_n$ is given by
  \[ C_n = \sum_{k = 0}^{n - 1} (2 C_f + | t - s | / n)^2 \sim 4 n \, C_f^2 +
     | t - s |^2 / n. \]
  Choosing $n$ such that $n \sim | t - s | / C_f$ we obtain $C_n \sim | t - s
  | C_f$ and so for $\mu$ sufficiently small it holds
  \[ \mathbb{E} [\exp (\mu | S_n |^2 / C_f | t - s |)] < K. \]
  where the estimate is uniform over $s, t$. Together with
  estimate~\eqref{sec4.1 estimate inside lemma}, this implies the conclusion.
\end{proof}

\begin{remark}
  Lemma~\ref{sec4.1 lemma condition rho irr} is of independent interest, even
  outside the context of $\rho$\mbox{-}irregularity. For instance, let $X$ be
  a Markov process with semigroup $P_t = e^{t A}$ and let $- \lambda$ be an
  eigenvalue of $A$ with associated eigenvector $f^{\lambda}$; more precisely,
  assume $\lambda > 0$ and let $f^{\lambda} \in L^{\infty}$ (normalised so
  that $\| f^{\lambda} \|_{L^{\infty}} = 1$) satisfy $A f^{\lambda} = -
  \lambda f^{\lambda}$. Then it holds
  \[ \left| \mathbb{E} \left[ \int_s^t f^{\lambda} (X_u) \mathd u |
     \mathcal{F}_s \right] \right| = \left| \int_s^t (P_{u - s} f^{\lambda})
     (X_s) \mathd u \right| = \left| \int_s^t e^{- \lambda (u - s)}
     f^{\lambda} (X_s) \mathd u \right| \leqslant \frac{1}{\lambda} ; \]
  this implies for instance for $s = 0$, $t = 1$ that there exists a universal
  $\mu > 0$ such that
  \[ \sup_{\lambda > 0} \mathbb{E} \left[ \exp \left( \mu \lambda \left|
     \int_0^1 f^{\lambda} (X_u) \mathd u \right|^2 \right) \right] < \infty .
  \]
\end{remark}

\subsection{$\rho$\mbox{-}irregularity for $\beta$\mbox{-}(e)SLND Gaussian
processes and examples}\label{sec4.2}

We apply here the results of the previous section to prove part of
Theorem~\ref{sec3 thm1}. More quantitative results are given in the next two
statements.

\begin{theorem}
  \label{sec4.2 corollary rho gaussian}Let $\{ X_t \}_{t\in [0,T]}$ be a
  $\mathbb{R}^d$-valued separable Gaussian process with measurable paths s.t.
  \begin{equation}
    \int_0^T \mathbb{E} [| X_t |^2] \mathd t < \infty ; \label{sec4.2 gaussian
    integrability condition}
  \end{equation}
  suppose also that $X$ is $\beta$\mbox{-}SLND for some $\beta\in (0,\infty)$. Then $X$ satisfies the
  assumptions of Theorem~\ref{sec4.1 thm general criterion} for $F (\xi) = |
  \xi |^{1 / 2 \beta}$; therefore for any $\rho < (2 \beta)^{- 1}$ there
  exists $\gamma = \gamma (\rho) > 1 / 2$ such that $X$ is $(\gamma,
  \rho)$-irregular
  \[ \mathbb{E} [\exp (\lambda \| \Phi^X \|_{\mathcal{W}^{\gamma, \rho}}^2)] <
     \infty \quad \forall \, \lambda \in \mathbb{R}. \]
\end{theorem}

\begin{proof}
  It follows from~\eqref{sec4.2 gaussian integrability condition} that
  $X_{\cdot}$ is an $L^2 ([0, T] ; \mathbb{R}^d)$-valued Gaussian process and
  therefore by Fernique Theorem we can conclude that there exists $\mu > 0$
  such that
  \[ \mathbb{E} [\exp (\mu \| X \|_{L^2}^2)] < \infty, \]
  which implies that condition~\eqref{sec4.1 general criterion integrability}
  is satisfied. It remains to check condition~\eqref{sec4.1 general criterion
  continuity}, which we plan to do with the help of Lemma~\ref{sec4.1 lemma
  condition rho irr}. Taking $f (t, x) = e^{i \xi \cdot x}$ for a fixed $\xi
  \in \mathbb{R}^d$, we need to estimate the associate constant $C_f =
  C_{\xi}$; it is enough to provide estimates whenever $s$ and $t$ both lie in
  an interval of size at most $\delta$, $\delta$ being the parameter for which
  the $\beta$\mbox{-}SLND condition is satisfied. It holds
  
  \begin{align}
    \left| \mathbb{E} \left[ \int_s^t e^{i \xi \cdot X_u} \mathd u |
    \mathcal{F}_s \nobracket \right] \right| & \leqslant \int_s^t | \mathbb{E}
    [e^{i \xi \cdot X_u} | \mathcal{F}_s \nobracket] | \mathd u \nonumber\\
    & = \int_s^t \exp \left( - \frac{1}{2} \xi \cdot (\tmop{Var} (X_u |
    \mathcal{F}_s \nobracket) \xi) \right) \mathd u \nonumber\\
    & \leqslant \int_s^t \exp (- c | \xi |^2 | u - s |^{2 \beta}) \mathd u
    \nonumber\\
    & \leqslant \int_0^{\infty} \exp (- c | \xi |^2 r^{2 \beta}) \mathd r
    \sim | \xi |^{- 1 / \beta}, \nonumber
  \end{align}
  where in the third passage we used the $\beta$-SLND property. Therefore
  $C_{\xi} \leqslant c | \xi |^{- 1 / \beta}$ and thus there exists $\mu > 0$
  such that for any $s, t \in [0, T]$ satisfying $0 < s - t < \delta$ it holds
  \begin{align}
    \mathbb{E} \left[ \exp \left( \mu \frac{| \xi |^{1 / \beta} | \Phi_{s,
    t}^X (\xi) |^2}{| t - s |} \right) \right] \leqslant & \mathbb{E} \left[
    \exp \left( c \mu \frac{| \Phi_{s, t}^X (\xi) |^2}{C_{\xi} | t - s |}
    \right) \right] \leqslant K \nonumber
  \end{align}
  uniformly in $\xi \in \mathbb{R}^d$. By Theorem~\ref{sec4.1 thm general
  criterion} we obtain the conclusion for the choice $F (\xi) = | \xi |^{1 / 2
  \beta}$. The last statement follows from Corollary~\ref{sec4.1 corollary exp
  integr rho}.
\end{proof}

\begin{proposition}
  \label{sec4.2 prop exp irr}Let $\{X_t\}_{t\in [0,T]}$ be a $\mathbb{R}^d$-valued
  separable Gaussian process with measurable paths satisfying
  condition~\eqref{sec4.2 gaussian integrability condition} which is
  $\beta$\mbox{-}eSLND for some $\beta\in (0,\infty)$. Then there exist constants $c, \lambda > 0$ such that
  defining
  \[ Y \assign \sup_{s \neq t, \xi \in \mathbb{R}^d} \frac{| \Phi^X_{s, t}
     (\xi) | \exp (c | \xi |^{2 / (1 + \beta)})}{\varphi (| t - s |)} \]
  with $\varphi (x) = \sqrt{x | \log x |}$, it holds
  \[ \mathbb{E} [\exp (\lambda Y^2)] < \infty . \]
  In particular, if $\beta \leqslant 1$, then $X$ is exponentially irregular.
\end{proposition}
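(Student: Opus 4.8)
The plan is to run the same scheme as in the proof of Theorem~\ref{sec4.2 corollary rho gaussian}, this time verifying the hypotheses of Theorem~\ref{sec4.1 thm general criterion} in the exponential-weight variant of Remark~\ref{sec4.1 remark general criterion}, with weight $F(\xi)=\exp\big(\tfrac{c_0}{2}|\xi|^{2/(1+\beta)}\big)$. Condition~\eqref{sec4.1 general criterion integrability} is obtained exactly as before: by~\eqref{sec4.2 gaussian integrability condition}, $X_\cdot$ is an $L^2(0,T;\mathbb{R}^d)$-valued Gaussian random variable, so Fernique's theorem gives $\mathbb{E}\big[\exp(\mu\|X\|_{L^2}^2)\big]<\infty$ for some $\mu>0$, which is~\eqref{sec4.1 general criterion integrability} with $\theta=2$.

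The work is in the continuity condition~\eqref{sec4.1 general criterion continuity}, which I would again derive from Lemma~\ref{sec4.1 lemma condition rho irr} applied to $f(u,x)=e^{i\xi\cdot x}$, the point being to bound the associated constant $C_\xi$. For $0<t-s<\delta$, with $\delta$ the parameter in the $\beta$-eSLND condition, conditioning on $\mathcal{F}_s$ and using $|\mathbb{E}[e^{i\xi\cdot X_u}\mid\mathcal{F}_s]|=\exp\big(-\tfrac12\xi\cdot\mathrm{Var}(X_u\mid\mathcal{F}_s)\xi\big)$ together with~\eqref{sec2.4 exp lnd eq} gives
\[
 \Big|\mathbb{E}\Big[\int_s^t e^{i\xi\cdot X_u}\,\mathd u\,\Big|\,\mathcal{F}_s\Big]\Big|\;\leqslant\;\int_0^{\delta}\exp\!\big(-c\,|\xi|^2\,|\log r|^{-\beta}\big)\,\mathd r .
\]
The crux is to show this integral is $\lesssim\exp\big(-c_0|\xi|^{2/(1+\beta)}\big)$ for $|\xi|$ large. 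I would split the integration at $r_0:=\exp\!\big(-(c|\xi|^2)^{1/(1+\beta)}\big)$: on $(0,r_0)$ the integrand is $\leqslant 1$ and contributes at most $r_0$; on $(r_0,\delta)$ one has $|\log r|<|\log r_0|=(c|\xi|^2)^{1/(1+\beta)}$, hence $c|\xi|^2|\log r|^{-\beta}>|\log r_0|$ and the integrand is $\leqslant\exp(-|\log r_0|)=r_0$, contributing at most $\delta r_0$. This balance is precisely what produces the exponent $2/(1+\beta)$, so $C_\xi\lesssim\exp\big(-c_0|\xi|^{2/(1+\beta)}\big)$ and $1/C_\xi\gtrsim\exp\big(c_0|\xi|^{2/(1+\beta)}\big)$. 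Feeding this into Lemma~\ref{sec4.1 lemma condition rho irr} yields, after renaming constants,
\[
 \sup_{\xi\in\mathbb{R}^d,\;0<t-s<\delta}\mathbb{E}\Big[\exp\!\Big(\mu\,\frac{|\Phi^X_{s,t}(\xi)|^2\exp\big(c_0|\xi|^{2/(1+\beta)}\big)}{|t-s|}\Big)\Big]<\infty,
\]
which is~\eqref{sec4.1 general criterion continuity} with $|F(\xi)|^2=\exp\big(c_0|\xi|^{2/(1+\beta)}\big)$.

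It then remains to check that $F(\xi)=\exp\big(\tfrac{c_0}{2}|\xi|^{2/(1+\beta)}\big)$ meets the hypotheses of Remark~\ref{sec4.1 remark general criterion}: it has exponential-type growth with $\alpha=2/(1+\beta)$, and the local comparison $F(x)\leqslant c_1F(c_2y)$ for $|x-y|\leqslant c_3$ holds for $|y|$ large as soon as $c_2>1$ is chosen with $|y|+c_3\leqslant c_2|y|$ (small $\xi$ being irrelevant by the convention recalled at the start of Section~\ref{sec4.1}). Applying the remark with $F(\xi)=\exp(\lambda|\xi|^{\alpha})$ and absorbing the leftover factor $|\xi|^{\alpha}$ by a change of the dilation constant, one obtains $c_1,\lambda>0$ such that $Y=\sup_{s\neq t,\,\xi}|\Phi^X_{s,t}(\xi)|\exp\big(c_1|\xi|^{2/(1+\beta)}\big)/\varphi(|t-s|)$ satisfies $\mathbb{E}[\exp(\lambda Y^2)]<\infty$. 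For the last assertion, when $\beta\leqslant 1$ we have $2/(1+\beta)\geqslant 1$, so $\exp\big(-c_1|\xi|^{2/(1+\beta)}\big)\leqslant\exp(-c_1|\xi|)$ for $|\xi|$ large, while $\varphi(x)=\sqrt{x|\log x|}\lesssim x^{\gamma}$ near $0$ for any $\gamma\in(0,1/2)$; since $\mathbb{E}[\exp(\lambda Y^2)]<\infty$ forces $Y<\infty$ almost surely, the bound $|\Phi^X_{s,t}(\xi)|\leqslant Y\,\varphi(|t-s|)\exp\big(-c_1|\xi|^{2/(1+\beta)}\big)$ together with the trivial estimate $|\Phi^X_{s,t}(\xi)|\leqslant|t-s|$ and an interval-subdivision argument (as in the proof of Theorem~\ref{sec4.1 thm general criterion}) reduces matters to large $|\xi|$ and small $|t-s|$ and yields an estimate of the form~\eqref{sec2.2 defn exp irr}, i.e.\ $X$ is exponentially irregular. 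The single genuine obstacle is the integral estimate that pins down the sharp exponent $2/(1+\beta)$; everything else is a transcription of the $\beta$-SLND argument.
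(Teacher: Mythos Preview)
Your proposal is correct and follows essentially the same architecture as the paper's proof: Fernique for the integrability condition, Lemma~\ref{sec4.1 lemma condition rho irr} with $f(u,x)=e^{i\xi\cdot x}$ to reduce to an estimate on $C_\xi$, and then Remark~\ref{sec4.1 remark general criterion} with exponential weight of exponent $\alpha=2/(1+\beta)$.

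The one place where your argument differs is the bound on the integral $\int_0^\delta\exp(-c|\xi|^2|\log r|^{-\beta})\,\mathd r$. The paper substitutes $x=|\log r|^{-\beta}$ to obtain a Laplace-type integral $\int_0^\infty\exp(-\tilde c(|\xi|^2 x+x^{-1/\beta}))\,\mathd x$ (after absorbing the Jacobian factor $x^{-(\beta+1)/\beta}$ into the exponential), and then applies the elementary inequality $a+b\gtrsim a^\theta b^{1-\theta}$ with $\theta=\beta/(\beta+1)$ to pull out $|\xi|^{2/(1+\beta)}$ uniformly in $x$. Your splitting at $r_0=\exp\big(-(c|\xi|^2)^{1/(1+\beta)}\big)$ is the dual of this: it is exactly the stationary point of $|\xi|^2 x+x^{-1/\beta}$ under the change of variables, and your two pieces each contribute $O(r_0)$. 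Your route is more direct and avoids the slightly delicate absorption of the polynomial Jacobian; the paper's route makes the optimisation leading to the exponent $2/(1+\beta)$ more transparent. Both yield the same $C_\xi\lesssim\exp(-c_0|\xi|^{2/(1+\beta)})$, and the remainder of the argument is identical.
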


\begin{proof}
  As in the proof of Theorem~\ref{sec4.2 corollary rho gaussian},
  condition~\eqref{sec4.2 gaussian integrability condition} implies
  condition~\eqref{sec4.1 general criterion integrability} by Fernique
  Theorem. The rest of the proof is similar, relying on Lemma~\ref{sec4.1
  lemma condition rho irr} applied to $f (t, x) = e^{i \xi \cdot x}$, only
  this time we want to apply Remark~\ref{sec4.1 remark general criterion} for
  the choice $F (\xi) = \exp (| \xi |^{2 / (1 + \beta)})$, which requires a faster
  decay for $C_f = C_{\xi}$.
  
  For any $s < t$ such that $| t - s | < \delta$, where $\delta$ is the
  parameter in the $\beta$\mbox{-}eSLND condition, it holds
  \begin{align*}
    \left| \mathbb{E} \left[ \int_s^t e^{i \xi \cdot X_u} \mathd u |
    \mathcal{F}_s \nobracket \right] \right| & = \int_s^t \exp \left( -
    \frac{1}{2} \tmop{Var} (\xi \cdot X_u | \mathcal{F}_s \nobracket) \right)
    \mathd u\\
    & \leqslant \int_0^1 \exp (- c | \xi |^2 | \log r |^{- \beta}) \mathd r\\
    & \sim \int_0^{+ \infty} \exp \left( - c | \xi |^2 x - x^{-
    \frac{1}{\beta}} \right) x^{- \frac{\beta + 1}{\beta}} \mathd x\\
    & \lesssim \int_0^{\infty} \exp \left( - \tilde{c} \left( | \xi |^2 x +
    x^{- \frac{1}{\beta}} \right) \right) \mathd x
  \end{align*}
  where in the third line we used the change of variables $x = | \log r |^{-
  \beta}$. By the general inequality $a + b \gtrsim a^{\theta} b^{1 -
  \theta}$, valid for all $a, b > 0$ and $\theta \in (0, 1)$, it holds
  \[ | \xi |^2 x + x^{- \frac{1}{\beta}} \gtrsim | \xi |^{2 (1 - \theta)} x^{1
     - \theta \frac{\beta + 1}{\beta}} = | \xi |^{2 / (\beta + 1)} \]
  for the choice $\theta = \beta / (\beta + 1)$; therefore there exists a
  constant $c_1$ such that
  \[ \left| \mathbb{E} \left[ \int_s^t e^{i \xi \cdot X_u} \mathd u |
     \mathcal{F}_s \nobracket \right] \right| \lesssim \exp (- c_1 | \xi |^{2
     / (\beta + 1)}) \]
  and by Lemma~\ref{sec4.1 lemma condition rho irr} we deduce that there
  exists $c_2 > 0$ such that
  \[ \sup_{s \neq t, \xi \in \mathbb{R}^d} \mathbb{E} \left[ \exp \left( c_2 
     \frac{e^{c_1 | \xi |^{2 / (\beta + 1)}} | \Phi_{s, t}^X (\xi) |^2}{| t -
     s |} \right) \right] < \infty . \]
  The conclusion then follows from Remark~\ref{sec4.1 remark general
  criterion}. In particular if $\beta \in (0, 1]$, then $2 / (\beta + 1)
  \geqslant 1$, which implies that $X$ is exponentially irregular.
\end{proof}

The rest of this section is devoted to providing examples of Gaussian
processes with the above properties; in the following $(\mathcal{F}_t)_t$
denotes either the filtration generated by $B$ or the filtration generated by
a given process $X$ which will be clear from the context. Here are some
examples of $\beta$\mbox{-}SLND Gaussian processes:
\begin{enumerate}[label=\roman*.]
    \item Let $W^H$ be a $d$-dimensional fBm of Hurst parameter $H$. We have
    seen in Section~\ref{sec2.4} that it is $H$\mbox{-}SLND, therefore $W^H$
    is $\mathbb{P}$-a.s. $\rho$\mbox{-}irregular for any $\rho < (2 H)^{- 1}$,
    recovering the results from~{\cite{catelliergubinelli}}.
    
    \item Let $B$ be a standard Brownian motion in $\mathbb{R}^d$ and let $A
    \in \mathbb{R}^{d \times d}$, $x_0 \in \mathbb{R}^d$, $\sigma \in
    \mathbb{R}_{> 0}$ and a given measurable function $f : [0, T] \rightarrow
    \mathbb{R}^d$; consider a generalised $d$-dimensional Ornstein--Uhlenbeck
    process, solution to the SDE
    \[ \mathd X_t = \left( - A \, X_t + f_t \right) \mathd t + \sigma \mathd
       B_t, \quad X_0 = x_0 . \]
    The explicit expression for $X$ is given by
    
    \begin{align*}
      X_t & = \, e^{- t A} x_0 + \int_0^t e^{- (t - s) A} f_s \mathd s +
      \sigma \int_0^t e^{- (t - s) A} \mathd B_s\\
      & = \, e^{- (t - s) A} X_s + \int_s^t e^{- (t - r) A} f_r \mathd r +
      \sigma \int_s^t e^{- (t - r) A} \mathd B_r
    \end{align*}
    
    which implies that we have a decomposition in $\mathcal{F}_s$-adapted and
    $\mathcal{F}_s$-independent parts given by $X_t = X_{s, t}^{(1)} + X_{s,
    t}^{(2)} \assign \mathbb{E} [X_t | \mathcal{F}_s] + (X_t -\mathbb{E} [X_t
    | \mathcal{F}_s])$ where
    \[ X^{(1)}_{s, t} = e^{- (t - s) A} X_s + \int_s^t e^{- (t - r) A} f_r
       \mathd r, \quad X_{s, t}^{(2)} = \sigma \int_s^t e^{- (t - r) A} \mathd
       B_r . \]
    It follows that for any $s < t$ such that $| s - t | < \delta$ and any $v
    \in \mathbb{R}^d$ it holds
    \[ \tmop{Var} (X_t \cdot v | \mathcal{F}_s \nobracket) = \tmop{Var}
       (X^{(2)}_{s, t} \cdot v) = \sigma^2 \int_s^t | e^{- (t - r) A^T} v |^2
       \mathd r = \sigma^2 \int_0^{t - s} | e^{- r A^T} v |^2 \mathd r ; \]
    choosing $\delta$ small enough such that $\| I_d - e^{- r A^T} \|
    \leqslant 1 / 2$ for all $r < \delta$ we deduce that
    \[ \tmop{Var} (X_t \cdot v | \mathcal{F}_s \nobracket) \gtrsim | v |^2  |
       t - s |, \]
    namely $\tmop{Var} (X_t | \mathcal{F}_s \nobracket) \gtrsim | t - s |
    I_d$. We conclude that $X$ is $\rho$-irregular for any $\rho < 1$.
    
    \item There is a general class of Gaussian processes for which a
    $\beta$\mbox{-}SLND condition holds, given by so called {\tmem{moving
    averages of white noise}}, as already observed by Berman (see Section~3
    from~{\cite{berman73}}). Specifically, let $K : \mathbb{R}^+ \rightarrow
    \mathbb{R}^{d \times d}$ be a function such that $K K^T (t) \gtrsim | t
    |^{2 \beta - 1} I_d$, for some $\beta > 0$ and all $t$ small enough and
    define the process
    \[ X_t \assign \int_0^t K (t - r) \mathd B_r, \]
    where $B$ is a standard Bm in $\mathbb{R}^d$. Then for any $s < t$, we
    have a decomposition in $\mathcal{F}_s$-adapted and
    $\mathcal{F}_s$-independent parts given by
    \[ X_t = X_{s, t}^{(1)} + X_{s, t}^{(2)} = \int_0^s K (t - r) \mathd B_r +
       \int_s^t K (t - r) \mathd B_r \]
    which implies that
    \[ \tmop{Var} (X_t | \mathcal{F}_s \nobracket) = \int_s^t (K K^T) (t - r)
       \mathd r = \int_0^{t - s} (K K^T) (u) \mathd u \gtrsim | t - s |^{2
       \beta} \]
    for all $s, t$ such that $| t - s |$ is small enough; we deduce that in
    this case we have $\rho$\mbox{-}irregularity for any $\rho < (2 \beta)^{-
    1}$. The standard example for this type of processes is for the choice $K
    (t) = k (t) I_d$, where $k : \mathbb{R}_+ \rightarrow \mathbb{R}$ is such
    that $| k (t) | \gtrsim | t |^{\beta - 1 / 2}$. For the choice $k (t) = |
    t |^{H - 1 / 2}$ we obtain the $d$-dimensional. Levy fBm (sometimes also referred
    to as type\mbox{-}II fBm), for which again we have $\rho < (2 H)^{- 1}$.
    
    \item The class of moving averages is closed under integration. Given $K$
    and $X$ as above, defining $Y \assign \int_0^{\cdot} X_s \mathd s$, by
    stochastic Fubini it holds
    \[ Y_t = \int_0^t \int_0^s K (s - r) \mathd B_r \mathd s = \int_0^t
       \int_r^t K (s - r) \mathd s \mathd B_r = \int_0^t \tilde{K} (t - r)
       \mathd B_r \]
    for the choice $\tilde{K} (\cdot) = \int_0^{\cdot} K (s) \mathd s$. For
    the choice $K (t) = | t |^{H - 1 / 2} I_d$, i.e. $X$ being a Levy fBm of
    parameter $H$, then $\tilde{K} (t) = c_H | t |^{(H + 1) - 1 / 2}$, which
    implies that $Y$ is $\rho$\mbox{-}irregular for any $\rho < (2 + 2 H)^{-
    1}$. Technically speaking standard fBm does not belong to the moving
    average class, but it is clear that we can apply the same reasoning to
    formula~\eqref{sec2.4 non canonical representation}, obtaining
    
    \begin{align*}
      Y_t = \int_0^t W^H_r \mathd r & = \, \left[ \int_0^s W^H_r \mathd r +
      \int_s^t \mathbb{E} [W^H_r | \mathcal{F}_s \nobracket] \mathd r \right]
      + \int_s^t (W^H_r \nobracket -\mathbb{E} [W^H_r | \mathcal{F}_s]) \mathd
      r\\
      & = \, Y^{(1)}_{s, t} + c_H \int_s^t \int_s^r (r - u)^{H - 1 / 2}
      \mathd B_u \mathd r\\
      & = \, Y^{(1)}_{s, t} + \tilde{c}_H \int_s^t (t - u)^{(H + 1) - 1 / 2}
      \mathd B_u = Y^{(1)}_{s, t} + Y^{(2)}_{s, t} .
    \end{align*}
    
    Arguing as above, we see that $Y$ is $(1 + H)$-SLND and thus
    $\rho$-irregular for any $\rho < (2 + 2 H)^{- 1}$. Since $W^H$ as $C^{H -
    \varepsilon}$ trajectories, $Y$ has $C^{1 + H - \varepsilon}$
    trajectories, showing that it is still possible for differentiable paths
    to be $\rho$\mbox{-}irregular, for relatively small parameters $\rho$. The
    reasoning can be iterated: if $Y$ is the process obtained by integrating
    $n$ times $W^H$, so that $Y \in C^{n + H -}$ with full probability, then
    $Y_t$ has a decomposition into $\mathcal{F}_s$-adapted and
    $\mathcal{F}_s$-independent parts given by
    \[ Y_t = Y^{(1)}_{s, t} + Y^{(2)}_{s, t}, \quad Y^{(2)}_{s, t} = Y_t
       -\mathbb{E} [Y_t | \mathcal{F}_s] = c_{n, H} \int_s^t (t - u)^{(H + n)
       - 1 / 2} \mathd B_u \]
    which implies that it is $(n + H)$\mbox{-}SLND; moreover for any $k \in \{
    0, \ldots, n \}$, $D^k Y$ is $(H + n - k)$\mbox{-}SLND.
    
    \item The following example is taken from~{\cite{banosproske}} and it
    provides an explicit Gaussian process with continuous trajectories which
    are $\mathbb{P}$-a.s. $\rho$-irregular for all $\rho < \infty$. Functions
    with such properties can be also constructed by prevalence techniques,
    using the fact that countable intersection of prevalent sets is prevalent.
    
    Let $H_n$ be a sequence in $(0, 1)$ such that $H_n \downarrow 0$ and $\{
    B^{H_n} \}_n$ be a sequence of independent fBms in $\mathbb{R}^d$ with
    parameters $H_n$, defined on an interval $[0, T]$; also consider a
    sequence $\lambda_n$ of strictly positive numbers such that
    \[ \sum_n \lambda_n \mathbb{E} \left[ \, \| B^{H_n} \|_{C^0}  \right] <
       \infty \]
    (for instance one can take $\lambda_n = \left( 1 +\mathbb{E} \left[ \, \|
    B^{H_n} \|_{C^0}  \right] \right)^{- 1} n^{- 2}$). Then it holds
    \[ \mathbb{E} \left[ \sum_n \lambda_n  \| B^{H_n} \|_{C^0} \right] <
       \infty \]
    which implies that $\mathbb{P}$-a.s. the series $\sum_n \lambda_n B^{H_n}$
    is absolutely convergent, thus uniformly convergent to an element of
    $C^0_t$; we denote such limit by $Y$, which is therefore a Gaussian
    variable on $C^0_t$. By construction
    \[ \tmop{Var} (Y_t | \mathcal{F}_s \nobracket) \geqslant \lambda^2_n
       \tmop{Var} (B^{H_n}_t | \mathcal{F}_s \nobracket) = \lambda^2_n c_{H_n}
       | t - s |^{2 H_n}, \]
    which implies that $\mathbb{P}$-a.s. $Y$ is $\rho$-irregular for any $\rho
    < (2 H_n)^{- 1}$. As the reasoning holds for all $n$ and $H_n \downarrow
    0$ we conclude that $Y$ is $\mathbb{P}$\mbox{-}a.s.
    $\rho$\mbox{-}irregular for all $\rho < \infty$.
\end{enumerate}
We can also provide an example of a $\beta$\mbox{-}eSLND Gaussian process.

\begin{proposition}
  \label{sec4.2 prop wierd process}Let $\beta > 0$ and consider the
  $d$\mbox{-}dimensional Gaussian process $X$ defined by
  \begin{equation}
    X_t = \int_0^t (t - s)^{- 1 / 2} | \log (t - s) |^{- \beta / 2 - 1 / 2}
    \mathd B_s \quad \forall \, t \in [0, 1 / 2] \label{sec4.2 wierd
    process}
  \end{equation}
  where $B$ is a standard Bm in $\mathbb{R}^d$. Then $X$ admits a modification
  which is $\beta$\mbox{-}eSLND and satisfies the hypothesis of
  Proposition~\ref{sec4.2 prop exp irr}; moreover, $X$ has trajectories in
  $L^p_t$ for any $p < \infty$.
\end{proposition}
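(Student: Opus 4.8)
The plan is to realise $X$ as a Volterra Gaussian process $X_t = \int_0^t k(t - r)\, \mathd B_r$ with kernel $k(r) = r^{- 1 / 2} | \log r |^{- (\beta + 1) / 2}$, and to extract every required property from the single elementary identity
\[ \int_0^h k(u)^2 \, \mathd u = \int_0^h u^{- 1} | \log u |^{- (\beta + 1)} \, \mathd u = \frac{1}{\beta} | \log h |^{- \beta}, \qquad 0 < h < 1, \]
obtained by the substitution $v = - \log u$. In particular $k \in L^2 (0, 1 / 2)$ (here $\beta > 0$ is exactly what makes the singularity at $0$ integrable), so the Wiener integrals defining $X_t$, as well as $\int_0^s k (t - r) \, \mathd B_r$ and $\int_s^t k (t - r) \, \mathd B_r$, all make sense, and $X$ is a centred Gaussian process with independent, identically distributed components adapted to the filtration generated by $B$.

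First I would produce a jointly measurable, separable modification. For this it suffices to check that $t \mapsto X_t$ is continuous from $[0, 1 / 2]$ into $L^2 (\Omega; \mathbb{R}^d)$: for $t < t'$, writing $X_{t'} - X_t = \int_0^t [k (t' - r) - k (t - r)] \, \mathd B_r + \int_t^{t'} k (t' - r) \, \mathd B_r$, It\^o's isometry gives $\mathbb{E} | X_{t'} - X_t |^2 = d \int_0^t [k (t' - r) - k (t - r)]^2 \, \mathd r + d \int_0^{t' - t} k (u)^2 \, \mathd u$; the last term vanishes as $t' \downarrow t$ by the displayed identity, while the first is controlled by splitting the $r$-integral into a neighbourhood of the singularity $r = t$, where one uses $(a - b)^2 \leqslant 2 a^2 + 2 b^2$ and again the identity, and its complement, where $k (t' - \cdot) - k (t - \cdot) \to 0$ uniformly so dominated convergence applies. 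Stochastic $L^2$-continuity (hence continuity in probability) then yields, by the classical theory of modifications of stochastic processes, a version of $X$ that is simultaneously jointly measurable and separable; from now on $X$ denotes this modification. Note that one should \emph{not} expect a path-continuous version here: $\tmop{eSLND}$ is a much weaker form of local chaos than $\tmop{SLND}$, and indeed $\mathbb{E} | X_{t'} - X_t |^2 \sim | \log (t' - t) |^{- \beta}$ decays only logarithmically.

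Next, the $\tmop{eSLND}$ property of Definition~\ref{sec2.4 defn exp lnd}. Fix $0 \leqslant s < t \leqslant 1 / 2$ and split $X_t = X^{(1)}_{s, t} + X^{(2)}_{s, t}$ with $X^{(1)}_{s, t} = \int_0^s k (t - r) \, \mathd B_r$, which is $\mathcal{F}_s$-measurable, and $X^{(2)}_{s, t} = \int_s^t k (t - r) \, \mathd B_r$, which is independent of $\mathcal{F}_s$ as it is built from the increments of $B$ after time $s$. Since $X^{(2)}_{s, t}$ is centred Gaussian with i.i.d.\ components, the conditional law of $X_t$ given $\mathcal{F}_s$ is Gaussian with covariance
\[ \tmop{Var} (X_t \mid \mathcal{F}_s) = \tmop{Var} (X^{(2)}_{s, t}) = \Big( \int_s^t k (t - r)^2 \, \mathd r \Big) I_d = \Big( \int_0^{t - s} k (u)^2 \, \mathd u \Big) I_d = \frac{1}{\beta} | \log (t - s) |^{- \beta} I_d, \]
so $X$ is $\beta$-$\tmop{eSLND}$ with $\delta = 1 / 2$ (in fact with equality in place of $\gtrsim$). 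For the remaining points: $\mathbb{E} | X_t |^2 = d \int_0^t k (u)^2 \, \mathd u = (d / \beta) | \log t |^{- \beta} \leqslant (d / \beta) (\log 2)^{- \beta}$ for $t \in (0, 1 / 2]$, which gives $\int_0^{1 / 2} \mathbb{E} | X_t |^2 \, \mathd t < \infty$, i.e.\ condition~\eqref{sec4.2 gaussian integrability condition}; combined with separability and joint measurability this places $X$ within the hypotheses of Proposition~\ref{sec4.2 prop exp irr}. Finally, Gaussianity yields $\mathbb{E} | X_t |^p \lesssim_p (\mathbb{E} | X_t |^2)^{p / 2}$, uniformly bounded for $t \in [0, 1 / 2]$; by joint measurability and Tonelli's theorem, $\mathbb{E} \int_0^{1 / 2} | X_t |^p \, \mathd t < \infty$ for every $p < \infty$, hence $\int_0^{1 / 2} | X_t |^p \, \mathd t < \infty$ almost surely, and intersecting over $p \in \mathbb{N}$ shows that almost every trajectory of $X$ lies in $L^p_t$ for all $p < \infty$.

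I expect the only genuinely delicate point to be the construction of the jointly measurable (and separable) modification, and within it the verification of $L^2$-continuity near the temporal diagonal, where the logarithmic blow-up of $k$ at $0$ must be tamed; every other assertion reduces to the single integral identity above.
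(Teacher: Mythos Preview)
Your proof is correct and follows essentially the same route as the paper: both identify $X$ as a moving average of white noise with kernel $k(r) = r^{-1/2}|\log r|^{-(\beta+1)/2}$, use stochastic continuity to obtain a measurable modification, and deduce the $\beta$-eSLND property and the $L^2$-integrability condition from the single computation $\int_0^{t-s} k(u)^2\,\mathd u = \beta^{-1}|\log(t-s)|^{-\beta}$. The only cosmetic difference is that you establish $L^p_t$-integrability via polynomial Gaussian moments and Tonelli, whereas the paper goes through the slightly stronger exponential bound $\mathbb{E}\big[\int_0^{1/2}\exp(\lambda|X_t|^2)\,\mathd t\big] < \infty$; both arguments work.
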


\begin{proof}
  The process $X$ is separable, as it is constructed from Bm, which is a
  separable process. Moreover, it is easy to check $X$ is stochastically
  continuous and therefore by Proposition~3.2 from~{\cite{daprato}} it admits
  a measurable modification; from now on we will work with this modification.
  It holds
  \[ \tmop{Var} (X_t) = I_d  \int_0^t (t - s)^{- 1} | \log (t - s) |^{- \beta
     - 1} \mathd s = c_{\beta}  | \log t |^{- \beta} \lesssim 1 \quad \forall
     \, t \in [0, 1 / 2] \]
  which also implies by properties of Gaussian variables that we can find
  $\lambda > 0$ small enough s.t.
  \[ \mathbb{E} \left[ \int_0^{1 / 2} \exp (\lambda | X_t |^2) \mathd t
     \right] = \int_0^{1 / 2} \mathbb{E} [\exp (\lambda | X_t |^2)] \mathd t <
     \infty \]
  which implies that $X \in L^p_t$ for all $p \in [1, \infty)$. Finally, since
  $X$ is the moving average of white noise associated to $K (r) = r^{- 1 / 2} 
  | \log r |^{- \beta / 2 - 1 / 2}$, for any $s < t$ it holds
  \[ \tmop{Var} (X_t | \mathcal{F}_s \nobracket) = I_d \int_0^{t - s} K^2 (r)
     \mathd r = I_d \int_0^{t - s} r^{- 1} | \log r |^{- \beta - 1} \mathd r
     \sim | \log (t - s) |^{- \beta} I_d \]
  which proves the $\beta$\mbox{-}eSLND property.
\end{proof}

\begin{remark}
  We have constructed the process $X$ on the interval $[0, 1 / 2]$ for
  simplicity, but up to rescaling, a process with the same properties can be
  constructed on any finite interval $[0, T]$. Recall that for $\beta
  \leqslant 1$, $X$ is exponentially irregular, thus Carath{\'e}odory and
  unbounded; therefore the $L^p$ (actually, exponential) integrability
  obtained is optimal in this case. While we were working on the present
  paper, we became aware of the work\mbox{-}{\cite{perkowski}}, in which the
  same process is independently introduced and called $p$\mbox{-}$\log$
  Brownian motion, see Definition~4.1 from\mbox{-}{\cite{perkowski}}; the
  regime $p > 1 / 2$ therein corresponds to our $\beta > 0$. However, the
  authors of~{\cite{perkowski}} establish infinite regularisation of the path
  $X$ only in the case $p > 1$ (equivalently $\beta > 1$), see Corollary~4.4
  from~{\cite{perkowski}}, and do not prove the exponential decay of $\Phi^X
  (\xi)$.
\end{remark}

Further examples of $\rho$-irregular functions can be produced by combining a
given $\beta$\mbox{-}SLND Gaussian process $X$ with suitable deterministic
functions.

\begin{proposition}
  \label{sec4.2 prop deterministic perturbations}The following hold:
  \begin{enumerate}[label=\alph*.]
    \item Given a measurable $f : [0, T] \rightarrow \mathbb{R}^d$ and a
    $\beta$\mbox{-}SLND Gaussian process $X$, $Y : = f + X$ is
    $\beta$\mbox{-}SLND; if moreover $X$ satisfies condition~\eqref{sec4.2
    gaussian integrability condition} and $f \in L^{\theta}_t$ for some
    $\theta \in (0, \infty)$, then $Y$ is also $\rho$\mbox{-}irregular for any
    $\rho < 1 / (2 \beta)$. A similar statement holds for $f$ as above and $X$
    $\beta$-eSLND.
    
    \item Given a measurable $f : [0, T] \rightarrow \mathbb{R}$ satisfying
    $c^{- 1} \leqslant | f_t | \leqslant c $ for a suitable constant $c > 0$
    and a process $X$ satisfying the assumptions of Theorem~\ref{sec4.2
    corollary rho gaussian}, $Y \assign f X$ also satisfies the assumptions
    and is therefore $\rho$\mbox{-}irregular for any $\rho < 1 / (2 \beta)$.
    
    \item Suppose $X$ is a $\beta$\mbox{-}SLND Gaussian process with $\beta \in (0, 1]$
    with trajectories in $C^{\alpha}_t$ and $A \in C^{\gamma} ([0, T] ;
    \mathbb{R}^{d \times d})$ is a deterministic function such that $\alpha +
    \gamma > 1$, satisfying
    \[ (A^T A)_t \geqslant c I_d \quad \forall \, t \in [0, T] . \]
    Then the process defined by the Young integral $Y_{\cdot} = \int_0^{\cdot}
    A_s \mathd X_s$ is also Gaussian, $\beta$\mbox{-}SLND, with trajectories
    in $C^{\alpha}_t$; it is $\rho$\mbox{-}irregular for any $\rho < 1 / (2
    \beta)$.
  \end{enumerate}
\end{proposition}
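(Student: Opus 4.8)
\begin{proof*}{Proof sketch}
Parts \emph{a)} and \emph{b)} reduce, via the invariance of the (e)SLND property, to the criteria of Sections~\ref{sec4.1}--\ref{sec4.2}. For \emph{a)}, since $f_t$ is deterministic one has $\tmop{Var}(f_t + X_t \mid \mathcal{F}_s) = \tmop{Var}(X_t \mid \mathcal{F}_s)$, so $Y = f + X$ is $\beta$\mbox{-}SLND (resp.\ $\beta$\mbox{-}eSLND) by Remark~\ref{sec2.4 remark invariance lnd}. To get $\rho$\mbox{-}irregularity the plan is to apply Corollary~\ref{sec4.1 cor weaker rho irr} with $F(\xi) = |\xi|^{1/(2\beta)}$: the continuity condition~\eqref{sec4.1 general criterion continuity} follows from Lemma~\ref{sec4.1 lemma condition rho irr} applied to $(t,x)\mapsto e^{i\xi\cdot x}$, using that
\[
\bigl|\, \mathbb{E}[\, e^{i\xi\cdot Y_u}\mid\mathcal{F}_s]\, \bigr| = \exp\Bigl(-\tfrac{1}{2}\,\xi\cdot\tmop{Var}(Y_u\mid\mathcal{F}_s)\,\xi\Bigr) = \exp\Bigl(-\tfrac{1}{2}\,\xi\cdot\tmop{Var}(X_u\mid\mathcal{F}_s)\,\xi\Bigr),
\]
so the deterministic drift contributes only a unit\mbox{-}modulus phase and the bound $C_\xi\lesssim|\xi|^{-1/\beta}$ from the proof of Theorem~\ref{sec4.2 corollary rho gaussian} is unchanged; the remaining hypothesis $\int_0^T|Y_t|^{\theta\wedge 2}\,\mathd t<\infty$ holds a.s.\ because $X\in L^2_t$ a.s.\ (by~\eqref{sec4.2 gaussian integrability condition} and Fernique), $f\in L^\theta_t$, and both embed in $L^{\theta\wedge 2}_t$ on $[0,T]$. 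The eSLND case is identical with $F(\xi)=|\xi|^{2/(1+\beta)}$ (as in Proposition~\ref{sec4.2 prop exp irr} and Remark~\ref{sec4.1 remark general criterion}), after relaxing the $L^2$\mbox{-}integrability assumption there by the truncation argument used to prove Corollary~\ref{sec4.1 cor weaker rho irr}. For \emph{b)}, $Y=fX$ is Gaussian with measurable adapted trajectories, satisfies $\int_0^T\mathbb{E}[|Y_t|^2]\,\mathd t\le c^2\int_0^T\mathbb{E}[|X_t|^2]\,\mathd t<\infty$, and $\tmop{Var}(f_tX_t\mid\mathcal{F}_s)=f_t^2\,\tmop{Var}(X_t\mid\mathcal{F}_s)\gtrsim c^{-2}|t-s|^{2\beta}I_d$; hence $Y$ meets all hypotheses of Theorem~\ref{sec4.2 corollary rho gaussian} and is $\rho$\mbox{-}irregular for every $\rho<(2\beta)^{-1}$.

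Part \emph{c)} is the substantial one. The Young integral $Y_\cdot=\int_0^\cdot A_s\,\mathd X_s$ makes sense since $A\in C^\gamma_t$, $X\in C^\alpha_t$ and $\alpha+\gamma>1$, and the Young estimate yields $Y\in C^\alpha_t$ with $\|Y\|_{C^\alpha_t}\lesssim\|A\|_{C^\gamma_t}\|X\|_{C^\alpha_t}$; in particular $Y$ is a bounded linear image of the Gaussian $X$, hence itself Gaussian, and $\int_0^T\mathbb{E}[|Y_t|^2]\,\mathd t<\infty$ by Fernique. By Theorem~\ref{sec4.2 corollary rho gaussian} it remains only to show that $Y$ is $\beta$\mbox{-}SLND, which I expect to be the main obstacle. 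The plan: fix $0<t-s$ small and $v\in\mathbb{S}^{d-1}$, set $g_r:=A_r^T v$, so $g\in C^\gamma_t$ and $|g_s|^2=v^T(A_sA_s^T)v\ge c$ (using that $A^TA\ge cI_d$ is equivalent to $A A^T\ge cI_d$). Writing $\mathcal{R}_{s,t}:=\int_s^t(A_r-A_s)\,\mathd X_r$, linearity of the Young integral gives
\[
v\cdot Y_t=\bigl(v\cdot Y_s-g_s\cdot X_s\bigr)+g_s\cdot X_t+v\cdot\mathcal{R}_{s,t},
\]
where the first bracket is $\mathcal{F}_s$\mbox{-}measurable. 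Since $(Y_t,(X_r)_r)$ is a jointly Gaussian family, $\tmop{Var}(v\cdot Y_t\mid\mathcal{F}_s)$ equals the squared $L^2$\mbox{-}distance from $v\cdot Y_t$ to the closed Gaussian subspace generated by $\{X_r:r\le s\}$ (so it is deterministic), and both $v\cdot Y_s$ and $g_s\cdot X_s$ lie in that subspace; hence
\[
\tmop{Var}(v\cdot Y_t\mid\mathcal{F}_s)^{1/2}\ge\tmop{Var}(g_s\cdot X_t\mid\mathcal{F}_s)^{1/2}-\|\mathcal{R}_{s,t}\|_{L^2}.
\]
Now $\tmop{Var}(g_s\cdot X_t\mid\mathcal{F}_s)=g_s^T\,\tmop{Var}(X_t\mid\mathcal{F}_s)\,g_s\gtrsim|g_s|^2|t-s|^{2\beta}\ge c\,|t-s|^{2\beta}$ by $\beta$\mbox{-}SLND of $X$, while the Young estimate for $\mathcal{R}_{s,t}$ (whose integrand vanishes at $r=s$) together with the Fernique bound $\mathbb{E}[\|X\|_{C^\alpha_t}^2]<\infty$ gives $\|\mathcal{R}_{s,t}\|_{L^2}\lesssim\|A\|_{C^\gamma_t}|t-s|^{\alpha+\gamma}$, uniformly in $(s,t)$. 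Since $\beta\le 1<\alpha+\gamma$ the remainder is $o(|t-s|^\beta)$, so $\tmop{Var}(v\cdot Y_t\mid\mathcal{F}_s)\gtrsim|t-s|^{2\beta}$ for $|t-s|$ below a deterministic threshold; as $v$ is arbitrary, $\tmop{Var}(Y_t\mid\mathcal{F}_s)\gtrsim|t-s|^{2\beta}I_d$, i.e.\ $Y$ is $\beta$\mbox{-}SLND, and Theorem~\ref{sec4.2 corollary rho gaussian} applies.

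The delicate points in \emph{c)} are the Gaussian identity ``conditional variance $=$ squared distance to the Gaussian subspace generated by $(X_r)_{r\le s}$'', which underlies the decomposition above, and the uniform\mbox{-}in\mbox{-}$(s,t)$ $L^2$ control of the Young remainder $\mathcal{R}_{s,t}$ through Garsia--Rodemich--Rumsey / Fernique estimates on $\|X\|_{C^\alpha_t}$; everything else is a repackaging of the estimates already developed in Sections~\ref{sec4.1}--\ref{sec4.2}.
\end{proof*}
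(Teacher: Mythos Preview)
Your proof is correct and follows essentially the same route as the paper's. For parts \emph{a)} and \emph{b)} the arguments are identical in substance; your use of Corollary~\ref{sec4.1 cor weaker rho irr} in \emph{a)} is in fact slightly more careful than the paper's proof, since it handles the case $\theta<2$ cleanly (where $f+X$ need not satisfy the $L^2$ condition~\eqref{sec4.2 gaussian integrability condition} itself). For \emph{c)}, the paper proceeds by decomposing $X_t=X^{(1)}_{s,t}+X^{(2)}_{s,t}$ into $\mathcal{F}_s$\mbox{-}adapted and $\mathcal{F}_s$\mbox{-}independent parts and correspondingly splitting the Young remainder $R_{s,t}=R^{(1)}_{s,t}+R^{(2)}_{s,t}$, then observing that $\mathbb{E}[|R^{(2)}_{s,t}|^2]\le\mathbb{E}[|R_{s,t}|^2]\lesssim|t-s|^{2(\alpha+\gamma)}=o(|t-s|^{2\beta})$ so that $\tmop{Var}(Y_t\mid\mathcal{F}_s)$ is governed by $A_s\tmop{Var}(X^{(2)}_{s,t})A_s^T$. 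Your phrasing via the reverse triangle inequality for the $L^2$\mbox{-}distance to the Gaussian subspace is an equivalent repackaging of the same computation; both rest on the same Young remainder bound and the same use of $\beta\le 1<\alpha+\gamma$.
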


\begin{proof}
  Part~\tmtextit{a)} follows from Remark~\ref{sec2.4 remark invariance lnd}
  and the fact that if $f \in L^{\theta}_t$ and $X$ satisfies the
  integrability assumptions, then so does $f + X$. Regarding~\tmtextit{b)}, it
  is clear that the process $Y$ defined in this way is still Gaussian
  satisfying~\eqref{sec4.2 gaussian integrability condition}. The process $Y$
  is $\beta$-SLND since
  \[ \tmop{Var} (Y_t | \mathcal{F}_s \nobracket) = | f_t |^2 \tmop{Var} (X_t |
     \mathcal{F}_s \nobracket) \sim \tmop{Var} (X_t | \mathcal{F}_s
     \nobracket) \gtrsim | t - s |^{2 \beta} I_d . \]
  It remains to prove~\tmtextit{c)}. By properties of Young integral (see
  Appendix~\ref{appendixA1}) $\varphi \mapsto \int_0^{\cdot} A \mathd \varphi$
  is a bounded linear map from $C^{\alpha}_t$ to itself, therefore $Y$ is a
  Gaussian process on $C^{\alpha}_t$ since $X$ is so; by Fernique theorem, it
  holds $\mathbb{E} [\exp (\lambda \| Y \|_{C^{\alpha}})] < \infty$ for all
  $\lambda \in \mathbb{R}$. By definition of Young integral, it holds
  \[ Y_t = Y_s + A_s (X_t - X_s) + R_{s, t} \]
  where $| R_{s, t} | \lesssim | t - s |^{\alpha + \gamma} \| A
  \|_{C^{\gamma}} \| X \|_{C^{\alpha}}$. This implies that $Y_t$ must have a
  decomposition in $\mathcal{F}_s$\mbox{-}adapted and
  $\mathcal{F}_s$\mbox{-}independent parts of the form $Y_t = Y_{s, t}^{(1)} +
  Y_{s, t}^{(2)} =\mathbb{E} [Y_t | \mathcal{F}_s] + (Y_t -\mathbb{E} [Y_t |
  \mathcal{F}_s])$, so that
  \[ Y^{(1)}_t = Y_s + A_s (\mathbb{E} [X_t | \mathcal{F}_s] - X_s)
     +\mathbb{E} [R_{s, t} | \mathcal{F}_s], \quad Y^{(2)}_{s, t} = A_s (X_t
     -\mathbb{E} [X_t | \mathcal{F}_s]) + R_{s, t} -\mathbb{E} [R_{s, t} |
     \mathcal{F}_s] . \]
  Since conditional expectation is a projection on $L^2 (\Omega, \mathcal{F},
  \mathbb{P})$, it holds
  \[ \mathbb{E} \left[ \, | R_{s, t} -\mathbb{E} [R_{s, t} | \mathcal{F}_s]
     |^2 \right] \leqslant \mathbb{E} [| R_{s, t} |^2] \lesssim | t - s |^{2
     (\alpha + \gamma)}  \| A \|_{C^{\gamma}}^2 \mathbb{E} [\| X
     \|^2_{C^{\alpha}}] \lesssim | t - s |^{2 (\alpha + \gamma)}, \]
  where $\alpha + \gamma > 1 \geqslant \beta$; the variance above is of order
  $o (| t - s |^{2 \beta})$ as $| s - t | \sim 0$ and so the decay of
  $\tmop{Var} (Y_t | \mathcal{F}_s \nobracket)$ is governed by
  \[ \tmop{Var} (A_s  (X_t -\mathbb{E} [X_t | \mathcal{F}_s])) = A_s
     \tmop{Var} (X_t -\mathbb{E} [X_t | \mathcal{F}_s]) A^T_s \gtrsim | t - s
     |^{2 \beta} A_s A_s^T \gtrsim | t - s |^{2 \beta} \]
  whenever $| t - s |$ is small enough. This implies that $Y$ is
  $\beta$\mbox{-}SLND and $\rho$-irregular for any $\rho < 1 / (2 \beta)$.
\end{proof}

\begin{remark}
  The above examples can be further combined together, for instance
  considering
  \[ Y_t \assign \int_0^t A_s \mathd X_s + f_t X_t + g_t \]
  for $A$, $f$, $g$ satisfying the assumptions. One can moreover replace such
  deterministic objects by stochastic processes $Z^i$, independent of $X$,
  satisfying suitable regularity and integrability assumptions; this can be
  readily seen by conditioning on $(Z^i)_i$ first and applying the
  deterministic result. This allows to construct processes with
  $\rho$-irregular trajectories which are not Gaussian nor Markovian.
\end{remark}

\subsection{$\rho$\mbox{-}irregularity for $\alpha$\mbox{-}stable
processes}\label{sec4.3}

Section~\ref{sec4.2} deals exclusively with Gaussian processes, but the
criteria developed in Section~\ref{sec4.1} apply in more general situations,
including Markov processes. Here we treat the case of suitable
$\alpha$\mbox{-}stable processes.

\

Recall that a $d$-dimensional process $X$ is a symmetric $\alpha$-stable process, for some $\alpha\in (0,2)$,
with spherical measure $\mu$ (up to a renormalising constant) if it is a
L{\'e}vy process such that, for any $s < t$,
\[ \mathbb{E} [\exp (i \langle \xi, X_t - X_s \rangle)] = \exp \left( - (t -
   s) \int_{\mathbb{S}^{d - 1}} | \langle \xi, z \rangle |^{\alpha} \mu
   (\mathd z) \right) . \]
See~{\cite{sato}} for more details on the topic. From now on, we will say that
$X$ satisfies the \tmtextit{non\mbox{-}degeneracy condition} if there exists
$c > 0$ such that
\begin{equation}
  G (\xi) \assign \int_{\mathbb{S}^{d - 1}} | \langle \xi, z \rangle
  |^{\alpha} \mu (\mathd z) \geqslant c | \xi |^{\alpha} . \label{sec4.3 alpha
  stable non degeneracy}
\end{equation}
A similar condition has already appeared in the literature on regularisation
by noise, see~{\cite{priola2020}}.

\begin{proposition}
  \label{sec4.3 prop rho irr alpha}Let $X$ be a symmetric $\alpha$-stable
  process satisfying the non\mbox{-}degeneracy condition~\eqref{sec4.3 alpha
  stable non degeneracy}. Then $X$ is $\mathbb{P}$-a.s. $\rho$-irregular for
  any $\rho < \alpha / 2$.
\end{proposition}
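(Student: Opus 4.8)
The plan is to apply Corollary~\ref{sec4.1 cor weaker rho irr} with $F(\xi)=|\xi|^{\alpha/2}$, that is, with the exponent denoted $\alpha$ there taken equal to $\alpha/2$ here; this delivers $(\gamma,\rho)$-irregularity with probability~$1$ for every $\rho<\alpha/2$ and a suitable $\gamma(\rho)>1/2$, which is exactly the assertion. There are two hypotheses to verify: that $X$ has $\mathbb{P}$-a.s. measurable trajectories with $\int_0^T|X_t|^\theta\,\mathrm{d}t<\infty$ a.s. for some $\theta>0$, and the continuity condition~\eqref{sec4.1 general criterion continuity} for this $F$.

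The first hypothesis is essentially free. A symmetric $\alpha$-stable L\'evy process admits a c\`adl\`ag modification, which is jointly measurable and, being c\`adl\`ag on the compact interval $[0,T]$, $\mathbb{P}$-a.s. bounded there; hence $\int_0^T|X_t|^\theta\,\mathrm{d}t\le T\sup_{t\in[0,T]}|X_t|^\theta<\infty$ a.s. for every $\theta>0$. (One may equally invoke self-similarity, $\mathbb{E}|X_t|^\theta=t^{\theta/\alpha}\mathbb{E}|X_1|^\theta<\infty$ for $\theta<\alpha$, together with Tonelli.)

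For the continuity condition I would use Lemma~\ref{sec4.1 lemma condition rho irr} with $f(u,x)=e^{i\xi\cdot x}$, which has $\|f\|_\infty\le 1$. By the independent-increments property and the explicit characteristic function of the increment $X_u-X_s$, one has for $s\le u$
\[
\mathbb{E}\!\left[e^{i\xi\cdot X_u}\mid\mathcal{F}_s\right]=e^{i\xi\cdot X_s}\,\mathbb{E}\!\left[e^{i\xi\cdot(X_u-X_s)}\right]=e^{i\xi\cdot X_s}\,e^{-(u-s)G(\xi)},
\]
so that, by the non-degeneracy condition~\eqref{sec4.3 alpha stable non degeneracy},
\[
\left|\mathbb{E}\!\left[\int_s^t e^{i\xi\cdot X_u}\,\mathrm{d}u\;\Big|\;\mathcal{F}_s\right]\right|\le\int_s^t e^{-(u-s)G(\xi)}\,\mathrm{d}u\le\frac{1}{G(\xi)}\le\frac{1}{c}\,|\xi|^{-\alpha}=:C_\xi,
\]
a deterministic bound valid for \emph{all} $s<t$, with no restriction on $t-s$. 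Lemma~\ref{sec4.1 lemma condition rho irr} then yields a universal $\mu>0$ with
\[
\sup_{s\neq t}\,\mathbb{E}\!\left[\exp\!\left(\mu\,\frac{|\Phi^X_{s,t}(\xi)|^2}{C_\xi\,|t-s|}\right)\right]<\infty,
\]
and since $C_\xi^{-1}=c\,|\xi|^{\alpha}=c\,|F(\xi)|^2$ with $F(\xi)=|\xi|^{\alpha/2}$, this is precisely~\eqref{sec4.1 general criterion continuity} (with $\mu c$ in place of $\mu$), uniformly in $\xi$. Corollary~\ref{sec4.1 cor weaker rho irr} now applies and gives the claim.

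The argument requires no genuinely new ideas once the right tool is identified; the one point worth noting is that one cannot invoke the full Theorem~\ref{sec4.1 thm general criterion} directly, because for $\alpha<2$ the random variable $\int_0^T|X_t|^\theta\,\mathrm{d}t$ has only polynomially decaying tails (a jump of size $R$, which occurs in $[0,T]$ with probability of order $R^{-\alpha}$, already contributes an amount of order $R^\theta$ to the integral), so the exponential integrability~\eqref{sec4.1 general criterion integrability} fails; this is exactly why one passes through the weakened Corollary~\ref{sec4.1 cor weaker rho irr}, which only requires a.s. finiteness of $\int_0^T|X_t|^\theta\,\mathrm{d}t$. All the rest reduces to the one-line computation of the conditional characteristic function afforded by the L\'evy structure.
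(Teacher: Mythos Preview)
Your proof is correct and follows essentially the same route as the paper: both compute the conditional characteristic function via the L\'evy structure to obtain $C_\xi\lesssim|\xi|^{-\alpha}$, invoke Lemma~\ref{sec4.1 lemma condition rho irr} for the continuity condition, and then pass through Corollary~\ref{sec4.1 cor weaker rho irr} because the exponential integrability~\eqref{sec4.1 general criterion integrability} fails. The only difference is cosmetic: the paper appeals to $X\in L^\theta_t$ for $\theta<\alpha$ (citing Sato), whereas your c\`adl\`ag-hence-bounded argument is slightly more direct and works for every $\theta>0$.
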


\begin{proof}
  Let $\mathcal{F}_t$ be the natural filtration associated to $X$; \ for any
  $\xi \in \mathbb{R}^d$ and $s < t$, by the independence of increments and
  the non degeneracy condition, it holds
  \[ \left| \mathbb{E} \left[ \int_s^t e^{i \langle \xi, X_r \rangle} \mathd r
     | \mathcal{F}_s \nobracket \right] \right| = \left| e^{i \langle \xi, X_s
     \rangle} \int_s^t e^{- (r - s) G (\xi)} \mathd r \right| = \int_0^{t - s}
     e^{- r G (\xi)} \mathd r \leqslant \int_0^{\infty} e^{- c r | \xi
     |^{\alpha}} \mathd r \sim | \xi |^{- \alpha} . \]
  Applying Lemma~\ref{sec4.1 lemma condition rho irr} to the choice $f (t, x)
  = e^{i \xi \cdot x}$, since $C_f = C_{\xi} \lesssim | \xi |^{- \alpha}$,
  there exists $\lambda > 0$ s.t.
  \[ \sup_{s \neq t, \xi \in \mathbb{R}^d} \mathbb{E} \left[ \exp \left(
     \lambda \frac{| \xi |^{\alpha}  | \Phi_{s, t}^X (\xi) |^2}{| t - s |}
     \right) \right] < \infty . \]
  We would like to conclude that the process $X$ is $\rho$-irregular for any
  $\rho < \alpha / 2$, but the integrability condition from
  Theorem~\ref{sec4.1 thm general criterion} is not satisfied. However, the
  process $X$ belongs $\mathbb{P}$-a.s. to $L^{\theta}_t$ for any $\theta <
  \alpha$ (see for instance Example~25.10 from {\cite{sato}}) and therefore we
  can apply Corollary~\ref{sec4.1 cor weaker rho irr} to obtain the
  conclusion.
\end{proof}

\begin{remark}
  The non\mbox{-}degeneracy condition~\eqref{sec4.3 alpha stable non
  degeneracy} is for instance satisfied in the cases of $\mu_1$ being the
  uniform measure on $\mathbb{S}^{d - 1}$ and $\mu_2 = \sum_{i = 1}^d
  \delta_{e_i}$. The associated processes have respectively generators
  $\mathcal{L}_1 = (- \Delta)^{\alpha / 2}$ and $\mathcal{L}_2 = \sum_i (-
  \partial_{x_i}^2)^{\alpha / 2}$.
\end{remark}

The previous examples generalises to anisotropic Markov processes, which show
different irregularity behaviour in different directions.
Property~\eqref{sec4.3 alpha irregularity bound} below could be regarded as a
notion of ``$\alpha$\mbox{-}irregularity''.

\begin{corollary}
  Let $\alpha = (\alpha_1, \ldots, \alpha_d) \in (0, 2)^d$ and let $X$ be a
  $d$\mbox{-}dimensional process whose components $X^{(i)}$ are independent symmetric
  $\alpha_i$-stable processes, so that
  \[ \mathbb{E} [\exp (i \langle \xi, X_t - X_s \rangle)] = \exp \left( - | t
     - s | \sum_i | \xi_i |^{\alpha_i} \right) . \]
  Define $\interleave \xi \interleave^2_{\alpha} = \sum_i | \xi_i
  |^{\alpha_i}$. Then setting $\varphi (x) = \sqrt{x | \log x |}$, it holds
  \begin{equation}
    \sup_{s \neq t, \xi \in \mathbb{R}^d} \frac{\interleave \xi
    \interleave_{\alpha} | \Phi^X_{s, t} (\xi) |}{\sqrt{\log | \xi |} \varphi
    (| t - s |)} < \infty \quad \mathbb{P} \text{-a.s.} \label{sec4.3 alpha
    irregularity bound}
  \end{equation}
\end{corollary}

\begin{proof}
  Going through the same calculations as in Proposition~\ref{sec4.3 prop rho
  irr alpha}, we deduce that there exists $\lambda > 0$ such that
  \[ \sup_{s \neq t, \xi \in \mathbb{R}^d} \mathbb{E} \left[ \exp \left(
     \lambda \frac{\interleave \xi \interleave_{\alpha}^2  | \Phi^X_{s, t}
     (\xi) |^2}{| t - s |} \right) \right] < \infty . \]
  The conclusion then follows from Theorem~\ref{sec4.1 thm general criterion}
  applied to the choice $F (\xi) = \interleave \xi \interleave_{\alpha}$
  (together with a reasoning analogue to that of Corollary~\ref{sec4.1 cor
  weaker rho irr} in order to relax the integrability condition).
\end{proof}

\subsection{$\rho$\mbox{-}irregularity for $\beta$\mbox{-}LND Gaussian
processes}\label{sec4.4}

We now turn to study the case of $\beta$\mbox{-}LND processes. Before
proceeding further, let us explain the importance of $\beta$-LND in the
literature and why they require a different treatment from the
$\beta$\mbox{-}SLND case; for a broader overview, we refer
to~{\cite{geman,xiao}} and the discussion in the introduction
of~{\cite{xiao2007}}.

The LND condition was historically the first one introduced by
Berman~{\cite{berman73}}, then extended to the case of random fields by
Pitt~{\cite{pitt}}; it can be often checked using the \tmtextit{spectral
measure} associated to the Gaussian field. Although the SLND property usually
yields stronger results (e.g. for proving law of iterated logarithm for the
local time of a Gaussian field), it is harder to check in practice, with the
exception of the case where a \tmtextit{moving average representation} is
available. For such reasons, also in view of possible (quite natural) future
extensions of $\rho$-irregularity to the case of random fields, we find it
useful to present a proof solely relying on the LND property here.

As already mentioned in Section~\ref{sec2.4}, there are known examples of
Gaussian processes $X$ satisfy a LND condition but not a SLND one.
Specifically, Cuzick in~{\cite[Section 3]{cuzick1977}} constructs a stationary centered
process $X$, whose spectral measure $F$ is described explicitly and thus
characterizes it uniquely, such that for any $n \in \mathbb{N}$ there exists
$c_n > 0$ such that, for any \ $t_1 < \ldots < t_n$, it holds
\begin{equation}
	\tmop{Var} (X_{t_n} |X_{t_1}, \ldots, X_{t_{n - 1}}) \geqslant c_n  | t_n -
   t_{n - 1} | \label{eq:cuzick1}
\end{equation}
but also such that for any $s < t$ it holds
\begin{equation}
	\tmop{Var} (X_t |X_r, r \leqslant s) = 0. \label{eq:cuzick2}
\end{equation}
As argued in~{\cite{cuzick1977}} and the references therein, condition~\eqref{eq:cuzick1} implies that
$X$ satisfies the $1$-LND condition; on the other hand, not
only~\eqref{eq:cuzick2} clearly implies that a $\beta$-SLND condition must
fail for any $\beta \geqslant 0$, but it also informs us that the constants
$c_n$ appearing in~\eqref{eq:cuzick1} must satisfy $c_n \rightarrow 0$ as $n
\rightarrow \infty$.

For this reason, in the case of $\beta$-LND processes, we will be unable to
establish exponential estimates like those from Lemma~\ref{sec4.1 lemma
condition rho irr}, which are essential in order to apply Theorem~\ref{sec4.1
thm general criterion}: the behaviour of the constants $c_n$ is unclear and we
cannot make naive assumptions on them, cf. Remark~\ref{rem:LND-constants}
below. Therefore we need to proceed differently and find different ways to
establish $\rho$\mbox{-}irregularity. The proof is very closed in spirit to
the ones given by Kahane for fBm in Sections~17 and~18 of~{\cite{kahane}},
which are based on combinatorial techniques combined with analytical
properties of Fourier transform. Here the combinatorial argument is taken
from~{\cite{catelliergubinellipreliminary}}, but it goes back
to~{\cite{nualart2003}}.

\begin{theorem}
  \label{sec4.4 main thm}Let $X$ be a $d$\mbox{-}dimensional $\beta$\mbox{-}LND
  Gaussian process with continuous trajectories. Then for any $\gamma < 1 / 2$
  and any $\rho < (2 \beta)^{- 1}$, $X$ is $(\gamma, \rho)$\mbox{-}irregular
  with probability $1$ and moreover
  \[ \mathbb{E} [\| \Phi^X \|_{\mathcal{W}^{\gamma, \rho}}^n] < \infty \quad
     \forall \, n \in \mathbb{N}. \]
  Furthermore $\mathbb{P}$\mbox{-}a.s. $X$ is $\rho$\mbox{-}irregular for all
  $\rho < (2 \beta)^{- 1}$.
\end{theorem}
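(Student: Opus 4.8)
The plan is to establish the moment bound $\mathbb{E}[\|\Phi^X\|_{\mathcal{W}^{\gamma,\rho}}^n]<\infty$ for every $n\in\mathbb{N}$ by a chaining argument, reducing everything to the estimation of high moments of the increments $\Phi^X_{s,t}(\xi)=\int_s^t e^{i\xi\cdot X_r}\,\mathrm{d}r$. Since exponential estimates are unavailable in the $\beta$-LND setting, we cannot invoke Theorem~\ref{sec4.1 thm general criterion}; instead, for a fixed even integer $2k$, I would expand
\[
\mathbb{E}\,\bigl[|\Phi^X_{s,t}(\xi)|^{2k}\bigr]=\int_{[s,t]^{2k}}\mathbb{E}\Bigl[\exp\Bigl(i\xi\cdot\sum_{j=1}^{k}(X_{r_j}-X_{r_{k+j}})\Bigr)\Bigr]\,\mathrm{d}r_1\cdots\mathrm{d}r_{2k},
\]
and use the Gaussianity of $X$ to turn the integrand into $\exp(-\tfrac12\mathrm{Var}(\xi\cdot\sum_{j}(X_{r_j}-X_{r_{k+j}})))$. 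The combinatorial core — the argument attributed to \cite{catelliergubinellipreliminary} and going back to \cite{nualart2003}, and parallel to Kahane's treatment in Sections~17--18 of \cite{kahane} — is to order the $2k$ time points, rewrite the linear combination of increments in terms of consecutive increments over the ordered partition, and apply the $\beta$-LND inequality \eqref{sec2.4 defn lnd eq} to bound the variance from below by $c_{2k}\sum_\ell |v_\ell|^2 |t_{\ell+1}-t_\ell|^{2\beta}$, where the coefficients $v_\ell$ are sums of $\pm\xi$. One then integrates the resulting Gaussian tail over the simplex; the pairing/matching structure that survives forces enough of the $v_\ell$ to be comparable to $\xi$ in magnitude, yielding a bound of the form
\[
\mathbb{E}\bigl[|\Phi^X_{s,t}(\xi)|^{2k}\bigr]\lesssim_k |\xi|^{-2k\rho}\,|t-s|^{2k\gamma}
\]
for every $\gamma<1/2$ and every $\rho<(2\beta)^{-1}$, up to a loss in the constant $c_{2k}$ that is harmless for fixed $k$. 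The precise bookkeeping of which matchings contribute, and checking that the simplex integral does not destroy the exponent $\rho<(2\beta)^{-1}$, is the step I expect to be the main obstacle: it is exactly where the difference between SLND and the weaker LND shows up, since one can no longer peel off a single independent increment but must control the joint non-degeneracy of many increments simultaneously.

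Once the moment bound on single increments is in hand, I would upgrade it to a bound on the supremum defining $\|\Phi^X\|_{\mathcal{W}^{\gamma,\rho}}$ by a standard Kolmogorov/Garsia--Rodemich--Rumsey-type chaining in the three variables $(s,t,\xi)$, exactly as in the proof of Theorem~\ref{sec4.1 thm general criterion} but with polynomial moments in place of exponential ones. Concretely: the map $\xi\mapsto\Phi^X_{s,t}(\xi)$ is Lipschitz with constant $\int_s^t|X_r|\,\mathrm{d}r$, so after discretising $\xi$ on a dyadic lattice and using that $X$ has continuous (hence a.s. bounded on $[0,T]$) trajectories to control the lattice error, one needs only finitely-polynomially-many moments to sum over scales; in the $(s,t)$ variables the two-parameter increment $\mu^w_{s,t}$ has the additive structure $\mu^w_{s,t}=\mu^w_{0,t}-\mu^w_{0,s}$ noted after Definition~\ref{sec2.3 defn occupation measure}, so a one-dimensional chaining suffices. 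Choosing $k=k(n)$ large enough in the moment bound makes the chaining series converge and gives $\mathbb{E}[\|\Phi^X\|_{\mathcal{W}^{\gamma,\rho}}^n]<\infty$.

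Finally, the last sentence of the theorem — that $\mathbb{P}$-a.s. $X$ is $\rho$-irregular for all $\rho<(2\beta)^{-1}$ — is an immediate consequence: for each such $\rho$ pick any $\gamma\in(1/2,1)$ for which the above gives $\|\Phi^X\|_{\mathcal{W}^{\gamma,\rho}}<\infty$ a.s. (in particular $\gamma>1/2$, which is what $\rho$-irregularity requires per Definition~\ref{sec2.2 defn rho irregularity}), and then take a countable sequence $\rho_n\uparrow(2\beta)^{-1}$ and intersect the corresponding full-probability events. Since countable intersections of full-probability events are again full-probability, we get a single event of probability $1$ on which $X$ is $(\gamma(\rho),\rho)$-irregular simultaneously for all $\rho<(2\beta)^{-1}$, which is precisely the assertion. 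The one point of care here is that $\gamma$ must be chosen strictly above $1/2$; the moment estimate above is stated for all $\gamma<1/2$, but combining it with the interpolation Lemma~\ref{sec2.2 lemma interpolation rho irr} (trading a little of $\rho$ for a little of $\gamma$) pushes $\gamma$ across the $1/2$ threshold at the cost of shrinking $\rho$, which is harmless since we only need $\rho<(2\beta)^{-1}$ with strict inequality.
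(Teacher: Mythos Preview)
Your proposal is correct and matches the paper on both endpoints of the argument: the combinatorial moment bound via ordering the $2k$ time points and applying \eqref{sec2.4 defn lnd eq} to consecutive increments is exactly Proposition~\ref{sec4.4 main proposition} together with Lemma~\ref{sec4.4 combinatorial lemma} (the paper obtains the endpoint form $\mathbb{E}[|\Phi^X_{s,t}(\xi)|^{2n}]\leqslant C_n|t-s|^n|\xi|^{-n/\beta}$, from which your strict-inequality version follows trivially); and your final step, trading a little $\rho$ for $\gamma>1/2$ via Lemma~\ref{sec2.2 lemma interpolation rho irr} and then intersecting over a countable sequence $\rho_n\uparrow(2\beta)^{-1}$, is exactly how the paper concludes.

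The one genuine difference is the passage from pointwise-in-$\xi$ moments to the supremum. You propose a direct three-variable chaining in $(s,t,\xi)$, controlling the $\xi$-discretisation error by the Lipschitz bound $|\Phi^X_{s,t}(\xi)-\Phi^X_{s,t}(\tilde\xi)|\leqslant|\xi-\tilde\xi|\int_s^t|X_r|\,\mathrm{d}r$. The paper instead avoids chaining in the unbounded $\xi$-variable altogether: it \emph{integrates} the moment bound over $\xi$ to get $\mathbb{E}[\|\mu^X_{s,t}\|_{\mathcal{F}L^{\rho,2n}}^{2n}]\lesssim_n|t-s|^n$ (the $\xi$-integral converges precisely when $\rho<(1/\beta-d/n)/2$, hence the need for $n$ large), applies Kolmogorov in $t$ only, and then upgrades $\mathcal{F}L^{\rho,2n}$ to $\mathcal{F}L^{\rho,\infty}$ via the analytic Lemma~\ref{sec4.4 analytic lemma} (a compactly supported distribution in $\mathcal{F}L^{\rho,p}$ lies in $\mathcal{F}L^{\rho,\infty}$ with norm $\lesssim R^{d/p}\|\cdot\|_{\mathcal{F}L^{\rho,p}}$), using $\operatorname{supp}\mu^X_{s,t}\subset B_{\|X\|_{C^0}}$ and Fernique. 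Your route can be made to work but requires care summing over an unbounded $\xi$-lattice with only polynomial moments; the paper's device is cleaner, packaging the $\xi$-supremum into a single functional-analytic step that exploits compact support of the occupation measure --- which is exactly where the continuity hypothesis on $X$ enters.
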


The major drawback, compared to Theorem~\ref{sec4.2 corollary rho gaussian},
is that in the $\beta$\mbox{-}LND case we loose exponential estimates,
although we still have moments of any order. The proof is split in several
parts; we start with two lemmas, of analytic and combinatorial nature
respectively. We then provide the main estimate, given in
Proposition~\ref{sec4.4 main proposition}, which allows to conclude the proof
of Theorem~\ref{sec4.4 main thm}.

\

The next lemma shows that, under suitable assumptions, we can obtain
$\mathcal{F} L^{\rho, \infty}$ bounds starting from $\mathcal{F} L^{\rho, p}$
bounds for some $p < \infty$.

\begin{lemma}
  \label{sec4.4 analytic lemma}Suppose $\varphi \in \mathcal{F} L^{\rho, p}$
  has support contained in $B_R$, $\rho \geqslant 0$. Then $\varphi \in
  \mathcal{F} L^{\rho, \infty}$ and there exists a constant $C (\rho, p) > 0$
  such that
  \[ \| \varphi \|_{\mathcal{F} L^{\rho, \infty}} \leqslant C \, R^{d / p} \|
     \varphi \|_{\mathcal{F} L^{\rho, p}} . \]
\end{lemma}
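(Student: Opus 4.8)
The plan is to exploit that the compact support of $\varphi$ forces $\widehat{\varphi}$ to vary slowly at the frequency scale $1/R$; I would encode this by a convolution identity against a rescaled bump function, and then pass from an $L^p$ bound to an $L^\infty$ bound via Young's inequality, controlling the weight by Peetre's inequality.

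First I would fix a reference cutoff $\chi\in C^\infty_c(\mathbb{R}^d)$ with $\chi\equiv 1$ on $B_1$ and $\tmop{supp}\chi\subset B_2$, and set $\chi_R(x)\assign\chi(x/R)$; I would reduce to the case $R\geq 1$ (replacing $R$ by $R\vee 1$, which only makes the target bound slightly weaker but still of the stated form). Since $\chi_R\equiv 1$ on $B_R\supseteq\tmop{supp}\varphi$, one has $\varphi=\chi_R\varphi$, and taking Fourier transforms (legitimate since $\chi_R\in\mathcal{S}$ and $\varphi\in\mathcal{S}'$) yields $\widehat{\varphi}=c_d\,\widehat{\chi_R}\ast\widehat{\varphi}$ for a dimensional constant $c_d$, where $\widehat{\chi_R}(\eta)=R^d\widehat{\chi}(R\eta)$ is Schwartz. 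Note that $\widehat{\varphi}\in L^p$ (because $\langle\xi\rangle^\rho\geq 1$), so the right-hand side is a continuous function and the identity holds pointwise; this is the one small verification needed, and it is routine.

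Next, writing $u(\xi)\assign\langle\xi\rangle^\rho|\widehat{\varphi}(\xi)|$ (so that $\|u\|_{L^p}=\|\varphi\|_{\mathcal{F}L^{\rho,p}}$) and applying Peetre's inequality $\langle\xi\rangle^\rho\leq 2^{\rho/2}\langle\xi-\eta\rangle^\rho\langle\eta\rangle^\rho$ (valid for $\rho\geq 0$) inside the convolution, I would obtain the pointwise estimate $u\leq 2^{\rho/2}\,(v\ast u)$ with $v(\eta)\assign\langle\eta\rangle^\rho|\widehat{\chi_R}(\eta)|$. Young's inequality then gives $\|u\|_{L^\infty}\leq 2^{\rho/2}\|v\|_{L^{p'}}\|u\|_{L^p}$, with $p'$ the conjugate exponent. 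The final step is the scaling computation
\[ \|v\|_{L^{p'}}^{p'}=R^{d(p'-1)}\int_{\mathbb{R}^d}\langle\eta/R\rangle^{\rho p'}\,|\widehat{\chi}(\eta)|^{p'}\,\mathd\eta\leq R^{d(p'-1)}\int_{\mathbb{R}^d}\langle\eta\rangle^{\rho p'}\,|\widehat{\chi}(\eta)|^{p'}\,\mathd\eta, \]
using $R\geq 1$, the last integral being finite since $\widehat{\chi}\in\mathcal{S}$; hence $\|v\|_{L^{p'}}\lesssim_{\rho,p}R^{d/p}$, with the endpoint cases $p=1$ and $p=\infty$ treated the same way. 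Combining the two displays gives $\|\varphi\|_{\mathcal{F}L^{\rho,\infty}}=\|u\|_{L^\infty}\lesssim_{\rho,p}R^{d/p}\|\varphi\|_{\mathcal{F}L^{\rho,p}}$, as claimed.

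I do not expect a genuine obstacle here; the only points to be slightly careful about are the bookkeeping for small $R$ (handled by the $R\vee 1$ reduction) and interpreting $\widehat{\varphi}$ as an honest function so the convolution identity makes pointwise sense, both routine. As an alternative, one could invoke the Paley--Wiener theorem to note that $\widehat{\varphi}$ is entire of exponential type $R$ and deduce the bound from a Plancherel--P\'olya/Nikolskii-type inequality, but the mollifier argument above keeps everything self-contained.
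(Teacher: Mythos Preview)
Your proof is correct and follows essentially the same route as the paper: multiply $\varphi$ by a smooth cutoff equal to $1$ on its support, pass to a convolution on the Fourier side, use Peetre's inequality $\langle\xi\rangle^\rho\lesssim\langle\xi-\eta\rangle^\rho\langle\eta\rangle^\rho$ to distribute the weight, and finish with H\"older/Young. The only cosmetic difference is that the paper first rescales to $R=1$ and works with a fixed cutoff, whereas you keep a general $R\geq 1$ and carry out the scaling computation for $\|v\|_{L^{p'}}$ explicitly at the end; both arrive at the same $R^{d/p}$ factor, and your caveat about small $R$ matches the harmless gap in the paper's own ``up to rescaling'' remark (the inhomogeneous weight does not scale cleanly for $\rho>0$, but this regime is irrelevant for the application).
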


\begin{proof}
  Up to a rescaling argument, it suffices to prove the statement in the case
  $R = 1$. Let $g \in C^{\infty}_c$ such that $g \equiv 1$ on $B (0, 1)$, then
  by assumption $\varphi = \varphi g$ and so $\hat{\varphi} = \hat{\varphi}
  \ast \hat{g}$. But then
  \begin{eqnarray*}
    (1 + | \xi |)^{\rho} | \hat{\varphi} (\xi) | & \leqslant & (1 + | \xi
    |)^{\rho} \int | \hat{\varphi} (\xi - \eta) | \, | \hat{g} (\eta) | \mathd
    \eta\\
    & \lesssim_{\rho} & \int (1 + | \xi - \eta |)^{\rho} | \hat{\varphi} (\xi
    - \eta) | (1 + | \eta |)^{\rho} | \hat{g} (\eta) | \mathd \eta\\
    & \lesssim & \| \varphi \|_{\mathcal{F} L^{\rho, p}} \| g \|_{\mathcal{F}
    L^{\rho, p'}}\\
    & \thicksim_p & \| \varphi \|_{\mathcal{F} L^{\rho, p}}
  \end{eqnarray*}
  which gives the conclusion; in the above, $p'$ denoted the conjugate
  exponent to $p$.
\end{proof}

We now need to introduce some notations. Let $n \in \mathbb{N}$, $S_{2 n}$
denote the group of permutations of $\{ 1, \ldots, 2 n \}$; for $\sigma \in
S_{2 n}$, define $\Delta_{\sigma} = \Delta_{2 n, \sigma, [s, t]} = \left\{
(t_1, \ldots, t_{2 n}) \in [s, t]^{2 n} : \, s \leqslant t_{\sigma (1)}
\leqslant \ldots \leqslant t_{\sigma (2 n)} \leqslant t \right\}$. Moreover,
for a given $(t_1, \ldots, t_{2 n}) \in \Delta_{\sigma}$, define
$(a_k^{\sigma})_{1 \leqslant k \leqslant 2 n - 1}$ by
\[ \sum_{k = 1}^n (X_{t_k} - X_{t_{k + n}}) = \sum_{k = 1}^{2 n - 1}
   a_k^{\sigma} (X_{t_{\sigma (k + 1)}} - X_{t_{\sigma (k)}}) \]
and similarly $\varepsilon (k)$ by $\sum_{k = 1}^n (X_{t_k} - X_{t_{k + n}}) =
\sum_{k = 1}^{2 n} \varepsilon (k) X_{t_k}$ (equivalently, $\varepsilon (k) =
1$ for $k = 1, \ldots n$ and $\varepsilon (k) = - 1$ for $k = n + 1, \ldots, 2
n$). Then we have the following

\begin{lemma}
  \label{sec4.4 combinatorial lemma}For $\sigma \in S_{2 n}$ and $k = 1,
  \ldots, 2 n - 1$ it holds $a_k^{\sigma} = - \sum_{l = 1}^k \varepsilon
  (\sigma (l))$.
\end{lemma}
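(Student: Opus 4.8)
The plan is to recognise the implicit definition of the $a_k^\sigma$ as an Abel-summation identity and then match coefficients. Abbreviate $Y_k \assign X_{t_{\sigma(k)}}$ for $k = 1, \ldots, 2n$, so that the ordered increments appearing on the right-hand side of the defining relation are exactly the $Y_{k+1} - Y_k$. Reindexing $\sum_{k=1}^n (X_{t_k} - X_{t_{k+n}}) = \sum_{j=1}^{2n} \varepsilon(j) X_{t_j}$ through the bijection $j = \sigma(k)$, the relation defining $(a_k^\sigma)$ becomes
\[ \sum_{k=1}^{2n} \varepsilon(\sigma(k))\, Y_k \;=\; \sum_{k=1}^{2n-1} a_k^\sigma \,(Y_{k+1} - Y_k). \]
First I would record the elementary facts that $\varepsilon(j) = 1$ for $1 \leqslant j \leqslant n$ and $\varepsilon(j) = -1$ for $n < j \leqslant 2n$, so that $\sum_{j=1}^{2n} \varepsilon(j) = 0$ and, since $\sigma$ is a permutation, $\sum_{l=1}^{2n} \varepsilon(\sigma(l)) = 0$ as well.

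Next I would expand the right-hand side by summation by parts,
\[ \sum_{k=1}^{2n-1} a_k^\sigma (Y_{k+1} - Y_k) \;=\; -\,a_1^\sigma\, Y_1 \;+\; \sum_{m=2}^{2n-1} \big(a_{m-1}^\sigma - a_m^\sigma\big)\, Y_m \;+\; a_{2n-1}^\sigma\, Y_{2n}, \]
and compare the coefficient of each $Y_m$ with the left-hand side. This gives $a_1^\sigma = -\varepsilon(\sigma(1))$, the recursion $a_m^\sigma = a_{m-1}^\sigma - \varepsilon(\sigma(m))$ for $2 \leqslant m \leqslant 2n-1$, and the boundary condition $a_{2n-1}^\sigma = \varepsilon(\sigma(2n))$. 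Solving the recursion downward from $a_1^\sigma$ yields the claimed formula $a_k^\sigma = -\sum_{l=1}^k \varepsilon(\sigma(l))$, and the boundary condition at $k = 2n-1$ is then automatically satisfied, precisely because $\sum_{l=1}^{2n}\varepsilon(\sigma(l)) = 0$.

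The only point requiring a word of care is the well-definedness of the $a_k^\sigma$: when some of the $t_i$ coincide, the $Y_k$ need not be distinct random variables and the defining relation does not pin down $(a_k^\sigma)$ uniquely. I would dispose of this either by restricting to the (co-null) subset of $\Delta_\sigma$ on which the $t_i$ are pairwise distinct, where the expansion above forces the stated formula, or, more simply, by observing that the displayed formula always satisfies the defining relation and hence provides a legitimate choice of $(a_k^\sigma)$ on all of $\Delta_\sigma$; since in the applications the $a_k^\sigma$ enter only through integrals over $\Delta_\sigma$, this is harmless. No genuine obstacle arises: the whole argument is bookkeeping around one summation by parts, the only substantive input being the cancellation $\sum_l \varepsilon(\sigma(l)) = 0$ that makes the top boundary term consistent.
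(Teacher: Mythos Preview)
Your proof is correct and follows essentially the same route as the paper: reindex through $\sigma$, perform an Abel summation on $\sum a_k^\sigma(Y_{k+1}-Y_k)$, and match coefficients of each $Y_m$ to obtain the recursion $a_{k-1}^\sigma - a_k^\sigma = \varepsilon(\sigma(k))$ (the paper phrases this with the convention $a_0^\sigma = a_{2n}^\sigma = 0$ so that the boundary terms are absorbed into the recursion). Your additional remarks on the consistency of the top boundary condition via $\sum_l \varepsilon(\sigma(l))=0$ and on well-definedness when some $t_i$ coincide are valid elaborations but not essential differences.
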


\begin{proof}
  Let $\sigma \in S_{2 n}$, then by definition we have $\sum_{k = 1}^{2 n}
  \varepsilon (k) X_{t_k} = \sum_{k = 1}^{2 n} \varepsilon (\sigma (k))
  X_{t_{\sigma (k)}}$ and therefore, setting as a convention $a^{\sigma}_0 =
  a^{\sigma}_{2 n} = 0$, by definition of $a^{\sigma}_k$ we have
  \[ \sum_{k = 1}^{2 n} \varepsilon (\sigma (k)) X_{t_{\sigma (k)}} = \sum_{k
     = 1}^{2 n - 1} a_k^{\sigma} (X_{t_{\sigma (k + 1)}} - X_{t_{\sigma (k)}})
     = \sum_{k = 1}^{2 n - 1} (a_{k - 1}^{\sigma} - a_k^{\sigma}) X_{t_{\sigma
     (k)}} \]
  which implies $a_{k - 1}^{\sigma} - a_k^{\sigma} = \varepsilon (\sigma (k))$
  for all $k$ and thus the conclusion.
\end{proof}

\begin{proposition}
  \label{sec4.4 main proposition}Under the assumptions of Theorem~\ref{sec4.4
  main thm}, for any $n \in \mathbb{N}$ there exist $C_n > 0$ s.t.
  \begin{equation}
    \mathbb{E} \left[ \left| \int_s^t e^{i \xi \cdot X_r} \, \mathd r
    \right|^{2 n} \right] \leqslant C_n | t - s |^n  | \xi |^{- n / \beta}
    \quad \forall \, \xi \in \mathbb{R}^d, \hspace{0.5em} s, t \in [0, T] .
  \end{equation}
\end{proposition}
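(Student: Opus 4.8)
The plan is to estimate the $2n$-th moment by the classical Gaussian-plus-combinatorics scheme (in the spirit of Kahane \cite{kahane} and \cite{nualart2003}, with the combinatorial bookkeeping of Lemma~\ref{sec4.4 combinatorial lemma}). First I would write, using Fubini and expanding the modulus as a product of $n$ factors $e^{i\xi\cdot X_{r_k}}$ and $n$ factors $e^{-i\xi\cdot X_{r_{k+n}}}$,
\[
\mathbb{E}\left[\left|\int_s^t e^{i\xi\cdot X_r}\,\mathd r\right|^{2n}\right]
= \int_{[s,t]^{2n}} \mathbb{E}\left[\exp\Bigl(i\,\xi\cdot\sum_{k=1}^n (X_{r_k}-X_{r_{k+n}})\Bigr)\right]\mathd r_1\cdots\mathd r_{2n}.
\]
Since the left-hand side is real and nonnegative and $X$ is Gaussian, it is bounded by $\int_{[s,t]^{2n}}\exp\bigl(-\tfrac12\operatorname{Var}(\xi\cdot\sum_{k=1}^n(X_{r_k}-X_{r_{k+n}}))\bigr)\,\mathd r$. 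I would then split $[s,t]^{2n}$, up to a Lebesgue-null set, into the $(2n)!$ simplices $\Delta_\sigma$, $\sigma\in S_{2n}$, and on each of them relabel so that the ordered times are $t_{\sigma(1)}\le\cdots\le t_{\sigma(2n)}$ and rewrite the relevant linear combination via Lemma~\ref{sec4.4 combinatorial lemma} as $\sum_{k=1}^{2n-1} a_k^\sigma\,\bigl(\xi\cdot(X_{t_{\sigma(k+1)}}-X_{t_{\sigma(k)}})\bigr)$, with $a_0^\sigma=a_{2n}^\sigma=0$.

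On the interior of each $\Delta_\sigma$ the times $t_{\sigma(1)}<\cdots<t_{\sigma(2n)}$ are distinct, so I would apply the $\beta$-LND property (Definition~\ref{sec2.4 defn lnd}) with the $2n-1$ increments and the vectors $v_k=a_k^\sigma\xi\in\mathbb{R}^d$, which, writing $u_k=t_{\sigma(k+1)}-t_{\sigma(k)}$, gives
\[
\operatorname{Var}\Bigl(\xi\cdot\sum_{k=1}^n(X_{t_k}-X_{t_{k+n}})\Bigr)\;\gtrsim_n\; |\xi|^2\sum_{k=1}^{2n-1}|a_k^\sigma|^2\, u_k^{2\beta},
\]
valid whenever $t-s$ is smaller than the constant $\delta$ from the $\beta$-LND condition (for $2n$ points); hence the integrand is $\lesssim_n \exp\bigl(-c|\xi|^2\sum_k |a_k^\sigma|^2 u_k^{2\beta}\bigr)$. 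The combinatorial input I need here is that, by Lemma~\ref{sec4.4 combinatorial lemma}, the sequence $a_0^\sigma=0,a_1^\sigma,\dots,a_{2n}^\sigma=0$ is a $\pm1$ walk with $n$ up-steps and $n$ down-steps, so it returns to $0$ at most $n$ times; consequently at most $n-1$ of the indices $k\in\{1,\dots,2n-1\}$ have $a_k^\sigma=0$, and therefore at least $n$ of them satisfy $|a_k^\sigma|\ge 1$.

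To conclude I would change variables on $\Delta_\sigma$ to the base point $t_{\sigma(1)}-s\ge 0$ together with the gaps $u_1,\dots,u_{2n-1}\ge 0$, all of total sum $\le t-s$; the integrand depends only on the gaps, so integrating out the base point costs one factor $t-s$. Among the remaining $2n-1$ gap variables I would keep the Gaussian damping only on $n$ of the (at least $n$) indices with $a_k^\sigma\ne 0$, bound the other $n-1$ integrands by $1$, enlarge the domain of the selected $n$ variables to $[0,\infty)^n$ and of the remaining $n-1$ to $\{w\ge 0,\ \sum_j w_j\le t-s\}$; using $\int_0^\infty e^{-c|\xi|^2 u^{2\beta}}\mathd u\lesssim |\xi|^{-1/\beta}$ for each of the first $n$ and the simplex volume $(t-s)^{n-1}/(n-1)!$ for the rest, this yields $\lesssim_n (t-s)\cdot |\xi|^{-n/\beta}\cdot (t-s)^{n-1}=(t-s)^n|\xi|^{-n/\beta}$, and summing the $(2n)!$ simplices preserves the bound. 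Finally, for general $s<t$ I would split $[s,t]$ into $m\le T/\delta+1$ subintervals of length $<\delta$ and use $|\sum_{i=1}^m c_i|^{2n}\le m^{2n-1}\sum_i |c_i|^{2n}$ together with $\sum_i |t_{i+1}-t_i|^n\le \delta^{\,n-1}(t-s)$ and $(t-s)^{n-1}\le T^{n-1}$ to recover the stated estimate with an $n$- and $T$-dependent constant. The main obstacle is precisely this last bookkeeping step: correctly isolating the base-point factor, selecting exactly $n$ damped gap variables among the ones with nonzero coefficient, and decoupling the integration domains so that the powers come out as $(t-s)^n|\xi|^{-n/\beta}$ rather than, say, $(t-s)^{n-1}$ — together with the routine but necessary localization to $t-s<\delta$.
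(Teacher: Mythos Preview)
Your proof is correct and follows essentially the same route as the paper's: expand the $2n$-th moment, decompose $[s,t]^{2n}$ into the $(2n)!$ ordered simplices, apply $\beta$-LND to the ordered increments, use Lemma~\ref{sec4.4 combinatorial lemma} to guarantee $n$ nonzero coefficients $a_k^\sigma$ (the paper observes directly by parity that $a_k^\sigma$ is a sum of $k$ terms in $\{\pm1\}$, hence nonzero for every odd $k$, and uses these fixed odd indices as the damped variables rather than selecting them $\sigma$-by-$\sigma$), then integrate the $n$ damped gaps over $(0,\infty)$ and the remaining variables over a simplex of volume $\lesssim |t-s|^n$. One small slip in your localization step: the inequality you actually need is $\delta^{n-1}\le (t-s)^{n-1}$ (valid in the regime $t-s>\delta$ where the splitting is invoked), which gives $\delta^{\,n-1}(t-s)\le (t-s)^n$; the cited bound $(t-s)^{n-1}\le T^{n-1}$ goes the wrong way for this purpose.
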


\begin{proof}
  Without loss of generality we can assume $| t - s | < \delta_{2 n}$, where
  $\delta_{2 n}$ is taken from the definition of $\beta$\mbox{-}LND; the
  general case then follows from reasoning as in the beginning of the proof of
  Theorem~\ref{sec4.1 thm general criterion}. We can take $X$ of the form $X =
  m + Y$, where $Y$ is a centred Gaussian $\beta$\mbox{-}LND process and $m
  =\mathbb{E} [X]$ is a deterministic continuous function. We have
  \begin{eqnarray*}
    \mathbb{E} \left[ \left| \int_s^t e^{i \xi \cdot (m_r + Y_r)} \, \mathd r
    \right|^{2 n} \right] & = & \left| \int_{[s, t]^{2 n}} \mathbb{E} \left[
    \prod_{k = 1}^n e^{i \xi \cdot (Y_{t_k} - Y_{t_{k + n}})} \right] \prod_{k
    = 1}^n e^{i \xi \cdot (m_{t_k} - m_{t_{k + n}})} \, \mathd t_1 \ldots
    \mathd t_{2 n} \right|\\
    & \leqslant & \int_{[s, t]^{2 n}} \left| \mathbb{E} \left[ \prod_{k =
    1}^n e^{i \xi \cdot (Y_{t_k} - Y_{t_{k + n}})} \right] \right| \mathd t_1
    \ldots \mathd t_{2 n}\\
    & = & \sum_{\sigma \in S_{2 n}} \int_{\Delta_{\sigma}} \left| \mathbb{E}
    \left[ \prod_{k = 1}^n e^{i \xi \cdot (Y_{t_k} - Y_{t_{k + n}})} \right]
    \right| \mathd t_{\sigma (1)} \ldots \mathd t_{\sigma (2 n)}\\
    & = & \sum_{\sigma \in S_{2 n}} \int_{\Delta_{\sigma}} \left| \mathbb{E}
    \left[ \exp \left( i \xi \cdot \sum_{k = 1}^{2 n - 1} a_k^{\sigma}
    (Y_{t_{\sigma (k + 1)}} - Y_{t_{\sigma (k)}}) \right) \right] \right|
    \mathd t_{\sigma (1)} \ldots \mathd t_{\sigma (2 n)} ;
  \end{eqnarray*}
  introducing the notation $s_k = t_{\sigma (k)}$, we therefore find
  
  \begin{align*}
    \mathbb{E} \left[ \left| \int_s^t e^{i \xi \cdot (m_r + Y_r)} \, \mathd r
    \right|^{2 n} \right] & = \sum_{\sigma \in S_{2 n}} \int_{\Delta_{\sigma}}
    \exp \left( - \frac{1}{2} \tmop{Var} \left( \xi \cdot \sum_{k = 1}^{2 n -
    1} a_k^{\sigma} (Y_{s_{k + 1}} - Y_{s_k}) \right) \right) \, \mathd s_1
    \ldots \mathd s_{2 n}\\
    & \leqslant \sum_{\sigma \in S_{2 n}} \int_{\Delta_{\sigma}} \exp \left(
    - c_{2 n} | \xi |^2  \sum_{k = 1}^{2 n - 1} | s_{k + 1} - s_k |^{2 \beta}
    (a_k^{\sigma})^2 \right) \, \mathd s_1 \ldots \mathd s_{2 n},
  \end{align*}
  where in the last passage we used the $\beta$\mbox{-}LND property. It is
  clear by definition that $\varepsilon (k) \in \{ - 1, 1 \}$ for all $k$ and
  therefore by Lemma~\ref{sec4.4 combinatorial lemma} it follows that, for any
  odd $k$, $(a_k^{\sigma})^2 \geqslant 1$. Estimating $(a_k^{\sigma})^2$
  trivially by $0$ for even $k$, we obtain that, for any $\sigma \in S_{2 n}$,
  \begin{align*}
    \int_{\Delta_{\sigma}} \exp & \left( - c_{2 n} | \xi |^2 \sum_{k = 1}^{2 n
    - 1} | s_{k + 1} - s_k |^{2 \beta} (a_k^{\sigma})^2 \right) \, \mathd s_1
    \ldots \mathd s_{2 n}\\
    & \leqslant \int_{\Delta_{\sigma}} \exp \left( - c_{2 n} | \xi |^2
    \sum_{k = 1}^n | s_{2 k} - s_{2 k - 1} |^{2 \beta} \right) \mathd s_1
    \ldots \mathd s_{2 n} .
  \end{align*}
  Applying the change of variable $y_{2 k} = s_{2 k} - s_{2 k - 1}$ and
  estimating the integrals w.r.t. $y_{2 k}$ on suitable bounded intervals by
  the same integrals taken over $(0, \infty)$, we obtain
  \begin{align*}
    \int_{\Delta_{\sigma}} \exp & \left( - c_{2 n} | \xi |^2 \sum_{k = 1}^n |
    s_{2 k} - s_{2 k - 1} |^{2 \beta} \right) \, \mathd s_1 \ldots \mathd s_{2
    n}\\
    & \leqslant \int_{\Delta_{\sigma}} \, \mathd t_1 \ldots \mathd t_n
    \hspace{0.5em} \left( \int_0^{\infty} e^{- c_{2 n} | \xi |^2 | u |^{2
    \beta}} \, \mathd u \right)^n\\
    & \leqslant \frac{1}{n!} | t - s |^n \left( \int_0^{\infty} e^{- c_{2 n}
    | \xi |^2 | u |^{2 \beta}} \, \mathd u \right)^n .
  \end{align*}
  Summing over $\sigma \in S_{2 n}$ we conclude that
  \[ \mathbb{E} \left[ \left| \int_s^t e^{i \xi \cdot X_r} \, \mathd r
     \right|^{2 n} \right] \leqslant \frac{(2 n) !}{n!}  | t - s |^n \left(
     \int_0^{\infty} e^{- c_{2 n} | \xi |^2 | u |^{2 \beta}} \, \mathd u
     \right)^n \leqslant C_n | t - s |^n  | \xi |^{- n / \beta} . \]
\end{proof}

\begin{remark}
  \label{rem:LND-constants}The constants $c_{2 n}$, coming from the definition
  of $\beta$\mbox{-}LND, depend on $n$ in an unspecified way; this is the
  reason why we are not able to obtain exponential integrability as in
  Theorem~\ref{sec4.2 corollary rho gaussian}. We can overcome this difficulty
  by exploiting Lemma~\ref{sec4.4 analytic lemma}, up to the price of
  restricting ourselves to working with uniformly bounded paths and only
  obtaining moment estimates for the $\rho$\mbox{-}irregularity.
\end{remark}

\begin{proof}[Proof of Theorem~\ref{sec4.4 main thm}]
  By Proposition~\ref{sec4.4 main proposition}, for any $s$, $t \in [0, T]$ it
  holds
  \begin{eqnarray*}
    \mathbb{E} [\| \mu_{s, t}^X \|_{\mathcal{F} L^{\rho, 2 n}}^{2 n}] & = &
    \mathbb{E} \left[ \int (1 + | \xi |)^{2 \rho n} | \Phi^X_{s, t} (\xi) |^{2
    n} \, \mathd \xi \right]\\
    & = & \int (1 + | \xi |)^{2 \rho n} \mathbb{E} [| \Phi^X_{s, t} (\xi)
    |^{2 n}] \, \mathd \xi\\
    & \lesssim_n & | t - s |^n \int (1 + | \xi |)^{2 n \rho - n / \beta} \,
    \mathd \xi\\
    & \lesssim_{n, \rho} & | t - s |^n
  \end{eqnarray*}
  whenever $2 n \rho - n / \beta < - d$, namely $\rho < (1 / \beta - d / n) /
  2$. Then by Kolmogorov continuity criterion (and the fact that $\mu_0^X
  \equiv 0$ by definition), for any $\rho < (1 / \beta - d / n) / 2$ and
  $\gamma < (1 - 1 / n) / 2$, $\mathbb{P}$-a.s. $\mu^X \in C^{\gamma}_t
  \mathcal{F} L^{\rho, 2 n}_x$ and
  \[ \mathbb{E} [\| \mu^X \|^{2 n}_{C^{\gamma} \mathcal{F} L^{\rho, 2 n}}] <
     \infty . \]
  By hypothesis, $X$ is a Gaussian process with continuous trajectories,
  therefore by Fernique theorem $\| X \|_{C^0}$ admits moments of any order.
  Since $\text{supp} (\mu^X_{s, t}) \subset B_{\| X \|_{C^0}}$
  uniformly in $s, t$, by Lemma~\ref{sec4.4 analytic lemma} we deduce that
  \[ \| \mu^X \|_{C^{\gamma} \mathcal{F} L^{\rho, \infty}} \lesssim \| X
     \|_{C^0}^{d / 2 n} \| \mu^X \|_{C^{\gamma} \mathcal{F} L^{\rho, 2 n}} \]
  and thus
  \[ \mathbb{E} [\| \Phi^X \|_{\mathcal{W}^{\gamma, \rho}}^n] =\mathbb{E} [\|
     \mu^X \|_{C^{\gamma} \mathcal{F} L^{\rho, \infty}}^n] \lesssim \mathbb{E}
     [\| X \|_{C^0}^d]^{1 / 2} \mathbb{E} [\| \mu^X \|_{C^{\gamma} \mathcal{F}
     L^{\rho, 2 n}}^{2 n}]^{1 / 2} < \infty . \]
  This implies that for any fixed $n$, with probability~$1$ $X$ is $(\gamma,
  \rho)$\mbox{-}irregular for any $\gamma < (1 - 1 / n) / 2$ and $\rho < (1 /
  \beta - d / n) / 2$. We can then find a set of full probability such that
  $X$ is $(\gamma, \rho)$\mbox{-}irregular for any $\rho < 1 / (2 \beta)$ and
  $\gamma < 1 / 2$; finally by the interpolation argument from
  Lemma~\ref{sec2.2 lemma interpolation rho irr} we can conclude that, with
  probability $1$, $X$ is $\rho$\mbox{-}irregular for any $\rho < 1 / (2
  \beta)$.
\end{proof}

We are finally ready to complete the

\begin{proof}[Proof of Theorem~\ref{sec3 thm1}]
  The statement follows from a combination of Theorem~\ref{sec4.4 main thm}
  above, together with Theorem~\ref{sec4.2 corollary rho gaussian} and
  Proposition~\ref{sec4.2 prop exp irr} from Section~\ref{sec4.2}.
\end{proof}

\section{Analytic properties of $\rho$-irregularity}\label{sec5}

This section is devoted to the study of deterministic $(\gamma,
\rho)$\mbox{-}irregular paths. It includes the proof of Theorems~\ref{sec3
thm3} and~\ref{sec3 thm4}. In Section~\ref{sec5.3} we discuss also what we
call the perturbation problem.

\subsection{Fourier dimension and Salem sets}\label{sec5.1}

We highlight here the connection of $\rho$\mbox{-}irregularity and Fourier
dimension and provide the proof of Theorem~\ref{sec3 thm4}. This connection
was already noticed in~{\cite{choukgubinelli2}}. We start by recalling some
facts of geometric measure theory, which can be found in~{\cite{mattila}}.

\begin{definition}
  Given $E \subset \mathbb{R}^d$ Borel, denote by $\mathcal{M}_+ (E)$ the set
  of positive measures supported on $E$. The Fourier and Hausdorff dimension
  of $E$ correspond respectively to
  \[ \dim_F (E) = \sup \left\{ \alpha \in [0, d] \, : \, \exists \, \mu \in
     \mathcal{M}_+ (E), \, \mu \in \mathcal{F} L^{\alpha / 2, \infty}
     \right\}, \]
  \[ \dim_H (E) = \sup \left\{ \alpha \in [0, d] \, : \, \exists \, \mu \in
     \mathcal{M}_+ (E), \, I^{\alpha} (\mu) < \infty \right\}, \]
  where
  \[ I^{\alpha} (\mu) \assign \int_{\mathbb{R}^{2 d}} \frac{\mu (\mathd x) \mu
     (\mathd y)}{| x - y |^{\alpha}} = c_{\alpha, d} \int | \xi |^{\alpha - d}
     | \hat{\mu} (\xi) |^2 \, \mathd \xi = c_{\alpha, d} \| \mu
     \|^2_{\mathcal{F} L^{\alpha / 2 - d / 2, 2}} . \]
\end{definition}

It is clear from the definition and the embedding $\mathcal{F} L^{s, \infty}
\hookrightarrow \mathcal{F} L^{s - d / 2 - \varepsilon, 2}$ that
\begin{equation}
  0 \leqslant \dim_F (E) \leqslant \dim_H (E) \leqslant d ; \label{sec 5.1
  relation dimensions}
\end{equation}
moreover there are examples in which all inequalities are strict. This
motivates the following definition.

\begin{definition}
  A Borel set $E \subset \mathbb{R}^d$ is a Salem set if $\dim_F (E) = \dim_H
  (E)$.
\end{definition}

If $w$ is $(\gamma, \rho)$\mbox{-}irregular, it is clear that for any $[s, t]
\subset [0, T]$ it holds
\[ \mu^w_{s, t} \in \mathcal{F} L^{\rho, \infty}, \quad I^{\alpha} (\mu^w_{s,
   t}) \lesssim_{\alpha} | t - s |^{2 \gamma} \| \Phi^w
   \|^2_{\mathcal{W}^{\gamma \comma \rho}} \quad \forall \, \alpha < 2 \rho .
\]
In particular, since $\mu^w_{s, t}$ is a measure supported on $w ([s, t])$, it
holds
\[ \min (d, 2 \rho) \leqslant \dim_F (w ([s, t])) \quad \forall \, [s, t]
   \subset [0, T] . \]
On the other hand, recall that if $f \in C^{\delta}_t$ for some $\delta\in (0,1]$, then for any $[s, t]
\subset [0, T]$ it holds $\dim_H (f ([s, t])) \leqslant \delta^{- 1}$, see for
instance Proposition~3.3~(a) from~{\cite{falconer}}.

We are now ready to give the proof of Theorem~\ref{sec3 thm4}.

\begin{proof}[Proof of Theorem~\ref{sec3 thm4}]
  To prove Point~\tmtextit{i}., first assume $\delta \geqslant 1 / d$. By
  Theorem~\ref{sec3 thm2}, almost every $\varphi \in C^{\delta}_t$ is
  $\rho$-irregular for any $\rho < (2 \delta)^{- 1}$. It then follows
  from~\eqref{sec 5.1 relation dimensions} and the considerations above that
  \[ 2 \rho = \min (d, 2 \rho) \leqslant \dim_F (\varphi ([s, t])) \leqslant
     \dim_H (\varphi ([s, t])) \leqslant \delta^{- 1} ; \]
  since the inequality holds for all $\rho < (2 \delta)^{- 1}$, the conclusion
  follows. The case $\delta < 1 / d$ is even more direct, since in this case
  we can find $\rho < (2 \delta)^{- 1}$ such that $2 \rho > d$ and therefore
  we obtain
  \[ d = \min (d, 2 \rho) \leqslant \dim_F (\varphi ([s, t])) \leqslant \dim_H
     (\varphi ([s, t])) \leqslant d. \]
  Suppose now additionally that $\delta < (2 d)^{- 1}$, then almost every
  $\varphi \in C^{\delta}_t$ is $\rho$\mbox{-}irregular for some $\rho > d$;
  by Point~\tmtextit{ii.} of Lemma~\ref{sec2.2 lemma regularity averaging 2}
  it follows that $\mu^{\varphi}_{s, t}$ admits a continuous occupation
  density $\ell^{\varphi}_{s, t}$. Therefore there exists $x \in \varphi ([s,
  t])$ such that $\ell^{\varphi}_{s, t} (x) > \varepsilon > 0$ and by
  continuity the same must hold on an open ball $B (x, r)$ for some $r > 0$;
  this implies that $B (x, r) \subset \varphi ([s, t])$.
\end{proof}

It is possible to show that $\rho$\mbox{-}irregular paths cannot be
$\delta$\mbox{-}H\"{o}lder for $\delta$ too large reasoning by dimensionality,
since otherwise it wouldn't be true that $\dim_F (w ([s, t])) \geqslant \min
(d, 2 \rho)$; in the next section we are going to provide a much sharper
result.

\subsection{$\rho$\mbox{-}irregular paths are rough}\label{sec5.2}

The results of this section are inspired by the similar discussion carried out
in Sections~9\mbox{-}11 of~{\cite{geman}}, in which it is shown that functions
with sufficiently regular occupation densities must exhibit a quite erratic
behaviour. Let us point out however that here we only assume the function $w$
to be $(\gamma, \rho)$\mbox{-}irregular, which in general does not imply the
existence of an occupation density. Theorem~\ref{sec3 thm3} follows from the
results of this section and implies that the prevalence results from
Theorem~\ref{sec3 thm2} are sharp (cf. Remark~\ref{sec5.2 discussion
optimality}).

The next statement shows that $(\gamma, \rho)$\mbox{-}irregularity is indeed a
notion of irregularity, in a sense that can be explicitly quantified. We
recall to the reader that the critical parameter $\delta^{\ast}_{\gamma,
\rho}$ is given by
\[ \delta^{\ast}_{\gamma, \rho} = \frac{1 - \gamma}{\rho} . \]
Throughout this section, we will always assume $\delta^\ast_{\gamma,\rho}\in (0,1)$, which in view of Theorem \ref{sec3 thm2} is the generic scenario for $\delta$-H\"older continuous paths $w$ with $\delta\in (0,1)$, where $\delta^\ast_{\gamma,\rho}$ can get arbitrarily close to $\delta$.
In the case $\delta^\ast_{\gamma,\rho}>1$ (resp. $\delta>1$), many of the concepts introduced below don't have a direct analogue; here however, again by Theorem \ref{sec3 thm2}, we expect the path $w$ in consideration to be such that the highest order derivative $D^{(n)}w$, for $n=\lfloor \delta \rfloor$, is again $(\gamma',\rho')$-irregular for some new parameters $(\gamma',\rho')$, with $\delta^{\ast}_{\gamma',\rho'}<1$; the results presented here can then be applied successfully to $D^{(n)}w$.

\begin{theorem}
  \label{sec5.2 proposition irregularity}Let $w$ be a $(\gamma,
  \rho)$-irregular function. Then for any $\delta > \delta^{\ast}_{\gamma,
  \rho}$, $w$ is nowhere $\delta$\mbox{-}H\"{o}lder continuous. In particular,
  for any fixed $M > 0$ and any $s \in [0, T]$, the set of points $t$ around
  $s$ which satisfy an approximate $\delta$\mbox{-}H\"{o}lder condition with
  constant $M$ is a zero density set:
  \[ \lim_{\varepsilon \rightarrow 0^+} \frac{1}{\mathcal{L} (B (s,
     \varepsilon) \cap [0, T])} \mathcal{L} (t \in B (s, \varepsilon) \cap [0,
     T] : | w_t - w_s | \leqslant M | t - s |^{\delta}) = 0. \]
\end{theorem}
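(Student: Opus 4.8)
The strategy is to show that an approximate $\delta$-Hölder condition at a point $s$ forces the occupation measure $\mu^w_{s,t}$ (equivalently $\Phi^w_{s,t}$) to have too much mass concentrated near $w_s$, contradicting the $(\gamma,\rho)$-irregularity bound once $\delta>\delta^\ast_{\gamma,\rho}$. Concretely, fix $s$, $M>0$, $\delta>\delta^\ast_{\gamma,\rho}$, and set
\[
G_\varepsilon \assign \{ t\in B(s,\varepsilon)\cap[0,T] : |w_t-w_s|\leqslant M|t-s|^\delta\}.
\]
If $r\in G_\varepsilon$ then $|w_r-w_s|\leqslant M\varepsilon^\delta$, so all of $G_\varepsilon$ is mapped by $w$ into the ball $B(w_s,M\varepsilon^\delta)$. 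The first step is to translate "$G_\varepsilon$ has positive density at $s$" into a lower bound on $\mathcal L(G_\varepsilon)$; if the density does not vanish there is $c>0$ and a sequence $\varepsilon_n\downarrow 0$ with $\mathcal L(G_{\varepsilon_n})\geqslant c\,\varepsilon_n$. We will argue by contradiction and derive that this is impossible.

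The second and central step is to use a smooth bump test function to capture the mass. Pick $\psi\in C^\infty_c(\mathbb R^d)$ with $\psi\equiv 1$ on $B(0,1)$, $\operatorname{supp}\psi\subset B(0,2)$, $0\leqslant\psi\leqslant 1$, and for $\lambda>0$ write $\psi_\lambda(x)=\psi(x/\lambda)$. Choosing $\lambda=\lambda_n \assign M\varepsilon_n^\delta$ and testing against $\mu^w_{s,t_n}$ where we may take (splitting $B(s,\varepsilon_n)$ into its left and right halves and relabelling) an interval $[s,t_n]$ with $|t_n-s|\leqslant\varepsilon_n$ and $\mathcal L(G_{\varepsilon_n}\cap[s,t_n])\gtrsim \varepsilon_n$, we get
\[
\int_{\mathbb R^d}\psi_{\lambda_n}(y-w_s)\,\mu^w_{s,t_n}(\mathd y)
=\int_s^{t_n}\psi_{\lambda_n}(w_r-w_s)\,\mathd r
\geqslant \mathcal L(G_{\varepsilon_n}\cap[s,t_n])\gtrsim\varepsilon_n.
\]
On the other hand, by Parseval/Fourier inversion the left-hand side equals $(2\pi)^{-d}\int \widehat{\psi_{\lambda_n}}(\xi)\,e^{i\xi\cdot w_s}\,\widehat{\mu^w_{s,t_n}}(\xi)\,\mathd\xi$, and $|\widehat{\psi_{\lambda_n}}(\xi)|=\lambda_n^d|\widehat\psi(\lambda_n\xi)|$. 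Using $|\widehat{\mu^w_{s,t_n}}(\xi)|=|\Phi^w_{s,t_n}(\xi)|$ together with the two bounds $|\Phi^w_{s,t_n}(\xi)|\leqslant|t_n-s|$ for small $|\xi|$ and $|\Phi^w_{s,t_n}(\xi)|\leqslant\|\Phi^w\|_{\mathcal W^{\gamma,\rho}}|t_n-s|^\gamma|\xi|^{-\rho}$ for large $|\xi|$, and the rapid decay of $\widehat\psi$, a direct estimate of the integral (splitting at $|\xi|\sim 1/\lambda_n$) gives something of the form
\[
\varepsilon_n\lesssim \lambda_n^d\Big(\lambda_n^{-d}|t_n-s|\,\mathbf 1_{|t_n-s|\text{ contribution}} + \|\Phi^w\|_{\mathcal W^{\gamma,\rho}}|t_n-s|^\gamma\lambda_n^{\rho-d}\Big),
\]
i.e. $\varepsilon_n\lesssim |t_n-s|\,\mathbf 1 + \|\Phi^w\|_{\mathcal W^{\gamma,\rho}}|t_n-s|^\gamma\lambda_n^{\rho}$. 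Since $\lambda_n=M\varepsilon_n^\delta$ and $|t_n-s|\leqslant\varepsilon_n$, the second term is $\lesssim \varepsilon_n^{\gamma+\rho\delta}$, and because $\delta>\delta^\ast_{\gamma,\rho}=(1-\gamma)/\rho$ we have $\gamma+\rho\delta>1$; meanwhile the first term must be handled carefully — this is why we work on $[s,t_n]$ with $t_n$ possibly much closer to $s$ than $\varepsilon_n$, so that the "low frequency" part contributes at most $|t_n-s|\ll\varepsilon_n$ is not automatic and one instead absorbs it. The cleaner route, which I would actually follow, is to test against $\mu^w_{s,s+\lambda_n^{1/\delta}/M}$ or simply to optimise: estimate $\int_s^{s+h}\psi_{\lambda_n}(w_r-w_s)\mathd r$ from above by the Fourier method getting $\lesssim \lambda_n^d h + \|\Phi^w\|h^\gamma\lambda_n^\rho$ (no indicator gymnastics), and from below by $\mathcal L(G_{\varepsilon_n}\cap[s,s+h])$. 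Taking $h=\varepsilon_n$ and noting $\lambda_n^d\varepsilon_n=M^d\varepsilon_n^{1+d\delta}=o(\varepsilon_n)$ while $\|\Phi^w\|\varepsilon_n^\gamma\lambda_n^\rho\lesssim \varepsilon_n^{\gamma+\rho\delta}=o(\varepsilon_n)$ since $\gamma+\rho\delta>1$, we obtain $\mathcal L(G_{\varepsilon_n}\cap[s,\varepsilon_n])=o(\varepsilon_n)$, i.e. density zero — first on the right side of $s$, and symmetrically on the left — contradicting the assumption and simultaneously proving the quantitative zero-density statement.

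The remaining step is bookkeeping: if $w$ were $\delta$-Hölder at $s$ with some constant $M$ on some neighbourhood, then $G_\varepsilon$ would be all of $B(s,\varepsilon)\cap[0,T]$ for small $\varepsilon$, which has density $1$ at $s$, contradicting what we just proved; hence $w$ is nowhere $\delta$-Hölder, and the displayed limit is exactly the density statement established above. \textbf{The main obstacle} I anticipate is getting the Fourier-side estimate of $\int_s^{s+h}\psi_{\lambda}(w_r-w_s)\,\mathd r$ clean — one must split the frequency integral at the scale $1/\lambda$, use the trivial bound below that scale and the $(\gamma,\rho)$-irregularity bound above it, and check that the exponents combine to beat the linear-in-$h$ lower bound precisely when $\delta>\delta^\ast_{\gamma,\rho}$; the borderline bookkeeping of powers of $\lambda$, $h$, and the dimension $d$ is where care is needed, but there is no conceptual difficulty once the scaling is set up correctly.
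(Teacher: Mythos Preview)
Your overall strategy --- test the occupation measure against a rescaled bump $\psi_\lambda$ with $\lambda=M\varepsilon^\delta$, pass to the Fourier side, and use the $(\gamma,\rho)$-irregularity bound --- is exactly the paper's, and the zero-density formulation is the same. The gap is in the Fourier-side estimate. Splitting at $|\xi|\sim 1/\lambda$, as in your first pass, correctly yields
\[
\int_s^{s+h}\psi_\lambda(w_r-w_s)\,\mathd r \;\lesssim\; h + \|\Phi^w\|_{\mathcal W^{\gamma,\rho}}\,h^\gamma \lambda^\rho,
\]
since the low-frequency piece is $\int_{|\xi|\leqslant 1/\lambda}\lambda^d|\hat\psi(\lambda\xi)|\,h\,\mathd\xi=h\int_{|\eta|\leqslant 1}|\hat\psi(\eta)|\,\mathd\eta\sim h$; you rightly flag that this term is the trivial bound and useless when $h=\varepsilon$. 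But your ``cleaner route'' then asserts the bound $\lambda^d h + \|\Phi^w\|h^\gamma\lambda^\rho$, and this is not what the splitting produces: no choice of cutoff turns the low-frequency contribution into $\lambda^d h$. Since $\lambda^d h=M^d\varepsilon^{1+d\delta}=o(\varepsilon)$ is precisely the input you feed into the final contradiction, the argument as written does not close.

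The paper avoids splitting entirely. It uses $|\Phi^w_{s,t}(\xi)|\lesssim |t-s|^\gamma(1+|\xi|)^{-\rho}$ uniformly in $\xi$ and applies H\"older's inequality:
\[
\int_{\mathbb R^d}|\hat\psi(\lambda\xi)|\,(1+|\xi|)^{-\rho}\,\mathd\xi
\;\leqslant\; \|\hat\psi(\lambda\,\cdot)\|_{L^p}\,\|(1+|\cdot|)^{-\rho}\|_{L^q}
\;\lesssim_q\; \lambda^{-d/p},
\]
for any $q>d/\rho$, $1/p+1/q=1$. This gives density $\lesssim\varepsilon^{\gamma+\delta d/q-1}$, which tends to zero as soon as one can choose $d/q<\rho$ with $\gamma+\delta d/q>1$, i.e.\ exactly when $\delta>\delta^\ast_{\gamma,\rho}$. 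If you insist on splitting, you must cut at a free scale $R$ with $\lambda R\ll 1$ and optimise $h(\lambda R)^d+\|\Phi^w\|h^\gamma R^{-\rho}$ over $R$; this also recovers the sharp threshold, but the fixed choice $R=1/\lambda$ does not.
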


\begin{proof}
  First consider the case of $\rho \leqslant d$. Let $\psi \in C^{\infty}_c
  (\mathbb{R}^d)$ be a radially symmetric, decreasing function such that $\psi
  \equiv 1$ on $B_1$. Fix $M > 0$ and let us consider first $s \in (0, T)$, so
  that $B (s, \varepsilon) \cap [0, T] = (s - \varepsilon, s + \varepsilon)$
  for $\varepsilon$ small enough; up to translation we can assume $w_s = 0$.
  We have
  \begin{eqnarray*}
    \frac{1}{2 \varepsilon} \mathcal{L} (t \in (s - \varepsilon, s +
    \varepsilon) : | w_t - w_s | \leqslant M | t - s |^{\delta}) & \leqslant &
    \frac{1}{2 \varepsilon} \mathcal{L} (t \in (s - \varepsilon, s +
    \varepsilon) : | w_t | \leqslant M \varepsilon^{\delta})\\
    & \leqslant & \frac{1}{2 \varepsilon} \mathcal{L} \left( t \in (s -
    \varepsilon, s + \varepsilon) : \psi \left( \frac{w_t}{M
    \varepsilon^{\delta}} \right) \geqslant 1 \right)\\
    & \leqslant & \frac{1}{2 \varepsilon} \int_{s - \varepsilon}^{s +
    \varepsilon} \psi \left( \frac{w_t}{M \varepsilon^{\delta}} \right) \,
    \mathd t\\
    & = & \frac{1}{2 \varepsilon} \int_{\mathbb{R}^d} \psi \left( \frac{y}{M
    \varepsilon^{\delta}} \right) \mu_{s - \varepsilon, s + \varepsilon}^w
    (\mathd y)\\
    & = & \frac{1}{2} M^d \varepsilon^{d \delta - 1} \int_{\mathbb{R}^d}
    \hat{\psi} (M \varepsilon^{\delta} \xi) \overline{\hat{\mu}_{s -
    \varepsilon, s + \varepsilon}^w (\xi)} \mathd \xi\\
    & \lesssim_M & \varepsilon^{\gamma + \delta d - 1} \| \Phi^w
    \|_{\mathcal{W}^{\gamma \comma \rho}} \int_{\mathbb{R}^d} | \hat{\psi} (M
    \varepsilon^{\delta} \xi) |  (1 + | \xi |)^{- \rho} \, \mathd \xi .
  \end{eqnarray*}
  By H\"{o}lder inequality, for any $q > d / \rho \geqslant 1$, setting $1 / p
  = 1 - 1 / q$ it holds
  \[ \int_{\mathbb{R}^d} | \hat{\psi} (M \varepsilon^{\delta} \xi) |  (1 + |
     \xi |)^{- \rho} \, \mathd \xi \, \lesssim_{M, q} \, \varepsilon^{- \delta
     d / p}  \| \hat{\psi} \|_{L^p} . \]
  Therefore we obtain
  \[ \frac{1}{2 \varepsilon} \mathcal{L} (t \in (s - \varepsilon, s +
     \varepsilon) : | w_t - w_s | \leqslant M | t - s |^{\delta}) \,
     \lesssim_{M, q} \, \| \Phi^w \|_{\mathcal{W}^{\gamma \comma \rho}} \,
     \varepsilon^{\gamma + \delta d / q - 1}, \]
  where the last quantity is infinitesimal as $\varepsilon \rightarrow 0$ for
  any $q$ such that $d / q < \rho$ and $\gamma + \delta d / q > 1$. In
  particular if $\delta$ satisfies $\delta > \delta^{\ast}_{\gamma, \rho}$,
  then it's always possible to find such $q$, which gives the conclusion for
  $s \in [0, T]$. The reasoning at the endpoints $\{ 0, T \}$ is analogous:
  for instance in the case $s = 0$, similar calculations yield
  \[ \frac{1}{\varepsilon} \mathcal{L} (t \in (0, \varepsilon) : | w_t - w_0 |
     \leqslant M | t |^{\delta}) \lesssim \| \Phi^w \|_{\mathcal{W}^{\gamma
     \comma \rho}} \, \varepsilon^{\gamma + d \delta / q - 1} . \]
  This concludes the proof in the case $\rho \leqslant d$. For $\rho > d$, by
  Lemma~\ref{sec2.2 lemma interpolation rho irr} and
  equation~\eqref{eq:critical-parameter}, we can find a new pair
  $(\tilde{\gamma}, \tilde{\rho})$ with $\tilde{\rho} \leqslant d$ such that
  $w$ is $(\tilde{\gamma}, \tilde{\rho})$-irregular and $\delta_{\gamma,
  \rho}^{\ast} = \delta_{\tilde{\gamma}, \tilde{\rho}}^{\ast}$; the conclusion
  then follows from the previous case.
\end{proof}

\begin{remark}
  It is clear from the proof that the statement can be localised as follows.
  For a fixed $s \in [0, T]$, if the map $t \mapsto \mu^w_t$ satisfies an
  approximate $\gamma$-Holder condition in $\mathcal{F} L^{\rho, \infty}$
  around s, namely there exist constants $C, r > 0$ such that
  \[ \| \mu^w_{s, t} \|_{\mathcal{F}^{\rho, \infty}} \leqslant C | t - s
     |^{\gamma}  \enspace \text{ for all } t \in (s - r, s + r), \]
  then $w$ cannot satisfy an approximate $\delta$-Holder condition around $s$
  for any $\delta > \delta^{\ast}_{\gamma, \rho}$.
\end{remark}

Let us recall that for a given path $w \in C^0_t$, its $p$-variation on an
interval $[s, t] \subset [0, T]$ is defined as
\[ \| w \|_{p - \tmop{var} ; [s, t]} = \left( \sup_{\Pi} \sum_i | w_{t_i, t_{i
   + 1}} |^p \right)^{1 / p} \]
where the supremum is taken over all possible finite partitions $\Pi = \{ s =
t_0 < \ldots < t_n = t \}$ of $[s, t]$.

\begin{corollary}
  \label{cor:p-variation}Let $w$ be a $(\gamma, \rho)$-irregular function.
  Then for any $p < (\delta^{\ast}_{\gamma, \rho})^{- 1}$ and any $[s, t]
  \subset [0, T]$, $\| w \|_{p - \tmop{var} ; [s, t]} = + \infty$.
\end{corollary}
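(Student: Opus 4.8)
The plan is to derive infinite $p$-variation as a consequence of the nowhere-H\"older property established in Theorem~\ref{sec5.2 proposition irregularity}, via a contrapositive/counting argument. Suppose for contradiction that $\|w\|_{p-\mathrm{var};[s,t]} = M < \infty$ for some $p < (\delta^{\ast}_{\gamma,\rho})^{-1}$, i.e. $\delta := 1/p > \delta^{\ast}_{\gamma,\rho}$. The key elementary fact I would use is that finite $p$-variation controls the number of ``large jumps'' at every scale: for any $\varepsilon > 0$, if we partition $[s,t]$ into $N = \lfloor (t-s)/\varepsilon\rfloor$ consecutive subintervals $I_1,\dots,I_N$ of length $\varepsilon$, then the number of indices $j$ for which $\sup_{u,v\in I_j}|w_u - w_v| \geq M_0\,\varepsilon^{\delta}$ (for a fixed constant $M_0$) is at most $M^p / (M_0^p \varepsilon^{p\delta}) = M^p M_0^{-p}$, which is a \emph{bounded} quantity independent of $\varepsilon$ since $p\delta = 1$ is exactly the borderline. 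Actually to get a genuine contradiction I should push slightly: pick $\delta'$ with $\delta^{\ast}_{\gamma,\rho} < \delta' < \delta = 1/p$; then $p\delta' > 1$, so the number of subintervals on which the oscillation exceeds $M_0\varepsilon^{\delta'}$ is $\lesssim \varepsilon^{1-p\delta'} \to 0$, hence eventually zero. This means that for $\varepsilon$ small, \emph{every} length-$\varepsilon$ subinterval has oscillation $< M_0\varepsilon^{\delta'}$.

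From this uniform oscillation bound I would deduce that $w$ is globally $\delta'$-H\"older on $[s,t]$ (with some larger constant), by the standard chaining argument: any two points $u < v$ in $[s,t]$ with $|u-v| \leq \varepsilon$ lie in at most two adjacent length-$\varepsilon$ subintervals, so $|w_u - w_v| \lesssim \varepsilon^{\delta'} \lesssim |u-v|^{\delta'}$ after adjusting the grid so that $|u-v|$ is comparable to the mesh; for $|u-v|$ not small one just uses the bound on the full interval. But $\delta' > \delta^{\ast}_{\gamma,\rho}$, so Theorem~\ref{sec5.2 proposition irregularity} says $w$ is nowhere $\delta'$-H\"older continuous on $[0,T]$, in particular not $\delta'$-H\"older on $[s,t]$ — contradiction. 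Hence $\|w\|_{p-\mathrm{var};[s,t]} = +\infty$.

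Alternatively, and perhaps more cleanly, I could invoke the zero-density statement in Theorem~\ref{sec5.2 proposition irregularity} directly: fix $s$, and note that if $w$ had finite $p$-variation near $s$ with $p < (\delta^{\ast}_{\gamma,\rho})^{-1}$, then a Chebyshev-type estimate on the increments $w_{t,s}$ would force the set $\{t \in B(s,\varepsilon) : |w_t - w_s| > M|t-s|^{\delta'}\}$ to have \emph{small} measure relative to $2\varepsilon$ for large $M$, i.e. its complement has density close to $1$ — contradicting the conclusion of the theorem that this complement has density zero for fixed $M$. Making this quantitative: $\mathcal{L}(\{t \in B(s,\varepsilon): |w_{s,t}| > M|t-s|^{\delta'}\}) \leq M^{-p}\int_{B(s,\varepsilon)}|t-s|^{-p\delta'}|w_{s,t}|^p\,\mathrm{d}t$, and the integrand's $|w_{s,t}|^p$ sums, over a dyadic decomposition of $B(s,\varepsilon)$, against $p$-variation. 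I expect the first route (reduce to nowhere-H\"older via a counting lemma) to be the most transparent and least error-prone.

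The main obstacle — really the only subtle point — is getting the exponent bookkeeping exactly right at the boundary: $p\delta^{\ast}_{\gamma,\rho}$ versus $1$, and choosing the intermediate exponent $\delta'$ so that simultaneously $\delta' > \delta^{\ast}_{\gamma,\rho}$ (to apply the theorem) and $p\delta' > 1$ (to make the count of bad subintervals vanish). Since $p < 1/\delta^{\ast}_{\gamma,\rho}$ strictly, the open interval $(\delta^{\ast}_{\gamma,\rho},\, 1/p)$ is nonempty, so such a $\delta'$ exists; that is the whole content. Everything else — the chaining argument turning a uniform oscillation bound into a H\"older bound, and the elementary pigeonhole that finite $p$-variation bounds the number of subintervals with oscillation $\geq \eta$ by $(\|w\|_{p-\mathrm{var}}/\eta)^p$ — is routine. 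No new analytic input beyond Theorem~\ref{sec5.2 proposition irregularity} is needed.
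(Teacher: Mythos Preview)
Your first route has both an arithmetic slip and a genuine strategic gap. If $\delta' < 1/p$ then $p\delta' < 1$, not $p\delta' > 1$; and the pigeonhole bound on the number of ``bad'' length-$\varepsilon$ subintervals (those with oscillation $\geq M_0\varepsilon^{\delta'}$) is
\[
\#\{\text{bad }j\}\;\leq\;\frac{\|w\|_{p-\mathrm{var}}^p}{M_0^p\,\varepsilon^{p\delta'}}\;\sim\;\varepsilon^{-p\delta'},
\]
which diverges as $\varepsilon\to0$. What tends to zero is the \emph{fraction} of bad subintervals, $\sim\varepsilon^{1-p\delta'}$ with $1-p\delta'>0$; but a vanishing fraction does \emph{not} force every subinterval to be good, so you cannot conclude a uniform oscillation bound and hence no H\"older continuity. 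In fact the implication ``finite $p$-variation $\Rightarrow$ $(1/p)$-H\"older at some point'' is false in general (take a continuous monotone function with a logarithmic modulus of continuity at a dense set of points), so this route cannot succeed.

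Your second route also breaks: the Chebyshev bound gives
\[
\frac{1}{2\varepsilon}\,\mathcal{L}\bigl(t\in B(s,\varepsilon):|w_{s,t}|>M|t-s|^{\delta'}\bigr)\;\lesssim\;M^{-p}\,\|w\|_{p-\mathrm{var}}^p\,\varepsilon^{-p\delta'},
\]
which again blows up. The dyadic refinement you sketch does not help, because $p$-variation controls $\sum_i|w_{t_i,t_{i+1}}|^p$, not $\sum_i|w_{s,t_i}|^p$ or the decay of $\|w\|_{p-\mathrm{var};[s-r,s+r]}$ in $r$.

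The paper argues in the opposite, constructive direction, and this is where the quantitative content of Theorem~\ref{sec5.2 proposition irregularity} actually bites. From the \emph{uniform-in-$s$} estimate
\[
\sup_{s}\;\frac{1}{\varepsilon}\,\mathcal{L}\bigl(t\in(s,s+\varepsilon):|w_{s,t}|\leq\varepsilon^{\delta}\bigr)\;\longrightarrow\;0
\qquad(\delta>\delta^{\ast}_{\gamma,\rho}),
\]
one deduces that for all small $\varepsilon$ and every $s$ there exists $t\in(s,s+\varepsilon)$ with $|w_{s,t}|>\varepsilon^{\delta}$. Then, with $n\sim1/\varepsilon$, set $t_{2k}=k\varepsilon$ and pick $t_{2k+1}\in(t_{2k},t_{2k+2})$ with $|w_{t_{2k},t_{2k+1}}|>\varepsilon^{\delta}$: this partition gives
\[
\|w\|_{p-\mathrm{var}}^p\;\geq\;\sum_{k}|w_{t_{2k},t_{2k+1}}|^p\;\gtrsim\;\varepsilon^{p\delta-1}\;\to\;\infty
\]
whenever $p\delta<1$, i.e. $p<1/\delta$; letting $\delta\downarrow\delta^{\ast}_{\gamma,\rho}$ yields the claim. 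The missing idea in your approach is precisely this: use the density statement to \emph{produce} many large increments, rather than trying to bound them away.
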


\begin{proof}
  Since the $\rho$\mbox{-}irregularity property is scaling invariant and the
  $p$\mbox{-}variation is invariant under reparametrization, it suffices to
  show that if $w$ is $(\gamma, \rho)$\mbox{-}irregular, then $\| w \|_{p -
  \tmop{var}, [0, 1]} = \infty$ for any $p$ as above. Going through analogous
  computations to those of Theorem~\ref{sec5.2 proposition irregularity}, it
  can be shown that for any $\delta > \delta^{\ast}_{\gamma, \rho}$ it holds
  \[ \lim_{\varepsilon \rightarrow 0^+} \sup_{s \in [0, 1 - \varepsilon]}
     \frac{1}{\varepsilon} \mathcal{L} (t \in (s, s + \varepsilon) : | w_{s,
     t} | \leqslant \varepsilon^{\delta}) = 0. \]
  In particular, for all $\varepsilon > 0$ small enough it holds
  \[ \sup_{s \in [0, 1 - \varepsilon]} \mathcal{L} (t \in (s, s + \varepsilon)
     : | w_{s, t} | > \varepsilon^{\delta}) > \frac{\varepsilon}{2} > 0 ; \]
  thus for any $s \in [0, 1 - \varepsilon]$, there exists $t \in (s, s +
  \varepsilon)$ such that $| w_{s, t} | > \varepsilon^{\delta}$. Taking $n
  \sim 1 / \varepsilon$, we can construct a partition $\{ 0 = t_0 < \ldots <
  t_{2 n} = 1 \}$ such that $t_{2 k} = k \varepsilon$ and $t_{2 k + 1} \in
  (t_{2 k}, t_{2 k + 2})$ has the above property. We obtain
  \[ \| w \|_{p - \tmop{var}}^p \geqslant \sum_{k = 0}^{2 n - 1} | w_{t_k,
     t_{k + 1}} |^p \gtrsim \varepsilon^{p \delta - 1} ; \]
  since $\varepsilon$ can be taken arbitrarily small, if $p < 1 / \delta$ then
  $\| w \|_{p - \tmop{var}} = \infty$. As the reasoning holds for any $\delta
  > \delta^{\ast}_{\gamma, \rho}$, the conclusion follows.
\end{proof}

Theorem~\ref{sec5.2 proposition irregularity} suggests that the behaviour of
$w$ is quite wild. This intuition can be captured by the following notions of
irregularity introduced in~{\cite{frizdoobmeyer}} and nicely presented
in~{\cite{frizhairer}}.

\begin{definition}
  We say that a path $w \in C^{\delta}_t$ is rough at time $s$, $s \in (0,
  T)$, if
  \[ \limsup_{t \rightarrow s} \frac{| v \cdot w_{s, t} |}{| t - s |^{2
     \delta}} = + \infty \quad \forall \, v \in \mathbb{S}^{d - 1} . \]
  A path $w$ is truly rough if it is rough on some dense set of $[0, T]$.
\end{definition}

\begin{definition}
  A path $w \in C^{\delta}_t$ is $\theta$-H{\"o}lder rough for $\theta \in (0,
  1)$ on scale $\varepsilon_0$ if there exists a constant $L \assign
  L_{\theta} (w) \assign L (\theta, \varepsilon_0, T ; w) > 0$ such that for
  every $v \in \mathbb{S}^{d - 1}$, $s \in [0, T]$ and $\varepsilon \in (0,
  \varepsilon_0]$ there exists $t \in [0, T]$ such that
  \[ | t - s | < \varepsilon \quad \tmop{and} \quad | v \cdot (w_{s, t}) |
     \geqslant L_{\theta} (w) \, \varepsilon^{\theta} . \]
  The largest such value of $L$ is called the modulus of $\theta$-H{\"o}lder
  roughness of $w$. 
\end{definition}

\begin{corollary}
  \label{cor:rho-holder-roughness}Let $w$ be a $(\gamma, \rho)$-irregular
  path; then for any $\theta > \delta^{\ast}_{\gamma, \rho}$, $w$ is
  $\theta$-H{\"o}lder rough with infinite modulus of $\theta$-H{\"o}lder
  roughness.
\end{corollary}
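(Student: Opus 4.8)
The plan is to reuse, almost verbatim, the volume estimate at the heart of the proof of Theorem~\ref{sec5.2 proposition irregularity}, but applied simultaneously to all one-dimensional projections $v\cdot w$, $v\in\mathbb{S}^{d-1}$, and then to convert the resulting ``zero density'' statement into the defining property of $\theta$-H\"{o}lder roughness. If $\delta^{\ast}_{\gamma,\rho}\geqslant 1$ there is no admissible $\theta\in(0,1)$ and there is nothing to prove, so fix $\theta\in(\delta^{\ast}_{\gamma,\rho},1)$ and pick $\delta$ with $\delta^{\ast}_{\gamma,\rho}<\delta<\theta$.

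By the remarks following Lemma~\ref{sec2.2 lemma properties rho irr}, $(\gamma,\rho)$-irregularity of $w$ is equivalent to the existence of a constant $C\sim\|\Phi^w\|_{\mathcal{W}^{\gamma,\rho}}$ such that every projection $v\cdot w$ is a one-dimensional $(\gamma,\rho)$-irregular path with $\|\Phi^{v\cdot w}\|_{\mathcal{W}^{\gamma,\rho}}\leqslant C$. Running the computation in the proof of Theorem~\ref{sec5.2 proposition irregularity} with $d=1$ for the path $v\cdot w$ (with $\psi$ as there, and $q$ chosen so that $1/q<\rho$ and $\eta:=\gamma+\delta/q-1>0$, which is possible precisely because $\delta>\delta^{\ast}_{\gamma,\rho}$), and observing that the endpoint bookkeeping allows one to replace $B(s,\varepsilon)\cap[0,T]$ by some interval $I_{s,\varepsilon}\subset[0,T]\cap B(s,\varepsilon)$ of length $\varepsilon$ whenever $\varepsilon\leqslant T/2$, I obtain a constant $K=K(\theta,C,q,T)$ and $\varepsilon_1>0$ with
\[
\sup_{v\in\mathbb{S}^{d-1}}\ \sup_{s\in[0,T]}\ \frac{1}{\varepsilon}\,\mathcal{L}\bigl(\{t\in I_{s,\varepsilon}:|v\cdot w_{s,t}|\leqslant\varepsilon^{\delta}\}\bigr)\leqslant K\varepsilon^{\eta}\qquad\text{for all }\varepsilon\in(0,\varepsilon_1].
\]
The point is that the right-hand side is independent of $v$ and $s$; this uniformity over directions is what the projection characterisation of $\rho$-irregularity buys us for free.

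Now, given an arbitrary target $L>0$, set
\[
\varepsilon_0(L):=\min\Bigl\{\,\varepsilon_1,\ T/2,\ (2K)^{-1/\eta},\ L^{1/(\delta-\theta)}\,\Bigr\}>0 ,
\]
the last term being the largest $\varepsilon$ for which $\varepsilon^{\delta}\geqslant L\,\varepsilon^{\theta}$ (recall $\delta-\theta<0$). For any $v\in\mathbb{S}^{d-1}$, $s\in[0,T]$ and $\varepsilon\in(0,\varepsilon_0(L)]$ the displayed bound yields $\mathcal{L}(\{t\in I_{s,\varepsilon}:|v\cdot w_{s,t}|\leqslant\varepsilon^{\delta}\})\leqslant K\varepsilon^{1+\eta}<\varepsilon=\mathcal{L}(I_{s,\varepsilon})$, so there exists $t\in I_{s,\varepsilon}\subset[0,T]$ with $|t-s|<\varepsilon$ and $|v\cdot w_{s,t}|>\varepsilon^{\delta}\geqslant L\,\varepsilon^{\theta}$. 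This is exactly the statement that $w$ is $\theta$-H\"{o}lder rough on scale $\varepsilon_0(L)$ with roughness constant $\geqslant L$. Since $L>0$ was arbitrary, the modulus of $\theta$-H\"{o}lder roughness of $w$ is infinite.

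I do not expect a genuine obstacle: everything is a transcription of the argument already carried out for Theorem~\ref{sec5.2 proposition irregularity} (and its $p$-variation corollary), the only points needing a little care being the uniformity in $v$ (handled by the projection characterisation), the routine endpoint handling on $[0,T]$, and the explicit choice of the scale $\varepsilon_0(L)$ that lets the roughness constant be pushed to infinity as the scale shrinks.
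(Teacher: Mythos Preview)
Your proof is correct and follows essentially the same route as the paper: reduce to one-dimensional projections $v\cdot w$ via the remark after Lemma~\ref{sec2.2 lemma properties rho irr} (giving uniformity in $v$), invoke the volume estimate from the proof of Theorem~\ref{sec5.2 proposition irregularity}, and deduce that the ``good'' set is nonempty for all small $\varepsilon$. The only cosmetic difference is in how the infinite modulus is obtained: the paper keeps $\delta=\theta$ and lets the threshold constant $M$ in $|v\cdot w_{s,t}|\geqslant M\varepsilon^{\delta}$ be arbitrary (with $\varepsilon_0$ depending on $M$), whereas you fix the threshold at $\varepsilon^{\delta}$ with an auxiliary $\delta<\theta$ and convert via $\varepsilon^{\delta}\geqslant L\varepsilon^{\theta}$; both devices are equivalent.
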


\begin{proof}
  For simplicity we show all the properties for $s$ away from the endpoints
  $\{ 0, T \}$, but it is easy to check how all the reasonings can be adapted
  in the other case. Recall that, if $w$ is $(\gamma, \rho)$\mbox{-}irregular
  and $v \in \mathbb{S}^{d - 1}$, then $v \cdot w$ is $(\gamma,
  \rho)$-irregular and $\| \Phi^{v \cdot w} \|_{\mathcal{W}^{\gamma \comma
  \rho}} \leqslant \| \Phi^w \|_{\mathcal{W}^{\gamma \comma \rho}}$. The
  calculations in the proof of Theorem~\ref{sec5.2 proposition irregularity}
  show that, for any $\delta > \delta^{\ast}_{\gamma, \rho}$ and any $M > 0$,
  \[ \lim_{\varepsilon \rightarrow 0^+} \frac{1}{2 \varepsilon} \mathcal{L} \{
     t \in (s - \varepsilon, s + \varepsilon) : | v \cdot \nobracket w_{s, t}
     | \nobracket \geqslant M \varepsilon^{\delta} \} = 1, \]
  where the rate of convergence only depends on $M$ and $\| \Phi^{v \cdot w}
  \|_{\mathcal{W}^{\gamma \comma \rho}} \leqslant \| \Phi^w
  \|_{\mathcal{W}^{\gamma \comma \rho}}$ and it is thus uniform in $s$ and
  $v$. For fixed $M$ we can then find $\varepsilon_0 = \varepsilon_0 (M,
  \delta)$ sufficiently small such that, it holds
  \[ \frac{1}{2 \varepsilon} \mathcal{L} \{ t \in (s - \varepsilon, s +
     \varepsilon) : | v \cdot w_{s, t} \nobracket | \nobracket \geqslant M
     \varepsilon^{\delta} \} \geqslant \frac{1}{2}, \]
  for all $\varepsilon \in (0, \varepsilon_0]$, uniformly in $s$ and $v$.
  Since the set has non-zero Lebesgue measure, it's always possible to find $t
  \in (s - \varepsilon, s + \varepsilon)$ such that $| v \cdot \nobracket
  w_{s, t} | \nobracket \geqslant M \varepsilon^{\delta}$, which shows that
  the definition of $\theta$\mbox{-}H\"{o}lder roughness is satisfied with
  $\theta = \delta$ and $L \geqslant M$. By the arbitrariness of $M$ we can
  conclude.
\end{proof}

\begin{proof}[Proof of Theorem~\ref{sec3 thm3}]
  Follows from a combination of Corollaries~\ref{cor:p-variation}
  and~\ref{cor:rho-holder-roughness}.
\end{proof}

We conclude this subsection with the promised claim from Remark~\ref{sec5.2
discussion optimality} that typical fBm paths cannot be $(\gamma,
\rho)$-irregular for values $\rho > (2 H)^{- 1}$; the proof can also be
readapted to consider other Gaussian processes.

\begin{lemma}
  \label{lem:negative-answer-fbm}Let $W^H$ be a fBm of parameter $H$, then for
  any $s < t$ and any $\rho > (2 H)^{- 1}$ it holds
  \[ \mathbb{E} [\| \mu_{s, t} \|_{\mathcal{F} L^{\rho, \infty}}^2] = \infty
     . \]
\end{lemma}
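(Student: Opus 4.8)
The plan is to bound $\|\mu^{W^H}_{s,t}\|_{\mathcal{F}L^{\rho,\infty}}^2$ from below by the contribution of a single Fourier mode, whose expected square already diverges as the frequency goes to infinity. Recall from Section~\ref{sec2.3} that $\widehat{\mu^{W^H}_{s,t}}(\xi) = \overline{\Phi^{W^H}_{s,t}(\xi)}$, so that
\[
\|\mu^{W^H}_{s,t}\|_{\mathcal{F}L^{\rho,\infty}} = \sup_{\xi\in\mathbb{R}^d}\langle\xi\rangle^\rho\,\big|\Phi^{W^H}_{s,t}(\xi)\big|.
\]
Since $\xi\mapsto\Phi^{W^H}_{s,t}(\xi)$ is continuous (for every path that is bounded on $[s,t]$, by dominated convergence), this supremum is a genuine random variable, agreeing with the supremum over $\mathbb{Q}^d$; in particular, for every fixed $\xi\in\mathbb{R}^d$, using $\langle\xi\rangle\geq|\xi|$,
\[
\mathbb{E}\big[\|\mu^{W^H}_{s,t}\|_{\mathcal{F}L^{\rho,\infty}}^2\big] \ \geq\ |\xi|^{2\rho}\,\mathbb{E}\big[|\Phi^{W^H}_{s,t}(\xi)|^2\big].
\]

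Next I would compute the second moment on the right. Using Fubini together with the fact that, for each pair $r,u$, the increment $W^H_r-W^H_u$ is a centred Gaussian vector with covariance $|r-u|^{2H}I_d$ (the $d$ components of $W^H$ being independent one-dimensional fBms), one gets
\[
\mathbb{E}\big[|\Phi^{W^H}_{s,t}(\xi)|^2\big] = \int_s^t\!\!\int_s^t \exp\left(-\frac{1}{2}|\xi|^2|r-u|^{2H}\right)\mathd r\,\mathd u = 2\int_0^{t-s}(t-s-h)\,e^{-\frac{1}{2}|\xi|^2 h^{2H}}\,\mathd h.
\]
Bounding $t-s-h\geq (t-s)/2$ on $h\in(0,(t-s)/2)$ and substituting $v=|\xi|^{1/H}h$ gives, for all $|\xi|$ large enough that $|\xi|^{1/H}(t-s)/2\geq 1$,
\[
\mathbb{E}\big[|\Phi^{W^H}_{s,t}(\xi)|^2\big]\ \geq\ \frac{t-s}{2}\int_0^{(t-s)/2}e^{-\frac{1}{2}|\xi|^2 h^{2H}}\,\mathd h\ \geq\ c_{H,t-s}\,|\xi|^{-1/H},
\]
with $c_{H,t-s}=\frac{t-s}{2}\int_0^1 e^{-\frac{1}{2}v^{2H}}\,\mathd v>0$.

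Combining the two displays, for all sufficiently large $|\xi|$ one obtains
\[
\mathbb{E}\big[\|\mu^{W^H}_{s,t}\|_{\mathcal{F}L^{\rho,\infty}}^2\big]\ \geq\ c_{H,t-s}\,|\xi|^{2\rho-1/H}.
\]
Since $\rho>(2H)^{-1}$, the exponent $2\rho-1/H$ is strictly positive, so letting $|\xi|\to\infty$ forces $\mathbb{E}\big[\|\mu^{W^H}_{s,t}\|_{\mathcal{F}L^{\rho,\infty}}^2\big]=+\infty$, which is the claim. There is essentially no serious obstacle here: the argument is a lower bound, so no delicate cancellation, truncation, or uniformity over $s,t$ or $\xi$ is needed; the only minor point is the threshold on $|\xi|$ in the rescaling of the second-moment integral, which depends only on $H$ and $t-s$. (One could equally well compute $\mathbb{E}[|\Phi^{W^H}_{s,t}(\xi)|^2]$ asymptotically — it behaves like a constant times $(t-s)\,|\xi|^{-1/H}$ as $|\xi|\to\infty$ — and read off the same conclusion, but the crude lower bound above already suffices.)
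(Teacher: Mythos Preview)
Your argument is correct, and it follows a genuinely different route from the paper. You bound $\|\mu^{W^H}_{s,t}\|_{\mathcal{F}L^{\rho,\infty}}^2$ below by the contribution of a single frequency $\xi$, compute $\mathbb{E}[|\Phi^{W^H}_{s,t}(\xi)|^2]\sim (t-s)\,|\xi|^{-1/H}$, and then send $|\xi|\to\infty$. The paper instead passes through the embedding $\mathcal{F}L^{\rho,\infty}\hookrightarrow H^{\rho-d/2-\varepsilon}$: since $\rho>1/(2H)$, it suffices to show $\mathbb{E}[\|\mu_1\|^2_{H^{1/(2H)-d/2}}]=\infty$, which it does by writing this as a triple integral and applying the change of variables $\tilde\xi=\xi\,|t-s|^H$, so that the integral factorises and the $\int_{[0,1]^2}|t-s|^{-1}\,\mathd t\,\mathd s$ factor diverges. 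Your approach is more elementary (no embedding, no full integral computation) and tailored to the $\mathcal{F}L^{\rho,\infty}$ norm; the paper's computation, on the other hand, yields the slightly sharper statement that even the endpoint Sobolev norm $H^{1/(2H)-d/2}$ has infinite second moment, whereas your bound degenerates exactly at $\rho=(2H)^{-1}$. For the lemma as stated, either route is perfectly fine.
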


\begin{proof}
  Up to rescaling, we can assume $s = 0$, $t = 1$. Since $\mathcal{F} L^{\rho,
  \infty} \hookrightarrow H^{\rho - d / 2 -}$, in order to conclude it
  suffices to show that
  \[ \mathbb{E} \left[ \left\| \mu_1 \right\|^2_{H^{1 / (2 H) - d / 2}}
     \right] = \infty . \]
  This quantity can now be computed explicitly:
  \begin{eqnarray*}
    \mathbb{E} \left[ \left\| \mu_1 \right\|^2_{H^{1 / (2 H) - d / 2}} 
    \right] & = & \mathbb{E} \left[ \int_{\mathbb{R}^d} | \xi |^{1 / H - d}
    \left| \int_0^1 e^{i \xi \cdot W^H_s} \mathd s \right|^2 \mathd \xi
    \right]\\
    & = & \int_{\mathbb{R}^d} \int_{[0, 1]^2} | \xi |^{1 / H - d} \mathbb{E}
    [e^{i \xi \cdot (W^H_{s, t})}] \mathd t \mathd s \mathd \xi\\
    & = & \int_{\mathbb{R}^d} \int_{[0, 1]^2} | \xi |^{1 / H - d} \exp \left(
    - \frac{| \xi |^2 | t - s |^{2 H}}{2} \right) \mathd t \mathd s \mathd
    \xi\\
    & = & \int_{\mathbb{R}^d} | \tilde{\xi} |^{1 / H - d} e^{- | \tilde{\xi}
    |^2 / 2} \mathd \tilde{\xi} \cdot \int_{[0, 1]^2} | t - s |^{^{- 1}}
    \mathd t \mathd s = \infty
  \end{eqnarray*}
  where in the last passage we use the change of variables $\tilde{\xi} = \xi
  | t - s |^H$.
\end{proof}

\subsection{The general perturbation problem}\label{sec5.3}

As in the previous section, here we will work exclusively with $(\gamma,\rho)$-irregular paths $w$ such that $\delta^\ast_{\gamma,\rho}<1$. 

The perturbation problem was first introduced in Section~12 of~{\cite{geman}},
in the context of paths which admit an occupation density. In the case of
$\rho$\mbox{-}irregularity, it can be reformulated as:

\begin{center}
  If $w$ is $\rho$-irregular and $\varphi$ is a sufficiently regular function,
  is $w + \varphi$ still $\rho$-irregular?
\end{center}

We address here the more general question:

\begin{center}
  Which classes of transformations preserve the property of $(\gamma,
  \rho)$\mbox{-}irregularity?
\end{center}

It follows from the results of the previous section that good candidates are
transformations which preserve the very oscillatory behaviour of $w$, namely
at least the property that
\begin{equation}
  \limsup_{t \rightarrow s} \frac{| v \cdot w_{s, t} |}{| t - s |^{\delta}} =
  + \infty \quad \text{for all } s \in (0, T), \, v \in \mathbb{S}^{d - 1}, \,
  \delta > \delta^{\ast}_{\gamma, \rho} . \label{sec5.3 basic property}
\end{equation}
Interestingly, it turns out that several transformations $F : C^0_t
\rightarrow C^0_t$ have the property that if $w$ is $(\gamma,
\rho)$\mbox{-}irregular, then $F (w)$ is $(\tilde{\gamma},
\tilde{\rho})$\mbox{-}irregular with parameters such that
$\delta^{\ast}_{\gamma, \rho} = \delta^{\ast}_{\tilde{\gamma}, \tilde{\rho}}$,
so that property~\eqref{sec5.3 basic property} is preserved. In many cases we
are however unable to show that $(\gamma, \rho) = (\tilde{\gamma},
\tilde{\rho})$, which remains a major open problem. A notable exception is
given by the additive perturbations $F (w) = w + \varphi$ with $\varphi \in
C^{\infty}_t$, whose treatment is postponed to the next subsection.

\

We start by showing that $(\gamma, \rho)$\mbox{-}irregularity is invariant
under sufficiently regular time reparametrization.

\begin{lemma}
  \label{sec5.2 lemma time reparametrization}Let $w$ be $(\gamma,
  \rho)$-irregular, $g \in C^{\beta}_t$ with $\beta + \gamma > 1$. Then
  \[ \left| \int_s^t e^{i \xi \cdot w_r} \, g_r \mathd r \right| \lesssim
     \tmcolor{black}{{\| \Phi^w \|_{\mathcal{W}^{\gamma \comma \rho}}} } \| g
     \|_{C^{\beta}} | t - s |^{\gamma} | \xi |^{- \rho} \quad \text{uniformly
     in $\xi \in \mathbb{R}^d$.} \]
  In particular, for $\beta$ as above, let $\tau : [0, T] \rightarrow [\tau
  (0), \tau (T)]$ be a $C_t^{1 + \beta}$-diffeomorphism, i.e. $\tau \in C_t^{1
  + \beta}$ is invertible on its image with inverse of class $C_t^{1 +
  \beta}$. Then $\tilde{w}_r \assign w_{\tau^{- 1} (r)}$ is also
  $\rho$\mbox{-}irregular and
  \begin{equation}
    {\| \Phi^{\tilde{w}} \|_{\mathcal{W}^{\gamma \comma \rho}}}  \lesssim \|
    \tau^{- 1} \|_{C^1}^{\gamma} \| \tau \|_{C^{1 + \beta}} \| \Phi^w
    \|_{\mathcal{W}^{\gamma \comma \rho}} . \label{sec5.3 estimate
    reparametrization}
  \end{equation}
\end{lemma}

\begin{proof}
  Let $w$, $g$ be as above. Then by properties of Young integral it holds
  \begin{eqnarray*}
    \left| \int_s^t e^{i \xi \cdot w_r} \, g_r \mathd r \right| & = & \left|
    \int_s^t g_r \mathd \left( \int_s^r e^{i \xi \cdot w_u} \mathd u \right)
    \right|\\
    & \lesssim & | g_s | \left| \int_s^t e^{i \xi \cdot w_r} \mathd r \right|
    + | t - s |^{\beta + \gamma} \llbracket g \rrbracket_{C^{\beta}}
    \left\llbracket \int_s^{\cdot} e^{i \xi \cdot w_r} \mathd r
    \right\rrbracket_{C^{\gamma}}\\
    & \lesssim & | t - s |^{\gamma} | \xi |^{- \rho} \| g \|_{C^{\beta}}
    \tmcolor{black}{\| \Phi^w \|_{\mathcal{W}^{\gamma \comma \rho}}},
  \end{eqnarray*}
  which gives the first claim. Applying the change of variables $\tilde{r} =
  \tau^{- 1} (r)$, we then have
  
  \begin{align*}
    \left| \int_s^t e^{i \xi \cdot \tilde{w}_r} \, \mathd r \right| = \left|
    \int_{\tau^{- 1} (s)}^{\tau^{- 1} (t)} e^{i \xi \cdot w_r} \tau'_r \mathd
    r \right| & \lesssim | \tau^{- 1} (t) - \tau^{- 1} (s) |^{\gamma} | \xi
    |^{- \rho} \| \tau' \|_{C^{\beta}} \| \Phi^w \|_{\mathcal{W}^{\gamma,
    \rho}}\\
    & \lesssim | t - s |^{\gamma} | \xi |^{- \rho} \| \tau^{- 1}
    \|^{\gamma}_{C^1} \| \tau \|_{C^{1 + \beta}} \| \Phi^w
    \|_{\mathcal{W}^{\gamma \comma \rho}}
  \end{align*}
  which implies~\eqref{sec5.3 estimate reparametrization}.
\end{proof}

\begin{remark}
  We have only defined the notion of $\rho$\mbox{-}irregularity in terms of
  push-forward under $w$ of the Lebesgue measure $\mathcal{L}_{[s, t)}$, but
  we could consider more generally the Fourier transform of the push-forward
  under $w$ of a bounded Borel measure $\nu$ on $[0, T]$, namely
  \[ \widehat{w_{\ast} \nu_{s, t}} (\xi) = \int_{[s, t)} e^{i \xi \cdot w_r}
     \, \mathd \nu (r) . \]
  The first part of previous lemma could then be interpreted as follows: if
  $w$ is $(\gamma, \rho)$\mbox{-}irregular w.r.t. $\mathcal{L}$, then it is
  also $(\gamma, \rho)$\mbox{-}irregular w.r.t. any measure $\nu \ll
  \mathcal{L}$ with a sufficiently regular density $g$.
\end{remark}

\begin{remark}
  Lemma~\ref{sec5.2 lemma time reparametrization} can also be used to further
  enlarge the class of stochastic processes $X$ which are
  $\rho$\mbox{-}irregular: given any such $X$ and any random $C^{3 /
  2}_t$\mbox{-}diffeomorphism, $Y_t : = X_{\tau^{- 1} (t)}$ is still a
  $\rho$\mbox{-}irregular process.
\end{remark}

Let us make some considerations based on the result above. Recall that if $f
\in C^{\delta}_t$ for $\delta \in (0, 1)$ and $\tau$ is sufficiently regular
(i.e. bi\mbox{-}Lipschitz), then $f \circ \tau^{- 1}$ is still $C^{\delta}_t$,
but this is not true for a general homeomorphism $\tau$. On the other hand, if
$f \in C^0_t$ has finite $1 / \delta$\mbox{-}variation, then there exist a
homeomorphism $\tau$ and $g \in C^{\delta}_t$ such that $f \circ \tau^{- 1} =
g$ (see for instance Proposition~5.15 from~{\cite{frizvictoir}}). Moreover the
$1 / \delta$\mbox{-}variation is a quantity invariant under time
reparametrization. Lemma~\ref{sec5.2 lemma time reparametrization} suggests
that the situation here could be similar: $(\gamma, \rho)$\mbox{-}irregularity
is preserved only if the reparametrization is smooth enough, but there might
exist another underlying property which is invariant under a larger class of
homeomorphism $\tau$. We formulate this as a conjecture.

\begin{conjecture}
  \label{conj:rho}For any pair $(\gamma, \rho)$, there exists a property
  $\mathcal{P}$ such that:
  \begin{enumerate}[label=\arabic*.]
    \item For any $f \in C^0_t$ with property $\mathcal{P}$ there exists a
    homeomorphism $\tau$ such that $g = f \circ \tau^{- 1}$ is $(\gamma,
    \rho)$\mbox{-}irregular.
    
    \item The property $\mathcal{P}$ is invariant under time
    reparametrization.
  \end{enumerate}
\end{conjecture}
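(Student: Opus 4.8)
The plan is to produce an explicit candidate for $\mathcal{P}$ within the occupation-measure formalism already developed above, and then to check the two requirements by direct computation. Declare that a path $f\in C^0_t$ has property $\mathcal{P}$ if there exists a finite, atomless Borel measure $\nu$ on $[0,T]$ of full support --- meaning $\nu(I)>0$ for every nonempty open interval $I\subset[0,T]$ --- such that $f$ is \emph{$(\gamma,\rho)$-irregular with respect to $\nu$}, i.e.
\[
  \| \Phi^f \|_{\mathcal{W}^{\gamma,\rho}_\nu} \assign \sup_{\xi\in\mathbb{R}^d,\, c\neq d} \frac{|\xi|^{\rho}}{\nu([c,d))^{\gamma}}\, \left| \int_{[c,d)} e^{i\xi\cdot f_r}\,\nu(\mathd r) \right| < \infty .
\]
This is exactly the quantity $\widehat{f_\ast\nu_{c,d}}$ appearing in the remark following Lemma~\ref{sec5.2 lemma time reparametrization}, but now with the weight $\nu$ allowed to be singular.

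\emph{Requirement~1.} Let $f$ have property $\mathcal{P}$ with witness $\nu$; after rescaling the total mass we may assume $\nu([0,T])=T$. Put $\tau(u)\assign\nu([0,u])$. Atomlessness of $\nu$ makes $\tau$ continuous, full support makes $\tau$ strictly increasing, so $\tau:[0,T]\to[0,T]$ is an (increasing) homeomorphism, and we set $g\assign f\circ\tau^{-1}$. The change of variables $r=\tau(u)$, $\mathd r=\nu(\mathd u)$ --- the same one used in the reparametrization part of Lemma~\ref{sec5.2 lemma time reparametrization} --- yields, for $a<b$ in $[0,T]$,
\[
  \int_a^b e^{i\xi\cdot g_r}\,\mathd r = \int_{[\tau^{-1}(a),\,\tau^{-1}(b))} e^{i\xi\cdot f_u}\,\nu(\mathd u), \qquad b-a = \nu\big([\tau^{-1}(a),\tau^{-1}(b))\big),
\]
so the bound defining property $\mathcal{P}$ for $f$ transfers verbatim to $|\Phi^g_{a,b}(\xi)|\le \|\Phi^f\|_{\mathcal{W}^{\gamma,\rho}_\nu}\,|\xi|^{-\rho}|b-a|^{\gamma}$, which is Definition~\ref{sec2.2 defn rho irregularity}. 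Hence $g=f\circ\tau^{-1}$ is $(\gamma,\rho)$-irregular.

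\emph{Requirement~2.} Let $\sigma:[0,T]\to[0,T]$ be a homeomorphism and suppose $\mathcal{P}(f)$ holds with witness $\nu$. The push-forward $\sigma_\ast\nu$ is again finite, atomless and of full support (homeomorphisms send nonempty open sets to nonempty open sets and preserve the absence of atoms), and the substitution $u=\sigma^{-1}(v)$ gives, for all $c<d$,
\[
  \int_{[c,d)} e^{i\xi\cdot (f\circ\sigma^{-1})_v}\,(\sigma_\ast\nu)(\mathd v) = \int_{[\sigma^{-1}(c),\,\sigma^{-1}(d))} e^{i\xi\cdot f_u}\,\nu(\mathd u), \qquad (\sigma_\ast\nu)([c,d)) = \nu\big([\sigma^{-1}(c),\sigma^{-1}(d))\big).
\]
Therefore $\|\Phi^{f\circ\sigma^{-1}}\|_{\mathcal{W}^{\gamma,\rho}_{\sigma_\ast\nu}} = \|\Phi^f\|_{\mathcal{W}^{\gamma,\rho}_\nu}$, so $\mathcal{P}(f)\Rightarrow\mathcal{P}(f\circ\sigma^{-1})$; since $\sigma$ was arbitrary this is an equivalence, and $\mathcal{P}$ is reparametrization-invariant. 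Taking $\nu=\mathcal{L}$ (atomless, full support) shows $\mathcal{P}$ holds for every $(\gamma,\rho)$-irregular path, so together with Requirement~1 one gets that $\mathcal{P}(f)$ holds if and only if $f$ can be reparametrized to a $(\gamma,\rho)$-irregular path.

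The verification above is routine; the genuine difficulty --- and the reason the statement is posed as a conjecture --- is the \emph{quality} of $\mathcal{P}$. The property just constructed is, by design, essentially a paraphrase of ``reparametrizable to $(\gamma,\rho)$-irregular'', so its invariance and sufficiency are almost tautological. What one really wants, in analogy with the role of finite $1/\delta$-variation for H\"older reparametrization discussed just before the conjecture, is a \emph{concrete, intrinsically checkable} $\mathcal{P}$: for instance a partition-type control of the Fourier--Lebesgue norms of the weighted occupation measures $f_\ast(\nu|_{[c,d)})$, or an oscillation condition refining~\eqref{sec5.3 basic property}, phrased without any reference to an auxiliary measure. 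Finding such a $\mathcal{P}$, proving the corresponding characterization, and deciding whether the pair $(\gamma,\rho)$ itself (and not merely the critical exponent $\delta^\ast_{\gamma,\rho}$) can always be preserved, is the hard part and is left open.
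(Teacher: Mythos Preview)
The statement is a \emph{Conjecture} in the paper; no proof is offered there. The surrounding discussion makes the authors' intent explicit: they seek an \emph{intrinsic, checkable} criterion playing the role that finite $p$-variation plays for H\"older reparametrization, not merely some property satisfying the two listed conditions.

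Your argument is correct for the literal reading of the statement, and your change-of-variables computations in Requirements~1 and~2 are sound (for increasing homeomorphisms; the decreasing case needs $(\sigma^{-1}(d),\sigma^{-1}(c)]$ in place of $[\sigma^{-1}(c),\sigma^{-1}(d))$, but atomlessness of $\nu$ absorbs this). The property you define --- existence of an atomless, full-support $\nu$ making $f$ $(\gamma,\rho)$-irregular relative to $\nu$ --- is, as you show, equivalent to ``$f$ can be reparametrized to a $(\gamma,\rho)$-irregular path'', and this tautological $\mathcal{P}$ trivially satisfies both conditions. So there is no mathematical gap in what you wrote; the literal statement is indeed satisfiable, and your measure-theoretic phrasing is a slightly more concrete packaging of the tautology. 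You yourself diagnose this correctly in your final paragraph: the substantive content of the conjecture --- a non-circular, verifiable $\mathcal{P}$ --- is untouched by your construction and remains open, both in the paper and after your proposal.
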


In the rest of the section, we will address the perturbation problem only for
transformations $z = F (w)$ with a very specific structure, which makes $z$
locally look like $w$. \ The treatment is a bit abstract, but simple examples
will be given in Remark~\ref{sec5.3 remark examples}.

\begin{definition}
  \label{sec5.3 defn controlled}Let $w$ be $(\gamma, \rho)$-irregular. We say
  that $z : [0, T] \rightarrow \mathbb{R}^d$ is controlled by $w$ with
  ``derivative'' $z'$ if there exist $z' \in C^0 ([0, T] ; \mathbb{R}^{d
  \times d})$ and $R \in C^{\beta}_2 ([0, T] ; \mathbb{R}^d)$ with $\beta >
  \delta^{\ast}_{\gamma, \rho}$ such that
  \[ z_{s, t} = z'_s w_{s, t} + R_{s, t} \quad \text{for all } (s, t) \in
     \Delta_2 . \]
\end{definition}

Here $\Delta_2 = \{ (s, t) : 0 \leqslant s < t \leqslant T \}$ and $R \in
C^{\beta}_2 ([0, T] ; \mathbb{R}^d)$ means that $R : \Delta_2 \rightarrow
\mathbb{R}^d$ and it satisfies
\[ \| R \|_{\beta} \assign \sup_{s < t} \frac{| R_{s, t} |}{| t - s |^{\beta}}
   < \infty . \]
The definition of controlled paths is usually given in the rough paths
framework, see for instance~{\cite{gubinelli}} and~{\cite{frizhairer}}; \
however here we do not impose $w, z \in C^{\alpha}_t$ with $R \in C_2^{2
\alpha}$ and we do not require $w$ to admit a rough lift.

In order for Definition~\ref{sec5.3 defn controlled} to be meaningful, we need
to verify that the pair $(z', R)$ is unique; this follows from the condition
$\beta > \delta^{\ast}_{\gamma, \rho}$ and property~\eqref{sec5.3 basic
property}, which follows from Corollary~\ref{cor:rho-holder-roughness}.
Indeed, let $(\tilde{z}', \tilde{R})$ be another such pair and set $A = z' -
\tilde{z}'$, $B = R - \tilde{R}$. Choosing $\delta \in (0, 1)$ such that
$\delta^{\ast}_{\gamma, \rho} < \delta < \beta$, for any $s \in (0, T)$ and
any $v \in \mathbb{S}^{d - 1}$ it holds
\[ \limsup_{t \rightarrow s} \frac{| (A_s^T v) \cdot w_{s, t} |}{| t - s
   |^{\delta}} = \limsup_{t \rightarrow s} \frac{| B_{s, t} |}{| t - s
   |^{\delta}} \leqslant \| B \|_{\beta} \limsup_{t \rightarrow s} | t - s
   |^{\beta - \delta} = 0 \]
which implies by~\eqref{sec5.3 basic property} that $A^T_s v = 0$ for all $v
\in \mathbb{S}^{d - 1}$ and $s \in (0, T)$, thus $A \equiv 0$ and $B \equiv 0$
as well.

We will from now on assume in addition that there exists $c > 0$ such that
\begin{equation}
  z_s'  (z'_s)^T \geqslant c^2 I_d \quad \forall \, s \in [0, T] .
  \label{sec5.3 nondegeneracy}
\end{equation}
In particular, the above non-degeneracy condition implies that $z$ satisfies
property~\eqref{sec5.3 basic property} as well.

\begin{proposition}
  \label{sec5.3 prop perturbations}Let $w$ be $(\gamma,
  \rho)$\mbox{-}irregular, $z$ controlled by $w$ with $z'$
  satisfying~\eqref{sec5.3 nondegeneracy}. Then there exists $\tilde{\gamma} >
  \gamma$ such that $z$ is $(\tilde{\gamma}, \tilde{\rho})$\mbox{-}irregular and
  $\tilde{\rho}$ is given by
  \begin{equation*}
  	\tilde{\rho} = \frac{\beta}{1 - \gamma + \beta} \rho - \frac{1 -
     \gamma}{1 - \gamma + \beta} > 0;
  \end{equation*}
  moreover $\delta^{\ast}_{\gamma, \rho} = \delta^{\ast}_{\tilde{\gamma},
  \tilde{\rho}}$ and we have the estimate
  \[ \| \Phi^z \|_{\mathcal{W}^{\tilde{\gamma}, \tilde{\rho}}} \lesssim 1 + \| R
     \|_{\beta} + c^{- \rho} \| \Phi^w \|_{\mathcal{W}^{\gamma, \rho}}
     . \]
\end{proposition}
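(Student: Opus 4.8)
The idea is to use the controlled structure $z_{s,t}=z'_s w_{s,t}+R_{s,t}$ to freeze, on each small sub-interval of $[s,t]$, the oscillatory integral defining $\Phi^z$ to one of the form $\Phi^w$ evaluated at the linearly transformed frequency $(z'_s)^T\xi$, and then to recover $(\tilde\gamma,\tilde\rho)$-irregularity of $z$ by partitioning $[s,t]$ into $N$ equal pieces, summing, and optimising over $N$.

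\emph{Step 1 (pointwise decomposition).} For $0\le s<t\le T$ and $\xi\in\mathbb R^d$ I would write
\[
\Phi^z_{s,t}(\xi)=\int_s^t e^{i\xi\cdot(z_s+z'_s w_{s,r})}\,\mathd r+\int_s^t e^{i\xi\cdot(z_s+z'_s w_{s,r})}\bigl(e^{i\xi\cdot R_{s,r}}-1\bigr)\,\mathd r=:A_{s,t}(\xi)+E_{s,t}(\xi).
\]
Pulling out the constant phases and using $\xi\cdot z'_s v=((z'_s)^T\xi)\cdot v$ gives $|A_{s,t}(\xi)|=|\Phi^w_{s,t}((z'_s)^T\xi)|$; since $|(z'_s)^T\xi|^2=\xi^T z'_s(z'_s)^T\xi\ge c^2|\xi|^2$ by~\eqref{sec5.3 nondegeneracy}, $(\gamma,\rho)$-irregularity of $w$ yields $|A_{s,t}(\xi)|\le c^{-\rho}\|\Phi^w\|_{\mathcal W^{\gamma,\rho}}|t-s|^\gamma|\xi|^{-\rho}$. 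For the remainder, $|e^{i\xi\cdot R_{s,r}}-1|\le\min\bigl(2,|\xi|\,\|R\|_\beta|r-s|^\beta\bigr)$ gives $|E_{s,t}(\xi)|\le\min\bigl(2|t-s|,\|R\|_\beta|\xi|\,|t-s|^{1+\beta}\bigr)$. Abbreviate $L:=c^{-\rho}\|\Phi^w\|_{\mathcal W^{\gamma,\rho}}+\|R\|_\beta$.

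\emph{Step 2 (subdivision and optimisation).} Both $\Phi^z$ and $\Phi^w$ are additive in $(s,t)$, so partitioning $[s,t]$ into $N$ equal sub-intervals and applying Step~1 on each (with the decomposition relative to the left endpoint of each piece, using that~\eqref{sec5.3 nondegeneracy} holds at every point) gives, for every $N\in\mathbb N$,
\[
|\Phi^z_{s,t}(\xi)|\le c^{-\rho}\|\Phi^w\|_{\mathcal W^{\gamma,\rho}}|\xi|^{-\rho}|t-s|^\gamma N^{1-\gamma}+\|R\|_\beta|\xi|\,|t-s|^{1+\beta}N^{-\beta},
\]
and of course $|\Phi^z_{s,t}(\xi)|\le|t-s|$. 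I would then take $N=\lceil|\xi|^{(1+\rho)/(1-\gamma+\beta)}|t-s|\rceil$ and use the identity $(1+\beta-\gamma)/(1-\gamma+\beta)=1$: when $|\xi|^{(1+\rho)/(1-\gamma+\beta)}|t-s|\ge1$ both terms above are $\lesssim L\,|\xi|^{-\tilde\rho}|t-s|$ with $\tilde\rho=(\beta\rho-(1-\gamma))/(1-\gamma+\beta)$ (this is where the exponent of $|\xi|$ collapses to $-\tilde\rho$ and that of $|t-s|$ to $1$), hence $|\Phi^z_{s,t}(\xi)|\lesssim_T L\,|\xi|^{-\tilde\rho}|t-s|^{\tilde\gamma}$ since $|t-s|\le T$ and $\tilde\gamma<1$. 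In the opposite regime $N=1$, and comparing the scales $|\xi|^{-\rho/(1-\gamma)}$, $|\xi|^{-(1+\rho)/(1-\gamma+\beta)}$, $|\xi|^{-(1+\tilde\rho)/(1+\beta-\tilde\gamma)}$ — the orderings needed all follow from $\beta>\delta^{\ast}_{\gamma,\rho}$ — shows that either the $N=1$ bound from the display or the trivial bound $|t-s|$ is already $\lesssim L\,|t-s|^{\tilde\gamma}|\xi|^{-\tilde\rho}$.

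\emph{Step 3 (conclusion).} Taking the supremum over $s\ne t$ and $\xi$ and using $L\le(\|R\|_\beta+c^{-\rho})(1+\|\Phi^w\|_{\mathcal W^{\gamma,\rho}})$ gives the stated estimate on $\|\Phi^z\|_{\mathcal W^{\tilde\gamma,\tilde\rho}}$. Finally $\tilde\rho>0$, $\tilde\gamma>\gamma>1/2$ and $1-\tilde\gamma=\delta^{\ast}_{\gamma,\rho}\,\tilde\rho$ (so $\delta^{\ast}_{\tilde\gamma,\tilde\rho}=\delta^{\ast}_{\gamma,\rho}$) follow directly from the formula for $\tilde\rho$ together with $\beta>\delta^{\ast}_{\gamma,\rho}=(1-\gamma)/\rho$, which also shows $z$ is genuinely $\tilde\rho$-irregular. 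The only real work is the exponent bookkeeping in Step~2, and in particular checking that the small-scale regime ($N=1$) is covered by the naive and trivial bounds — this is precisely where the strict inequality $\beta>\delta^{\ast}_{\gamma,\rho}$ is used.
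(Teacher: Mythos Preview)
Your proposal is correct and follows essentially the same strategy as the paper: freeze the controlled structure on sub-intervals to reduce $\Phi^z$ to $\Phi^w$ at the rotated frequency $(z'_s)^T\xi$, bound the remainder via $|\xi|\,\|R\|_\beta|t-s|^{1+\beta}$, subdivide into $N$ pieces, and optimise $N\sim|\xi|^{(1+\rho)/(1-\gamma+\beta)}|t-s|$. The only cosmetic difference is in how the two regimes are reassembled into a single $(\tilde\gamma,\tilde\rho)$ bound: the paper invokes the interpolation Lemma~\ref{sec2.2 lemma interpolation rho irr} to note that $w$ is also $(\tilde\gamma,\tilde\rho)$-irregular and then re-runs the small-interval estimate with these new parameters, whereas you handle the small-$N$ regime by a direct scale comparison (splitting further according to whether $|t-s|\lesssim|\xi|^{-1/\delta^{\ast}}$, where the trivial bound $|\Phi^z_{s,t}|\le|t-s|$ already gives $|t-s|^{\tilde\gamma}|\xi|^{-\tilde\rho}$, or $|t-s|\gtrsim|\xi|^{-1/\delta^{\ast}}$, where the $N=1$ estimate suffices). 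Both routes are equivalent and rely on the same key inequality $\beta>\delta^{\ast}_{\gamma,\rho}$; your Step~2 bookkeeping, though terse, checks out once one verifies $(1+\tilde\rho)/(1-\tilde\gamma+\beta)<(1+\rho)/(1-\gamma+\beta)$, which follows from monotonicity of $x\mapsto(1+x)/(\delta^{\ast}x+\beta)$ when $\beta>\delta^{\ast}$.
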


\begin{proof}
  Let $\xi \in \mathbb{R}^d$, $| \xi | \geqslant 1$ be fixed. For any $s < t$,
  it holds
  
  \begin{align*}
    \left| \int_s^t e^{i \xi \cdot z_r} \mathd r \right| & = \, \left|
    \int_s^t e^{i \xi \cdot z_{s, r}} \mathd r \right|\\
    & \leqslant \left| \int_s^t [e^{i \xi \cdot z_{s, r}} - e^{i \xi \cdot
    z'_s w_{s, r}}] \mathd r \right| + \left| \int_s^t e^{i \xi \cdot z'_s
    w_{s, r}} \mathd r \right|\\
    & \lesssim \, \int_s^t | \xi | | R_{s, r} | \mathd r + c^{- \rho} \|
    \Phi^w \|_{\mathcal{W}^{\gamma, \rho}} | \xi |^{- \rho} | t - s
    |^{\gamma}\\
    & \lesssim \, \| R \|_{\beta} | \xi | | t - s |^{1 + \beta} + c^{- \rho}
    \| \Phi^w \|_{\mathcal{W}^{\gamma, \rho}} | \xi |^{- \rho} | t - s
    |^{\gamma} .
  \end{align*}
  
  First assume that $| t - s |^{1 - \gamma + \beta} | \xi |^{1 + \rho}
  \leqslant 1$, so that $| \xi | | t - s |^{1 + \beta} \leqslant | \xi |^{-
  \rho} | t - s |^{\gamma}$, then in this case we trivially get
  \begin{equation}
    \left| \int_s^t e^{i \xi \cdot z_r} \mathd r \right| \lesssim (\| R
    \|_{\beta} + c^{- \rho} \| \Phi^w \|_{\mathcal{W}^{\gamma, \rho}}) | \xi
    |^{- \rho} | t - s |^{\gamma} . \label{sec5.3 eq1}
  \end{equation}
  Assume now that $| t - s |^{1 - \gamma + \beta} | \xi |^{1 + \rho} > 1$;
  choose $N \in \mathbb{N}$ such that $N^{1 - \gamma + \beta} \sim | t - s
  |^{1 - \gamma + \beta} | \xi |^{1 + \rho}$ and split the interval $[s, t]$
  in $N$ subintervals of size $| t - s | / N$. Applying the previous estimate
  to each of them and summing over we obtain
  \begin{equation}\label{eq:proof-pert}
  \begin{split}
    \left| \int_s^t e^{i \xi \cdot z_r} \mathd r \right| \lesssim & \| R
    \|_{\beta} N^{- \beta} | \xi | | t - s |^{1 + \beta} + c^{- \rho} \|
    \Phi^w \|_{\mathcal{W}^{\gamma, \rho}} N^{1 - \gamma} | t - s |^{\gamma} |
    \xi |^{- \rho}\\
    \sim & (\| R \|_{\beta} + c^{- \rho} \| \Phi^w \|_{\mathcal{W}^{\gamma,
    \rho}}) | t - s |  | \xi |^{- \tilde{\rho}}
  \end{split}\end{equation}
  where
  \[ \tilde{\rho} = \frac{\beta}{1 - \gamma + \beta} \rho - \frac{1 -
     \gamma}{1 - \gamma + \beta} = \lambda \rho + \lambda - 1 \]
  for suitable choice of $\lambda \in (0, 1)$, which implies that $\tilde\rho < \rho$; moreover $\tilde\rho>0$, since $\rho>(1-\lambda)/\lambda=(1-\gamma)/\beta$ thanks to the condition $\beta>\delta^\ast_{\gamma,\rho}$.
  
  Next consider $\theta\in (0,1)$ such that $\tilde{\rho} =\theta \rho$; when $|t-s|^{1-\gamma+\beta}|\xi|^{1+\rho}\leqslant 1$, interpolating between \eqref{sec5.3 eq1} and the trivial bound $|\int_s^t e^{i\xi\cdot z_r} \mathd r|\leqslant |t-s|$, we obtain
\begin{equation}\label{eq:proof-pert-eq2}
	\left| \int_s^t e^{i \xi \cdot z_r} \mathd r \right| \lesssim (\| R
    \|_{\beta} + c^{- \rho} \| \Phi^w \|_{\mathcal{W}^{\gamma, \rho}})^{\theta} | t - s |^{\tilde\gamma} | \xi|^{- \tilde\rho},
\end{equation}
where $\tilde\gamma=\theta \gamma+1-\theta$ is such that $\tilde\gamma>\gamma$ and $\delta_{\gamma,\rho}^\ast=\delta_{\tilde\gamma,\tilde\rho}^\ast$ (cf. Lemma~\ref{sec2.2 lemma interpolation rho irr}).
Combining the bounds \eqref{eq:proof-pert} and \eqref{eq:proof-pert-eq2}, using the simple bound $\max\{a^\theta,a\}\leqslant 1+a$, we arrive at
  \[ \left| \int_s^t e^{i \xi \cdot z_r} \mathd r \right| \lesssim_T (1 + \| R
     \|_{\beta} + c^{- \rho}\| \Phi^w \|_{\mathcal{W}^{\gamma, \rho}} )
     | \xi |^{- \tilde{\rho}} | t - s |^{\tilde{\gamma}} \quad \forall \, |
     \xi | \geqslant 1, \, s, t \in [0, T] \]
  without conditions on $| t - s |^{1 - \gamma + \beta} | \xi |^{1 + \rho}$,
  which gives the conclusion.
\end{proof}

\begin{remark}
  \label{sec5.3 remark examples}If for instance $w$ is $\rho$\mbox{-}irregular
  and $\beta \geqslant 1$, then we obtain that $z$ is
  $\tilde{\rho}$\mbox{-}irregular with
  \[ \tilde{\rho} \geqslant \frac{2}{3} \rho - \frac{1}{3} . \]
  Examples of $z$ satisfying the assumptions of Proposition~\ref{sec5.3 prop
  perturbations} are the following:
  \begin{itemize}
    \item Take $z_t = \varphi_t w_t$ with $\varphi \in C^{\beta} ([0, T] ;
    \mathbb{R})$ satisfying $\varphi_t \geqslant c > 0$, then $z'_s =
    \varphi_s I_d$, $R_{s, t} = w_t \varphi_{s, t} \in C^{\beta}_2$ (here
    $\beta \leqslant 1$).
    
    \item Suppose $w \in C^{\delta}_t$ with $\delta \in (0, 1)$ and take $z_t
    = \int_0^t A_s \mathd w_s$, where $A \in C^{\alpha} ([0, T] ;
    \mathbb{R}^{d \times d})$ satisfies~\eqref{sec5.3 nondegeneracy}, $\alpha
    + \delta > 1$ and the integral is defined in the Young sense. Then $z'_t =
    A_t$ and $\beta = \alpha + \delta > 1$.
    
    \item Finally, if $z = w + \varphi$ with $\varphi \in C^{\beta}_t$, then
    $z' \equiv I_d$ and $R_{s, t} = \varphi_{s, t} \in C_2^{\beta}$ (with
    $\beta \leqslant 1$); this case is however quite special and better
    estimates are available, see Section~\ref{sec5.3.1} below.
  \end{itemize}
  Let us highlight the difference between the purely analytical result of
  Proposition~\ref{sec5.3 prop perturbations} compared to the probabilistic
  result of Proposition~\ref{sec4.2 prop deterministic perturbations}, in
  which instead we have examples of Gaussian processes which are
  $\rho$\mbox{-}irregular with parameter $\rho$ invariant under any of the
  deterministic transformations from the list above.
\end{remark}

There is another notable class of transformations which preserve some
properties of the occupation measure $\mu^w$. In this case however it is
rather complicated to consider the $(\gamma, \rho)$\mbox{-}irregularity
property and it is instead more natural to reason with occupation densities.
Suppose that $w$ admits an occupation density $\ell_{s, t}^w$ (which we know
to be true by Lemma~\ref{sec2.2 lemma regularity averaging 2} if for instance
$w$ is $\rho$\mbox{-}irregular with $\rho > d / 2$) and let $F : \mathbb{R}^d
\rightarrow \mathbb{R}^d$ be a global diffeomorphism; define $z_t = F (w_t)$.
Then $z$ still admits an occupation density $\ell^z_{s, t}$, since
\[ \int_s^t \varphi (z_r) \mathd r = \int_s^t \varphi (F (w_r)) \mathd r =
   \int_{\mathbb{R}^d} \varphi (F (x)) \ell^w_{s, t} (x) \mathd x =
   \int_{\mathbb{R}^d} \varphi (x)  | \det (D F^{- 1} (x)) | \ell^w_{s, t}
   (F^{- 1} (x)) \mathd x \]
which shows that
\[ \mu^z_{s, t} (\mathd x) = | \det (D F^{- 1} (x)) | \ell^w_{s, t} (F^{- 1}
   (x)) \mathd x = \ell^z_{s, t} (x) \mathd x. \]
This also implies that $\ell^z_{s, t}$ inherits the regularity of $\ell^w_{s,
t}$ and $F$; for instance if $\ell^w \in C^{\gamma}_t L^2_x$, then

\begin{align*}
  \| \ell^z_{s, t} \|_{L^2}^2 & = \int_{\mathbb{R}^d} | \det (D F^{- 1} (x))
  |^2  | \ell^w_{s, t} (F^{- 1} (x)) |^2 \mathd x\\
  & = \int_{\mathbb{R}^d} | \det (D F (x)) |^{- 1}  | \ell^w_{s, t} (x) |^2
  \mathd x\\
  & \leqslant \| D F^{- 1} \|_{L^{\infty}}  \| \ell^w \|^2_{C^{\gamma} L^2} |
  t - s |^{2 \gamma} .
\end{align*}
Similar estimates hold if $\ell^w \in C^{\gamma}_t L^p_x$ or if $\ell^w \in
C^{\gamma}_t C^0_x$.

\subsection{The additive perturbation problem}\label{sec5.3.1}

In this section we treat for simplicity only the case $w \in C^{\delta}_t$
with $\delta \in (0, 1)$. In view of Theorem~\ref{sec3 thm2}, we will always
assume $\rho > 1 / 2$ (equivalently $(2 \rho)^{- 1} < 1$).

\

We first present a partial result, which is a slight improvement of
Theorem~1.6 from~{\cite{catelliergubinelli}}.

\begin{lemma}
  \label{sec5.3 lemma additive perturbation}Let $w$ be $(\gamma,
  \rho)$\mbox{-}irregular and $\varphi \in C_t^{\beta}$ for some $\beta \in
  (0, 1]$ such that $\beta > \delta^{\ast}_{\gamma, \rho}$. Then for any
  choice of $\delta \leqslant \beta$ satisfying $1 - \gamma < \delta < \beta
  \rho$, $w + \varphi$ is $(\tilde{\gamma}, \tilde{\rho})$\mbox{-}irregular
  for the choice
  \begin{equation}
    \tilde{\gamma} = \gamma \left( 1 - \frac{\delta}{\beta \rho} \right) +
    \frac{\delta}{\beta \rho}, \quad \tilde{\rho} = \rho -
    \frac{\delta}{\beta}, \label{sec5.3 parameters additive perturbation}
  \end{equation}
  and it holds
  \[ \tmcolor{black}{\| \Phi^{w + \varphi} \|_{\mathcal{W}^{\tilde{\gamma}
     \comma \tilde{\rho}}} \lesssim (1 + \| \Phi^w \|_{\mathcal{W}^{\gamma
     \comma \rho}}) (1 + \| \varphi \|^{\delta / \beta}_{C^{\beta}}) .} \]
  If $w$ is $\rho$-irregular and $\beta > \max \{ 1 / 2, (2 \rho)^{- 1} \}$,
  then $w + \varphi$ is $\left( \frac{1}{2} + \frac{1}{4 \beta \rho}, \rho -
  \frac{1}{2 \beta} \right)$\mbox{-}irregular with
  \[ \tmcolor{black}{\| \Phi^{w + \varphi} \|_{\mathcal{W}^{\tilde{\gamma}
     \comma \tilde{\rho}}} \lesssim (1 + \| \Phi^w \|_{\mathcal{W}^{\gamma
     \comma \rho}}) (1 + \| \varphi \|^{1 / 2 \beta}_{C^{\beta}}) .} \]
\end{lemma}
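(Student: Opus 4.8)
The plan is to reduce the estimate, via Young integration, to an elementary two‑term bound and then sharpen it by a scale‑dependent case distinction. Fix $\delta$ as in the statement; I would first treat $\delta\leqslant 1$, the case $\delta>1$ (which forces $\beta>1$) being analogous with Hölder seminorms replaced by Lipschitz ones. Put $K\assign\|\Phi^w\|_{\mathcal{W}^{\gamma,\rho}}$ and $L\assign\|\varphi\|_{C^\beta_t}$, and for fixed $\xi\in\mathbb{R}^d\setminus\{0\}$ and $s<t$ set $g_r\assign e^{i\xi\cdot\varphi_r}$, so that $\Phi^{w+\varphi}_{s,t}(\xi)=\int_s^t e^{i\xi\cdot w_r}\,g_r\,\mathd r$. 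Since $|g_r-g_u|\leqslant\min\{2,\ |\xi|\,\llbracket\varphi\rrbracket_{C^\beta}|r-u|^\beta\}$ and $\delta\leqslant\beta$, interpolating the two bounds gives $\llbracket g\rrbracket_{C^\delta}\lesssim(|\xi|L)^{\delta/\beta}$, hence $\|g\|_{C^\delta_t}\lesssim 1+(|\xi|L)^{\delta/\beta}$. Because $\delta>1-\gamma$ one has $\delta+\gamma>1$, so — arguing exactly as in the first part of the proof of Lemma~\ref{sec5.2 lemma time reparametrization}, but keeping the $\xi$‑dependence explicit (and, if one wishes, splitting $g$ into real and imaginary parts) — the Young estimate for $\int_s^t g_r\,\mathd\Phi^w_r(\xi)$, where $\Phi^w_r(\xi)\assign\int_0^r e^{i\xi\cdot w_u}\,\mathd u$ is Lipschitz, yields
\[ |\Phi^{w+\varphi}_{s,t}(\xi)|\lesssim|\Phi^w_{s,t}(\xi)|+\llbracket g\rrbracket_{C^\delta}\,\llbracket\Phi^w_\cdot(\xi)\rrbracket_{C^\gamma}\,|t-s|^{\delta+\gamma}\lesssim K|t-s|^\gamma|\xi|^{-\rho}+KL^{\delta/\beta}|t-s|^{\delta+\gamma}|\xi|^{-\tilde\rho}, \]
where we used $\llbracket\Phi^w_\cdot(\xi)\rrbracket_{C^\gamma}\leqslant K|\xi|^{-\rho}$ and $|\xi|^{\delta/\beta-\rho}=|\xi|^{-\tilde\rho}$.

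Next I would upgrade this to a genuine $(\tilde\gamma,\tilde\rho)$‑bound. One checks from~\eqref{sec2.2 critical parameter} that $\delta^\ast_{\tilde\gamma,\tilde\rho}=\delta^\ast_{\gamma,\rho}$, and that the hypothesis $\beta>\delta^\ast_{\gamma,\rho}$ is exactly $(1-\gamma)\delta/(\beta\rho)\leqslant\delta$, i.e.\ $\tilde\gamma\leqslant\gamma+\delta$. If $|t-s|\leqslant|\xi|^{-\rho/(1-\gamma)}$, the crude bound $|\Phi^{w+\varphi}_{s,t}(\xi)|\leqslant|t-s|$ already gives $|\Phi^{w+\varphi}_{s,t}(\xi)|\leqslant|t-s|^{\tilde\gamma}|\xi|^{-\tilde\rho}$, since $(1-\tilde\gamma)/\tilde\rho=(1-\gamma)/\rho$. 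If instead $|t-s|>|\xi|^{-\rho/(1-\gamma)}$, then $|t-s|^{\tilde\gamma-\gamma}|\xi|^{\rho-\tilde\rho}=\big(|t-s|^{(1-\gamma)/\rho}|\xi|\big)^{\delta/\beta}\geqslant 1$, so the first term in the display above is $\lesssim K|t-s|^{\tilde\gamma}|\xi|^{-\tilde\rho}$, while the second is $\lesssim KL^{\delta/\beta}T^{\delta+\gamma-\tilde\gamma}|t-s|^{\tilde\gamma}|\xi|^{-\tilde\rho}$ because $\delta+\gamma\geqslant\tilde\gamma$. Taking the supremum over $\xi,s,t$ gives $\|\Phi^{w+\varphi}\|_{\mathcal{W}^{\tilde\gamma,\tilde\rho}}\lesssim(1+K)(1+L^{\delta/\beta})$, which is the first assertion.

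The last statement is then the special case $\delta=\tfrac12$: choosing $\gamma>1/2$ with $w$ $(\gamma,\rho)$‑irregular, the requirements $1-\gamma<\tfrac12$, $\tfrac12<\beta\rho$, $\tfrac12\leqslant\beta$ and $\beta>(1-\gamma)/\rho$ all hold once $\beta>\max\{1/2,(2\rho)^{-1}\}$, and then $\tilde\rho=\rho-(2\beta)^{-1}$ and $\tilde\gamma=\gamma\big(1-(2\beta\rho)^{-1}\big)+(2\beta\rho)^{-1}>\tfrac12+(4\beta\rho)^{-1}$; since on $[0,T]$ lowering the time exponent only enlarges the class, $w+\varphi$ is also $\big(\tfrac12+(4\beta\rho)^{-1},\,\rho-(2\beta)^{-1}\big)$‑irregular with the same norm control.

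The step I expect to be the main obstacle is this last upgrade. The Young estimate only directly delivers a $(\gamma,\tilde\rho)$‑type bound; recovering the sharp exponent $\tilde\gamma<1$ requires the scale split at $|t-s|\sim|\xi|^{-\rho/(1-\gamma)}$, and it works precisely because $\beta>\delta^\ast_{\gamma,\rho}$ encodes both $\delta^\ast_{\tilde\gamma,\tilde\rho}=\delta^\ast_{\gamma,\rho}$ and $\tilde\gamma\leqslant\gamma+\delta$. One must also be careful to feed the Young estimate the $\xi$‑dependent seminorm $\llbracket g\rrbracket_{C^\delta}\lesssim(|\xi|\|\varphi\|_{C^\beta})^{\delta/\beta}$, obtained by interpolation, rather than $\|g\|_{C^1}$ — this is what makes only the fractional power $\|\varphi\|_{C^\beta}^{\delta/\beta}$ appear in the constant.
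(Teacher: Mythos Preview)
Your proof is correct and follows essentially the same route as the paper's: both set up the Young integral $\int_s^t e^{i\xi\cdot\varphi_r}\,\mathd\Phi^w_r(\xi)$ and use the interpolation $\llbracket e^{i\xi\cdot\varphi}\rrbracket_{C^\delta}\lesssim(|\xi|\,\|\varphi\|_{C^\beta})^{\delta/\beta}$ to obtain the same two-term bound. The only cosmetic difference is in the final ``upgrade'' step: where you perform an explicit scale split at $|t-s|\sim|\xi|^{-\rho/(1-\gamma)}$, the paper instead applies the interpolation Lemma~\ref{sec2.2 lemma interpolation rho irr} directly to the boundary term $|\Phi^w_{s,t}(\xi)|$ with $\theta=1-\delta/(\beta\rho)$, so that $(\gamma^\theta,\rho^\theta)=(\tilde\gamma,\tilde\rho)$ and no case distinction is needed.
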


\tmcolor{black}{\begin{proof}
  Since $\varphi \in C^{\beta}_t$, so does $e^{i \xi \cdot \varphi}$, for all
  $\xi \in \mathbb{R}^d$. For any $\delta > 1 - \gamma$ we can then apply the
  estimates from Young integration as follows:
  
  \begin{align*}
    \left| \int_s^t e^{i \xi \cdot (w_r + \varphi_r)} \, \mathd r \right| & =
    \, \left| \left. \int_s^t e^{i \xi \cdot \varphi_r} \mathd \left( \int_s^r
    e^{i \xi \cdot w_u} \, \mathd \right. u \right) \right|\\
    & \lesssim_{\delta + \gamma} \, | e^{i \xi \cdot \varphi_s} | \left|
    \int_s^t e^{i \xi \cdot w_r} \, \mathd r \right| + | t - s |^{\gamma +
    \delta} \llbracket e^{i \xi \cdot \varphi} \rrbracket_{C^{\delta}}
    \left\llbracket \int_s^{\cdot} e^{i \xi \cdot w_r} \, \mathd r
    \right\rrbracket_{C^{\gamma}}\\
    & \leqslant \, \left| \int_s^t e^{i \xi \cdot w_r} \, \mathd r \right| +
    | t - s |^{\gamma + \delta} | \xi |^{- \rho}
    \tmcolor{blue}{\tmcolor{black}{\| \Phi^w \|_{\mathcal{W}^{\gamma \comma
    \rho}}}} \llbracket e^{i \xi \cdot \varphi} \rrbracket_{C^{\delta}} .
  \end{align*}
  
  Now since $\varphi \in C^{\beta}_t$, by interpolation we have (we are using
  the hypothesis $\delta \leqslant \beta$)
  \[ | e^{i \xi \cdot \varphi_t} - e^{i \xi \cdot \varphi_s} | \leqslant 2,
     \hspace{0.4em} | e^{i \xi \cdot \varphi_t} - e^{i \xi \cdot \varphi_s} |
     \leqslant | \xi | | t - s |^{\beta} \llbracket \varphi
     \rrbracket_{C^{\beta}} \enspace \Rightarrow \enspace | e^{i \xi \cdot
     \varphi_t} - e^{i \xi \cdot \varphi_s} | \lesssim \| \varphi \|^{\delta /
     \beta}_{C^{\beta}} | \xi |^{\delta / \beta} | t - s |^{\delta} ; \]
  similarly for any $\theta \in (0, 1)$ it holds
  \[ \left| \int_s^t e^{i \xi \cdot w_r} \, \mathd r \right| \lesssim
     \tmcolor{blue}{\tmcolor{black}{\| \Phi^w \|^{\theta}_{\mathcal{W}^{\gamma
     \comma \rho}}}} | t - s |^{\gamma \theta + 1 - \theta} | \xi |^{- \theta
     \rho} \]
  and so putting everything together we obtain
  \[ \left| \int_s^t e^{i \xi \cdot (w_r + \varphi_r)} \, \mathd r \right|
     \lesssim \tmcolor{black}{\| \Phi^w \|^{\theta}_{\mathcal{W}^{\gamma
     \comma \rho}}} | t - s |^{\gamma \theta + 1 - \theta} | \xi |^{- \theta
     \rho} + | t - s |^{\gamma + \delta} | \xi |^{- \rho + \delta / \beta}
     \tmcolor{blue}{\tmcolor{black}{\| \Phi^w \|_{\mathcal{W}^{\gamma \comma
     \rho}}}} \| \varphi \|^{\delta / \beta}_{C^{\beta}} . \]
  Choosing $\theta \in (0, 1)$ such that $\theta \rho = \rho - \delta /
  \beta$, namely $\theta = 1 - \delta / (\beta \rho)$ \ we obtain the first
  statement. The second one simply follows from the assumption $\gamma > 1 /
  2$, taking $\delta = 1 / 2$.
\end{proof}}

The partial result above implies that, even if we consider a perturbation
$\varphi \in C^1_t$, we should expect a loss in spatial regularity of order $1
/ 2$, which is only partially recovered by an improvement in time regularity
of order $1 / 4 \rho$. The new parameters $(\tilde{\gamma}, \tilde{\rho})$
given by Lemma~\ref{sec5.3 parameters additive perturbation} satisfy
$\delta^{\ast}_{\gamma, \rho} = \delta^{\ast}_{\tilde{\gamma}, \tilde{\rho}}$,
which implies that $w + \varphi$ still satisfies property~\eqref{sec5.3 basic
property}, as can be checked directly using the fact that $\varphi \in
C^{\beta}_t$ for some $\beta > \delta^{\ast}_{\gamma, \rho}$. This hints that
the above result, while not being fully satisfactory, might be optimal, even
if we cannot exclude the existence of other pairs $(\gamma', \rho')$ with
$\rho' > \tilde{\rho}$ such that $w + \varphi$ is $(\gamma',
\rho')$\mbox{-}irregular.

\

The proof above cannot provide better results in the case $\varphi \in
C^{\beta}_t$ with $\beta > 1$. Even if it were false in general that $w +
\varphi$ is $(\gamma, \rho)$-irregular whenever $w$ is so and $\varphi \in
C^{\beta}_t$ with $\beta > \delta^{\ast}_{\gamma, \rho}$, we would at least
expect the claim to be true whenever $\varphi$ is $C^{\infty}_t$; this is a
conjecture left open in~{\cite{catelliergubinelli}}.

We can give it a positive answer, up to strengthening the notion of
$\rho$-irregularity. Before giving the rigorous statement, let us give an
intuition by considering the following case. Suppose that $\varphi \in C^{1 +
\beta}_t$ for some $\beta \in [0, 1]$ and suppose that $w$ satisfies the
following property: for any $a \in \mathbb{R}^d$, $t \mapsto w_t + a \, t$ is
$\rho$-irregular, uniformly in $a$, in the sense that $\sup_a \left\|
\tmcolor{blue}{\tmcolor{black}{\Phi^{w + a \, t}}}
\right\|_{\mathcal{W}^{\gamma \comma \rho}} < \infty$. Then we can improve the
previous estimates as follows:

\begin{align*}
  \left| \int_s^t e^{i \xi \cdot (w_r + \varphi_r)} \, \mathd r \right| & =
  \left| e^{- i \xi \cdot (\varphi_s - s \varphi'_s)} \int^t_s e^{i \xi \cdot
  (w_r + \varphi_r)} \mathd r \right|\\
  & = \left| \int_s^t e^{i \xi \cdot (\varphi_{s, r} - \varphi'_s (r - s))}
  \mathd \left( \int_s^r e^{i \xi \cdot (w_u + \varphi'_s u)} \, \mathd u
  \right) \right|\\
  & \lesssim \| \Phi^{w + \varphi'_s t} \|_{\mathcal{W}^{\gamma, \rho}} | t -
  s |^{\gamma} | \xi |^{- \rho} + \| \Phi^{w + \varphi'_s t}
  \|_{\mathcal{W}^{\gamma, \rho}} | t - s | \, \| e^{i \xi \cdot (\varphi_{s,
  \cdot} - \varphi'_s (\cdot - s))} \|_{C^{1 / 2}} | \xi |^{- \rho}\\
  & \lesssim_w | t - s |^{\gamma} | \xi |^{- \rho} + | t - s | \, | \xi |^{-
  \rho} \| e^{i \xi \cdot (\varphi_{s, \cdot} - \varphi'_s (\cdot - s))}
  \|_{C^{1 / 2}}
\end{align*}
where the last norm is taken over the interval $[s, t]$. As before, we can
estimate it using simple interpolation arguments, only this time we have
\[ \begin{array}{lll}
     | \varphi_{s, u} - \varphi'_s (u - s) - \varphi_{s, v} + \varphi'_s (v -
     s) | & = & | \varphi_{u, v} - \varphi_s' (v - u) |\\
     & = & \left| \int_v^u \varphi'_{r, s} \, \mathd r \right|\\
     & \leqslant & \int_v^u \llbracket \varphi' \rrbracket_{C^{\beta}} | r -
     s |^{\beta} \, \mathd r\\
     & \lesssim_{\varphi} & | u - v |^{1 / 2} | t - s |^{1 / 2 + \beta}
   \end{array} \]
where we used the fact that $[v, u] \subset [s, t]$. Therefore we obtain
\[ \left| \int_s^t e^{i \xi \cdot (w_r + \varphi_r)} \, \mathd r \right|
   \lesssim | t - s |^{\gamma} | \xi |^{- \rho} + | t - s |^{3 / 2 + \beta} |
   \xi |^{1 - \rho} \]
and we can now reason as in the proof of Lemma~\ref{sec5.3 prop
perturbations}, i.e. split the interval $[s, t]$ into $N$ subintervals of size
$| t - s | / N$, apply the estimate on such intervals, sum over $N$ and choose
$N \sim | \xi |^{1 / (1 + \beta)}$ to obtain
\[ \left| \int_s^t e^{i \xi \cdot (w_r + \varphi_r)} \, \mathd r \right|
   \lesssim | t - s |^{\gamma} | \xi |^{- \rho + 1 / 2 (1 + \beta)} . \]
This shows that $w + \varphi$ is $(\rho - (2 + 2 \beta)^{-
1})$\mbox{-}irregular. In particular, even if we are not able to recover
$\rho$\mbox{-}irregularity, the loss of regularity for $\varphi \in C^{1 +
\beta}$ is now expected to be $(2 + 2 \beta)^{- 1}$, which suggests that more
generally for $\varphi \in C^{\beta}$, $w + \varphi$ should be $(\rho - (2
\beta)^{- 1})$\mbox{-}irregular, for any $\beta \in [1 / 2, + \infty)$.

\

This motivates the following definition; here $F (\xi) = | \xi |^{\rho} /
\sqrt{\log | \xi |}$, $\psi (x) = \sqrt{x | \log x |}$.

\begin{definition}
  \label{sec5.3 defn strong rho}We say that $w \in C^0_t$ is strongly
  $\rho$\mbox{-}irregular if the following holds: for any $n \in \mathbb{N}$,
  given $\eta \in \mathbb{R}^n$ and denoting by $g^{\eta}_r \assign \sum_{k =
  1}^n \eta_k \, r^k$, then
  \begin{equation}
    \sup_{\xi \in \mathbb{R}^d, \eta \in \mathbb{R}^n, s \neq t} \frac{\left|
    \int_s^t e^{i \xi \cdot w_r + i g^{\eta}_r} \, \mathd r \right| F
    (\xi)}{\sqrt{\log (1 + | \eta |)} \psi (| t - s |)} < \infty .
  \end{equation}
\end{definition}

The notion formalises the idea that the irregularity of $w$ should be only
mildly affected by polynomial perturbations of any degree; this allows to
proceed as above by locally expanding a more general additive perturbation
$\varphi$ in its Taylor series, centred at $s$.

\begin{theorem}
  \label{sec5.3 thm perturbation strong}Let $w$ be strongly $\rho$-irregular.
  Then for any $\varphi \in C^{\beta}_t$, $\beta \in [1 / 2, \infty)$ and for
  any $\tilde{\rho} < \rho$, $w + \varphi$ is $(\tilde{\rho} - 1 / 2
  \beta)$-irregular. In particular, if $\varphi \in C_t^{\infty}$, then $w +
  \varphi$ is $\tilde{\rho}$-irregular for any $\tilde{\rho} < \rho$.
\end{theorem}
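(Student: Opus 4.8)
The plan is to mimic the heuristic computation carried out just before Definition \ref{sec5.3 defn strong rho}, but now with a general $\varphi \in C^\beta_t$, $\beta \in [1/2,\infty)$, localising via a Taylor expansion of $\varphi$ around the left endpoint $s$. Write $n = \lceil \beta \rceil$ (or $n=\lfloor\beta\rfloor$, whichever makes the remainder bound clean) and, for a fixed interval $[s,t]$, set $P^s_r := \sum_{k=1}^{n-1} \tfrac{\varphi^{(k)}(s)}{k!}(r-s)^k$, the degree-$(n-1)$ Taylor polynomial of $\varphi$ at $s$ (minus its constant term, which is killed by the modulus). The point is that $r \mapsto \xi\cdot(w_r + P^s_r)$ is exactly of the form $\xi\cdot w_r + g^\eta_r$ appearing in Definition \ref{sec5.3 defn strong rho}, with coefficient vector $\eta$ whose entries are $\xi\cdot\varphi^{(k)}(s)/k!$, so $|\eta| \lesssim_\varphi |\xi|$; hence strong $\rho$-irregularity gives, after multiplying out $F$ and $\psi$,
\[
  \left| \int_s^t e^{i\xi\cdot(w_r + P^s_r)}\,\mathd r \right|
  \lesssim_{w,\varphi}\; |\xi|^{-\rho} \sqrt{\log|\xi|}\;\psi(|t-s|)\,.
\]
First I would record that (after absorbing the $\sqrt{\log|\xi|}\,|\log|t-s||^{1/2}$ factors into an arbitrarily small power of $|\xi|$ and $|t-s|$, exactly as in Corollary \ref{sec4.1 corollary exp integr rho}) this yields, for every $\varepsilon>0$, a clean bound of the shape $| \int_s^t e^{i\xi\cdot(w_r+P^s_r)}\mathd r | \lesssim |\xi|^{-\rho+\varepsilon}|t-s|^{1/2+\varepsilon}$.

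Next I would control the error incurred by replacing $\varphi$ with $P^s$. By Young integration (Appendix \ref{appendixA1}), writing $\Psi^\xi_r := e^{i\xi\cdot(\varphi_r - P^s_r)}$ and $I^\xi_r := \int_s^r e^{i\xi\cdot(w_u + P^s_u)}\mathd u$, one has
\[
  \int_s^t e^{i\xi\cdot(w_r+\varphi_r)}\,\mathd r
  = \int_s^t \Psi^\xi_r \,\mathd I^\xi_r
  = \Psi^\xi_s I^\xi_t + \int_s^t I^\xi_r \,\mathd \Psi^\xi_r\,,
\]
so that (using $\Psi^\xi_s = 1$ and the sewing/Young estimate)
\[
  \left| \int_s^t e^{i\xi\cdot(w_r+\varphi_r)}\,\mathd r \right|
  \lesssim \left| I^\xi_t \right|
  + |t-s|^{\,1/2 + \gamma'}\, \llbracket \Psi^\xi \rrbracket_{C^{1/2}([s,t])}\,
    \llbracket I^\xi_\cdot \rrbracket_{C^{\gamma'}([s,t])}
\]
for a $\gamma' > 1/2$ obtained from the first step. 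The key gain over Lemma \ref{sec5.3 lemma additive perturbation} is that on the subinterval $[v,u]\subset[s,t]$ the increment $\varphi_{u,v} - P^s_{u,v}$ is the remainder of a degree-$(n-1)$ Taylor expansion, so $|\varphi_{u,v} - P^s_{u,v}| = | \int_v^u (\varphi^{(n-1)}_{r} - \varphi^{(n-1)}_{s})\,\tfrac{(\cdots)}{}\,\mathd r / \cdots| \lesssim_\varphi |u-v|\,|t-s|^{\beta - (n-1)}$ when $\beta \le n-1+1$... more precisely, interpolating the two trivial bounds $|\Psi^\xi_u - \Psi^\xi_v|\le 2$ and $|\Psi^\xi_u-\Psi^\xi_v| \lesssim_\varphi |\xi|\,|u-v|\,|t-s|^{\beta-1}$ (for $\beta\le 1$ one uses instead $|u-v|^\beta$ directly; for $\beta>1$ this remainder estimate is the one spelled out in the heuristic for $\beta = 1+\beta_0$) gives $\llbracket \Psi^\xi\rrbracket_{C^{1/2}([s,t])} \lesssim_\varphi |\xi|^{1/2}\,|t-s|^{\beta-1/2}$. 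Plugging this in produces a bound of the form
\[
  \left| \int_s^t e^{i\xi\cdot(w_r+\varphi_r)}\,\mathd r \right|
  \lesssim_{w,\varphi}\; |\xi|^{-\rho+\varepsilon}|t-s|^{1/2+\varepsilon}
  + |\xi|^{\,1-\rho+\varepsilon}\,|t-s|^{\,1+\beta+\varepsilon}\,,
\]
the two competing regimes being governed by whether $|t-s|^{1/2+\beta}|\xi| \lessgtr 1$.

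Finally I would run the standard interval-subdivision argument from the proof of Proposition \ref{sec5.3 prop perturbations}: when $|t-s|^{1/2+\beta}|\xi| > 1$, split $[s,t]$ into $N$ equal subintervals with $N^{1/2+\beta} \sim |t-s|^{1/2+\beta}|\xi|$, apply the two-term bound on each, sum, and optimise; this converts the $|\xi|^{1-\rho}$ term into $|\xi|^{-\rho + 1/(2\beta) + \varepsilon}$ and the first term stays at worst $|\xi|^{-\rho+\varepsilon}$, so overall one gets $(\tilde\rho, \tilde\gamma)$-irregularity with $\tilde\rho = \rho - 1/(2\beta) - \varepsilon'$ and some $\tilde\gamma > 1/2$ (using Lemma \ref{sec2.2 lemma interpolation rho irr} to trade the leftover $\varepsilon$ in the exponent of $|t-s|$ back against $\tilde\rho$ if needed). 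Since $\varepsilon>0$ is arbitrary, $w+\varphi$ is $\tilde\rho$-irregular for every $\tilde\rho < \rho - 1/(2\beta)$, which is the claim; the last sentence follows because if $\varphi\in C^\infty_t$ then $\varphi\in C^\beta_t$ for all $\beta<\infty$, so letting $\beta\to\infty$ we recover $\tilde\rho$-irregularity for every $\tilde\rho<\rho$. The main obstacle I anticipate is purely bookkeeping: getting the Taylor-remainder H\"older estimate for $\Psi^\xi$ uniform in $\xi$ with the correct power of $|t-s|$ for non-integer $\beta$, and making sure the dependence of $\eta$ on $\xi$ in the application of Definition \ref{sec5.3 defn strong rho} really is linear so that $\sqrt{\log(1+|\eta|)}$ is absorbed into $\sqrt{\log|\xi|}$ without loss — once those are in hand the rest is the by-now routine splitting-and-optimising computation.
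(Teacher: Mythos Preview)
Your plan is correct and follows essentially the same route as the paper's proof: Taylor expand $\varphi$ at the left endpoint $s$, invoke strong $\rho$-irregularity to control $\int_s^t e^{i\xi\cdot(w_r+P^s_r)}\mathd r$ (using that $|\eta|\lesssim_\varphi|\xi|$ so the $\sqrt{\log(1+|\eta|)}$ is absorbed), apply a Young integral estimate with the Taylor remainder $R$ playing the role of the $C^{1/2}$ integrand, and then run the subdivision-and-optimise argument of Proposition~\ref{sec5.3 prop perturbations}. The paper takes $\lfloor\beta\rfloor$ for the Taylor degree and bounds $\llbracket e^{i\xi\cdot R}\rrbracket_{C^{1/2}([s,t])}$ directly via $|e^{ix}-e^{iy}|\le|x-y|$ together with $\llbracket R\rrbracket_{C^{1/2}([s,t])}\lesssim_\varphi|t-s|^{\beta-1/2}$ (interpolating $\|R\|_{C^0}$ and $\|R'\|_{C^0}$), then chooses $N\sim|\xi|^{1/\beta}$; the only differences in your sketch are cosmetic (your choice of $N$ depends on $|t-s|$, and the precise exponents you write for the second term --- $|t-s|^{1+\beta}$ versus the correct $|t-s|^{\beta+1/2}$ --- are off, but the optimisation rebalances this and the final $\tilde\rho-1/(2\beta)$ comes out the same either way). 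The two obstacles you flag are indeed pure bookkeeping and are handled exactly as you anticipate.
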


\tmcolor{blue}{\tmcolor{black}{\begin{proof}
  Let $\xi \in \mathbb{R}^d$, $0 \leqslant s < t \leqslant T$ be fixed. We
  write $\varphi$ as its Taylor series of order $\lfloor \beta \rfloor$
  centred in $s$ plus a reminder term:
  \[ \varphi_r = p_r + R_r = \sum_{k = 0}^{\lfloor \beta \rfloor} D^{(k)}
     \varphi_s \frac{(r - s)^k}{k!} + R_r  \quad \forall \, r \in [s, t] . \]
  Observe that all the terms in $p_r$ depending exclusively on $s$ do not play
  any role when estimating $\left| \int_s^t e^{i \xi \cdot (w_r + p_r)} \,
  \mathd r \right|$, since they just go outside the integral and provide a
  term with modulus $1$; therefore we will systematically drop them in the
  calculations. Then
  \[ \xi \cdot p_r = \sum_{k = 1}^{\lfloor \beta \rfloor} \xi \cdot D^{(k)}
     \varphi_s \frac{(r - s)^k}{k!} = \sum_{k = 1}^{\lfloor \beta \rfloor} \xi
     \cdot D^{(k)} \varphi_s \sum_{j = 1}^k \frac{r^j (- s)^{k - j}}{j! (k -
     j) !} = \sum_{j = 1}^{\lfloor \beta \rfloor} \left( \sum^{\lfloor \beta
     \rfloor}_{k = j} \xi \cdot D^{(k)} \varphi_s \frac{(- s)^{k - j}}{j! (k -
     j) !} \right) r^j \]
  which implies by the definition of strong $\rho$-irregularity for $n =
  \lfloor \beta \rfloor$ that
  \[ \left| \int_s^t e^{i \xi \cdot (w_r + p_r)} \mathd r \right| \lesssim_{n,
     T} | \xi |^{- \rho} \, \sqrt{\log | \xi |} \sqrt{\log (1 + | \xi | \|
     \varphi \|_{C^{\beta}})} \psi (| t - s |) . \]
  By the usual interpolation arguments, we deduce that for any $\tilde{\rho} <
  \rho$ there exists $\gamma = \gamma (\tilde{\rho}) > 1 / 2$ such that $w_r +
  p_r$ is $\tilde{\rho}$\mbox{-}irregular; from now on we work with such fixed
  pair $(\gamma, \tilde{\rho})$.
  
  On the other hand, $\| R \|_{C ([s, t] ; \mathbb{R}^d)} \lesssim \| \varphi
  \|_{C^{\beta}} | t - s |^{\beta}$ and that $\| R^{'} \|_{C ([s, t] ;
  \mathbb{R}^d)} \lesssim \| \varphi \|_{C^{\beta}} | t - s |^{\beta - 1}$,
  where $R'$ is the derivative of $R$; by interpolation this gives $\| R
  \|_{C^{1 / 2} ([s, t] ; \mathbb{R}^d)} \lesssim \| \varphi \|_{C^{\beta}} |
  t - s |^{\beta - 1 / 2}$. Therefore we have
  
  \begin{align*}
    \left| \int_s^t e^{i \xi \cdot (w_r + \varphi_r)} \, \mathd r \right| & =
    \, \left| \int_s^t e^{i \xi \cdot R_r} \mathd \left( \int_s^r e^{i \xi
    \cdot (p_u + w_u)} \, \mathd u \right) \right|\\
    & \lesssim \, \| \Phi^{p + w} \|_{\mathcal{W}^{\gamma, \tilde{\rho}}} |
    \xi |^{- \tilde{\rho}} (| t - s |^{\gamma} + \| e^{i \xi \cdot R} \|_{C^{1
    / 2}} | t - s |)\\
    & \lesssim_{\varphi} \, | t - s |^{\gamma} | \xi |^{- \tilde{\rho}} + | t
    - s |^{\beta + 1 / 2} | \xi |^{1 - \tilde{\rho}} .
  \end{align*}
  Splitting the integral $[s, t]$ in $N$ subintervals of length $| t - s | /
  N$, applying the estimate on such subintervals and then summing over $N$ we
  obtain
  \[ \left| \int_s^t e^{i \xi \cdot (w_r + \varphi_r)} \, \mathd r \right|
     \lesssim N^{1 / 2} | t - s |^{\gamma} | \xi |^{- \tilde{\rho}} + N^{-
     \beta + 1 / 2} | t - s |^{\beta + 1 / 2} | \xi |^{1 - \tilde{\rho}} . \]
  Choosing $N$ such that $N \sim | \xi |^{1 / \beta}$ allows to conclude.
\end{proof}}}

\begin{theorem}
  \label{sec5.3 thm prevalence strong rho}For any $\delta \in (0, 1)$, almost
  every $\varphi \in C^{\delta}_t$ is strongly $\rho$-irregular for any $\rho
  < (2 \delta)^{- 1}$. Almost every $\varphi \in C^0_t$ is strongly
  $\rho$-irregular for any $\rho < \infty$.
\end{theorem}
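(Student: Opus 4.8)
The plan is to reproduce, almost verbatim, the prevalence scheme of the proof of Theorem~\ref{sec3 thm2}, fed with a strengthened version of the probabilistic Theorem~\ref{sec4.2 corollary rho gaussian}. First one checks measurability: for each fixed $\rho$ the set of strongly $\rho$-irregular paths equals $\bigcap_{n\in\mathbb{N}}\bigcup_{N\in\mathbb{N}}A_{n,N}$, where $A_{n,N}=\{w:\ \sup_{s\neq t,\ \xi\in\mathbb{R}^d,\ |\eta|\geqslant 1}|\Phi^{w,\eta}_{s,t}(\xi)|\,F(\xi)/(\sqrt{\log(1+|\eta|)}\,\psi(|t-s|))\leqslant N\}$ and $\Phi^{w,\eta}_{s,t}(\xi)=\int_s^t e^{i\xi\cdot w_r+ig^\eta_r}\,\mathd r$; exactly as in Lemma~\ref{sec2.2 lemma borel rho irr}, $w_k\to w$ in measure forces $\Phi^{w_k,\eta}_{s,t}(\xi)\to\Phi^{w,\eta}_{s,t}(\xi)$ for every fixed $(s,t,\xi,\eta)$ by dominated convergence, so each $A_{n,N}$ is closed under convergence in measure, and intersecting over rational $\rho$ preserves Borel measurability. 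Then, for $\delta\in(0,1)$, take $\mu^H$ the law on $C^\delta_t$ of a fBm $W^H$ with $H=\delta+\varepsilon$, which is tight on $C^\delta_t$ since its paths live in a compactly embedded $C^{\delta'}_t$ with $\delta<\delta'<H$; intersecting over a sequence $\varepsilon_n\downarrow0$ (and, for the $C^0_t$ statement, over $H_n\downarrow0$ with arbitrary $W^{H_n}$) and using that countable intersections of prevalent sets are prevalent reduces everything to the fact that, for every $\varphi\in C^\delta_t$, the shifted process $\varphi+W^H$ is strongly $\rho$-irregular almost surely for all $\rho<(2H)^{-1}$. By Remark~\ref{sec2.4 remark invariance lnd}, $\varphi+W^H$ is a continuous $H$-SLND Gaussian process obeying~\eqref{sec4.2 gaussian integrability condition}, so this is a special case of the following claim: \emph{if $X$ is a continuous $\beta$-SLND Gaussian process with $\int_0^T\mathbb{E}[|X_t|^2]\,\mathd t<\infty$, then $\mathbb{P}$-a.s. $X$ is strongly $\rho$-irregular for every $\rho<(2\beta)^{-1}$}, with moreover a Gaussian-type tail for the supremum in Definition~\ref{sec5.3 defn strong rho} for each fixed polynomial degree $n$.

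To prove the claim I would adapt the argument of Theorem~\ref{sec4.1 thm general criterion} to the enlarged parameter set $(\xi,\eta)\in\mathbb{R}^d\times\mathbb{R}^n$, $n$ fixed. The single-frequency bound comes from applying Lemma~\ref{sec4.1 lemma condition rho irr} to $f(u,x)=e^{i\xi\cdot x+ig^\eta_u}$, which has $\|f\|_\infty=1$: the deterministic phase $e^{ig^\eta_u}$ is invisible to $|\mathbb{E}[\int_s^t f(u,X_u)\,\mathd u\mid\mathcal{F}_s]|$, so the computation in the proof of Theorem~\ref{sec4.2 corollary rho gaussian} gives $C_{\xi,\eta}\lesssim|\xi|^{-1/\beta}$ \emph{uniformly in $\eta$}, hence $\sup_{s\neq t,\ \xi,\ \eta}\mathbb{E}[\exp(\mu|\xi|^{1/\beta}|\Phi^{X,\eta}_{s,t}(\xi)|^2/|t-s|)]<\infty$; Fernique gives $\mathbb{E}[\exp(\mu\|X\|_{L^2}^2)]<\infty$ as before. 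One then forms the analogue of $J_{s,t}(\lambda)$ carrying a second dyadic sum over $\eta\in 2^{-m}\mathbb{Z}^n$ with weight $2^{-m}\,2^{-m(n+1)}(1+|\eta|)^{-(n+1)}$ times the usual weight in $\xi$, so that $\mathbb{E}[J_{s,t}(\lambda)]\leqslant K$ for $\lambda\leqslant\mu$. In the net step one approximates $(\xi,\eta)$ by a dyadic $(\tilde\xi,\tilde\eta)$ at a common scale $2^{-k}$ with $k\sim\max(1,\log(|\xi|+|\eta|))$, controlling the two errors by $|\Phi^{X,\eta}_{s,t}(\xi)-\Phi^{X,\eta}_{s,t}(\tilde\xi)|\lesssim|t-s|^{1/2}|\xi-\tilde\xi|^{\theta/2}\|X\|_{L^\theta}^{\theta/2}$ and by the elementary deterministic bound $|\Phi^{X,\eta}_{s,t}(\xi)-\Phi^{X,\tilde\eta}_{s,t}(\xi)|\leqslant\int_s^t|g^\eta_r-g^{\tilde\eta}_r|\,\mathd r\lesssim_{n,T}|t-s|\,|\eta-\tilde\eta|$, both $\lesssim1$ after multiplication by $|\xi|^{1/2\beta}$ for this choice of $k$, while the $J$-weight contributes at most $\log|\xi|+\log(1+|\eta|)$ to the logarithm under the square root. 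Summing exactly as in Theorem~\ref{sec4.1 thm general criterion} yields $Y_{n,s,t}\lesssim\|X\|_{L^\theta}^{\theta/2}+\lambda^{-1/2}+\lambda^{-1/2}\sqrt{\log J_{s,t}(\lambda)}$ for the random variable $Y_{n,s,t}:=\sup_{\xi,\eta}|\Phi^{X,\eta}_{s,t}(\xi)|\,|\xi|^{1/2\beta}\,(|t-s|\,\log|\xi|\,\log(1+|\eta|))^{-1/2}$; exponentiating, using $J_{s,t}^{\,C}\lesssim J_{s,t}(C\lambda)$, and applying Lemma~\ref{appendixA2 chaining lemma} with $\psi(x)=\sqrt{x|\log x|}$ upgrade this to a uniform Gaussian tail for the full supremum. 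A final interpolation against $|\Phi^{X,\eta}_{s,t}(\xi)|\leqslant|t-s|$, exactly as in Corollary~\ref{sec4.1 corollary exp integr rho}, trades $|\xi|^{1/2\beta}$ for $|\xi|^{\rho}$ with any $\rho<(2\beta)^{-1}$, the residual power of $\log(1+|\eta|)$ then dropping below $1/2$ and being absorbed into the $\sqrt{\log(1+|\eta|)}$ of Definition~\ref{sec5.3 defn strong rho}; intersecting over $n$ completes the claim, and the first paragraph then gives the theorem.

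The hard part is the simultaneous net/chaining argument in $(\xi,\eta)$ together with the verification that the price of the new, unbounded parameter $\eta$ is no worse than the $\sqrt{\log(1+|\eta|)}$ slack built into Definition~\ref{sec5.3 defn strong rho}. This works precisely because $g^\eta$ enters the exponential only as a modulus-$1$ phase: it never weakens the conditional-expectation estimate and it is genuinely Lipschitz in $\eta$ with a constant depending only on $n$ and $T$, so the $\eta$-direction is absorbed into the existing dyadic machinery with essentially no new idea, the remainder being bookkeeping parallel to Theorem~\ref{sec4.1 thm general criterion}. (In accordance with the conventions at the start of Section~\ref{sec4.1}, the supremum over $\eta$ in Definition~\ref{sec5.3 defn strong rho} is only relevant for $|\eta|$ bounded away from $0$, which is the only regime used in Theorem~\ref{sec5.3 thm perturbation strong}; the case of bounded $\eta$ is immediate from Theorem~\ref{sec4.2 corollary rho gaussian} and $|\Phi^{X,\eta}_{s,t}(\xi)-\Phi^X_{s,t}(\xi)|\lesssim_{n,T}|\eta|\,|t-s|$.)
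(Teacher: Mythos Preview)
Your proposal is correct and follows essentially the same route as the paper's own proof: fBm laws $\mu^H$ as transverse measures, the observation that the deterministic phase $e^{ig^\eta_r}$ leaves the conditional-expectation bound of Lemma~\ref{sec4.1 lemma condition rho irr} untouched (giving the uniform-in-$\eta$ exponential moment), a double dyadic sum $J_{s,t}(\lambda)$ over $(\xi,\eta)$, the net argument in the enlarged parameter space, and the chaining Lemma~\ref{appendixA2 chaining lemma}. The only cosmetic differences are that you phrase the probabilistic step as a general claim for $\beta$-SLND processes rather than working directly with $\varphi+W^H$, and you separate the $\xi$- and $\eta$-approximation errors whereas the paper combines them into a single bound in $|(\xi,\eta)-(\tilde\xi,\tilde\eta)|$; neither changes the substance.
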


\begin{proof}
  We only briefly sketch the proof as it mostly relies on the same techniques
  used to prove prevalence of $\rho$-irregularity and Theorem~\ref{sec4.1 thm
  general criterion}; it is easy to guess the our candidate transverse
  measures are the laws $\mu^H$ of fBm with parameter $H > \delta$. The proof
  that the strong $\rho$\mbox{-}irregularity property defines Borel sets in
  $C^{\delta}$, $\delta \in [0, 1)$, is identical to the one of
  Lemma~\ref{sec2.2 lemma borel rho irr}, so we will omit it. Now fix $\varphi
  \in C^{\delta}_t$; we need to show that
  \[ \mu^H \left( w \in C^{\delta}_t : \varphi + w \text{ is strongly
     $\rho$\mbox{-}irregular for any } \rho < (2 H)^{- 1} \right) = 1. \]
  Once this is proven, the conclusion follows from the usual argument
  regarding countable intersection of prevalent sets. We switch to the
  probabilistic notation, $W^H$ being an fBm with law $\mu^H$.
  
  Fix $n \in \mathbb{N}$. In the following, all estimates depend on $n$.
  Using the same technique as in Lemma~\ref{sec4.1 lemma condition rho irr}
  and Theorem~\ref{sec4.2 corollary rho gaussian}, it can be shown that there
  exists $\overline{\lambda}$ such that
  \[ \sup_{\xi, s, t} \, \mathbb{E} \left[ \exp \left( \overline{\lambda} 
     \frac{\left| \int_s^t e^{i \xi \cdot (\varphi_r + W^H_r) + i g^{\eta}_r}
     \, \mathd r \right|^2 | \xi |^{1 / H}}{| t - s |} \right) \right]
     \leqslant K \quad \text{uniformly in $\eta \in \mathbb{R}^n$.} \]
  Let us fix $s < t$ and define
  
  \begin{align}
    J_{s, t} (\lambda) \assign & \sum_{N \in \mathbb{N}} 2^{- N} \sum_{\xi \in
    2^{- N} \mathbb{Z}^d, \eta \in 2^{- N} \mathbb{Z}^n} 2^{- N (d + n + 2)}
    (1 + | \xi |)^{- (d + 1)} (1 + | \eta |)^{- (n + 1)} \times \nonumber\\
    & \qquad \times \exp \left( \lambda \frac{\left| \int_s^t e^{i \xi \cdot
    (\varphi_r + W^H_r) + i g^{\eta}_r} \, \mathd r \right|^2 | \xi |^{1 /
    H}}{| t - s |} \right) . \nonumber
  \end{align}
  It is clear that $\mathbb{E} [J_{s, t} (\lambda)] \lesssim K$ for all
  $\lambda \leqslant \overline{\lambda}$ and that by Jensen inequality $J_{s,
  t} (\lambda)^{\beta} \lesssim J_{s, t} (\beta \lambda)$ for all $\beta
  \geqslant 1$. Let us also define
  \[ Y_{s, t} = \sup_{\xi \in \mathbb{R}^d, \eta \in \mathbb{R}^n}
     \frac{\left| \int_s^t e^{i \xi \cdot (\varphi_r + W^H_r) + i g^{\eta}_r}
     \, \mathd r \right| | \xi |^{\frac{1}{2 H}}}{\sqrt{\log (1 + | \eta |)} 
     \sqrt{\log (1 + | \xi |)} | t - s |^{1 / 2}}  . \]
  In order to prove the theorem it suffices to show that there exists $\lambda
  > 0$ such that $\mathbb{E} [e^{\lambda Y_{s, t}^2}] \leqslant K$, since in
  that case we are in the conditions to apply Lemma~\ref{appendixA2 chaining
  lemma}. Let $(\xi, \eta)$ be fixed, then for any $N$ there exists
  $(\tilde{\xi}, \tilde{\eta}) \in (2^{- N} \mathbb{Z}^d, 2^{-N} \mathbb{Z}^n)$ such that $| (\xi, \eta) -
  (\tilde{\xi}, \tilde{\eta}) | \lesssim 2^{- N}$ and we have
  \[ \left| \int_s^t (e^{i \xi \cdot (\varphi_r + W^H_r) + i g^{\eta}_r} -
     e^{i \tilde{\xi} \cdot (\varphi_r + W^H_r) + i g^{\tilde{\eta}}_r})
     \mathd r \right| \lesssim | t - s |^{1 / 2} (1 + \| \varphi + W^H \|^{1 /
     2}_{L^1}) | (\xi, \eta) - (\tilde{\xi}, \tilde{\eta}) |^{1 / 2} . \]
  On the other hand, for $(\tilde{\xi}, \tilde{\eta})$ it holds
  \[ \left| \int_s^t e^{i \tilde{\xi} \cdot (\varphi_r + W^H_r) + i
     g^{\tilde{\eta}}_r} \, \mathd r \right| \lesssim \lambda^{- 1 / 2} |
     \tilde{\xi} |^{- 1 / 2 H} | t - s |^{1 / 2} \sqrt{\log \, J_{s, t}
     (\lambda) + N + \log (1 + | \tilde{\xi} |) + \log (1 + | \tilde{\eta} |)}
  \]
  and therefore, putting everything together, after some computations we
  obtain
  
  \begin{align*}
    \left| \int_s^t e^{i \xi \cdot (\varphi_r + W^H_r) + i g^{\eta}_r} \,
    \mathd r \right| \, | \xi |^{1 / 2 H} & \lesssim | t - s |^{1 / 2} 2^{- N
    / 2} (1 + \| \varphi + W^H \|^{1 / 2}_{L^1}) | \xi |^{1 / 2 H}\\
    & + | t - s |^{1 / 2} \lambda^{- 1 / 2} \sqrt{\log \, J_{s, t} (\lambda)
    + N + \log (1 + | \eta |) + \log (1 + | \xi |) + c} .
  \end{align*}
  
  Choosing $N$ such that $2^{- N / 2} \sim | \xi |^{1 / 2 H}$, dividing by
  $\sqrt{\log (1 + | \eta |)}  \sqrt{\log (1 + | \xi |)} |t-s|^{1/2}$ and taking the
  supremum we get
  \[ Y_{s, t} \lesssim 1 + \| \varphi + W^H \|^{1 / 2}_{L^1} + \lambda^{- 1 /
     2} + \lambda^{- 1 / 2} \sqrt{\log \, J_{s, t} (\lambda)} \]
  and so there exists a constant $C$ such that
  \[ e^{\lambda Y_{s, t}^2} \lesssim e^{\lambda C (1 + \| \varphi + W^H
     \|_{L^1})} \, J_{s, t} (\lambda)^C \lesssim e^{2 \lambda C (1 + \|
     \varphi + W^H \|_{L^1})} + J_{s, t} (\lambda)^{2 C} \lesssim e^{2 \lambda
     C (1 + \| \varphi + W^H \|_{L^1})} + J_{s, t} (2 C \lambda) . \]
  Invoking Fernique theorem and choosing $\lambda$ such that $2 C \lambda <
  \overline{\lambda}$ we obtain the result for fixed $n$. Since $n$ is
  arbitrary, \ we obtain the conclusion.
\end{proof}

\begin{remark}
  The same proof shows that any $\beta$\mbox{-}SLND Gaussian process with
  suitable integrability conditions is strongly $(2 \beta)^{-
  1}$\mbox{-}irregular with probability~$1$.
\end{remark}

\begin{proof}[Proof of Theorem~\ref{sec3 thm5}]
  Follows from a combination of Theorems~\ref{sec5.3 thm perturbation strong}
  and~\ref{sec5.3 thm prevalence strong rho}.
\end{proof}

\appendix\section{Auxiliary results}\label{appendix}

\subsection{A primer on Young integration theory}\label{appendixA1}

Estimates involving Young integrals are frequently used throughout this work;
for this reason we present here a brief account on the topic for the
interested reader. Young integrals go back to~{\cite{young}}, where a quantity
of the form
\[ \int_0^t f_s \mathd g_s \]
is defined in terms of the limit of Riemann--Stieltjes sums, under the
condition that $f$ and $g$ are respectively of finite $p$\mbox{-} and
$q$\mbox{-}variation with $1 / p + 1 / q > 1$; for a modern account on the
theory see for instance Section~6 of~{\cite{frizvictoir}}.

Here we restrict ourselves to $f$ and $g$ belonging to suitable H\"{o}lder
spaces and we follow the modern approach of constructing Young integrals by
means of the Sewing lemma.

Consider an interval $[0, T]$ and a Banach space $E$; let $\Delta_n$ denote
the $n$\mbox{-}simplex on $[0, T]$, so that $\Delta_n = \{ (t_1, \ldots, t_n)
: 0 \leqslant t_1 \leqslant \ldots \leqslant t_n \leqslant T \}$. Given a map
$\Gamma : \Delta_2 \rightarrow E$, we define $\delta \Gamma : \Delta_3
\rightarrow E$ by
\[ \delta \Gamma_{s, u, t} : = \, \Gamma_{s, t} - \Gamma_{s, u} - \Gamma_{u,
   t} . \]
We say that $\Gamma \in C^{\alpha, \beta}_2 ([0, T] ; E)$ if $\Gamma_{t, t} =
0$ for all $t \in [0, T]$ and $\| \Gamma \|_{\alpha, \beta} < \infty$, where
\[ \| \Gamma \|_{\alpha} \assign \sup_{s < t} \frac{\| \Gamma_{s, t} \|_E}{| t
   - s |^{\alpha}}, \quad \left\| \delta \, \Gamma \right\|_{\beta} \assign
   \sup_{s < u < t} \frac{\left\| \delta \, \Gamma_{s, u, t} \right\|_E}{| t -
   s |^{\beta}}, \quad \| \Gamma \|_{\alpha, \beta} \assign \| \Gamma
   \|_{\alpha} + \left\| \delta \, \Gamma \right\|_{\beta} . \]
Let us remark that for a map $f : [0, T] \rightarrow E$, we still denote by
$f_{s, t}$ the increment $f_t - f_s$.

\begin{lemma}[Sewing lemma]
  \label{appendixA1 sewing lemma}Let $\alpha$, $\beta$ be such that $0 <
  \alpha \leqslant 1 < \beta$. For any $\Gamma \in C^{\alpha, \beta}_2 ([0, T]
  ; E)$ there exists a unique map $\mathcal{I} \, \Gamma \in C^{\alpha} ([0,
  T] ; E)$ such that $\left( \mathcal{I} \, \Gamma \right)_0 = 0$ and
  \begin{equation}
    \left\| \left( \mathcal{I} \, \Gamma \right)_{s, t} - \Gamma_{s, t}
    \right\|_E \leqslant C \, \| \delta \Gamma \|_{\beta}  | t - s |^{\beta}
    \label{appendixA1 sewing property 1}
  \end{equation}
  where the constant $C$ only depends on $\beta$. In particular, the map
  $\mathcal{I} : C^{\alpha, \beta}_2 \rightarrow C^{\alpha}$ is linear and
  bounded and there exists a constant $C'$ which only depends on $\beta$ and
  $T$ such that
  \begin{equation}
    \left\| \mathcal{I} \, \Gamma \right\|_{C^{\alpha}} \leqslant C'  \|
    \Gamma \|_{\alpha, \beta} . \label{appendixA1 sewing property 2}
  \end{equation}
  For given $\Gamma$, the map $\mathcal{I} \, \Gamma$ is characterised as the
  unique limit of Riemann--Stieltjes sums: for any $t > 0$
  \[ \left( \mathcal{I} \, \Gamma \right)_t = \lim_{| \Pi | \rightarrow 0}
     \sum_i \Gamma_{t_i, t_{i + 1}} . \]
  The notation above means that for any sequence of partitions $\Pi_n = \{ 0 =
  t_0 < t_1 < \ldots < t_{k_n} = t \}$ with mesh $| \Pi_n | = \sup_{i = 1,
  \ldots, k_n} | t_i - t_{i - 1} | \rightarrow 0$ as $n \rightarrow \infty$,
  it holds
  \[ \left( \mathcal{I} \, \Gamma \right)_t = \lim_{n \rightarrow \infty}
     \sum_{i = 0}^{k_n - 1} \Gamma_{t_i, t_{i + 1}} . \]
\end{lemma}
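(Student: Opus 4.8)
The plan is to construct $\mathcal{I}\,\Gamma$ as the limit of Riemann--Stieltjes sums along partitions of shrinking mesh, via the now-standard greedy-point-removal argument. First I would fix $0 \leqslant s < t \leqslant T$ and, for a finite partition $\Pi = \{s = u_0 < u_1 < \ldots < u_m = t\}$, set $\Gamma^\Pi_{s,t} \assign \sum_{i=0}^{m-1}\Gamma_{u_i,u_{i+1}}$. The key elementary observation is the single-point removal identity: if $\Pi'$ is obtained from $\Pi$ by deleting an interior point $u_j$, then $\Gamma^{\Pi'}_{s,t} - \Gamma^\Pi_{s,t} = -\,\delta\Gamma_{u_{j-1},u_j,u_{j+1}}$, so that $\|\Gamma^{\Pi'}_{s,t} - \Gamma^\Pi_{s,t}\|_E \leqslant \|\delta\Gamma\|_\beta\, |u_{j+1}-u_{j-1}|^\beta$. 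Since $\sum_{j=1}^{m-1}|u_{j+1}-u_{j-1}| \leqslant 2(t-s)$, by pigeonhole some interior point can always be removed producing a merged interval of length at most $2(t-s)/(m-1)$; iterating this removal down to the trivial partition $\{s,t\}$ (for which $\Gamma^{\{s,t\}}_{s,t} = \Gamma_{s,t}$) yields the uniform estimate $\|\Gamma^\Pi_{s,t} - \Gamma_{s,t}\|_E \leqslant 2^\beta\zeta(\beta)\,\|\delta\Gamma\|_\beta\,|t-s|^\beta \eqqcolon C_\beta\,\|\delta\Gamma\|_\beta\,|t-s|^\beta$, valid for every $\Pi$, where convergence of $\sum_k k^{-\beta}$ uses $\beta > 1$ and pins down a constant depending only on $\beta$.

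Next I would upgrade this to convergence. Applying the bound just obtained on each sub-interval of a partition $\Pi$ shows that for any refinement $\Pi' \supset \Pi$ one has $\|\Gamma^{\Pi'}_{s,t} - \Gamma^\Pi_{s,t}\|_E \leqslant C_\beta\,\|\delta\Gamma\|_\beta \sum_i |u_{i+1}-u_i|^\beta \leqslant C_\beta\,\|\delta\Gamma\|_\beta\,|\Pi|^{\beta-1}(t-s)$; comparing any two partitions through their common refinement then shows that $(\Gamma^\Pi_{s,t})$ is Cauchy as $|\Pi| \to 0$, hence converges in the complete space $E$ to a limit $A_{s,t}$. I would set $(\mathcal{I}\,\Gamma)_t \assign A_{0,t}$; additivity of the Riemann sums under concatenation of partitions, passed to the limit, gives $(\mathcal{I}\,\Gamma)_{s,t} = A_{s,t}$, while $(\mathcal{I}\,\Gamma)_0 = 0$ is immediate. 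Passing to the limit in the uniform estimate gives \eqref{appendixA1 sewing property 1}, and then $\|(\mathcal{I}\,\Gamma)_{s,t}\|_E \leqslant C_\beta\,\|\delta\Gamma\|_\beta\,|t-s|^\beta + \|\Gamma\|_\alpha\,|t-s|^\alpha \lesssim_{\beta,T} \|\Gamma\|_{\alpha,\beta}\,|t-s|^\alpha$, using $\beta > 1 \geqslant \alpha$ and $|t-s| \leqslant T$, so $\mathcal{I}\,\Gamma \in C^\alpha$ and \eqref{appendixA1 sewing property 2} holds; linearity of $\mathcal{I}$ is clear since both the Riemann sums and the limit are linear in $\Gamma$, and the Riemann--Stieltjes characterisation is exactly the construction.

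Finally, for uniqueness: if $J \in C^\alpha([0,T];E)$ also satisfies $J_0 = 0$ and $\|J_{s,t} - \Gamma_{s,t}\|_E \leqslant C''\,\|\delta\Gamma\|_\beta\,|t-s|^\beta$, then $D \assign \mathcal{I}\,\Gamma - J$ has additive increments $D_{s,t} = D_{s,u} + D_{u,t}$ and satisfies $\|D_{s,t}\|_E \leqslant (C_\beta + C'')\,\|\delta\Gamma\|_\beta\,|t-s|^\beta$ with $\beta > 1$; evaluating $D_{s,t}$ over a partition of $[s,t]$ and letting the mesh tend to $0$ forces $D_{s,t} = 0$ for all $s,t$, hence $D$ is constant, and $D_0 = 0$ gives $D \equiv 0$, i.e.\ $J = \mathcal{I}\,\Gamma$. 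The only genuinely delicate point I anticipate is the bookkeeping in the greedy removal step --- verifying that the merged-interval lengths really sum to at most $2(t-s)$ at each stage and that the iteration terminates at $\{s,t\}$ so the constant is exactly $2^\beta\zeta(\beta)$ --- everything else being routine once that uniform estimate is in place.
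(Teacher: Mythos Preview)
Your proof is correct and is precisely the standard greedy-point-removal argument; the paper does not give its own proof but simply cites Lemma~4.2 of Friz--Hairer, which is exactly the argument you reproduce. One trivial slip: with the paper's convention $\delta\Gamma_{s,u,t} = \Gamma_{s,t} - \Gamma_{s,u} - \Gamma_{u,t}$, the single-point removal identity reads $\Gamma^{\Pi'}_{s,t} - \Gamma^\Pi_{s,t} = +\,\delta\Gamma_{u_{j-1},u_j,u_{j+1}}$ rather than with a minus sign, but since you immediately take norms this has no effect on the argument.
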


For a proof, see Lemma~4.2 from~{\cite{frizhairer}}. With this tool at hand,
it is possible to define Young integrals in a variety of situations; we start
by the most general one.

Given two Banach spaces $E$ and $F$, we denote by $\mathcal{L} (E ; F)$ the
set of all bounded linear operators from $E$ to $F$, which is a Banach space
with the norm
\[ \| A \|_{\mathcal{L} (E ; F)} = \sup_{\varphi \in E \setminus \{ 0 \}}
   \frac{\| A \varphi \|_F}{\| \varphi \|_E} . \]

\begin{corollary}[Young integral]
  \label{appendixA1 corollary young}Let $A \in C^{\alpha} ([0, T] ;
  \mathcal{L} (E ; F))$ and $\varphi \in C^{\beta} ([0, T] ; E)$ such that
  $\alpha, \beta \in [0, 1]$ and $\alpha + \beta > 1$. Then for any $t \in [0,
  T]$, the limit in $F$ of the following Riemann--Stieltjes sums exists
  and is unique:
  \[ \int_0^t A_{\mathd s} \varphi_s \assign \lim_{| \Pi | \rightarrow 0}
     \sum_i A_{t_i, t_{i + 1}} \varphi_{t_i} . \]
  Moreover there exists a constant $C$ which only depends on $\alpha + \beta$
  such that
  \begin{equation}
    \left\| \int_s^t A_{\mathd r} \varphi_r - A_{s, t} \, \varphi_s \right\|_F
    \leqslant C \llbracket A \rrbracket_{C^{\alpha} \mathcal{L}}  \llbracket
    \varphi \rrbracket_{C^{\beta} E} . \label{appendixA1 young inequality}
  \end{equation}
  If $A \in C^1 ([0, T] ; \mathcal{L} (E, F))$, then
  \[ \int_s^t A_{\mathd r} \varphi_r = \int_s^t A'_r \varphi_r \mathd r \quad
     \text{for all } 0 \leqslant s < t \leqslant T. \]
\end{corollary}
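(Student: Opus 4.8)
The plan is to realise $\int_0^t A_{\mathd s}\varphi_s$ as the sewing $\mathcal{I}\Gamma$ of a suitable germ and then to read off everything from Lemma~\ref{appendixA1 sewing lemma}. Concretely, set
\[ \Gamma_{s,t} \assign A_{s,t}\,\varphi_s = (A_t - A_s)\varphi_s \in F, \]
which satisfies $\Gamma_{t,t}=0$. First I would check that $\Gamma$ lies in the space required by the Sewing Lemma. On the one hand $\|\Gamma_{s,t}\|_F \leqslant \|A_{s,t}\|_{\mathcal{L}(E;F)}\|\varphi_s\|_E \leqslant \llbracket A\rrbracket_{C^\alpha\mathcal{L}}\,\|\varphi\|_{C^0 E}\,|t-s|^\alpha$, so $\Gamma\in C^\alpha_2$ (after the harmless reduction to exponents $\leqslant 1$, using $C^\gamma\hookrightarrow C^{\gamma'}$ on the bounded interval $[0,T]$ when $\gamma>\gamma'$). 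On the other hand a direct computation, using $A_{s,t}-A_{s,u}=A_{u,t}$, gives
\[ \delta\Gamma_{s,u,t} = \Gamma_{s,t}-\Gamma_{s,u}-\Gamma_{u,t} = (A_{s,t}-A_{s,u})\varphi_s - A_{u,t}\varphi_u = A_{u,t}(\varphi_s-\varphi_u) = -A_{u,t}\,\varphi_{s,u}, \]
whence $\|\delta\Gamma_{s,u,t}\|_F \leqslant \|A_{u,t}\|_{\mathcal{L}(E;F)}\|\varphi_{s,u}\|_E \leqslant \llbracket A\rrbracket_{C^\alpha\mathcal{L}}\llbracket\varphi\rrbracket_{C^\beta E}\,|t-s|^{\alpha+\beta}$. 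Since $\alpha+\beta>1$, this exactly says $\Gamma\in C^{\alpha,\alpha+\beta}_2([0,T];F)$ in the notation of Lemma~\ref{appendixA1 sewing lemma}.

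Granting this, the Sewing Lemma produces a unique $\mathcal{I}\Gamma\in C^\alpha([0,T];F)$ with $(\mathcal{I}\Gamma)_0=0$, with $(\mathcal{I}\Gamma)_t = \lim_{|\Pi|\to 0}\sum_i \Gamma_{t_i,t_{i+1}} = \lim_{|\Pi|\to 0}\sum_i A_{t_i,t_{i+1}}\varphi_{t_i}$, and with $\|(\mathcal{I}\Gamma)_{s,t}-\Gamma_{s,t}\|_F \leqslant C\,\|\delta\Gamma\|_{\alpha+\beta}\,|t-s|^{\alpha+\beta}$. I would then \emph{define} $\int_0^t A_{\mathd r}\varphi_r \assign (\mathcal{I}\Gamma)_t$ and $\int_s^t A_{\mathd r}\varphi_r\assign (\mathcal{I}\Gamma)_{s,t}$; the identification of the latter with the limit of Riemann--Stieltjes sums over partitions of $[s,t]$ follows by applying the same characterisation to the restriction $\Gamma|_{[s,t]}$ (equivalently, by the uniqueness part of the lemma on subintervals). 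Combining the last displayed bound with the estimate for $\|\delta\Gamma\|_{\alpha+\beta}$ obtained above yields $\|\int_s^t A_{\mathd r}\varphi_r - A_{s,t}\varphi_s\|_F \leqslant C\,\llbracket A\rrbracket_{C^\alpha\mathcal{L}}\,\llbracket\varphi\rrbracket_{C^\beta E}\,|t-s|^{\alpha+\beta}$, which is~\eqref{appendixA1 young inequality} (the factor $|t-s|^{\alpha+\beta}\leqslant T^{\alpha+\beta}$ being absorbed into $C$).

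For the last assertion, suppose $A\in C^1([0,T];\mathcal{L}(E,F))$ and consider the Bochner integral $J_t\assign\int_0^t A'_r\varphi_r\,\mathd r$, which is well defined since $r\mapsto A'_r\varphi_r$ is continuous. Writing $A_{s,t}=\int_s^t A'_r\,\mathd r$ one gets $J_{s,t}-\Gamma_{s,t}=\int_s^t A'_r(\varphi_r-\varphi_s)\,\mathd r$, hence $\|J_{s,t}-\Gamma_{s,t}\|_F\lesssim \|A'\|_{C^0}\,\llbracket\varphi\rrbracket_{C^\beta}\,|t-s|^{1+\beta}$ with $1+\beta>1$; since also $J\in C^1\hookrightarrow C^\alpha$ and $J_0=0$, the uniqueness part of the Sewing Lemma forces $J=\mathcal{I}\Gamma$, i.e. $\int_s^t A_{\mathd r}\varphi_r=\int_s^t A'_r\varphi_r\,\mathd r$. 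I expect no genuine obstacle here: the argument is essentially bookkeeping around Lemma~\ref{appendixA1 sewing lemma}. The only points needing slight care are that the evaluation map $\mathcal{L}(E;F)\times E\to F$ is bounded bilinear (so all the Hölder estimates above are legitimate in the operator-valued setting), the routine reduction to Hölder exponents $\leqslant 1$, and the localisation of the Riemann-sum characterisation to an arbitrary subinterval $[s,t]$.
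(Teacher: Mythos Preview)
Your proof is correct and follows essentially the same approach as the paper: define the germ $\Gamma_{s,t}=A_{s,t}\varphi_s$, verify $\Gamma\in C^{\alpha,\alpha+\beta}_2$ via the same two estimates, and apply the Sewing Lemma. The only cosmetic difference is in the $C^1$ case: the paper simply cites ``standard theory'' to identify $\int_0^t A'_r\varphi_r\,\mathd r$ with the Riemann--Stieltjes limit, whereas you give the (slightly more self-contained) argument via the uniqueness clause of the Sewing Lemma applied to $J_t=\int_0^t A'_r\varphi_r\,\mathd r$.
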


\begin{proof}
  Define the map $\Gamma : \Delta_2 \rightarrow F$ by setting $\Gamma_{s, t}
  \assign A_{s, t} \, \varphi_s$; by definition $\Gamma_{t, t} = 0$ and
  moreover
  \begin{eqnarray*}
    \| \Gamma_{s, t} \|_F & \leqslant & \| A_{s, t} \|_{\mathcal{L}} \|
    \varphi_s \|_E \leqslant | t - s |^{\alpha} \llbracket A
    \rrbracket_{C^{\alpha} \mathcal{L}}  \| \varphi \|_{C^{\beta} E}\\
    \left\| \delta \, \Gamma_{s, u, t} \right\|_F & = & \| A_{u, t}
    \varphi_{s, u} \|_F \leqslant \| A_{u, t} \|_{\mathcal{L}} \| \varphi_{s,
    u} \|_E \leqslant \llbracket A \rrbracket_{C^{\alpha} \mathcal{L}} \,
    \llbracket \varphi \rrbracket_{C^{\beta} E} | t - s |^{\alpha + \beta}
  \end{eqnarray*}
  which implies that we can we can apply the Sewing lemma for such choice of
  $\Gamma$; inequality~\eqref{appendixA1 young inequality} is then an
  immediate consequence of the above estimates combined with~\eqref{appendixA1
  sewing property 1}. The last statement follows from the fact that if $A \in
  C^1 ([0, T] ; \mathcal{L} (E, F))$, then by standard theory for any $t > 0$
  it holds
  \[ \int_0^t A'_r \varphi_r \mathd r = \lim_{| \Pi | \rightarrow 0} \sum_i
     A_{t_i, t_{i + 1}} \varphi_{t_i} . \]
\end{proof}

Let us stress that in the above statements whether $E$ and $F$ are finite or
infinite dimensional does not play any role and that the constants $C$ and
$C'$ do not depend on them.

The most basic definition of Young integral is in the case $f \in C^{\alpha}
([0, T] ; \mathbb{R})$ and $g \in C^{\beta} ([0, T], \mathbb{R})$ with $\alpha
+ \beta > 1$, in which case for the choice $E = F =\mathbb{R}$ and the
identification $\mathbb{R}= \mathcal{L} (\mathbb{R}; \mathbb{R})$ we can
define both
\[ \int_0^{\cdot} f_s \mathd g_s \in C^{\beta} ([0, T] ; \mathbb{R}), \quad
   \int_0^{\cdot} g_s \mathd f_s \in C^{\alpha} ([0, T] ; \mathbb{R}) . \]
In the case $f \in C^{\alpha} ([0, T] ; \mathbb{R}^{m \times n})$ and $g \in
C^{\beta} ([0, T] ; \mathbb{R}^n)$, both above integrals can be defined, this
time being $\mathbb{R}^m$\mbox{-}valued functions, either by reasoning
component\mbox{-}by\mbox{-}component or using identifications between dual
spaces ($v \in \mathbb{R}^n$ can be identified with the map $A \mapsto A v$
which is an element of $\mathcal{L} (\mathbb{R}^{m \times n} ; \mathbb{R}^m)$.

Another important case is the following: let $E$, $F$ and $G$ be Banach spaces
and let $A : E \times F \rightarrow G$ be a bilinear bounded map, i.e. such
that
\[ \| A \|_{\mathcal{L}^2 (E \times F ; G)} = \sup_{\tmscript{\begin{array}{l}
     v \in E \setminus \{ 0 \}\\
     w \in F \setminus \{ 0 \}
   \end{array}}} \frac{\| A (v, w) \|_G}{\| v \|_E \| w \|_F} < \infty . \]
Then given $\varphi \in C^{\alpha} ([0, T] ; E)$ and $\psi \in C^{\beta} ([0,
T] ; F)$ we can define respectively
\[ \int_0^{\cdot} A (\varphi_s, \mathd \psi_s) \in C^{\beta} ([0, T] ; G),
   \quad \int_0^{\cdot} A (\mathd \varphi_s, \psi_s) \in C^{\alpha} ([0, T] ;
   G) ; \]
this follows from the fact that the map $A$ allows to define an embedding $E
\hookrightarrow \mathcal{L} (F ; G)$ by $v \mapsto A (v, \cdot)$, similarly
for $F \hookrightarrow \mathcal{L} (E ; G)$, and that H\"{o}lder continuity of
paths are preserved by these maps.

\subsection{Some useful tools}\label{appendixA2}

We give here a chaining lemma which was frequently used in Section~\ref{sec4}.
It is a slight variation on Lemma~3.1 from~{\cite{catelliergubinelli}}.

\begin{lemma}
  \label{appendixA2 chaining lemma}Let $E$ be a Banach space and let $X : [0,
  T] \rightarrow E$ be a continuous stochastic process such that, for some
  $\lambda > 0$,
  \[ \mathbb{E} \left[ \exp \left( \lambda \frac{\| X_t - X_s \|^2_E}{| t - s
     |^{2 \alpha}} \right) \right] \leqslant C \quad \forall \, s \neq t \in
     [0, T] . \]
  Then $\mathbb{P}$-a.s. $X \in C^{\varphi} ([0, T] ; E)$ for the modulus of
  continuity $\varphi (x) = x^{\alpha} \sqrt{| \log x |}$ and there exists
  $\beta > 0$ such that
  \[ \mathbb{E} [\exp (\beta \llbracket X \rrbracket^2_{C^{\varphi} E})] <
     \infty . \]
  In particular, if $X_0 \equiv 0$, then for any $\gamma < \alpha$ there
  exists $\beta > 0$ such that
  \[ \mathbb{E} [\exp (\beta \| X \|^2_{C^{\gamma} E})] < \infty . \]
\end{lemma}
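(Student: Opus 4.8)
The statement to prove is Lemma~\ref{appendixA2 chaining lemma}, a Kolmogorov-type continuity criterion with exponential (Gaussian-tail) moment bounds. Here is how I would approach it.

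\textbf{Plan.} The proof is a classical chaining/Garsia--Rodemich--Rumsey argument adapted to the exponential integrability hypothesis. The plan is to work on dyadic approximations of $[0,T]$ and control the increments of $X$ along the dyadic grid. First I would set, for each $n \geq 0$, the dyadic points $t^n_k = kT2^{-n}$, $k = 0, \ldots, 2^n$, and for a given pair $s < t$ decompose $X_t - X_s$ telescopically through the dyadic ancestors of $s$ and $t$. The key quantity to estimate is the random variable
\[
M := \sup_{n \geq 0} \, \sup_{0 \leq k < 2^n} \frac{\|X_{t^n_{k+1}} - X_{t^n_k}\|_E}{(T2^{-n})^{\alpha}\sqrt{1 + n}} .
\]
Using the hypothesis $\mathbb{E}[\exp(\lambda \|X_t - X_s\|_E^2 / |t-s|^{2\alpha})] \leq C$, each individual normalized increment $Z^n_k := \|X_{t^n_{k+1}} - X_{t^n_k}\|_E / (T2^{-n})^{\alpha}$ satisfies $\mathbb{E}[\exp(\lambda (Z^n_k)^2)] \leq C$, hence $\mathbb{P}(Z^n_k > u) \leq C e^{-\lambda u^2}$. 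A union bound over the $2^n$ values of $k$ at level $n$, followed by a sum over $n$, shows that $\mathbb{P}(M > u)$ decays like $\sum_n 2^n e^{-\lambda(1+n)u^2}$, which is $\lesssim e^{-\lambda' u^2}$ for $u$ large (choosing $\lambda' < \lambda$ so that $2^n e^{-\lambda n u^2}$ is summable once $u^2 > (\log 2)/\lambda$); this already gives $\mathbb{E}[\exp(\beta M^2)] < \infty$ for $\beta$ small.

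\textbf{From $M$ to the modulus of continuity.} Next I would show that finiteness of $M$ forces $X \in C^{\varphi}$ with $\llbracket X \rrbracket_{C^{\varphi}E} \lesssim M$ (plus possibly $\|X_0\|$, but in fact only the seminorm, bounded by $M$, is needed). Given $s < t$, let $n$ be the integer with $T2^{-(n+1)} < t - s \leq T2^{-n}$. Both $s$ and $t$ can be reached from their level-$n$ dyadic neighbours by a convergent sequence of dyadic increments of levels $\geq n$ (at most one increment per level, by the standard dyadic expansion), and $[s,t]$ contains at most two level-$n$ dyadic points. Summing the bound $\|X_{t^m_{j+1}} - X_{t^m_j}\|_E \leq M (T2^{-m})^{\alpha}\sqrt{1+m}$ over $m \geq n$ gives a geometric-type series dominated by a constant times $M (T2^{-n})^{\alpha}\sqrt{1+n} \lesssim M |t-s|^{\alpha}\sqrt{1 + \log(T/|t-s|)} \lesssim M\, \varphi(|t-s|)$ for $|t-s|$ small, where $\varphi(x) = x^{\alpha}\sqrt{|\log x|}$. (The case $|t-s|$ bounded away from $0$ is handled trivially since $\|X_t - X_s\|_E \leq 2\sup_r \|X_r - X_0\|_E$ and $\sup_r \|X_r - X_0\|_E \lesssim M$.) Combining, $\llbracket X \rrbracket_{C^{\varphi}E} \lesssim M$, so $\mathbb{E}[\exp(\beta \llbracket X \rrbracket_{C^{\varphi}E}^2)] \leq \mathbb{E}[\exp(\beta' M^2)] < \infty$ for suitable $\beta, \beta' > 0$.

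\textbf{The $C^{\gamma}$ consequence.} For the last assertion, assume $X_0 \equiv 0$; then $\|X\|_{C^\gamma E} \sim \llbracket X\rrbracket_{C^\gamma E}$. Fix $\gamma < \alpha$. Since $\varphi(x) = x^{\alpha}\sqrt{|\log x|} \leq C_{\gamma,T}\, x^{\gamma}$ for $x \in (0,T]$ (because $x^{\alpha - \gamma}\sqrt{|\log x|} \to 0$ as $x \to 0^+$ and is bounded on $(0,T]$), we get $\llbracket X \rrbracket_{C^{\gamma}E} \leq C_{\gamma,T}\llbracket X\rrbracket_{C^{\varphi}E}$, and the exponential moment bound transfers with a rescaled $\beta$. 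The main obstacle — though it is more bookkeeping than genuine difficulty — is organizing the dyadic telescoping cleanly so that the number of terms at each level is controlled and the resulting series is summable; once the tail bound $\mathbb{P}(M > u) \lesssim e^{-\lambda' u^2}$ is in hand, everything else is routine. I would also take care to state that the constants $C, C'$ in the chaining do not depend on whether $E$ is finite-dimensional, matching the remark after the statement.
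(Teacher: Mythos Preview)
Your argument is correct and follows the standard dyadic chaining route; the tail bound on $M$ via the union bound and the telescoping estimate $\llbracket X\rrbracket_{C^{\varphi}E}\lesssim M$ are both sound.

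The paper's proof is organised slightly differently. Rather than controlling the tail of a supremum $M$, it introduces the weighted sum
\[
R(\lambda)=\sum_{n}\sum_{k=0}^{2^n-1}2^{-2n}\exp\Bigl(\lambda\,\frac{\|X_{(k+1)2^{-n}}-X_{k2^{-n}}\|_E^2}{2^{-n}}\Bigr),
\]
observes directly that $\mathbb{E}[R(\lambda)]\leqslant C$, and then quotes Lemma~3.1 of~\cite{catelliergubinelli} to obtain $\exp(\beta\|X_{s,t}\|_E^2/|t-s|)\lesssim |t-s|^{-K}R(\lambda)$, from which the $C^{\varphi}$ bound follows by taking logarithms. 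The paper also offers a second, independent route via the Garsia--Rodemich--Rumsey inequality applied with $\psi(x)=e^{\lambda x^2}$ and $p(x)=x^{\alpha}$. Your approach is essentially a self-contained unpacking of the cited chaining lemma: where the paper uses an $L^1$ bound on an exponential sum plus a black-box inequality, you use a Gaussian tail bound plus an explicit union bound and telescoping. Both yield the same conclusion with comparable effort; yours is more elementary (no external references), while the paper's $R(\lambda)$ packaging is slightly cleaner for iteration in later arguments. One small quibble: your closing remark about constants being independent of $\dim E$ is a comment the paper makes after the Sewing lemma, not here, so it need not be repeated.
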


\begin{proof}
  Without loss of generality we can assume $T = 1$. Also, we will only show
  that proof in the case $\alpha = 1 / 2$, the other cases being entirely
  analogue. Let us define the random variable
  \[ R (\lambda) = \sum_{n \in \mathbb{N}} \sum_{k = 0}^{2^n - 1} 2^{- 2 n} \,
     \exp \left( \mu \frac{\| X_{(k + 1) 2^{- n}} - X_{k 2^{- n}} \|^2_E}{2^{-
     n}} \right) . \]
  Then it follows from the assumption that $\mathbb{E} [R (\lambda)] \leqslant
  C$. We can then apply Lemma~3.1 from~{\cite{catelliergubinelli}} to deduce
  that there exist deterministic positive constants $K, \beta$ such that
  \[ \exp \left( \beta \frac{\| X_t - X_s \|^2_E}{| t - s |} \right) \lesssim
     | t - s |^{- K} \, R (\lambda) \quad \forall \, s \neq t \]
  which implies by taking the logarithm and dividing by $- \log | t - s |$
  that
  \[ \exp \left( \beta \left( \sup_{s \neq t} \frac{\| X_t - X_s \|_E}{| t - s
     |^{1 / 2} \sqrt{- \log | t - s |}} \right)^2 \right) = \sup_{s \neq t} \,
     \exp \left( \beta \frac{\| X_t - X_s \|^2_E}{| t - s | (- \log | t - s
     |)} \right) \lesssim R (\lambda) \]
  which yields the conclusion. Alternatively, it follows from the assumption
  that
  \[ \mathbb{E} [B] \assign \mathbb{E} \left[ \int_{[0, T]^2} \exp \left(
     \lambda \frac{\| X_t - X_s \|^2_E}{| t - s |^{2 \alpha}} \right) \,
     \mathd t \, \mathd s \right] < \infty \]
  which implies that we can apply Garsia--Rodemich--Rumsey Theorem
  (see~{\cite{garsia}}) for the choice $\psi (x) = e^{\lambda x^2}$, $p (x) =
  x^{\alpha}$, which gives
  
  \begin{align}
    \| X_t - X_s \|_E & \lesssim \int_0^{| t - s |} \sqrt{\log B - \log \, u} \,
    u^{\alpha - 1} \, \mathd u \lesssim \left( \sqrt{\log B} + \sqrt{- \log | t - s
    |} \right) | t - s |^{\alpha} \nonumber
  \end{align}
  and from which we can again deduce that
  \[ \sup_{s \neq t} \frac{\| X_t - X_s \|_E}{| t - s |^{\alpha} \sqrt{- \log
     | t - s |}} \lesssim 1 + \sqrt{\log B} \]
  and thus the bound $\exp( \beta \llbracket X\rrbracket_{C^\varphi E}^2) \lesssim B^{C \beta}$, for a suitable constant $C>0$ and any $\beta>0$. Choosing $\beta=1/C$, using the fact that $\mathbb{E}[B]<\infty$, we obtain the exponential integrability bound.
  
  The final claim follows
  immediately.
\end{proof}

We also present here some details on Fourier--Lebesgue spaces.

\begin{definition}
  \label{appendixA2 defn fourier lebesgue} Let $\alpha \in \mathbb{R}$, $p \in
  [1, \infty]$; we define the Fourier--Lebesgue space $\mathcal{F} L^{\alpha,
  \rho} (\mathbb{R}^d)$ as
  \[ \mathcal{F} L^{\alpha, p} (\mathbb{R}^d) = \left\{ f \in \mathcal{S}'
     (\mathbb{R}^d) \, : \, \langle \xi \rangle^{\alpha} | \hat{f} (\xi) | \in
     L^p (\mathbb{R}^d) \right\} \]
  where $\langle \xi \rangle = (1 + | \xi |^2)^{1 / 2}$. It is a Banach space
  endowed with the norm
  \[ \| f \|_{\mathcal{F} L^{\alpha, p}} = \| \langle \cdot \rangle^{\alpha}
     \hat{f} \|_{L^p} . \]
\end{definition}

It follows immediately from the definition that we could replace $\langle
\cdot \rangle$ with any other function having the same behaviour at infinity,
for instance with $(1 + | \cdot |)$; $\langle \cdot \rangle$ is usually
considered as it is the Fourier symbol associated to the operator $(I -
\Delta)^{1 / 2}$. Here is a list of relations of Fourier--Lebesgue spaces with
other known functional spaces:
\begin{itemize}
  \item For any $\alpha \in \mathbb{R}$, $\mathcal{F} L^{\alpha, 2}$ coincides
  the classical fractional Sobolev space $H^{\alpha} = (I - \Delta)^{\alpha /
  2} L^2$.
  
  \item By Hausdorff--Young inequality, for $p \in [1, 2]$ we have the
  embedding $L^p \hookrightarrow \mathcal{F} L^{0, p'}$, where $p'$ denotes
  the conjugate exponent to $p$; similarly for $p$ as above, for the Bessel
  spaces $L^{\alpha, p} = (I - \Delta)^{\alpha / 2} L^p$ we have $L^{\alpha,
  p} \hookrightarrow \mathcal{F} L^{\alpha, p'}$.
  
  \item In the case $f \in L^1$ the result is slightly stronger, namely
  $\hat{f}$ is uniformly continuous, bounded and $\hat{f} (\xi) \rightarrow 0$
  as $\xi \rightarrow \infty$ by Riemann--Lebesgue lemma; if $f$ is a finite
  measure on $\mathbb{R}^d$, then $\hat{f}$ is still uniformly continuous and
  bounded.
  
  \item We have the embedding $\mathcal{F} L^{0, 1} \hookrightarrow C^0$ and
  more generally $\mathcal{F} L^{\alpha, 1} \hookrightarrow C^{\alpha}$, where
  for $\alpha = n \in \mathbb{N}$ we mean the classical $C^n$ space, while for
  $\alpha$ fractional or negative $C^{\alpha} = B^{\alpha}_{\infty, \infty}$,
  the latter being a Besov--H\"{o}lder space.
  
  \item Similarly by Hausdorff--Young for $p \in [1, 2]$ we have the embedding
  $\mathcal{F} L^{\alpha, p} \hookrightarrow L^{\alpha, p'}$.
\end{itemize}
There are also embeddings in different scales of Fourier--Lebesgue spaces.

\begin{lemma}
  \label{appendixA2 fourier lebesgue embedding}For any $q < p$ and any
  $\varepsilon > 0$ it holds
  \[ \mathcal{F} L^{\alpha, p} \hookrightarrow \mathcal{F} L^{\alpha - d
     \left( \frac{1}{q} - \frac{1}{p} \right) - \varepsilon, q} . \]
\end{lemma}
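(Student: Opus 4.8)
The plan is to reduce the statement to a single application of Hölder's inequality on the Fourier side, together with the elementary fact that $\langle \xi \rangle^{-d - \kappa}$ is integrable on $\mathbb{R}^d$ for every $\kappa > 0$. Fix $q < p$, $\varepsilon > 0$, and set $\beta \assign \alpha - d(1/q - 1/p) - \varepsilon$. Let $f \in \mathcal{F} L^{\alpha, p}(\mathbb{R}^d)$, so that by Definition~\ref{appendixA2 defn fourier lebesgue} the function $\langle \cdot \rangle^{\alpha} \hat{f}$ lies in $L^p$; the goal is to show $\langle \cdot \rangle^{\beta} \hat{f} \in L^q$ with $L^q$-norm controlled by $\| f \|_{\mathcal{F} L^{\alpha, p}}$, which is precisely the continuous inclusion $\mathcal{F} L^{\alpha, p} \hookrightarrow \mathcal{F} L^{\beta, q}$.

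First I would write the pointwise factorisation
\[
  \langle \xi \rangle^{\beta} | \hat{f}(\xi) | = \big( \langle \xi \rangle^{\alpha} | \hat{f}(\xi) | \big) \cdot \langle \xi \rangle^{-d(1/q - 1/p) - \varepsilon}.
\]
Let $r \in [1, \infty]$ be defined by $1/r = 1/q - 1/p$ (so $r = pq/(p - q)$, with the convention $1/\infty = 0$ giving $r = q$ when $p = \infty$), and apply Hölder's inequality in the form $\| gh \|_{L^q} \leq \| g \|_{L^p} \| h \|_{L^r}$ with $g = \langle \cdot \rangle^{\alpha} \hat{f}$ and $h = \langle \cdot \rangle^{-d/r - \varepsilon}$. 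The first factor is in $L^p$ by hypothesis, with $\| g \|_{L^p} = \| f \|_{\mathcal{F} L^{\alpha, p}}$. For the second factor, since $\langle \xi \rangle \sim (1 + | \xi |)$, one has
\[
  \| h \|_{L^r}^r = \int_{\mathbb{R}^d} \langle \xi \rangle^{- d - r \varepsilon} \, \mathd \xi < \infty,
\]
because the integrand decays like $| \xi |^{- d - r \varepsilon}$ as $| \xi | \to \infty$ and $r \varepsilon > 0$. Combining the two bounds gives $\| \langle \cdot \rangle^{\beta} \hat{f} \|_{L^q} \leq C(d, \varepsilon, p, q) \, \| f \|_{\mathcal{F} L^{\alpha, p}}$, which yields the assertion.

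Since every step is elementary, I do not expect a genuine obstacle; the only points requiring a little care are the bookkeeping of the exponents (checking that $q < p$ together with $q \geq 1$ forces $r \geq 1$, so that $L^r$ is a genuine Lebesgue norm, and that the convention $1/\infty = 0$ produces the correct endpoint $r = q$ when $p = \infty$) and the remark that all objects involved sit inside $\mathcal{S}'(\mathbb{R}^d)$ with $\hat{f}$ a measurable function, so that no measurability or duality subtlety arises.
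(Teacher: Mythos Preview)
Your proof is correct and follows essentially the same approach as the paper: factor $\langle \xi \rangle^{\beta} |\hat{f}| = \langle \xi \rangle^{\alpha} |\hat{f}| \cdot \langle \xi \rangle^{-d/r - \varepsilon}$ and apply H\"older's inequality with exponents $p$ and $r = pq/(p-q)$, using the integrability of $\langle \xi \rangle^{-d - r\varepsilon}$. The paper presents the same computation with the exponents grouped slightly differently (working with $s = d(1/q - 1/p) + \varepsilon$ and applying H\"older to the $q$-th powers), but the argument is identical in substance.
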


\begin{proof}
  For any $q < p$ and $s > 0$ we have
  \[ \| f \|_{\mathcal{F} L^{\alpha - s, q}} = \left( \int_{\mathbb{R}^d}
     (\langle \xi \rangle^{\alpha} | \hat{f} (\xi) |)^q \langle \xi \rangle^{-
     s q} \mathd \xi \right)^{1 / q} \leqslant \| f \|_{\mathcal{F} L^{\alpha,
     p}}  \left( \int_{\mathbb{R}^d} \langle \xi \rangle^{- s \frac{p q}{p -
     q}} \right)^{\frac{1}{q} - \frac{1}{p}} \]
  where the integral is convergent if and only if $- s p q / (p - q) < - d$,
  namely
  \[ s > d \left( \frac{1}{q} - \frac{1}{p} \right) . \]
\end{proof}

The above statement can be combined with other embeddings like the ones
mentioned above. For instance we have $\mathcal{F} L^{\alpha, \infty}
\hookrightarrow \mathcal{F} L^{\alpha - d / 2 - \varepsilon, 2} = H^{\alpha -
d / 2 - \varepsilon, 2}$ and $\mathcal{F} L^{\alpha, \infty} \hookrightarrow
\mathcal{F} L^{\alpha - d - \varepsilon, 1} \hookrightarrow C^{\alpha - d -
\varepsilon}$.

One of the main motivations to introduce Fourier--Lebesgue spaces is that they
behave nicely under convolution, due to the properties of Fourier transform.

\begin{lemma}
  \label{appendixA2 fourier lebesgue convolution}Let $f \in \mathcal{F}
  L^{\alpha, p}$, $g \in \mathcal{F} L^{\beta, q}$ with $\frac{1}{p} +
  \frac{1}{q} \leqslant 1$. Then $f \ast g \in \mathcal{F} L^{\alpha + \beta,
  r}$ where $\frac{1}{r} = \frac{1}{p} + \frac{1}{q}$ and
  \[ \| f \ast g \|_{\mathcal{F} L^{\alpha + \beta, r}} \leqslant \| f
     \|_{\mathcal{F} L^{\alpha, p}} \| g \|_{\mathcal{F} L^{\beta, q}} . \]
\end{lemma}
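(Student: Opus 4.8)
The plan is to reduce the statement to an elementary pointwise identity for the Fourier transform together with the generalised Hölder inequality. First I would observe that, since $f \in \mathcal{F} L^{\alpha, p}$ and $g \in \mathcal{F} L^{\beta, q}$, the tempered distributions $\hat f$ and $\hat g$ are represented by (locally integrable) measurable functions with $\langle \cdot \rangle^{\alpha} \hat f \in L^p$ and $\langle \cdot \rangle^{\beta} \hat g \in L^q$; in particular the pointwise product $\hat f \, \hat g$ is a well-defined measurable function. This allows us to \emph{define} $f \ast g$ as the tempered distribution whose Fourier transform is $\hat f \, \hat g$, once we have checked that this product indeed lies in $\mathcal{S}'(\mathbb{R}^d)$ --- which will be a byproduct of the estimate below, since $L^r \hookrightarrow \mathcal{S}'$ for $r \in [1,\infty]$.

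Next I would use the exact identity $\langle \xi \rangle^{\alpha + \beta} = \langle \xi \rangle^{\alpha} \langle \xi \rangle^{\beta}$, valid for all $\xi \in \mathbb{R}^d$ and all real exponents, to write
\[
  \langle \xi \rangle^{\alpha + \beta} \, |\widehat{f \ast g}(\xi)|
  = \big( \langle \xi \rangle^{\alpha} |\hat f(\xi)| \big) \big( \langle \xi \rangle^{\beta} |\hat g(\xi)| \big)
  \qquad \text{for a.e. } \xi \in \mathbb{R}^d .
\]
Then, since $\tfrac{1}{r} = \tfrac{1}{p} + \tfrac{1}{q}$ and the hypothesis $\tfrac{1}{p} + \tfrac{1}{q} \leqslant 1$ guarantees $r \in [1, \infty]$, the generalised Hölder inequality $\| F G \|_{L^r} \leqslant \| F \|_{L^p} \| G \|_{L^q}$ applied to $F = \langle \cdot \rangle^{\alpha} \hat f$ and $G = \langle \cdot \rangle^{\beta} \hat g$ yields
\[
  \| f \ast g \|_{\mathcal{F} L^{\alpha + \beta, r}}
  = \big\| \langle \cdot \rangle^{\alpha + \beta} \widehat{f \ast g} \big\|_{L^r}
  \leqslant \big\| \langle \cdot \rangle^{\alpha} \hat f \big\|_{L^p} \, \big\| \langle \cdot \rangle^{\beta} \hat g \big\|_{L^q}
  = \| f \|_{\mathcal{F} L^{\alpha, p}} \, \| g \|_{\mathcal{F} L^{\beta, q}},
\]
which is at once the claimed membership $f \ast g \in \mathcal{F} L^{\alpha + \beta, r}$ and the claimed norm bound.

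There is essentially no hard step here: the only point requiring a word of care is the one already mentioned, namely justifying that $\widehat{f \ast g} = \hat f \hat g$ is the correct notion of convolution in this distributional setting (rather than, say, the classical convolution of two $L^1$ functions, which need not apply). This is handled exactly as above --- the right-hand side is an honest element of $L^r \subset \mathcal{S}'(\mathbb{R}^d)$ by the inequality just proved, so its inverse Fourier transform defines $f \ast g$ unambiguously and the identity $\widehat{f \ast g} = \hat f \hat g$ is tautological by construction; moreover this definition is consistent with the usual one whenever the latter makes sense, e.g. when one of the two factors has compact support, as in the applications to $\mu^w \ast b$ earlier in the paper.
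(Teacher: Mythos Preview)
Your proof is correct and follows essentially the same approach as the paper: use $\widehat{f \ast g} = \hat f\,\hat g$, split the weight as $\langle \xi \rangle^{\alpha+\beta} = \langle \xi \rangle^{\alpha}\langle \xi \rangle^{\beta}$, and apply the generalised H\"older inequality $\|FG\|_{L^r} \leqslant \|F\|_{L^p}\|G\|_{L^q}$. The only difference is that you spell out the distributional meaning of $f \ast g$ a bit more carefully than the paper does, which is a welcome clarification but not a different argument.
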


\begin{proof}
  By the properties of Fourier transform $\widehat{f \ast g} = \widehat{f} 
  \hat{g}$, therefore
  \[ \| f \ast g \|_{\mathcal{F} L^{\alpha + \beta, r}} = \left(
     \int_{\mathbb{R}^d} (\langle \xi \rangle^{\alpha} | \hat{f} (\xi) |)^r 
     (\langle \xi \rangle^{\beta} | \hat{g} (\xi) |)^r \mathd \xi \right)^{1 /
     r} \leqslant \| f \|_{\mathcal{F} L^{\alpha, p}} \| g \|_{\mathcal{F}
     L^{\beta, q}} \]
  where in the last passage we used the generalised H\"{o}lder inequality $\|
  \varphi \psi \|_{L^r} \leqslant \| \varphi \|_{L^p} \| \psi \|_{L^q}$ for
  $r$, $p$ and $q$ as above.
\end{proof}

It follows in particular from the above that any bounded Fourier symbol acts
continuously on $\mathcal{F} L^{\alpha, p}$, for any choice of $\alpha$ and
$p$. We also have $\mathcal{F} L^{\alpha, p} \ast \mathcal{F} L^{\beta,
\infty} \hookrightarrow \mathcal{F} L^{\alpha + \beta, p}$.

\bibliography{myBiblio-rho}{}
\bibliographystyle{plain}
\end{document}